\newcolumntype{L}[1]{>{\raggedright\arraybackslash}p{#1}}
\newcolumntype{C}[1]{>{\centering\arraybackslash}p{#1}}
\newcolumntype{R}[1]{>{\raggedleft\arraybackslash}p{#1}}
\newtheorem{theorem}{Theorem}[section] 
\newtheorem{lemma}[theorem]{Lemma}
\newtheorem{remark}[theorem]{Remark}
\newtheorem{proposition}[theorem]{Proposition}
\newtheorem{corollary}[theorem]{Corollary}
\numberwithin{equation}{section}
\begin{document}
\begin{frontmatter}
\title{{\bf High-order accurate positivity-preserving and well-balanced discontinuous Galerkin schemes for ten-moment Gaussian closure equations with source terms}}


\author[address1]{Jiangfu Wang}\ead{2101110034@stu.pku.edu.cn}
\author[address2,address1]{Huazhong Tang}\ead{hztang@math.pku.edu.cn}
\author[address3,address4]{Kailiang Wu\corref{cor1}}\ead{wukl@sustech.edu.cn}
\cortext[cor1]{Corresponding author.}
\address[address1]{Center for Applied Physics and Technology, HEDPS and LMAM, School of Mathematical Sciences, Peking University, Beijing, 100871, PR China.}
\address[address2]{Nanchang Hangkong University, Nanchang, 330000, Jiangxi Province, PR China.}
\address[address3]{Department of Mathematics, SUSTech International Center for Mathematics, and Guangdong Provincial Key Laboratory of Computational Science and Material Design, Southern University of Science and Technology, Shenzhen, Guangdong 518055, PR China}
\address[address4]{National Center for Applied Mathematics Shenzhen (NCAMS), Shenzhen, Guangdong 518055, PR China}


\begin{abstract}

This paper proposes novel high-order accurate discontinuous Galerkin (DG) schemes for the one- and two-dimensional ten-moment Gaussian closure equations with source terms defined by a known potential function. Our DG schemes exhibit the desirable capability of being well-balanced (WB) for a known hydrostatic equilibrium state while simultaneously preserving positive density and positive-definite anisotropic pressure tensor. The well-balancedness is built on carefully modifying the solution states in the Harten--Lax--van Leer--contact (HLLC) flux, and appropriate reformulation and discretization of the source terms. Our novel modification technique overcomes the difficulties posed by the anisotropic effects,  maintains the high-order accuracy, and ensures that the modified solution state remains within the physically admissible state set. Positivity-preserving analyses of our WB DG schemes are conducted, by using several key properties of the admissible state set, the HLLC flux and the HLLC solver, as well as the geometric quasilinearization (GQL) approach in [Wu and Shu, {\em SIAM Review}, 65(4): 1031--1073, 2023], which was originally applied to analyze the admissible state set and the physical-constraints-preserving schemes for the relativistic magnetohydrodynamic equations in [Wu and Tang, {\em M3AS}, 27(10): 1871--1928, 2017], to address the difficulties arising from the nonlinear constraints on the pressure tensor. Moreover, the proposed WB DG schemes satisfy the weak positivity for the cell averages, implying the use of a simple scaling limiter to enforce the physical admissibility of the DG solution polynomials at certain points of interest. Extensive numerical experiments are conducted to validate the preservation of equilibrium states, accuracy in capturing small perturbations to such states, robustness in solving problems involving low density or low pressure, and high resolution for both smooth and discontinuous solutions.
\vspace{2mm}
\end{abstract}

\begin{keyword}
discontinuous Galerkin schemes; well-balanced; positivity-preserving; ten-moment Gaussian closure equations; anisotropic pressure tensor; geometric quasilinearization (GQL).
\end{keyword}

\end{frontmatter}
\section{Introduction}

The Boltzmann equation characterizes the spatio-temporal evolution of the probability density of particles. However, its practical applicability is often limited due to its high-dimensional nature. To simplify the description of a system, one can consider the velocity moments of the probability density function, leading to a reduced number of independent variables governed by a new set of (macroscopic) equations. The compressible Euler equations of gas dynamics are one of such macroscopic systems which can be derived from the Boltzmann equation \cite{levermore1996moment}. This derivation assumes local thermodynamic equilibrium, which results in a scalar pressure. However, in many problems, such as collisionless plasma  \cite{dubroca2004magnetic,morreeuw2006electron,johnson2012ten,wang2018electron,dong2019global} and the non-equilibrium gas dynamics \cite{brown1995numerical}, the local thermodynamic equilibrium assumption does not hold, and anisotropic effects are often present, rendering the Euler equations less suitable. As an alternative, the ten-moment Gaussian closure equations \cite{levermore1998gaussian} provide an effective paradigm for such applications, where the pressure is described by an anisotropic and symmetric tensor.


In the two-dimensional (2D) case, the ten-moment Gaussian closure equations with source terms can be written into the form of balance laws as
\begin{equation}\label{ten-moment2}
\frac{\partial\mathbf{U}}{\partial t}+\frac{\partial\mathbf{F(U)}}{\partial x}+\frac{\partial\mathbf{G(U)}}{\partial y}=\mathbf{S}^x(\mathbf{U})+\mathbf{S}^y(\mathbf{U}),
\end{equation}
where the solution vector $\mathbf{U}=(\rho,m_1,m_2,E_{11},E_{12},E_{22})^\top$ with $\rho$ denoting the density and $\mathbf{m}=(m_1,m_2)^\top$ representing the momentum with $m_i=\rho u_i$ ($i=1,2$). Additionally, $E_{11}$, $E_{12}$, and $E_{22}$ denote components of the symmetric energy tensor $\mathbf{E}$. The system \eqref{ten-moment2} is closed by the equation of state
\begin{equation}\label{EOS}
\mathbf{E}=\frac{1}{2}(\rho\mathbf{u}\otimes\mathbf{u}+\mathbf{p}),
\end{equation}
where $\mathbf{u}=(u_1,u_2)^\top = {\bf m}/\rho$ denotes the velocity,
the symbol $\otimes$ signifies the tensor product, and $\mathbf{p}=(p_{ij})_{1\le i,j \le 2}$ is the symmetric pressure tensor.
The fluxes in \eqref{ten-moment2} are given by
\begin{equation}\label{FU}
\mathbf{F(U)}=\left(\rho u_1,\rho u_1^2+p_{11},\rho u_1u_2+p_{12},(E_{11}+p_{11})u_1,E_{12}u_1+\frac{1}{2}(p_{11}u_2+p_{12}u_1),E_{22}u_1+p_{12}u_2\right)^\top,
\end{equation}
\begin{equation}\label{GU}
\mathbf{G(U)}=\left(\rho u_2,\rho u_1u_2+p_{12},\rho u_2^2+p_{22},E_{11}u_2+p_{12}u_1,E_{12}u_2+\frac{1}{2}(p_{12}u_2+p_{22}u_1),(E_{22}+p_{22})u_2\right)^\top.
\end{equation}
In this paper, the following source terms are considered:
\begin{equation*}\label{sxU}
\mathbf{S}^x(\mathbf{U})=\left(0,-\frac{1}{2}\rho\partial_xW,0,-\frac{1}{2}\rho u_1\partial_xW,-\frac{1}{4}\rho u_2\partial_xW,0\right)^\top,
\end{equation*}
\begin{equation*}
\mathbf{S}^y(\mathbf{U})=\left(0,0,-\frac{1}{2}\rho\partial_yW,0,-\frac{1}{4}\rho u_1\partial_yW,-\frac{1}{2}\rho u_2\partial_yW\right)^\top,
\end{equation*}
where the function $W(x,y,t)$ represents a known potential, which can denote the electron quiver energy in laser light (see, e.g.~\cite{morreeuw2006electron,sangam2007anisotropic}).
In physics, the density must be positive, and the symmetric pressure tensor must be positive-definite. This means $\mathbf{U}$ should stay in the physically admissible state set
\begin{equation}
\mathcal{G}:=\left\{\mathbf{U}\in\mathbb{R}^6: \rho>0,~ \mathbf{x}^\top\mathbf{p}\mathbf{x}>0 ~ \forall\mathbf{x}\in\mathbb{R}^2 \setminus \{\mathbf{0}\} \right\},
\end{equation}
or
\begin{equation}
\mathcal{G}:=\{\mathbf{U}\in\mathbb{R}^6: \rho>0,~ p_{11}>0,~ \det(\mathbf{p})>0\},
\end{equation}
where $\det(\mathbf{p}):=p_{11}p_{22}-p_{12}^2$. Based on \eqref{EOS}, it can be observed that $\mathcal{G}$ is equivalent to
\begin{equation}\label{Gast}
\mathcal{G}_1:=\left\{\mathbf{U}\in\mathbb{R}^6: \rho>0,~ g_{11}(\mathbf{U})>0,~ g_{\det}(\mathbf{U})>0\right\},
\end{equation}
where
\[
g_{11}(\mathbf{U}):=2E_{11}-\frac{m_1^2}{\rho}, \quad g_{\det}(\mathbf{U}):=\left(2E_{11}-\frac{m_1^2}{\rho}\right)\left(2E_{22}-\frac{m_2^2}{\rho}\right)-\left(2E_{12}-\frac{m_1m_2}{\rho}\right)^2.
\]

In recent decades, several numerical schemes have been proposed for solving the ten-moment equations. Brown et al. introduced a second-order upwind finite volume scheme \cite{brown1995numerical}. Berthon et al. employed relaxation numerical schemes to approximate the weak solutions of these equations \cite{berthon2006numerical, berthon2015entropy}. Notably, their relaxation schemes are first-order accurate and ensure both entropy stability and the preservation of positivity.
Meena, Kumar, and their collaborators have developed a variety of numerical schemes for ten-moment equations, including high-order positivity-preserving discontinuous Galerkin (DG) methods \cite{meena2017positivity}, finite difference weighted essentially non-oscillatory (WENO) schemes \cite{meena2020positivity}, and high-order entropy stable finite difference methods \cite{sen2018entropy} as well as DG methods \cite{biswas2021entropy}. Besides, they have formulated a second-order robust monotone upwind scheme \cite{meena2017robust}, and a second-order well-balanced (WB) scheme to handle equilibrium states \cite{meena2018well}. Additionally, Meena and  Kumar proposed a robust finite volume scheme for the two-fluid ten-moment plasma flow equations \cite{meena2019robust}. Sangam applied a Harten--Lax--van Leer-contact (HLLC) approximate Riemann solver to solve the ten-moment equations coupled with magnetic field \cite{sangam2008hllc}.


The hyperbolic balance laws (\ref{ten-moment2}), coupled with (\ref{EOS}), exhibit non-trivial hydrostatic equilibrium solutions \cite{meena2018well}, where the source term and the flux gradient mutually balance each other.
Conventional numerical methods may fail to maintain such hydrostatic equilibrium solutions, potentially leading to notable numerical error on coarser meshes when simulating these solutions or their perturbations.
To tackle this issue, one has to conduct simulations on highly refined meshes, which can be time-consuming, especially for multi-dimensional problems or long-term simulations. In order to reduce computational costs, WB methods have been proposed to accurately preserve a discrete form of these hydrostatic equilibrium solutions up to machine accuracy. This ensures the effective capture of nearly equilibrium flows even on coarse meshes.
Existing WB methods primarily focused on the shallow water equations over a non-flat bottom topology (see, e.g. \cite{greenberg1996well,audusse2004fast,xing2005high,xing2010positivity,zhang2023high}) and the Euler equations under gravitational fields (see, e.g. \cite{xing2013high,kappeli2014well,chandrashekar2015second,li2016high,klingenberg2019arbitrary,grosheintz2019high,wu2021uniformly,ren2023high}).
Besides, there exist a few WB schemes for the magnetohydrodynamic (MHD) equations with gravitational source terms \cite{kanbar2022well}, the shallow water MHD equations \cite{bouchut2017multi}, and blood flow 
 \cite{britton2020well,hernandez2021well,pimentel2023high}. 
For the ten-moment Gaussian closure equations (\ref{ten-moment2}), Meena and Kumar presented a WB second-order finite volume scheme by combining hydrostatic reconstruction with contact-preserving numerical flux and appropriate source discretization \cite{meena2018well}.

In addition to maintaining the hydrostatic equilibrium states, another crucial requirement for numerical schemes of the system (\ref{ten-moment2}) is to preserve positive density and  positive-definite pressure tensor, referred to as the positivity-preserving property in this paper.
Ensuring positivity is not only vital for maintaining physically meaningful solutions but is also essential for the stability of numerical simulations. Over the past few decades, high-order positivity-preserving, or bound-preserving in general, numerical schemes have garnered widespread attention and made significant progress. Most of these studies are rooted in two types of limiters: a local scaling limiter  (see, e.g.  \cite{zhang2010maximum,zhang2010positivity,zhang2012maximum,zhang2017positivity}) or a flux-correction limiter (see, e.g.  \cite{xu2014parametrized,hu2013positivity,wu2015high}). 
Recently, the geometric quasilinearization (GQL) approach was developed in \cite{wu2021geometric} to address general bound-preserving problems involving nonlinear constraints. The GQL approach was motivated by the bound-preserving study on the compressible MHD systems \cite{wu2017admissible,wu2018positivity,wu2019provably,wu2023provably,DingWu2023} and relativistic hydrodynamic equations \cite{wu2021minimum,wu2021provably}.
The GQL approach equivalently transforms nonlinear constraints into linear ones by introducing free auxiliary variables, thereby highly simplifying the bound-preserving analysis. Furthermore, there have been efforts to develop high-order numerical schemes that simultaneously possess WB and positivity-preserving properties for various hyperbolic systems, such as the shallow water equations (see, e.g. \cite{xing2010positivity,xing2013positivity,li2017positivity,zhang2021positivity,zhang2023high}), the Euler equations under gravity (see, e.g. \cite{wu2021uniformly,zhang2021high,jiang2022positivity,ren2023high,gu2023high}), and blood flow \cite{hernandez2021well}. However, to the best of our knowledge, there are currently no studies on positivity-preserving WB schemes for the ten-moment equations (\ref{ten-moment2}), which pose some essential difficulties not encountered in other systems due to the anisotropic effects.

This paper aims to develop high-order WB DG schemes that are provably positivity-preserving for the ten-moment Gaussian closure system with source terms. To the best of our knowledge, the sole existing WB method for this system was devised by Meena and Kumar in \cite{meena2018well}. However, it was confined to second-order accuracy, and its positivity-preserving property has not been investigated yet.
The efforts, novelty, and difficulties in this work are summarized as follows:
\begin{itemize}
\item
The proposed novel DG schemes incorporate an appropriate discretization of the source terms and suitable modification to the solution states in the HLLC numerical fluxes, which are carefully devised such that
 the desired WB and positivity-preserving properties are satisfied simultaneously.
Our modification is novel and different from the one proposed in \cite{wu2021uniformly} for the Euler equations with gravitation.
The key distinction lies in the fact that for the Euler system with gravitation, the pressure is scalar, whereas for the ten-moment system, the pressure is an anisotropic tensor.
Our novel modification technique overcomes the difficulties posed by the anisotropic effects,  maintains the high-order accuracy, and 
ensures that the modified solution state remains within the physically admissible state set.

\item We introduce several important properties of the admissible state set, the HLLC flux and the HLLC solver.
Based on these properties and the GQL approach, we prove that the resulting WB DG schemes possess a weak positivity for the updated cell averages of the DG solutions.
This property ensures that a simple scaling limiter  \cite{zhang2010positivity,wang2012robust,meena2017positivity} can effectively enforce the physical admissibility of the DG solution polynomials at certain points of interest without compromising high-order accuracy and conservation.
The modification to the solution states in the HLLC numerical flux and special discretization of the source terms make the positivity-preserving analyses more difficult than the analyses for the standard DG schemes without any WB  modifications \cite{meena2017positivity}.
Furthermore, due to the new technique for handling the anisotropic pressure tensor, the positivity-preserving design and analyses for the ten-moment equations are more intricate than those in \cite{wu2021uniformly} for the Euler equations with gravitation, where the pressure is a scalar.

\end{itemize}

This paper is organized as follows. Section \ref{sec2} will introduce the hydrostatic equilibrium solutions of (\ref{ten-moment2}) and present several important properties (including the GQL respresentation) of the admissible state set, the HLLC flux and the HLLC solver.   Section \ref{sec3}  focuses on the development of  positivity-preserving WB DG schemes for the one-dimensional (1D) ten-moment system. Section \ref{sec4} then proceeds to generalize these schemes to the 2D case. Before concluding the paper in Section \ref{sec6}, Section \ref{sec5} will present the numerical tests to validate the desired properties and robustness of the proposed positivity-preserving WB DG schemes.

\section{Auxiliary results}\label{sec2}
This section introduces the hydrostatic equilibrium solutions of (\ref{ten-moment2}), several useful properties (including the GQL representation) of the admissible state set, the contact property of the HLLC flux and the positivity-preserving property of the HLLC solver.

\subsection{Hydrostatic equilibrium solutions}\label{stationary solutions}
The steady state solutions to the one-dimensional (1D) ten-moment model satisfy
\[
\frac{\partial\mathbf{F(U)}}{\partial x}=\mathbf{S}^x(\mathbf{U}).
\]
Denote the steady state solutions by $\left\{\rho^e,u_1^e,u_2^e,p_{11}^e,p_{12}^e,p_{22}^e\right\}$.
Due to the conditions $u_1^e=u_2^e=0$ at a hydrostatic state, the following relationships hold:
\begin{align}
(p_{11}^e)_x&=-\frac{1}{2}\rho^e W_x, \label{1d steady state 1} \\
(p_{12}^e)_x&=0. \notag
\end{align}
Consequently, $p_{12}^e$ is a constant at a hydrostatic equilibrium state. However, determining $p_{11}^e$ from (\ref{1d steady state 1}) requires an additional relation. Three cases \cite{meena2018well} are considered:
\begin{itemize}
\item Polytropic case: A polytropic equilibrium is characterized by $p_{11}^e=\alpha(\rho^e)^\nu$ with constants $\alpha$ and $\nu$ ($\nu>1$). This relation, along with (\ref{1d steady state 1}), yields
    \begin{equation*}
    \rho^e(x)=\left[\frac{\nu-1}{\alpha\nu}\left(C-\frac{1}{2}W(x)\right)\right]^{\frac{1}{\nu-1}}, \quad u_1^e=u_2^e=0, \quad p_{11}^e(x)=\frac{1}{\alpha^{\frac{1}{\nu-1}}}\left[\frac{\nu-1}{\nu}\left(C-\frac{1}{2}W(x)\right)\right]^{\frac{\nu}{\nu-1}},
    \end{equation*}
    where $C$ is a constant.

\item Isentropic case: The polytropic process behaves as an isentropic process for $\nu=3$.

\item Isothermal case: Assuming the ideal gas law, i.e., $p_{11}=\rho RT_{11}$ with constant temperature $T_{11}=T_{11,0}$ and ideal gas constant $R$, one has
    \begin{equation*}
      \rho^e(x)=\rho_0\exp\left(\frac{-W(x)}{2RT_{11,0}}\right), \quad u_1^e=u_2^e=0, \quad
      p_{11}^e(x)=p_{11,0}\exp\left(\frac{-W(x)}{2RT_{11,0}}\right),
    \end{equation*}
    where both $\rho_0$ and $p_{11,0}=\rho_0RT_{11,0}$ are constants.
\end{itemize}

At hydrostatic equilibrium states, it reduces to
\[
(p_{11}^e)_x+(p_{12}^e)_y=-\frac{1}{2}\rho W_x, \quad (p_{12}^e)_x+(p_{22}^e)_y=-\frac{1}{2}\rho W_y.
\]
These conditions are not enough to determine the steady solutions. Similar to the 1D case, one can assume that the pressure component $p_{12}^e$ is a constant, yielding that
\begin{align*}
(p_{11}^e)_x=-\frac{1}{2}\rho W_x, \qquad
(p_{22}^e)_y=-\frac{1}{2}\rho W_y.
\end{align*}
The hydrostatic equilibrium solutions can then be obtained with a special relation between these quantities, akin to the 1D case. We omit the details of three special equilibriums for the 2D case and refer readers to \cite{meena2018well} for a detailed derivation.

\subsection{Properties of admissible states}
This subsection gives several useful properties of the admissible state set $\mathcal{G}$ and its GQL representation,  which are crucial for our positivity-preserving analyses.

\begin{lemma}[{\rm\cite{meena2017positivity}}]\label{scale invariance}
The set $\mathcal{G}$ is convex. Moreover, if $\mathbf{U}\in\mathcal{G}$, then $\lambda\mathbf{U}\in\mathcal{G}$ for any $\lambda>0$.
\end{lemma}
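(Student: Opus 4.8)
The plan is to verify the two claims directly from the characterization $\mathcal{G}=\{\mathbf{U}\in\mathbb{R}^6:\rho>0,\ p_{11}>0,\ \det(\mathbf{p})>0\}$, or equivalently the set $\mathcal{G}_1$ in \eqref{Gast} defined through the functions $g_{11}(\mathbf{U})$ and $g_{\det}(\mathbf{U})$. For the scaling-invariance claim, I would observe that $\rho$ is linear in $\mathbf{U}$, so $\rho(\lambda\mathbf{U})=\lambda\rho(\mathbf{U})>0$ whenever $\lambda>0$ and $\mathbf{U}\in\mathcal{G}$. For the pressure constraints, the cleanest route is to note that $g_{11}$ is homogeneous of degree one and $g_{\det}$ is homogeneous of degree two in $\mathbf{U}$: indeed $g_{11}(\lambda\mathbf{U})=2\lambda E_{11}-(\lambda m_1)^2/(\lambda\rho)=\lambda g_{11}(\mathbf{U})$, and similarly each of the three bracketed factors defining $g_{\det}$ scales by $\lambda$, so $g_{\det}(\lambda\mathbf{U})=\lambda^2 g_{\det}(\mathbf{U})$. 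Hence $\lambda>0$ preserves the signs of $\rho$, $g_{11}$, and $g_{\det}$, giving $\lambda\mathbf{U}\in\mathcal{G}_1=\mathcal{G}$.

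For convexity, I would use the quadratic-form description $\mathcal{G}=\{\mathbf{U}:\rho>0,\ \mathbf{x}^\top\mathbf{p}\,\mathbf{x}>0\ \forall\mathbf{x}\in\mathbb{R}^2\setminus\{\mathbf{0}\}\}$ together with \eqref{EOS}. Fix $\mathbf{x}\in\mathbb{R}^2\setminus\{\mathbf{0}\}$; from \eqref{EOS} one has $\tfrac12\,\mathbf{x}^\top\mathbf{p}\,\mathbf{x}=\mathbf{x}^\top\mathbf{E}\,\mathbf{x}-\tfrac12\rho(\mathbf{u}\cdot\mathbf{x})^2 =\mathbf{x}^\top\mathbf{E}\,\mathbf{x}-\tfrac12(\mathbf{m}\cdot\mathbf{x})^2/\rho$, which is a linear function of $(E_{11},E_{12},E_{22})$ minus the term $(\mathbf{m}\cdot\mathbf{x})^2/(2\rho)$. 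The map $(\rho,\mathbf{m})\mapsto(\mathbf{m}\cdot\mathbf{x})^2/\rho$ is convex on $\{\rho>0\}$ (it is a perspective/quadratic-over-linear function, jointly convex), so $-(\mathbf{m}\cdot\mathbf{x})^2/(2\rho)$ is concave; adding the linear-in-$\mathbf{E}$ part keeps it concave. Therefore each constraint function $\mathbf{U}\mapsto \mathbf{x}^\top\mathbf{p}(\mathbf{U})\,\mathbf{x}$ is concave on the halfspace $\{\rho>0\}$, and $\mathcal{G}$ is the intersection over all such $\mathbf{x}$ of the open superlevel sets $\{\rho>0\}\cap\{\mathbf{x}^\top\mathbf{p}\,\mathbf{x}>0\}$, hence an intersection of convex sets, hence convex. (One may equivalently phrase the key fact as: $g_{11}$ and $g_{\det}$ are concave on $\{\rho>0\}$, since $g_{11}=2E_{11}-m_1^2/\rho$ is exactly of the above form with $\mathbf{x}=(1,0)^\top$, and $g_{\det}$ is a product of two such concave nonnegative factors minus a square — a case where one checks concavity via the determinant/Hessian.)

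The main obstacle is the convexity of the constraint associated with $g_{\det}$, i.e. the positive-definiteness (as opposed to just $p_{11}>0$): $g_{\det}$ itself is not concave on all of $\{\rho>0\}$ in an obvious product form, so I would argue it through the intersection-of-halfspaces description above rather than by directly analyzing $g_{\det}$. The point is that $\{\mathbf{p}\succ 0\}=\bigcap_{\mathbf{x}\neq\mathbf{0}}\{\mathbf{x}^\top\mathbf{p}\,\mathbf{x}>0\}$ converts the single nonlinear (determinant) condition into infinitely many constraints, each of which is concave in $\mathbf{U}$ on $\{\rho>0\}$ by the perspective-function argument; an arbitrary intersection of convex sets is convex, so this sidesteps any Hessian computation for $g_{\det}$. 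I would double-check the joint convexity of $(\rho,t)\mapsto t^2/\rho$ on $\rho>0$ (equivalently that its epigraph is the rotated second-order cone $\{(\rho,t,s):\ t^2\le \rho s,\ \rho>0\}$), which is standard, and then the argument is complete. Since this lemma is quoted from \cite{meena2017positivity}, a short proof along these lines suffices.
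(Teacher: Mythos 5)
Your proposal is correct. The paper itself gives no proof of this lemma --- it cites \cite{meena2017positivity} and merely remarks that it ``can be directly proven based on the definitions of $\mathcal{G}$ and $\overline{\mathcal{G}}$'' --- and your argument is a valid realization of exactly that: positive homogeneity of $\rho$, $g_{11}$, $g_{\det}$ gives the scaling claim, and writing $\mathbf{x}^\top\mathbf{p}\,\mathbf{x}=2\,\mathbf{x}^\top\mathbf{E}\,\mathbf{x}-(\mathbf{m}\cdot\mathbf{x})^2/\rho$ and invoking joint convexity of the quadratic-over-linear function on $\{\rho>0\}$ gives convexity as an intersection of convex superlevel sets; this is the same mechanism the paper later exploits in its GQL representation (Lemma \ref{GQL}), where $\varphi(\mathbf{U};\mathbf{z},\mathbf{u}_*)$ linearizes precisely these constraints.
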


Denote the closure of $\mathcal{G}$ by $\overline{\mathcal{G}}:=\left\{\mathbf{U}\in\mathbb{R}^6: \rho\geq0,~ \mathbf{x}^\top\mathbf{p}\mathbf{x}\geq0 ~ \forall\mathbf{x}\in\mathbb{R}^2 \setminus \{\mathbf{0}\} \right\}$. Then we have the following lemma.

\begin{lemma}\label{general combination}
	For any $\lambda_0>0$, $\lambda_1\geq0$, $\mathbf{U}_0\in\mathcal{G}$, and $\mathbf{U}_1\in\overline{\mathcal{G}}$, it holds that $(\lambda_0\mathbf{U}_0+\lambda_1\mathbf{U}_1)\in\mathcal{G}$.
\end{lemma}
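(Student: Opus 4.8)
The plan is to reduce the statement to a convex-combination claim via the scale invariance of $\mathcal{G}$, handle the density component trivially, and derive the positive-definiteness of the pressure tensor from a concavity property of the relevant quadratic form.

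First I would dispose of the degenerate cases. If $\lambda_1=0$ the claim is just $\lambda_0\mathbf{U}_0\in\mathcal{G}$, which is contained in Lemma~\ref{scale invariance}. So assume $\lambda_1>0$, set $\Lambda:=\lambda_0+\lambda_1>0$ and $\theta:=\lambda_0/\Lambda\in(0,1)$, and write $\lambda_0\mathbf{U}_0+\lambda_1\mathbf{U}_1=\Lambda\big(\theta\mathbf{U}_0+(1-\theta)\mathbf{U}_1\big)$. By the scale invariance in Lemma~\ref{scale invariance}, it then suffices to show $\mathbf{V}:=\theta\mathbf{U}_0+(1-\theta)\mathbf{U}_1\in\mathcal{G}$. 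For the density, $\rho_{\mathbf V}=\theta\rho_0+(1-\theta)\rho_1\ge\theta\rho_0>0$, since $\rho_0>0$, $\theta>0$, and $\rho_1\ge0$ (because $\mathbf{U}_1\in\overline{\mathcal{G}}$).

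Second, for the pressure I would use that, for each fixed $\mathbf{x}\in\mathbb{R}^2\setminus\{\mathbf{0}\}$, the map
\[
\Phi_{\mathbf x}(\mathbf{U}):=\mathbf{x}^\top\mathbf{p}(\mathbf{U})\,\mathbf{x}=2\,\mathbf{x}^\top\mathbf{E}\,\mathbf{x}-\frac{(\mathbf{x}^\top\mathbf{m})^2}{\rho}
\]
is concave on the open half-space $\{\rho>0\}$: the first term is linear in $\mathbf{U}$, while $(\rho,\mathbf{m})\mapsto(\mathbf{x}^\top\mathbf{m})^2/\rho$ is (a version of) the perspective of the convex function $\mathbf{m}\mapsto(\mathbf{x}^\top\mathbf{m})^2$ and hence convex on $\{\rho>0\}$, so its negative is concave. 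Assuming for the moment $\rho_1>0$ (so that $\mathbf{U}_0,\mathbf{U}_1$ both lie in $\{\rho>0\}$), concavity yields
\[
\Phi_{\mathbf x}(\mathbf{V})\ge\theta\,\Phi_{\mathbf x}(\mathbf{U}_0)+(1-\theta)\,\Phi_{\mathbf x}(\mathbf{U}_1)\ge\theta\,\Phi_{\mathbf x}(\mathbf{U}_0)>0,
\]
using $\Phi_{\mathbf x}(\mathbf{U}_0)>0$ (as $\mathbf{U}_0\in\mathcal{G}$), $\Phi_{\mathbf x}(\mathbf{U}_1)\ge0$ (as $\mathbf{U}_1\in\overline{\mathcal{G}}$), and $\theta>0$. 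Since $\mathbf{x}$ was arbitrary, $\mathbf{p}(\mathbf{V})$ is positive-definite and, together with $\rho_{\mathbf V}>0$, this gives $\mathbf{V}\in\mathcal{G}$.

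I expect the only genuine obstacle to be the boundary case $\rho_1=0$, where $\Phi_{\mathbf x}$ is not defined at $\mathbf{U}_1$ and the concavity step does not apply verbatim. Here I would first note that $\mathbf{U}_1\in\overline{\mathcal{G}}$ with $\rho_1=0$ forces $\mathbf{m}_1=\mathbf{0}$ and $\mathbf{E}_1$ positive semi-definite: taking $\mathbf{U}_1^{(n)}\in\mathcal{G}$ with $\mathbf{U}_1^{(n)}\to\mathbf{U}_1$, one has $2\mathbf{E}^{(n)}\succeq\mathbf{m}^{(n)}(\mathbf{m}^{(n)})^\top/\rho^{(n)}\succeq0$, and letting $n\to\infty$ (the cone of positive semi-definite matrices is closed, $\mathbf{E}^{(n)}\to\mathbf{E}_1$ stays bounded, $\rho^{(n)}\to0$) gives $\mathbf{m}_1=\mathbf{0}$ and $\mathbf{E}_1\succeq0$. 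Then $\mathbf{V}$ has $\rho_{\mathbf V}=\theta\rho_0>0$ and $\mathbf{m}_{\mathbf V}=\theta\mathbf{m}_0$, and a direct computation gives, for every $\mathbf{x}\ne\mathbf{0}$,
\[
\Phi_{\mathbf x}(\mathbf{V})=\theta\Big(2\,\mathbf{x}^\top\mathbf{E}_0\,\mathbf{x}-\frac{(\mathbf{x}^\top\mathbf{m}_0)^2}{\rho_0}\Big)+2(1-\theta)\,\mathbf{x}^\top\mathbf{E}_1\,\mathbf{x}\ge\theta\,\Phi_{\mathbf x}(\mathbf{U}_0)>0,
\]
which finishes this case. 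Alternatively, the case distinction can be bypassed entirely by invoking the standard convex-analysis fact that, for a convex set $C$ with nonempty interior, $x\in\operatorname{int}C$ and $y\in\overline{C}$ imply $\theta x+(1-\theta)y\in\operatorname{int}C$ for all $\theta\in(0,1]$; applied to $C=\mathcal{G}$, which is open (preimages of open sets under the continuous maps $\rho$, $g_{11}$, $g_{\det}$) and convex by Lemma~\ref{scale invariance}, hence equal to its own interior with closure $\overline{\mathcal{G}}$, this gives $\mathbf{V}\in\mathcal{G}$ at once. I would present the elementary concavity argument as the main proof and remark on this shortcut.
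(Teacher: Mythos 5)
Your proof is correct. The paper itself offers no written argument for this lemma---it simply states that Lemmas~\ref{scale invariance}--\ref{general combination} ``can be directly proven based on the definitions of $\mathcal{G}$ and $\overline{\mathcal{G}}$''---so there is no competing proof to compare against; your write-up supplies the details the authors omit. Both of your routes are sound: the reduction via scale invariance to a convex combination, the joint convexity of the quadratic-over-linear function $(\rho,\mathbf{m})\mapsto(\mathbf{x}^\top\mathbf{m})^2/\rho$ on $\{\rho>0\}$ (hence concavity of $\Phi_{\mathbf x}$), and the standard fact that a point in the interior of a convex set combined with a point of its closure lands in the interior. You were also right to flag the boundary case $\rho_1=0$: the paper's displayed formula for $\overline{\mathcal{G}}$ involves $\mathbf{p}=2\mathbf{E}-\mathbf{m}\mathbf{m}^\top/\rho$, which is undefined there, and your limiting argument showing $\mathbf{m}_1=\mathbf{0}$ and $\mathbf{E}_1\succeq0$ is exactly what is needed to make the statement (and the paper's identification of $\overline{\mathcal{G}}$ with the topological closure) rigorous. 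If anything, the interior-point shortcut is the cleaner choice for a paper, since it subsumes the case distinction, but either version stands on its own.
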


Lemmas \ref{scale invariance}--\ref{general combination} can be directly proven based on the definitions of $\mathcal{G}$ and $\overline{\mathcal{G}}$. Additionally, we have the following GQL representation \cite{wu2021geometric} of $\mathcal{G}$, which will be useful for simplifying the positivity-preserving analyses of the high-order DG schemes.
The remarkable advantages of the GQL techniques lie in that it equivalently transforms the intractable nonlinear constraints in $\mathcal{G}$ into linear constraints as detailed in \eqref{eq:GQL}.

\begin{lemma}[GQL representation {\rm \cite{wu2021geometric}}]\label{GQL}
The admissible state set $\mathcal{G}$ is equivalent to
\begin{equation}\label{eq:GQL}
	\mathcal{G}_{*}:=\left\{\mathbf{U}\in\mathbb{R}^6: \mathbf{U}\cdot\mathbf{e}_1>0, \varphi(\mathbf{U};\mathbf{z},\mathbf{u}_{*})>0 ~ \forall \mathbf{u}_{*}\in\mathbb{R}^2, ~ \forall \mathbf{z}\in\mathbb{R}^2\backslash\{\mathbf{0}\}\right\},
\end{equation}
where $\mathbf{e}_1:=(1,0,\cdots,0)^\top$ and the function $\varphi(\mathbf{U};\mathbf{z},\mathbf{u}_{*})$ is defined as
\[
\varphi(\mathbf{U};\mathbf{z},\mathbf{u}_{*}):=\mathbf{z}^\top\left(\mathbf{E}-\mathbf{m}\otimes\mathbf{u}_{*}+\rho\frac{\mathbf{u}_{*}\otimes\mathbf{u}_{*}}{2}\right)\mathbf{z},
\]
which is linear with respect to $\mathbf{U}$.
\end{lemma}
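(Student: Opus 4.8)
The plan is to establish the two inclusions $\mathcal{G}\subseteq\mathcal{G}_{*}$ and $\mathcal{G}_{*}\subseteq\mathcal{G}$ separately, working throughout with the characterization of $\mathcal{G}$ via the pressure tensor, namely $\rho>0$ and $\mathbf{x}^\top\mathbf{p}\mathbf{x}>0$ for all $\mathbf{x}\neq\mathbf{0}$. The crucial algebraic identity underlying everything is that, by the equation of state \eqref{EOS}, for any fixed vector $\mathbf{u}_{*}\in\mathbb{R}^2$ one has
\begin{equation*}
\mathbf{E}-\mathbf{m}\otimes\mathbf{u}_{*}+\rho\,\frac{\mathbf{u}_{*}\otimes\mathbf{u}_{*}}{2}
=\frac{1}{2}\Big(\rho\,(\mathbf{u}-\mathbf{u}_{*})\otimes(\mathbf{u}-\mathbf{u}_{*})+\mathbf{p}\Big),
\end{equation*}
which one verifies by expanding $\mathbf{E}=\tfrac12(\rho\,\mathbf{u}\otimes\mathbf{u}+\mathbf{p})$ and $\mathbf{m}=\rho\mathbf{u}$ and collecting terms; the rank-one cross terms cancel precisely because of the choice of coefficients $1$ and $\tfrac12$ in the definition of $\varphi$. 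Hence
\begin{equation*}
\varphi(\mathbf{U};\mathbf{z},\mathbf{u}_{*})=\frac{\rho}{2}\big(\mathbf{z}^\top(\mathbf{u}-\mathbf{u}_{*})\big)^2+\frac{1}{2}\,\mathbf{z}^\top\mathbf{p}\,\mathbf{z}.
\end{equation*}

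For the inclusion $\mathcal{G}\subseteq\mathcal{G}_{*}$: if $\mathbf{U}\in\mathcal{G}$ then $\mathbf{U}\cdot\mathbf{e}_1=\rho>0$, and for any $\mathbf{u}_{*}\in\mathbb{R}^2$ and any $\mathbf{z}\neq\mathbf{0}$ the displayed formula for $\varphi$ shows $\varphi$ is a sum of a nonnegative term and the strictly positive term $\tfrac12\mathbf{z}^\top\mathbf{p}\mathbf{z}$, hence $\varphi(\mathbf{U};\mathbf{z},\mathbf{u}_{*})>0$; therefore $\mathbf{U}\in\mathcal{G}_{*}$. For the reverse inclusion $\mathcal{G}_{*}\subseteq\mathcal{G}$: if $\mathbf{U}\in\mathcal{G}_{*}$ then $\rho=\mathbf{U}\cdot\mathbf{e}_1>0$, so $\mathbf{u}=\mathbf{m}/\rho$ is well-defined; now for any $\mathbf{z}\neq\mathbf{0}$ we are free to choose the auxiliary variable $\mathbf{u}_{*}=\mathbf{u}$, which makes the first term in the formula for $\varphi$ vanish and gives $\varphi(\mathbf{U};\mathbf{z},\mathbf{u})=\tfrac12\mathbf{z}^\top\mathbf{p}\mathbf{z}$; since this must be positive, we conclude $\mathbf{x}^\top\mathbf{p}\mathbf{x}>0$ for every $\mathbf{x}\neq\mathbf{0}$, i.e.\ $\mathbf{U}\in\mathcal{G}$.

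The only genuine point requiring care — and the step I expect to be the main (mild) obstacle — is the algebraic identity for $\varphi$: one must be careful that $\mathbf{E}$, $\mathbf{p}$, and the rank-one tensors are all treated as $2\times2$ symmetric matrices and that the quadratic form $\mathbf{z}^\top(\cdot)\mathbf{z}$ is expanded correctly in the components $(E_{11},E_{12},E_{22})$ and $(m_1,m_2)$; this is exactly the computation that exhibits $\varphi$ as linear in $\mathbf{U}$ (the coefficients of $\rho,m_1,m_2,E_{11},E_{12},E_{22}$ depending only on the parameters $\mathbf{z},\mathbf{u}_{*}$, not on $\mathbf{U}$). Once this identity is in hand, both inclusions are immediate, and the equivalence $\mathcal{G}=\mathcal{G}_{*}$ follows. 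I would also remark that the argument is a direct instance of the general GQL principle of \cite{wu2021geometric}: the nonlinear (indeed non-convex in $\mathbf{U}$) constraint $\det(\mathbf{p})>0$ is replaced by the family of constraints $\varphi(\mathbf{U};\mathbf{z},\mathbf{u}_{*})>0$, each linear in $\mathbf{U}$, at the cost of introducing the free auxiliary variables $\mathbf{z}$ and $\mathbf{u}_{*}$.
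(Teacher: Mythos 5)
Your proposal is correct, and it is the standard GQL argument (complete the square in $\mathbf{u}_{*}$, then specialize $\mathbf{u}_{*}=\mathbf{m}/\rho$ for the reverse inclusion); the paper itself gives no proof here, deferring to the cited reference, which proceeds exactly this way. One small caveat: the matrix identity you display is not literally true, since $\mathbf{m}\otimes\mathbf{u}_{*}$ is not symmetric while the right-hand side is; what holds, and what you actually use, is the equality of the associated quadratic forms $\varphi(\mathbf{U};\mathbf{z},\mathbf{u}_{*})=\tfrac{\rho}{2}\bigl(\mathbf{z}^\top(\mathbf{u}-\mathbf{u}_{*})\bigr)^2+\tfrac12\,\mathbf{z}^\top\mathbf{p}\,\mathbf{z}$, because $\mathbf{z}^\top(\mathbf{u}\otimes\mathbf{u}_{*})\mathbf{z}=\mathbf{z}^\top(\mathbf{u}_{*}\otimes\mathbf{u})\mathbf{z}$.
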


The proof of Lemma \ref{GQL} can be found in \cite{wu2021geometric}. Moreover, according to the definition of $\varphi(\mathbf{U};\mathbf{z},\mathbf{u}_{*})$,  $\varphi(\kappa(\mathbf{U})\mathbf{U};\mathbf{z},\mathbf{u}_{*})=\kappa(\mathbf{U})\varphi(\mathbf{U};\mathbf{z},\mathbf{u}_{*})$, where $\kappa(\mathbf{U})$ is a quantity related to $\mathbf{U}$.

\begin{lemma}\label{Uhat in G}
For any $\mathbf{U}\in\mathcal{G}$, define $\widehat{\mathbf{U}}$ as
\[
\widehat{\rho}=\rho, \quad \widehat{\mathbf{m}}=\mathbf{Qm}, \quad \widehat{\mathbf{E}}=\mathbf{QEQ}^\top,
\]
where $\mathbf{Q}\in\mathbb{R}^{2\times2}$ is non-singular, then $\widehat{\mathbf{U}}\in\mathcal{G}$.
\end{lemma}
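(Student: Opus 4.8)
The plan is to verify directly that $\widehat{\mathbf{U}}$ satisfies the two defining conditions of $\mathcal{G}$ as stated in its pressure-tensor form, namely $\widehat{\rho}>0$ and $\mathbf{x}^\top\widehat{\mathbf{p}}\mathbf{x}>0$ for all nonzero $\mathbf{x}\in\mathbb{R}^2$. The first is immediate since $\widehat{\rho}=\rho>0$. For the second, the main task is to compute the transformed pressure tensor $\widehat{\mathbf{p}}$ in terms of $\mathbf{p}$ using the equation of state \eqref{EOS}.

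First I would write out the transformed quantities through the EOS. Since $\widehat{\mathbf{m}}=\mathbf{Q}\mathbf{m}$ and $\widehat{\rho}=\rho$, the transformed velocity is $\widehat{\mathbf{u}}=\widehat{\mathbf{m}}/\widehat{\rho}=\mathbf{Q}\mathbf{u}$. Then $\widehat{\rho}\,\widehat{\mathbf{u}}\otimes\widehat{\mathbf{u}}=\rho\,(\mathbf{Q}\mathbf{u})\otimes(\mathbf{Q}\mathbf{u})=\mathbf{Q}(\rho\,\mathbf{u}\otimes\mathbf{u})\mathbf{Q}^\top$, using the identity $(\mathbf{Q}\mathbf{a})\otimes(\mathbf{Q}\mathbf{b})=\mathbf{Q}(\mathbf{a}\otimes\mathbf{b})\mathbf{Q}^\top$. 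Combining this with $\widehat{\mathbf{E}}=\mathbf{Q}\mathbf{E}\mathbf{Q}^\top$ and the EOS $\widehat{\mathbf{p}}=2\widehat{\mathbf{E}}-\widehat{\rho}\,\widehat{\mathbf{u}}\otimes\widehat{\mathbf{u}}$, I would obtain $\widehat{\mathbf{p}}=2\mathbf{Q}\mathbf{E}\mathbf{Q}^\top-\mathbf{Q}(\rho\,\mathbf{u}\otimes\mathbf{u})\mathbf{Q}^\top=\mathbf{Q}\big(2\mathbf{E}-\rho\,\mathbf{u}\otimes\mathbf{u}\big)\mathbf{Q}^\top=\mathbf{Q}\mathbf{p}\mathbf{Q}^\top$. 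So the transformation acts on the pressure tensor by congruence with $\mathbf{Q}$.

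Finally I would invoke the positive-definiteness of $\mathbf{p}$: for any $\mathbf{x}\in\mathbb{R}^2\setminus\{\mathbf{0}\}$, set $\mathbf{y}=\mathbf{Q}^\top\mathbf{x}$, which is nonzero because $\mathbf{Q}$ (hence $\mathbf{Q}^\top$) is non-singular; then $\mathbf{x}^\top\widehat{\mathbf{p}}\mathbf{x}=\mathbf{x}^\top\mathbf{Q}\mathbf{p}\mathbf{Q}^\top\mathbf{x}=\mathbf{y}^\top\mathbf{p}\mathbf{y}>0$ since $\mathbf{U}\in\mathcal{G}$. Together with $\widehat{\rho}>0$, this shows $\widehat{\mathbf{U}}\in\mathcal{G}$. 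There is no real obstacle here; the only point requiring minor care is confirming the tensor-product identity $(\mathbf{Q}\mathbf{u})\otimes(\mathbf{Q}\mathbf{u})=\mathbf{Q}(\mathbf{u}\otimes\mathbf{u})\mathbf{Q}^\top$ and the use of non-singularity of $\mathbf{Q}$ to ensure $\mathbf{Q}^\top\mathbf{x}\neq\mathbf{0}$, which is essential for the strict inequality. Alternatively, one could note that this lemma is essentially the statement that congruence transformations preserve positive-definiteness (Sylvester's law of inertia), but the direct computation above is self-contained and short.
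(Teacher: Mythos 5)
Your proposal is correct and follows essentially the same route as the paper, which simply observes that $\widehat{\mathbf{p}}=2\widehat{\mathbf{E}}-\widehat{\mathbf{m}}\widehat{\mathbf{m}}^\top/\widehat{\rho}=\mathbf{Q}\mathbf{p}\mathbf{Q}^\top$ and concludes; you have merely spelled out the tensor-product identity and the congruence argument that the paper leaves implicit.
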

\begin{proof}
The conclusion follows from  $\widehat{\mathbf{p}}=2\widehat{\mathbf{E}}-\frac{\widehat{\mathbf{m}}\widehat{\mathbf{m}}^\top}{\widehat{\rho}}=\mathbf{QpQ}^\top$.
\end{proof}

\begin{lemma}\label{U S positivity}
For any $\beta>0$ and $\mathbf{U}=(\rho,m_1,m_2,E_{11},E_{12},E_{22})^\top\in\mathcal{G}$, if   $|\widetilde{\delta}_1|\leq\min\left\{\sqrt{\frac{p_{11}}{\rho}},\sqrt{\frac{\det(\mathbf{p})}{\rho p_{22}}}\right\}\beta$ and $|\widetilde{\delta}_2|\leq\min\left\{\sqrt{\frac{p_{22}}{\rho}},\sqrt{\frac{\det(\mathbf{p})}{\rho p_{11}}}\right\}\beta$, then
\[
\widetilde{\mathbf{U}}_1:=\beta\mathbf{U}+\widetilde{\delta}_1\left(0,\rho,0,m_1,\frac{1}{2}m_2,0\right)^\top\in\overline{\mathcal{G}},
\]
\[
\widetilde{\mathbf{U}}_2:=\beta\mathbf{U}+\widetilde{\delta}_2\left(0,0,\rho,0,\frac{1}{2}m_1,m_2\right)^\top\in\overline{\mathcal{G}}.
\]
\end{lemma}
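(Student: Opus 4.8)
The plan is to compute the pressure tensor $\widetilde{\mathbf p}_1 := 2\widetilde{\mathbf E}-\widetilde{\mathbf m}\widetilde{\mathbf m}^\top/\widetilde\rho$ of $\widetilde{\mathbf U}_1$ entirely explicitly in terms of $\beta$, $\widetilde\delta_1$, and the quantities $\rho$, $\mathbf p$ of the original state $\mathbf U$, and then to observe that the terms linear in $\widetilde\delta_1$ cancel exactly, so that the positive semidefiniteness of $\widetilde{\mathbf p}_1$ is equivalent to precisely the two scalar inequalities assumed in the hypothesis. Since membership of $\widetilde{\mathbf U}_1$ in $\overline{\mathcal G}$ means $\widetilde\rho\ge0$ together with $\widetilde{\mathbf p}_1$ positive semidefinite, this will finish the first assertion.

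Concretely, from the definition the components of $\widetilde{\mathbf U}_1$ are $\widetilde\rho=\beta\rho$, $\widetilde m_1=\beta m_1+\widetilde\delta_1\rho$, $\widetilde m_2=\beta m_2$, $\widetilde E_{11}=\beta E_{11}+\widetilde\delta_1 m_1$, $\widetilde E_{12}=\beta E_{12}+\tfrac12\widetilde\delta_1 m_2$, $\widetilde E_{22}=\beta E_{22}$. Substituting these into $\widetilde p_{11}=2\widetilde E_{11}-\widetilde m_1^2/\widetilde\rho$ and using $p_{11}=2E_{11}-m_1^2/\rho$, the $O(\widetilde\delta_1)$ cross terms cancel and one is left with $\widetilde p_{11}=\beta p_{11}-\rho\widetilde\delta_1^2/\beta$. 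The same cancellation in $\widetilde p_{12}=2\widetilde E_{12}-\widetilde m_1\widetilde m_2/\widetilde\rho$ gives $\widetilde p_{12}=\beta p_{12}$, and trivially $\widetilde p_{22}=\beta p_{22}$. Hence $\det(\widetilde{\mathbf p}_1)=\widetilde p_{11}\widetilde p_{22}-\widetilde p_{12}^2=\beta^2\det(\mathbf p)-\rho p_{22}\widetilde\delta_1^2$. Now $\widetilde\rho=\beta\rho>0$ and $\widetilde p_{22}=\beta p_{22}>0$ hold automatically because $\beta>0$ and $\mathbf U\in\mathcal G$, so $\widetilde{\mathbf p}_1$ is positive semidefinite iff $\widetilde p_{11}\ge0$ and $\det(\widetilde{\mathbf p}_1)\ge0$, i.e. iff $\widetilde\delta_1^2\le\beta^2 p_{11}/\rho$ and $\widetilde\delta_1^2\le\beta^2\det(\mathbf p)/(\rho p_{22})$ — which is exactly the assumed bound $|\widetilde\delta_1|\le\min\{\sqrt{p_{11}/\rho},\sqrt{\det(\mathbf p)/(\rho p_{22})}\}\,\beta$. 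Therefore $\widetilde{\mathbf U}_1\in\overline{\mathcal G}$.

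For $\widetilde{\mathbf U}_2$ I would avoid repeating the computation and instead invoke the index-swap symmetry: applying Lemma~\ref{Uhat in G} with $\mathbf Q=\begin{pmatrix}0&1\\1&0\end{pmatrix}$ (whose action is $\rho\mapsto\rho$, $(m_1,m_2)\mapsto(m_2,m_1)$, $(E_{11},E_{12},E_{22})\mapsto(E_{22},E_{12},E_{11})$, and $\mathbf p\mapsto\mathbf Q\mathbf p\mathbf Q^\top$ so that $p_{11}$ and $p_{22}$ are interchanged while $\det(\mathbf p)$ is unchanged) maps $\overline{\mathcal G}$ onto itself, and a direct check shows it carries $\widetilde{\mathbf U}_2$ built from $\mathbf U$ to $\widetilde{\mathbf U}_1$ built from $\mathbf Q$-transformed $\mathbf U$ with the same parameter. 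Hence the first assertion applied to the transformed state yields the second, with the bound on $|\widetilde\delta_2|$ being exactly $\min\{\sqrt{p_{22}/\rho},\sqrt{\det(\mathbf p)/(\rho p_{11})}\}\,\beta$. The argument is essentially computational; the only real point — and what makes the two stated bounds sharp — is spotting the exact cancellation of the $\widetilde\delta_1$-linear contributions to $\widetilde p_{11}$ and $\widetilde p_{12}$, which is precisely why the perturbation vectors carry the particular coefficients $(0,\rho,0,m_1,\tfrac12 m_2,0)^\top$ and $(0,0,\rho,0,\tfrac12 m_1,m_2)^\top$.
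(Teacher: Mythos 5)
Your proof is correct and follows essentially the same route as the paper: compute $\widetilde{\mathbf{p}}_1$ explicitly, observe that the terms linear in $\widetilde{\delta}_1$ cancel so that $\widetilde{p}_{11}=\beta p_{11}-\widetilde{\delta}_1^2\rho/\beta$, $\widetilde{p}_{12}=\beta p_{12}$, $\widetilde{p}_{22}=\beta p_{22}$, and read off positive semidefiniteness from the two assumed scalar bounds. The only cosmetic difference is that for $\widetilde{\mathbf{U}}_2$ the paper simply says ``similar arguments'' while you invoke the index-swap symmetry via Lemma \ref{Uhat in G}; both are fine.
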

\begin{proof}
For $\beta>0$, one has $\widetilde{\rho}_1=\beta\rho>0$, and
\[
\widetilde{\mathbf{p}}_1=2\widetilde{\mathbf{E}}_1-\frac{\widetilde{\mathbf{m}}_1\widetilde{\mathbf{m}}_1^\top}{\widetilde{\rho}_1}
=\begin{pmatrix}
   \beta p_{11}-\frac{\widetilde{\delta}_1^2}{\beta}\rho & \beta p_{12} \\
   \beta p_{12} & \beta p_{22}
 \end{pmatrix}.
\]
For $\mathbf{U}\in\mathcal{G}$, if $|\widetilde{\delta}_1|\leq\min\left\{\sqrt{\frac{p_{11}}{\rho}},\sqrt{\frac{\det(\mathbf{p})}{\rho p_{22}}}\right\}\beta$, then
\[
\begin{cases}
  \beta p_{11}-\frac{\widetilde{\delta}_1^2}{\beta}\rho\geq0, &  \\
  \beta p_{22}>0, & \\
  \left(\beta p_{11}-\frac{\widetilde{\delta}_1^2}{\beta}\rho\right)\beta p_{22}-\beta^2p_{12}^2\geq0, &
\end{cases}
\]
which indicates that $\widetilde{\mathbf{p}}_1$ is positive-semidefinite. It follows that $\widetilde{\mathbf{U}}_1\in\overline{\mathcal{G}}$. Similar arguments imply   $\widetilde{\mathbf{U}}_2\in\overline{\mathcal{G}}$.
\end{proof}

The GQL representation (see Lemma \ref{GQL}) and Lemma \ref{U S positivity} with $\varphi(\kappa(\mathbf{U})\mathbf{U};\mathbf{z},\mathbf{u}_{*})=\kappa(\mathbf{U})\varphi(\mathbf{U};\mathbf{z},\mathbf{u}_{*})$ give the following corollary. It will play an important role in the positivity-preserving analyses of the WB schemes for the nonhomogeneous case; see Subsection \ref{pp analysis for 1d first-order schemes}, \ref{pp analysis of 1d high-order schemes}, \ref{pp analysis of 2d first-order schemes}, and \ref{pp analysis of 2d high-order schemes}.

\begin{corollary}\label{GQL corollary}
For any $\mathbf{U}\in\mathcal{G}$, one has
\begin{align*}
\left|\varphi(\mathbf{S}_1;\mathbf{z},\mathbf{u}_{*})\right|&\leq\delta_1(\mathbf{U})\varphi(\mathbf{U};\mathbf{z},\mathbf{u}_{*}),
\\
\left|\varphi(\mathbf{S}_2;\mathbf{z},\mathbf{u}_{*})\right|&\leq\delta_2(\mathbf{U})\varphi(\mathbf{U};\mathbf{z},\mathbf{u}_{*}) \quad \forall \mathbf{u}_{*}\in\mathbb{R}^2, ~ \forall \mathbf{z}\in\mathbb{R}^2\backslash\{\mathbf{0}\},
\end{align*}
where
\[
\mathbf{S}_1:=\left(0,\rho,0,m_1,\frac{1}{2}m_2,0\right)^\top, \quad
\delta_1(\mathbf{U}):=\max\left\{\sqrt{\frac{\rho}{p_{11}}},\sqrt{\frac{\rho p_{22}}{\det(\mathbf{p})}}\right\},
\]
\[
\mathbf{S}_2:=\left(0,0,\rho,0,\frac{1}{2}m_1,m_2\right)^\top, \quad
\delta_2(\mathbf{U}):={\max\left\{\sqrt{\frac{\rho}{p_{22}}},\sqrt{\frac{\rho p_{11}}{\det(\mathbf{p})}}\right\}}.
\]
\end{corollary}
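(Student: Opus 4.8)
The plan is to deduce the corollary directly from Lemma \ref{U S positivity} together with the GQL representation of Lemma \ref{GQL}, using only the linearity of $\varphi(\,\cdot\,;\mathbf{z},\mathbf{u}_{*})$ in $\mathbf{U}$. First I would record the elementary bookkeeping: for $\mathbf{U}\in\mathcal{G}$ we have $\rho>0$, $p_{11}>0$, $\det(\mathbf{p})>0$, and hence also $p_{22}>0$ (since $p_{11}p_{22}>p_{12}^2\ge 0$); therefore $\delta_1(\mathbf{U})$ and $\delta_2(\mathbf{U})$ are finite and strictly positive, and by the identity $\max\{1/a,1/b\}=1/\min\{a,b\}$ for positive $a,b$,
\[
\frac{1}{\delta_1(\mathbf{U})}=\min\left\{\sqrt{\frac{p_{11}}{\rho}},\sqrt{\frac{\det(\mathbf{p})}{\rho p_{22}}}\right\},\qquad \frac{1}{\delta_2(\mathbf{U})}=\min\left\{\sqrt{\frac{p_{22}}{\rho}},\sqrt{\frac{\det(\mathbf{p})}{\rho p_{11}}}\right\},
\]
which are exactly the smallness thresholds appearing in Lemma \ref{U S positivity}.

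Next I would apply Lemma \ref{U S positivity} with $\beta=1$ (by the positive homogeneity of $\varphi$ in $\mathbf{U}$ the specific value of $\beta$ is immaterial) and the two choices $\widetilde{\delta}_1=\pm\,1/\delta_1(\mathbf{U})$, both of which satisfy the hypothesis $|\widetilde{\delta}_1|\le\min\{\,\cdot\,\}$. Since the additive vector $(0,\rho,0,m_1,\tfrac12 m_2,0)^\top$ in that lemma is precisely $\mathbf{S}_1$, we obtain
\[
\mathbf{U}\pm\frac{1}{\delta_1(\mathbf{U})}\,\mathbf{S}_1\in\overline{\mathcal{G}},
\]
and likewise $\mathbf{U}\pm\tfrac{1}{\delta_2(\mathbf{U})}\mathbf{S}_2\in\overline{\mathcal{G}}$ from the second inclusion of Lemma \ref{U S positivity} with $\widetilde{\delta}_2=\pm\,1/\delta_2(\mathbf{U})$.

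Then I would invoke the GQL description: because $\overline{\mathcal{G}}$ is the closure of $\mathcal{G}=\mathcal{G}_{*}$ and $\varphi(\,\cdot\,;\mathbf{z},\mathbf{u}_{*})$ is a continuous (indeed linear) functional of $\mathbf{U}$, every $\mathbf{V}\in\overline{\mathcal{G}}$ satisfies $\varphi(\mathbf{V};\mathbf{z},\mathbf{u}_{*})\ge 0$ for all $\mathbf{u}_{*}\in\mathbb{R}^2$ and $\mathbf{z}\in\mathbb{R}^2\backslash\{\mathbf{0}\}$ (alternatively one may simply use the closed form $\varphi(\mathbf{U};\mathbf{z},\mathbf{u}_{*})=\tfrac12\mathbf{z}^\top\mathbf{p}\,\mathbf{z}+\tfrac12\rho\big(\mathbf{z}\cdot(\mathbf{u}-\mathbf{u}_{*})\big)^2$). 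Applying this to $\mathbf{V}=\mathbf{U}\pm\tfrac{1}{\delta_1(\mathbf{U})}\mathbf{S}_1$ and using linearity of $\varphi$ gives
\[
\varphi(\mathbf{U};\mathbf{z},\mathbf{u}_{*})\pm\frac{1}{\delta_1(\mathbf{U})}\,\varphi(\mathbf{S}_1;\mathbf{z},\mathbf{u}_{*})\ge 0,
\]
and taking the less favourable of the two sign choices yields $|\varphi(\mathbf{S}_1;\mathbf{z},\mathbf{u}_{*})|\le\delta_1(\mathbf{U})\,\varphi(\mathbf{U};\mathbf{z},\mathbf{u}_{*})$; the bound for $\mathbf{S}_2$ follows identically.

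I do not anticipate a real obstacle here — the corollary is essentially a restatement of Lemma \ref{U S positivity} in the language of the GQL functional. The only points requiring a little care are the passage from the strict GQL representation of $\mathcal{G}$ to its non-strict analogue on $\overline{\mathcal{G}}$ (handled by the continuity/limit argument or the explicit quadratic-form identity above), and checking that the thresholds in Lemma \ref{U S positivity} are indeed the reciprocals of $\delta_1(\mathbf{U})$ and $\delta_2(\mathbf{U})$, which is the elementary $\max\{1/a,1/b\}=1/\min\{a,b\}$ observation already noted.
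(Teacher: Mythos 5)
Your proposal is correct and follows essentially the route the paper intends: the paper derives this corollary precisely from Lemma \ref{U S positivity} (applied with the thresholds $1/\delta_i(\mathbf{U})$) combined with the GQL representation and the linearity/homogeneity of $\varphi$ in $\mathbf{U}$, which is exactly your argument with the details filled in. The two points you flag — that the thresholds in Lemma \ref{U S positivity} are the reciprocals of $\delta_i(\mathbf{U})$, and that $\varphi(\mathbf{V};\mathbf{z},\mathbf{u}_*)\ge 0$ on $\overline{\mathcal{G}}$ (cleanest via the identity $\varphi=\tfrac12\mathbf{z}^\top\mathbf{p}\mathbf{z}+\tfrac12\rho(\mathbf{z}\cdot(\mathbf{u}-\mathbf{u}_*))^2$, noting the density stays strictly positive here) — are handled correctly.
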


\subsection{Properties of HLLC flux and HLLC solver}
This subsection presents two important properties: the contact property of the HLLC flux \cite{meena2018well} and the positivity-preserving property of the HLLC solver \cite{meena2017robust}. The readers are referred to \cite{meena2017robust,meena2017positivity} for the details of the HLLC flux of the ten-moment Gaussian closure equations.

\begin{lemma}[{\rm\cite{meena2018well}}]\label{contact property}
For any two states $\mathbf{U}_L=\left(\rho_L,0,0,\frac{p_{11}}{2},\frac{p_{12}}{2},\frac{p_{22,L}}{2}\right)^\top$ and $\mathbf{U}_R=\left(\rho_R,0,0,\frac{p_{11}}{2},\frac{p_{12}}{2},\frac{p_{22,R}}{2}\right)^\top$, the HLLC flux in the x-direction satisfies
\[
\mathbf{F}^{hllc}(\mathbf{U}_L,\mathbf{U}_R)=(0,p_{11},p_{12},0,0,0)^\top.
\]

\end{lemma}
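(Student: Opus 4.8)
The plan is to unravel the definition of the HLLC numerical flux and exploit the two special features of the input states: both have vanishing velocities, and they share the same $p_{11}$ and $p_{12}$. Recall that the HLLC flux in the $x$-direction is built from wave-speed estimates $S_L\le S_R$ bracketing all $x$-characteristic speeds of $\mathbf{F}$, a contact speed
\[
S_* = \frac{p_{11,R}-p_{11,L}+\rho_L u_{1,L}(S_L-u_{1,L})-\rho_R u_{1,R}(S_R-u_{1,R})}{\rho_L(S_L-u_{1,L})-\rho_R(S_R-u_{1,R})},
\]
and intermediate states $\mathbf{U}_{L}^{*},\mathbf{U}_{R}^{*}$ determined by the Rankine--Hugoniot relations across the $S_{L},S_{R}$ waves under the requirement that $u_1$, $p_{11}$, $p_{12}$ be continuous across the contact; the flux is then $\mathbf{F}(\mathbf{U}_L)$, $\mathbf{F}_L^{*}:=\mathbf{F}(\mathbf{U}_L)+S_L(\mathbf{U}_L^{*}-\mathbf{U}_L)$, $\mathbf{F}_R^{*}:=\mathbf{F}(\mathbf{U}_R)+S_R(\mathbf{U}_R^{*}-\mathbf{U}_R)$, or $\mathbf{F}(\mathbf{U}_R)$, selected by the sign pattern of $S_L,S_*,S_R$.

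First I would note that $m_{1,L}=m_{2,L}=m_{1,R}=m_{2,R}=0$ gives $u_{1,L}=u_{2,L}=u_{1,R}=u_{2,R}=0$, and that $p_{11,L}=p_{11,R}=p_{11}$ makes the numerator in the formula for $S_*$ vanish, while the denominator equals $\rho_L S_L-\rho_R S_R<0$ since the velocity-free wave estimates satisfy $S_L<0<S_R$ (the fastest $x$-characteristics are $u_1\pm\sqrt{3p_{11}/\rho}$). Hence $S_*=0$, so $0$ sits on the boundary between the $\mathbf{F}_L^{*}$ and $\mathbf{F}_R^{*}$ branches; in either case $\mathbf{F}^{hllc}=\mathbf{F}_L^{*}$, which coincides with $\mathbf{F}_R^{*}$ at $S_*=0$ by the Rankine--Hugoniot relation across the contact.

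Next I would substitute $u_{1,L}=u_{2,L}=0$ and $S_*=0$ into the explicit ten-moment intermediate-state formulas. The mass relation $\rho_L^{*}=\rho_L(S_L-u_{1,L})/(S_L-S_*)$ yields $\rho_L^{*}=\rho_L$; the momentum and energy-tensor components of $\mathbf{U}_L^{*}$ each consist of $\rho_L^{*}$ times the corresponding specific quantity of $\mathbf{U}_L$ plus a correction proportional to $(S_*-u_{1,L})$, and the latter drops out, so $\mathbf{U}_L^{*}=\mathbf{U}_L$. Therefore $\mathbf{F}^{hllc}=\mathbf{F}_L^{*}=\mathbf{F}(\mathbf{U}_L)+S_L(\mathbf{U}_L^{*}-\mathbf{U}_L)=\mathbf{F}(\mathbf{U}_L)$, and evaluating \eqref{FU} at a state with $u_1=u_2=0$ gives $(0,p_{11},p_{12},0,0,0)^\top$, as claimed.

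The main obstacle is the bookkeeping with the precise ten-moment HLLC intermediate-state formulas: one must verify that continuity of $u_1$, $p_{11}$, $p_{12}$ across the contact (and the induced identity $u_{2,L}^{*}=u_{2,L}$) forces every correction term in $\mathbf{U}_L^{*}$ to be proportional to $(S_*-u_{1,L})$, so that $S_*=0$ collapses $\mathbf{U}_L^{*}$ onto $\mathbf{U}_L$; once this is established the conclusion is immediate. As a consistency check one can run the same computation on the right branch, obtaining $\mathbf{F}_R^{*}=\mathbf{F}(\mathbf{U}_R)=(0,p_{11},p_{12},0,0,0)^\top$, in agreement with the contact condition.
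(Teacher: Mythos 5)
Your argument is correct: with $u_{1,L}=u_{1,R}=0$ and the shared $p_{11}$, the contact speed $S_*$ vanishes, the intermediate states collapse onto the input states, and both starred fluxes reduce to $\mathbf{F}(\mathbf{U}_{L})=\mathbf{F}(\mathbf{U}_{R})=(0,p_{11},p_{12},0,0,0)^\top$. Note, however, that the paper does not prove this lemma at all --- it is quoted from the reference of Meena and Kumar, and the reader is pointed there for the details of the ten-moment HLLC solver --- so there is no in-paper proof to compare against; your direct verification through the HLLC construction (contact speed, Rankine--Hugoniot star states, branch selection) is exactly the standard argument one would find in that reference.
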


\begin{lemma}[{\rm\cite{meena2017robust}}]\label{pp property}
For any three admissible states $\mathbf{U}_L, \mathbf{U}_M, \mathbf{U}_R\in\mathcal{G}$, and any $\lambda_1>0$ satisfying
\[
\lambda_1\max_{\mathbf{U}\in\{\mathbf{U}_L, \mathbf{U}_M, \mathbf{U}_R\}}\alpha_{1}(\mathbf{U})\leq\frac{1}{2},
\]
where $\alpha_{1}(\mathbf{U}):=|u_1|+\sqrt{\frac{3p_{11}}{\rho}}$, one has
\[
\mathbf{U}_M-\lambda_1\left(\mathbf{F}^{hllc}(\mathbf{U}_M,\mathbf{U}_R)-\mathbf{F}^{hllc}(\mathbf{U}_L,\mathbf{U}_M)\right)\in\mathcal{G}.
\]
\end{lemma}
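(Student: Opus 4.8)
The plan is to reduce the claim, via the convexity of $\mathcal{G}$ (Lemma~\ref{scale invariance}) and the internal wave structure of the HLLC solver, to the admissibility of a handful of elementary states. Recall that for admissible data $(\mathbf{V}_L,\mathbf{V}_R)$ the HLLC flux is assembled from three speeds $S_L\le S_M\le S_R$ and two star states $\mathbf{V}_L^{*},\mathbf{V}_R^{*}$ obeying the Rankine--Hugoniot relations $\mathbf{F}(\mathbf{V}_L)+S_L(\mathbf{V}_L^{*}-\mathbf{V}_L)=\mathbf{F}_L^{*}$, $\mathbf{F}_L^{*}+S_M(\mathbf{V}_R^{*}-\mathbf{V}_L^{*})=\mathbf{F}_R^{*}$ and $\mathbf{F}_R^{*}=\mathbf{F}(\mathbf{V}_R)+S_R(\mathbf{V}_R^{*}-\mathbf{V}_R)$, with $\mathbf{F}^{hllc}(\mathbf{V}_L,\mathbf{V}_R)$ equal to $\mathbf{F}(\mathbf{V}_L)$, $\mathbf{F}_L^{*}$, $\mathbf{F}_R^{*}$ or $\mathbf{F}(\mathbf{V}_R)$ according to the signs of $(S_L,S_M,S_R)$; since the fastest $x$-characteristic speed is $|u_1|+\sqrt{3p_{11}/\rho}$, one may take $|S_L|,|S_M|,|S_R|\le\max\{\alpha_1(\mathbf{V}_L),\alpha_1(\mathbf{V}_R)\}$. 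Because $\mathcal{G}$ is convex, it suffices to split
\[
\mathbf{U}_M-\lambda_1\big(\mathbf{F}^{hllc}(\mathbf{U}_M,\mathbf{U}_R)-\mathbf{F}^{hllc}(\mathbf{U}_L,\mathbf{U}_M)\big)=\tfrac12\mathbf{A}+\tfrac12\mathbf{B},
\]
with $\mathbf{A}:=\mathbf{U}_M-2\lambda_1\big(\mathbf{F}^{hllc}(\mathbf{U}_M,\mathbf{U}_R)-\mathbf{F}(\mathbf{U}_M)\big)$ and $\mathbf{B}:=\mathbf{U}_M-2\lambda_1\big(\mathbf{F}(\mathbf{U}_M)-\mathbf{F}^{hllc}(\mathbf{U}_L,\mathbf{U}_M)\big)$, and to prove $\mathbf{A},\mathbf{B}\in\mathcal{G}$ separately.

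For this I would establish a ``half-fan'' positivity property of the HLLC solver: if $\mathbf{V}_L,\mathbf{V}_R\in\mathcal{G}$ and $\mu>0$ with $\mu\max\{\alpha_1(\mathbf{V}_L),\alpha_1(\mathbf{V}_R)\}\le1$, then $\mathbf{V}_L-\mu\big(\mathbf{F}^{hllc}(\mathbf{V}_L,\mathbf{V}_R)-\mathbf{F}(\mathbf{V}_L)\big)\in\mathcal{G}$ and $\mathbf{V}_R-\mu\big(\mathbf{F}(\mathbf{V}_R)-\mathbf{F}^{hllc}(\mathbf{V}_L,\mathbf{V}_R)\big)\in\mathcal{G}$. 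Applying the first statement to $(\mathbf{V}_L,\mathbf{V}_R)=(\mathbf{U}_M,\mathbf{U}_R)$ and the second to $(\mathbf{V}_L,\mathbf{V}_R)=(\mathbf{U}_L,\mathbf{U}_M)$, each with $\mu=2\lambda_1$ (admissible since the hypothesis gives $2\lambda_1\max_{\{L,M,R\}}\alpha_1\le1$), yields $\mathbf{A}\in\mathcal{G}$ and $\mathbf{B}\in\mathcal{G}$. To prove the half-fan property, substitute the Rankine--Hugoniot relations into $\mathbf{F}^{hllc}(\mathbf{V}_L,\mathbf{V}_R)-\mathbf{F}(\mathbf{V}_L)$ in each sign regime of $(S_L,S_M,S_R)$: the flux difference telescopes into a sum of $S_k$ times the jump across wave $k$ over the waves lying left of the interface, so that $\mathbf{V}_L-\mu\big(\mathbf{F}^{hllc}-\mathbf{F}(\mathbf{V}_L)\big)$ becomes a combination $c_0\mathbf{V}_L+c_1\mathbf{V}_L^{*}+c_2\mathbf{V}_R^{*}(+c_3\mathbf{V}_R)$ in which $c_0=1+\mu S_L\ge0$ by the CFL bound $\mu|S_L|\le1$ (or $c_0=1$ when $S_L\ge0$), the remaining coefficients are $\mu$ times the nonnegative gaps $S_M-S_L$, $S_R-S_M$, or $-\mu S_R\ge0$, and $\sum_i c_i=1$ by the consistency of $\mathbf{F}^{hllc}$; it is therefore a convex combination of $\mathbf{V}_L,\mathbf{V}_L^{*},\mathbf{V}_R^{*}$ (and possibly $\mathbf{V}_R$), which lies in $\mathcal{G}$ by convexity once the star states are known to be admissible. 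The second half-fan identity follows from the first by the reflection symmetry $x\mapsto-x$, $u_1\mapsto-u_1$ of \eqref{ten-moment2}, or by an entirely analogous telescoping, so it requires no separate computation.

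The only substantial ingredient — and the anticipated main obstacle — is the admissibility of the HLLC star states: $\mathbf{V}_L,\mathbf{V}_R\in\mathcal{G}$ implies $\mathbf{V}_L^{*},\mathbf{V}_R^{*}\in\mathcal{G}$ under the natural choice $S_L\le u_{1,L}$, $u_{1,R}\le S_R$, $S_L\le S_M\le S_R$. Positivity of the star densities is immediate from $\rho_L^{*}=\rho_L(S_L-u_{1,L})/(S_L-S_M)>0$ and its right analogue; the delicate part is the positive-definiteness of the star pressure tensors, i.e. the nonlinear constraints $\det(\mathbf{p}_L^{*})>0$, $\det(\mathbf{p}_R^{*})>0$, which have no counterpart in the scalar-pressure Euler setting. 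Here I would use that the HLL intermediate state $\mathbf{U}^{hll}=\big(S_R\mathbf{V}_R-S_L\mathbf{V}_L-(\mathbf{F}(\mathbf{V}_R)-\mathbf{F}(\mathbf{V}_L))\big)/(S_R-S_L)$ is the average over $[S_L,S_R]$ of the exact self-similar Riemann solution and hence lies in the convex invariant region $\mathcal{G}$, together with the contact conditions (continuity of $u_1$, $p_{11}$, $p_{12}$ across $S_M$) that fix the split $\mathbf{U}^{hll}\mapsto(\mathbf{V}_L^{*},\mathbf{V}_R^{*})$; the remaining determinant inequalities can then be linearised and verified through the GQL representation of $\mathcal{G}$ in Lemma~\ref{GQL}. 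This star-state admissibility is precisely the property established for the ten-moment HLLC solver in \cite{meena2017robust}, on which I would rely; granted it, the convex-splitting and telescoping steps above are routine bookkeeping with the Rankine--Hugoniot relations and the CFL inequality $2\lambda_1\max_{\{L,M,R\}}\alpha_1\le1$.
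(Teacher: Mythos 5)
The paper does not prove this lemma at all: it is imported verbatim from \cite{meena2017robust} (see also \cite{meena2017positivity}), so there is no in-paper argument to compare against. Judged on its own terms, your reconstruction is the standard and correct route to such a statement. The splitting $\frac12\mathbf{A}+\frac12\mathbf{B}$ with the doubled parameter $\mu=2\lambda_1$ correctly converts the hypothesis $\lambda_1\max\alpha_1\le\frac12$ into the one-sided CFL bound $\mu\max\alpha_1\le1$ needed for each half-fan; the telescoping of the Rankine--Hugoniot relations in each sign regime of $(S_L,S_M,S_R)$ does produce nonnegative coefficients summing to one (note the sum equals one simply because the jumps telescope, not because of flux consistency, but this is cosmetic); and the final step is covered by the convexity of $\mathcal{G}$ together with Lemma \ref{general combination}. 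The one substantive ingredient you do not prove --- admissibility of the HLLC star states $\mathbf{V}_L^{*},\mathbf{V}_R^{*}$, in particular the positive-definiteness of the star pressure tensors under the contact conditions on $u_1$, $p_{11}$, $p_{12}$ --- is exactly the technical core of \cite{meena2017robust}, and deferring to that reference is no weaker than what the paper itself does by citing the whole lemma. Your sketch of how one would establish it (HLL average of the exact self-similar solution plus the GQL linearization of the determinant constraint) is plausible but is an outline, not a proof; if you intend the argument to be self-contained you would need to carry out that verification, since for the anisotropic pressure tensor the constraint $\det(\mathbf{p}^{*})>0$ does not follow from the density and $p_{11}$ positivity alone.
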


\subsection{A property of positive-definite matrices}
\begin{lemma}\label{transfor T}
For three symmetric positive-definite $2\times2$ matrices
\[
\mathbf{A}=\begin{pmatrix}
    a_{11} & a_{12} \\
    a_{12} & a_{22}
  \end{pmatrix},
\quad
\mathbf{B}=\begin{pmatrix}
    b_{11} & b_{12} \\
    b_{12} & b_{22}
  \end{pmatrix},
\quad
\mathbf{C}=\begin{pmatrix}
    c_{11} & c_{12} \\
    c_{12} & c_{22}
  \end{pmatrix},
\]
which satisfy $a_{22}=b_{22}$ and $c_{11}=a_{11}$, there exist two $2\times2$ matrices in the following forms
\begin{align}
\mathbf{T}_1=\begin{pmatrix}
    t_1 & t_2 \\
    0 & 1
  \end{pmatrix}, \label{upper T}
\\
\mathbf{T}_2=\begin{pmatrix}
    1 & 0 \\
    t_4 & t_3
  \end{pmatrix}, \label{lower T}
\end{align}
such that $\mathbf{B}=\mathbf{T}_1\mathbf{AT}_1^\top$ and $\mathbf{C}=\mathbf{T}_2\mathbf{AT}_2^\top$.
\end{lemma}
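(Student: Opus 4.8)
The plan is to construct $\mathbf{T}_1$ and $\mathbf{T}_2$ explicitly by matching entries, exploiting the two compatibility conditions $a_{22}=b_{22}$ and $c_{11}=a_{11}$ to reduce what looks like an overdetermined $3\times 1$ system to a single scalar equation in each case.

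First I would handle $\mathbf{T}_1$. Expanding $\mathbf{T}_1\mathbf{A}\mathbf{T}_1^\top$ with $\mathbf{T}_1$ of the form \eqref{upper T}, the $(2,2)$-entry is exactly $a_{22}$, which equals $b_{22}$ by hypothesis, so that equation holds automatically. The $(1,2)$- and $(2,1)$-entries both equal $t_1 a_{12}+t_2 a_{22}$; since $\mathbf{A}$ is positive definite we have $a_{22}>0$, so requiring this to equal $b_{12}$ determines $t_2=(b_{12}-t_1 a_{12})/a_{22}$ as an affine function of $t_1$. It then remains to match the $(1,1)$-entry, $t_1^2 a_{11}+2t_1 t_2 a_{12}+t_2^2 a_{22}=b_{11}$. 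Substituting the expression for $t_2$ and multiplying through by $a_{22}$, the terms linear in $t_1$ cancel and the quadratic terms collapse, leaving $t_1^2(a_{11}a_{22}-a_{12}^2)=b_{11}a_{22}-b_{12}^2$, i.e. $t_1^2\det(\mathbf{A})=\det(\mathbf{B})$ after using $a_{22}=b_{22}$. Because $\mathbf{A}$ and $\mathbf{B}$ are positive definite, $\det(\mathbf{A})>0$ and $\det(\mathbf{B})>0$, so $t_1:=\sqrt{\det(\mathbf{B})/\det(\mathbf{A})}$ is a legitimate real choice; together with the affine relation for $t_2$ this produces the desired $\mathbf{T}_1$.

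The construction of $\mathbf{T}_2$ of the form \eqref{lower T} is entirely parallel, with the roles of the indices $1$ and $2$ interchanged. Expanding $\mathbf{T}_2\mathbf{A}\mathbf{T}_2^\top$, the $(1,1)$-entry equals $a_{11}=c_{11}$ automatically; matching the off-diagonal entry gives $t_4=(c_{12}-t_3 a_{12})/a_{11}$ (using $a_{11}>0$); and matching the $(2,2)$-entry, after the same cancellation of cross-terms, yields $t_3^2\det(\mathbf{A})=c_{11}c_{22}-c_{12}^2=\det(\mathbf{C})>0$, so $t_3:=\sqrt{\det(\mathbf{C})/\det(\mathbf{A})}$ works.

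The only genuine work is the routine algebraic simplification that produces the clean determinant identities $t_1^2\det(\mathbf{A})=\det(\mathbf{B})$ and $t_3^2\det(\mathbf{A})=\det(\mathbf{C})$; once these are in hand, positive definiteness of all three matrices immediately guarantees the square roots are real, so there is no substantive obstacle beyond the bookkeeping. I would also remark that, in view of the intended use alongside Lemma~\ref{Uhat in G}, the constructed matrices are automatically nonsingular since $\det(\mathbf{T}_1)=t_1>0$ and $\det(\mathbf{T}_2)=t_3>0$.
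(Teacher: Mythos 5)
Your proposal is correct and matches the paper's proof: the paper simply states the same explicit choices $t_1=\sqrt{\det(\mathbf{B})/\det(\mathbf{A})}$, $t_2=(b_{12}-a_{12}t_1)/a_{22}$, $t_3=\sqrt{\det(\mathbf{C})/\det(\mathbf{A})}$, $t_4=(c_{12}-a_{12}t_3)/a_{11}$ without writing out the entry-matching derivation you supply. Your verification of the determinant identities and the positivity needed for the square roots is exactly the omitted bookkeeping.
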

\begin{proof}
Taking
\[
t_1=\sqrt{\frac{\det(\mathbf{B})}{\det(\mathbf{A})}}, \quad t_2=\frac{b_{12}-a_{12}t_1}{a_{22}},
\]
and
\[
t_3=\sqrt{\frac{\det(\mathbf{C})}{\det(\mathbf{A})}}, \quad t_4=\frac{c_{12}-a_{12}t_3}{a_{11}},
\]
completes the proof.
\end{proof}

\section{Positivity-preserving WB DG methods in one dimension}\label{sec3}
Assume that the target hydrostatic equilibrium solutions to be preserved are explicitly known. As discussed in Subsection \ref{stationary solutions}, the following relations hold:
\begin{equation}\label{1D steady state}
(p_{11}^e(x))_x=-\frac{1}{2}\rho^e(x)W_x, \quad p_{12}^e(x)=C_0, \quad u_1^e(x)=u_2^e(x)=0,
\end{equation}
where $C_0$ is a constant.

\subsection{WB DG discretization}
Consider a spatial domain $\Omega$ partitioned into cells ${I_j=(x_{j-\frac{1}{2}}, x_{j+\frac{1}{2}})}$. Let the mesh size be denoted by $h_j = x_{j+\frac{1}{2}} - x_{j-\frac{1}{2}}$, with $h$ representing the maximum mesh size, $h=\max_j h_j$. Define the center of each cell as $x_j=\frac{1}{2}(x_{j+\frac{1}{2}}+x_{j-\frac{1}{2}})$. The DG numerical solutions are denoted by $\mathbf{U}_h(x,t)$,  each component of which belongs to the finite-dimensional space of discontinuous piecewise polynomial functions:
\[
\mathbb{V}_h^k=\left\{v(x)\in L^2(\Omega):v(x)|_{I_j}\in\mathbb{P}^k(I_j) ~ \forall j\right\},
\]
where $\mathbb{P}^k(I_j)$ represents the space of polynomials of degree up to $k$ in cell $I_j$. The semi-discrete DG methods state that for any test function $v\in\mathbb{V}_h^k$, the solution $\mathbf{U}_h$ is determined by
\begin{equation}\label{1D semi DG}
\int_{I_j}(\mathbf{U}_h)_t v \, dx - \int_{I_j}\mathbf{F}(\mathbf{U}_h) v_x \, dx + \widehat{\mathbf{F}}_{j+\frac{1}{2}} v(x_{j+\frac{1}{2}}^{-}) - \widehat{\mathbf{F}}_{j-\frac{1}{2}} v(x_{j-\frac{1}{2}}^{+}) = \int_{I_j}\mathbf{S}^x(\mathbf{U}_h) v \, dx,
\end{equation}
where $\widehat{\mathbf{F}}_{j+\frac{1}{2}}$ denotes the numerical flux at $x_{j+\frac{1}{2}}$. The notations $x_{j+\frac{1}{2}}^{-}$ and $x_{j+\frac{1}{2}}^{+}$ refer to the left and right limits at $x_{j+\frac{1}{2}}$, respectively.

It is worth noting that if the standard Gauss quadrature is used to discretize the cell integral at both sides of (\ref{1D semi DG}) and a conventional numerical flux is applied without any modification, the resulting DG scheme is not WB in general. For example, the DG scheme with the standard HLLC flux \cite{meena2017positivity} is not WB as it will be demonstrated by the numerical experiments in Section \ref{sec5}.


In the following, we construct a WB DG method that preserves the equilibrium state (\ref{1D steady state}). Let $\rho_h^e(x)$ and
\[
\mathbf{p}_h^e(x)=\begin{pmatrix}
                    p_{11,h}^e(x) & C_0 \\
                    C_0 & p_{22,h}^e(x)
                  \end{pmatrix}
\]
denote the positive (positive-definite) projections of $\rho^e(x)$ and 
\[
\mathbf{p}^e(x)=\begin{pmatrix}
                    p_{11}^e(x) & C_0 \\
                    C_0 & p_{22}^e(x)
                  \end{pmatrix}
\]
onto the space $\mathbb{V}_h^k$, respectively.
Define
\begin{equation}\label{pj1/2pm}
\mathbf{p}_{j+\frac{1}{2}}^{e,\pm,\ast}:=\begin{pmatrix}
                                       p_{11,j+\frac{1}{2}}^{e,\ast} & C_0 \\
                                       C_0 & p_{22,h}^e(x_{j+\frac{1}{2}}^{\pm})
                                     \end{pmatrix},
\end{equation}
where
\[
p_{11,j+\frac{1}{2}}^{e,\ast}:=\max\left\{p_{11,h}^e(x_{j+\frac{1}{2}}^{-}),p_{11,h}^e(x_{j+\frac{1}{2}}^{+})\right\}.
\]
As a critical observation, we note that $\mathbf{p}_{j+\frac{1}{2}}^{e,\pm,\ast}$ is always positive-definite.
According to Lemma \ref{transfor T}, there exist two upper triangular matrices $\mathbf{T}_{j+\frac{1}{2}}^{\pm}$ in the forms of (\ref{upper T}) satisfying
\begin{equation}\label{ph-pj1/2pm}
\mathbf{p}_{j+\frac{1}{2}}^{e,\pm,\ast}=\mathbf{T}_{j+\frac{1}{2}}^{\pm}\mathbf{p}_h^e(x_{j+\frac{1}{2}}^{\pm})(\mathbf{T}_{j+\frac{1}{2}}^{\pm})^\top.
\end{equation}

\begin{remark}\label{t1t2_high_order}
If we define
\[
\mathbf{T}_{j+\frac{1}{2}}^{\pm}=\begin{pmatrix}
                                   t_1^{\pm} & t_2^{\pm} \\
                                   0 & 1
                                 \end{pmatrix},
\]
then Lemma {\rm \ref{transfor T}} implies that
\[
t_1^{\pm}=\sqrt{\frac{\det\left(\mathbf{p}_{j+\frac{1}{2}}^{e,\pm,\ast}\right)}{\det\left(\mathbf{p}_h^e(x_{j+\frac{1}{2}}^{\pm})\right)}}, \quad
t_2^{\pm}=\frac{C_0(1-t_1^{\pm})}{p_{22,h}^e(x_{j+\frac{1}{2}}^{\pm})}.
\]
Because $p_{11,h}^e(x_{j+\frac{1}{2}}^{\pm})$ and $p_{22,h}^e(x_{j+\frac{1}{2}}^{\pm})$ are $(k+1)$-order approximation to $p_{11,h}^e(x_{j+\frac{1}{2}})$ and $p_{22,h}^e(x_{j+\frac{1}{2}})$, respectively, one can prove that $t_1^{\pm}=1+O(h^{k+1})$ and $t_2^{\pm}=O(h^{k+1})$, which will be verified by our numerical experiments.
\end{remark}

To make the DG method (\ref{1D semi DG}) WB, we apply the following HLLC numerical flux with modified solution states:
\begin{equation}\label{modified HLLC flux}
\widehat{\mathbf{F}}_{j+\frac{1}{2}}=\mathbf{F}^{hllc}(\widehat{\mathbf{U}}_{j+\frac{1}{2}}^{-},\widehat{\mathbf{U}}_{j+\frac{1}{2}}^{+}),
\end{equation}
where the modified solution states $\widehat{\mathbf{U}}_{j+\frac{1}{2}}^{\pm}$ are related to $\mathbf{U}_{j+\frac{1}{2}}^{\pm}{:=\mathbf{U}_h(x_{j+\frac{1}{2}}^{\pm})}$ by
\begin{equation}\label{U-Uhat}
\widehat{\rho}_{j+\frac{1}{2}}^{\pm}=\rho_{j+\frac{1}{2}}^{\pm}, \quad \widehat{\mathbf{m}}_{j+\frac{1}{2}}^{\pm}=\mathbf{T}_{j+\frac{1}{2}}^{\pm}\mathbf{m}_{j+\frac{1}{2}}^{\pm}, \quad
\widehat{\mathbf{E}}_{j+\frac{1}{2}}^{\pm}=\mathbf{T}_{j+\frac{1}{2}}^{\pm}\mathbf{E}_{j+\frac{1}{2}}^{\pm}(\mathbf{T}_{j+\frac{1}{2}}^{\pm})^\top.
\end{equation}
According to Lemma \ref{Uhat in G}, if $\mathbf{U}_{j+\frac{1}{2}}^{\pm}\in\mathcal{G}$, then $\widehat{\mathbf{U}}_{j+\frac{1}{2}}^{\pm}\in\mathcal{G}$.
Denote the $N$-point Gauss quadrature nodes and weights in $I_j$ by $\{x_j^{(\mu)},\omega_\mu\}_{1\leq\mu\leq N}$. The cell integral of flux in (\ref{1D semi DG}) is approximated by
\begin{equation}\label{1D cell integral}
\int_{I_j}\mathbf{F}(\mathbf{U}_h)v_xdx\approx h_j\sum_{\mu=1}^N\omega_\mu\mathbf{F}(\mathbf{U}_h(x_j^{(\mu)}))v_x(x_j^{(\mu)}).
\end{equation}

\begin{remark}
The novelty of the modification {\rm (\ref{U-Uhat})} lies in three aspects. (i) It maintains the contact property of the HLLC flux when the numerical solutions achieve the hydrostatic state, which is crucial for achieving well-balancedness; see the proof of Theorem {\rm\ref{1D well-balanceness}}. (ii) It does not destroy the high-order accuracy; see Remark {\rm \ref{t1t2_high_order}}. (iii) The modified states $\widehat{\mathbf{U}}_{j+\frac{1}{2}}^{\pm}$ remain in the admissible state set $\mathcal{G}$ if $\mathbf{U}_{j+\frac{1}{2}}^{\pm}\in\mathcal{G}$; see Lemma {\rm \ref{Uhat in G}}. Note that the existing modification techniques in {\rm \cite{wu2021uniformly,ren2023high}} for the Euler equations in the isotropic case do not meet the above three requirements simultaneously due to the anisotropic effects in the ten-moment system.
\end{remark}

\begin{remark}
The HLLC flux with modified solution states \eqref{modified HLLC flux} is not rigorously consistent, since $\mathbf{U}_{j+\frac{1}{2}}^{+}=\mathbf{U}_{j+\frac{1}{2}}^{-}$ can not imply that $\widehat{\mathbf{U}}_{j+\frac{1}{2}}^{+}=\widehat{\mathbf{U}}_{j+\frac{1}{2}}^{-}$ due to $\mathbf{T}_{j+\frac{1}{2}}^{+}\neq\mathbf{T}_{j+\frac{1}{2}}^{-}$. However, because $\mathbf{T}_{j+\frac{1}{2}}^{\pm}$ are both the approximation to the identity matrix of order $k+1$, the numerical flux \eqref{modified HLLC flux} is consistent with $(k+1)$-th order accuracy.
\end{remark}

Next, we turn our attention to the discretization of the integrals of the source terms in (\ref{1D semi DG}) to ensure the WB property. Let
$\mathbf{S}^x{=:}(0,S^{[2]},0,S^{[4]},S^{[5]},0)^\top$. Motivated by the techniques from \cite{xing2013high,li2016well,li2016high,wu2021uniformly} for the Euler equations, we reformulate and decompose the integral of the second component of the source terms as follows:
\begin{align}\label{S2 integral}
\int_{I_j} S^{[2]} v \, dx &= \int_{I_j} -\frac{1}{2} \rho W_x v \, dx = \int_{I_j} \frac{\rho}{\rho^e} (p_{11}^e)_x v \, dx \notag \\
&= \int_{I_j} \left( \frac{\rho}{\rho^e} - \frac{\overline{\rho}_j}{{\overline{\rho^e}_j}} + \frac{\overline{\rho}_j}{\overline{\rho^e}_j} \right) (p_{11}^e)_x v \, dx \notag \\
&= \int_{I_j} \left( \frac{\rho}{\rho^e} - \frac{\overline{\rho}_j}{\overline{\rho^e}_j} \right) (p_{11}^e)_x v \, dx \notag \\
&\quad + \frac{\overline{\rho}_j}{\overline{\rho^e}_j} \left( p_{11}^e(x_{j+\frac{1}{2}}^{-}) v(x_{j+\frac{1}{2}}^{-}) - p_{11}^e(x_{j-\frac{1}{2}}^{+}) v(x_{j-\frac{1}{2}}^{+}) - \int_{I_j} p_{11}^e v_x \, dx \right),
\end{align}
where we have used (\ref{1D steady state}) in the second equality, and $\overline{(\cdot)}_j$ denotes the cell average of the associated quantity over $I_j$. Then this integral can be approximated as
\begin{align}\label{S2 integral approximation}
\int_{I_j}S^{[2]}vdx&\approx h_j\sum_{\mu=1}^N\omega_\mu\left(\frac{\rho_h(x_j^{(\mu)})}{\rho_h^e(x_j^{(\mu)})}-\frac{\overline{(\rho_h)}_j}{\overline{(\rho_h^e)}_j}\right)(p_{11,h}^e)_x(x_j^{(\mu)})v(x_j^{(\mu)}) \notag \\
&+\frac{\overline{(\rho_h)}_j}{\overline{(\rho_h^e)}_j}\left(p_{11,j+\frac{1}{2}}^{e,\ast}v(x_{j+\frac{1}{2}}^{-})-p_{11,j-\frac{1}{2}}^{e,\ast}v(x_{j-\frac{1}{2}}^{+})-h_j\sum_{\mu=1}^N\omega_\mu p_{11,h}^e(x_j^{(\mu)})v_x(x_j^{(\mu)}) \right) \\
&=:\langle S^{[2]},v \rangle_j.
\end{align}
Similarly, one has
\begin{align}\label{S4 integral approximation}
\int_{I_j}S^{[4]}vdx&\approx h_j\sum_{\mu=1}^N\omega_\mu\left(\frac{m_{1,h}(x_j^{(\mu)})}{\rho_h^e(x_j^{(\mu)})}-\frac{\overline{(m_{1,h})}_j}{\overline{(\rho_h^e)}_j}\right)(p_{11,h}^e)_x(x_j^{(\mu)})v(x_j^{(\mu)}) \notag \\
&+\frac{\overline{(m_{1,h})}_j}{\overline{(\rho_h^e)}_j}\left(p_{11,j+\frac{1}{2}}^{e,\ast}v(x_{j+\frac{1}{2}}^{-})-p_{11,j-\frac{1}{2}}^{e,\ast}v(x_{j-\frac{1}{2}}^{+})-h_j\sum_{\mu=1}^N\omega_\mu p_{11,h}^e(x_j^{(\mu)})v_x(x_j^{(\mu)}) \right) \\
&=:\langle S^{[4]},v \rangle_j.
\end{align}

\begin{align}\label{S5 integral approximation}
\int_{I_j}S^{[5]}vdx&\approx h_j\sum_{\mu=1}^N\omega_\mu\left(\frac{m_{2,h}(x_j^{(\mu)})}{2\rho_h^e(x_j^{(\mu)})}-\frac{\overline{(m_{2,h})}_j}{2\overline{(\rho_h^e)}_j}\right)(p_{11,h}^e)_x(x_j^{(\mu)})v(x_j^{(\mu)}) \notag \\
&+\frac{\overline{(m_{2,h})}_j}{2\overline{(\rho_h^e)}_j}\left(p_{11,j+\frac{1}{2}}^{e,\ast}v(x_{j+\frac{1}{2}}^{-})-p_{11,j-\frac{1}{2}}^{e,\ast}v(x_{j-\frac{1}{2}}^{+})-h_j\sum_{\mu=1}^N\omega_\mu p_{11,h}^e(x_j^{(\mu)})v_x(x_j^{(\mu)}) \right) \\
&=:\langle S^{[5]},v \rangle_j.
\end{align}
By combining (\ref{1D cell integral}) and (\ref{S2 integral approximation})-(\ref{S5 integral approximation}), we obtain the WB DG methods with forward Euler time discretization as
\begin{align}\label{1D semi WB DG}
\int_{I_j}\frac{\mathbf{U}_h^{\text{new}}-\mathbf{U}_h}{\Delta t}vdx=&h_j\sum_{\mu=1}^N\omega_\mu\mathbf{F}(\mathbf{U}_h(x_j^{(\mu)}))v_x(x_j^{(\mu)})-\left(\widehat{\mathbf{F}}_{j+\frac{1}{2}}v(x_{j+\frac{1}{2}}^{-})-
\widehat{\mathbf{F}}_{j-\frac{1}{2}}v(x_{j-\frac{1}{2}}^{+})\right) \notag \\
&+\left(0,\langle S^{[2]},v \rangle_j,0,\langle S^{[4]},v \rangle_j,\langle S^{[5]},v \rangle_j,0\right)^\top
\end{align}
for any $v\in\mathbb{V}_h^k$.


\begin{theorem}\label{1D well-balanceness}
For the 1D ten-moment Gaussian closure equations with source terms, the DG schemes {\rm (\ref{1D semi WB DG})} are WB for a general known hydrostatic equilibrium solution {\rm (\ref{1D steady state})}.
\end{theorem}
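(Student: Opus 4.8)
\emph{Proof proposal.} The plan is to verify directly that the discrete hydrostatic state is a fixed point of the one-step update \eqref{1D semi WB DG}. Let $\mathbf{U}_h$ be the state whose density is the positive projection $\rho_h^e$, whose momentum vanishes identically, and whose energy tensor is $\frac{1}{2}\mathbf{p}_h^e$; by \eqref{EOS} together with $u_1^e=u_2^e=0$ this is exactly the DG representation of the equilibrium \eqref{1D steady state}. It suffices to show that, for every $v\in\mathbb{V}_h^k$ and every cell $I_j$, the entire right-hand side of \eqref{1D semi WB DG} equals the zero vector; then $\int_{I_j}(\mathbf{U}_h^{\mathrm{new}}-\mathbf{U}_h)\,v\,dx=0$ for all $v$, and invertibility of the local mass matrix forces $\mathbf{U}_h^{\mathrm{new}}=\mathbf{U}_h$. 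Since any SSP Runge--Kutta time discretization is a convex combination of forward Euler stages, the well-balancedness of the fully discrete scheme then follows immediately.

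The first and essential step is to identify the modified interface data. At the equilibrium $\mathbf{m}_{j+\frac12}^{\pm}=\mathbf{0}$ and $\mathbf{E}_{j+\frac12}^{\pm}=\frac{1}{2}\mathbf{p}_h^e(x_{j+\frac12}^{\pm})$, so \eqref{U-Uhat} together with \eqref{ph-pj1/2pm} gives $\widehat{\mathbf{m}}_{j+\frac12}^{\pm}=\mathbf{0}$ and $\widehat{\mathbf{E}}_{j+\frac12}^{\pm}=\frac{1}{2}\mathbf{T}_{j+\frac12}^{\pm}\mathbf{p}_h^e(x_{j+\frac12}^{\pm})(\mathbf{T}_{j+\frac12}^{\pm})^{\top}=\frac{1}{2}\mathbf{p}_{j+\frac12}^{e,\pm,\ast}$. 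Recalling \eqref{pj1/2pm}, the two modified states become $\widehat{\mathbf{U}}_{j+\frac12}^{-}=(\rho_h^e(x_{j+\frac12}^{-}),0,0,\frac{1}{2}p_{11,j+\frac12}^{e,\ast},\frac{1}{2}C_0,\frac{1}{2}p_{22,h}^e(x_{j+\frac12}^{-}))^{\top}$ and $\widehat{\mathbf{U}}_{j+\frac12}^{+}=(\rho_h^e(x_{j+\frac12}^{+}),0,0,\frac{1}{2}p_{11,j+\frac12}^{e,\ast},\frac{1}{2}C_0,\frac{1}{2}p_{22,h}^e(x_{j+\frac12}^{+}))^{\top}$, which share identical first, fourth, and fifth entries. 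This is exactly the configuration in Lemma \ref{contact property}, whose application yields $\widehat{\mathbf{F}}_{j+\frac12}=(0,p_{11,j+\frac12}^{e,\ast},C_0,0,0,0)^{\top}$, and likewise at $x_{j-\frac12}$. The whole point of introducing $\mathbf{T}_{j+\frac12}^{\pm}$ via Lemma \ref{transfor T}, and of choosing $p_{11,j+\frac12}^{e,\ast}=\max\{p_{11,h}^e(x_{j+\frac12}^{-}),p_{11,h}^e(x_{j+\frac12}^{+})\}$, is precisely to force the $p_{11}$ and $p_{12}$ components of the left and right pressure tensors to coincide so that this contact property can be triggered, even though the projections $p_{11,h}^e,p_{22,h}^e$ are discontinuous at $x_{j+\frac12}$ and the flow is anisotropic ($C_0\neq0$).

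It then remains to check the six components of the right-hand side of \eqref{1D semi WB DG}. At the equilibrium one has $\mathbf{F}(\mathbf{U}_h)=(0,p_{11,h}^e,C_0,0,0,0)^{\top}$ pointwise, while $m_{1,h}\equiv m_{2,h}\equiv0$ makes the source approximations \eqref{S4 integral approximation}--\eqref{S5 integral approximation} vanish identically, and $\rho_h/\rho_h^e\equiv1$ together with $\overline{(\rho_h)}_j/\overline{(\rho_h^e)}_j=1$ collapses \eqref{S2 integral approximation} to $\langle S^{[2]},v\rangle_j=p_{11,j+\frac12}^{e,\ast}v(x_{j+\frac12}^{-})-p_{11,j-\frac12}^{e,\ast}v(x_{j-\frac12}^{+})-h_j\sum_{\mu}\omega_\mu p_{11,h}^e(x_j^{(\mu)})v_x(x_j^{(\mu)})$. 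Components $1,4,5,6$ are then identically zero. Component $2$ cancels term by term, since the volume flux quadrature and the interface terms $p_{11,j\pm\frac12}^{e,\ast}v(\cdot)$ in \eqref{1D semi WB DG} are precisely the expressions appearing in $\langle S^{[2]},v\rangle_j$ with opposite sign, so that no quadrature exactness is even needed there. Component $3$ uses the exactness of the $N$-point Gauss rule on $C_0 v_x\in\mathbb{P}^{k-1}$ to write $h_j\sum_{\mu}\omega_\mu C_0 v_x(x_j^{(\mu)})=C_0\bigl(v(x_{j+\frac12}^{-})-v(x_{j-\frac12}^{+})\bigr)$, which cancels the third entry of $\widehat{\mathbf{F}}_{j\pm\frac12}$. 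Hence the right-hand side vanishes for all $v$, and the scheme is WB. The only genuine obstacle in this argument is the structural observation of the second step---recognizing that the modification \eqref{U-Uhat} was engineered so the modified interface data fall exactly into the hypotheses of Lemma \ref{contact property}; once that is in hand, the rest is a routine cancellation built into the special source discretizations \eqref{S2 integral approximation}--\eqref{S5 integral approximation}.
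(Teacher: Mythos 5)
Your proposal is correct and follows essentially the same route as the paper's own proof: compute the modified interface states at equilibrium, invoke the contact property of the HLLC flux (Lemma \ref{contact property}) to reduce the numerical flux to $(0,p_{11,j+\frac12}^{e,\ast},C_0,0,0,0)^{\top}$, and then verify the component-wise cancellation against the source discretizations, using exactness of the Gauss rule only for the constant-$p_{12}^e$ component. The additional remarks you include (invertibility of the local mass matrix, the observation that component 2 cancels term by term without quadrature exactness) are correct refinements of the same argument.
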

\begin{proof}
Assuming $\mathbf{U}_h$ reaches the equilibrium state (\ref{1D steady state}), one has $\rho_h=\rho_h^e$, $\mathbf{u}_h=\mathbf{u}_h^e=\mathbf{0}$, $E_{11,h}=\frac{1}{2}p_{11,h}^e$, $E_{12,h}=\frac{1}{2}p_{12,h}^e=\frac{1}{2}C_0$, $E_{22,h}=\frac{1}{2}p_{22,h}^e$. Thus, from (\ref{pj1/2pm}), (\ref{ph-pj1/2pm}) and (\ref{U-Uhat}), one gets
\[
\widehat{\mathbf{U}}_{j+\frac{1}{2}}^{\pm}=\left(\rho_h^e(x_{j+\frac{1}{2}}^{\pm}),0,0,\frac{1}{2}p_{11,j+\frac{1}{2}}^{e,\ast},\frac{1}{2}C_0,\frac{1}{2}p_{22,h}^e(x_{j+\frac{1}{2}}^{\pm})\right)^\top.
\]
According to the contact property of HLLC flux (Lemma \ref{contact property}), the HLLC numerical flux \eqref{modified HLLC flux} with modified solution states reduces to
\[
\widehat{\mathbf{F}}_{j+\frac{1}{2}}=\left(0,p_{11,j+\frac{1}{2}}^{e,\ast},C_0,0,0,0\right)^\top.
\]
Note that the first, fourth, fifth, and sixth components of both the flux and source terms approximation become zero. For the equation of momentum $m_1$, thanks to $\rho_h=\rho_h^e$, one has
\[
\langle S^{[2]},v\rangle_j=p_{11,j+\frac{1}{2}}^{e,\ast}v(x_{j+\frac{1}{2}}^{-})-p_{11,j-\frac{1}{2}}^{e,\ast}v(x_{j-\frac{1}{2}}^{+})-h_j\sum_{\mu=1}^N\omega_\mu(p_{11,h}^e)_x(x_j^{(\mu)})v_x(x_j^{(\mu)}).
\]
Let $F^{{[\ell]}}$ denotes the $\ell$-th component of $\mathbf{F}$. Since $\mathbf{u}_h=\mathbf{0}$, the flux term $F^{[2]}(\mathbf{U}_h(x_j^{(\mu)}))$ reduces to $p_{11,h}^e(x_j^{(\mu)})$. This implies
\begin{align*}
&h_j\sum_{\mu=1}^N\omega_\mu F^{[2]}(\mathbf{U}_h(x_j^{(\mu)}))v_x(x_j^{(\mu)})-\left(\widehat{F}_{j+\frac{1}{2}}^{[2]}v(x_{j+\frac{1}{2}}^{-})-
\widehat{F}_{j-\frac{1}{2}}^{[2]}v(x_{j-\frac{1}{2}}^{+})\right) \\
=&h_j\sum_{\mu=1}^N\omega_\mu p_{11,h}^e(x_j^{(\mu)})v_x(x_j^{(\mu)})-\left(p_{11,j+\frac{1}{2}}^{e,\ast}v(x_{j+\frac{1}{2}}^{-})-
p_{11,j-\frac{1}{2}}^{e,\ast}v(x_{j-\frac{1}{2}}^{+})\right) \\
=&-\langle S^{[2]},v\rangle_j.
\end{align*}
For the equation of momentum $m_2$, because $p_{12,h}^e=C_0$ is a constant at the equilibrium state (\ref{1D steady state}), one has
\begin{align*}
&h_j\sum_{\mu=1}^N\omega_\mu F^{[3]}(\mathbf{U}_h(x_j^{(\mu)}))v_x(x_j^{(\mu)})-\left(\widehat{F}_{j+\frac{1}{2}}^{[3]}v(x_{j+\frac{1}{2}}^{-})-
\widehat{F}_{j-\frac{1}{2}}^{[3]}v(x_{j-\frac{1}{2}}^{+})\right) \\
=&C_0h_j\sum_{\mu=1}^N\omega_\mu v_x(x_j^{(\mu)})-C_0(v(x_{j+\frac{1}{2}}^{-})-v(x_{j-\frac{1}{2}}^{+})) \\
=&C_0\left[\int_{I_j}v_xdx-(v(x_{j+\frac{1}{2}}^{-})-v(x_{j-\frac{1}{2}}^{+}))\right] \\
=&0.
\end{align*}
Hence, the right hand side of the DG methods (\ref{1D semi WB DG}) vanishes when $\mathbf{U}_h$ reaches the hydrostatic state. This implies
$\mathbf{U}_h^{\text{new}}=\mathbf{U}_h$ and completes the proof.
\end{proof}

To achieve high-order accuracy in time, some explicit strong-stability-preserving (SSP) methods \cite{gottlieb2001strong} can be used.
For example, one can utilize the third-order accurate SSP Rung-Kutta (SSP-RK) method
\begin{equation}\label{SSPRK3}
\begin{cases}
\mathbf{U}_h^{(1)}=\mathbf{U}_h^n+\Delta t\mathbf{L}(\mathbf{U}_h^n), \\
\mathbf{U}_h^{(2)}=\frac{3}{4}\mathbf{U}_h^n+\frac{1}{4}\left(\mathbf{U}_h^{(1)}+\Delta t\mathbf{L}(\mathbf{U}_h^{(1)})\right), \\
\mathbf{U}_h^{n+1}=\frac{1}{3}\mathbf{U}_h^n+\frac{2}{3}\left(\mathbf{U}_h^{(2)}+\Delta t\mathbf{L}(\mathbf{U}_h^{(2)})\right),
\end{cases}
\end{equation}
or the third-order accurate SSP multistep (SSP-MS) method
\begin{equation}\label{SSPMS3}
\mathbf{U}_h^{n+1}=\frac{16}{27}\left(\mathbf{U}_h^{n}+3\Delta t\mathbf{L}(\mathbf{U}_h^{n})\right)+
\frac{11}{27}\left(\mathbf{U}_h^{n-3}+\frac{12}{11}\Delta t\mathbf{L}(\mathbf{U}_h^{n-3})\right),
\end{equation}
where $\mathbf{U}_h^n$ denotes the DG solutions at the $n$-th time step.
Because they can be written as convex combinations of the forward Euler time discretization, they will keep the WB property and also maintain the positivity-preserving property discussed later.
\subsection{Positivity of first-order WB DG scheme}\label{pp analysis for 1d first-order schemes}
In this and the next subsections, we delve into the positivity-preserving analyses of our WB DG scheme {(\ref{1D semi WB DG})}. The modification to the solution states in the numerical flux and the specialized discretization of source terms introduce additional complexity into the positivity-preserving analyses, in comparison with the standard DG schemes \cite{meena2017positivity}. Notably, due to the presence of an anisotropic pressure tensor, our modification to the solution states in the numerical flux significantly differs from the approach in \cite{wu2021uniformly} designed for Euler equations with gravitation, where the pressure is a scalar. This distinction leads to some notable difficulties in our positivity-preserving analyses, rendering it more intricate than the corresponding analyses for the scalar pressure case in \cite{wu2021uniformly}.

Let $\overline{\mathbf{U}}_j(t){:=}\frac{1}{h_j}\int_{I_j}\mathbf{U}_h(x,t)dx$ denote the cell average of $\mathbf{U}_h$ over $I_j$. Taking $v=1$ in (\ref{1D semi WB DG}), one gets the evolution equations of the cell average as
\begin{equation}\label{1D cell average semi}
\overline{\mathbf{U}}_j^{new}=\overline{\mathbf{U}}_j-\frac{\Delta t}{h_j}\left(\widehat{\mathbf{F}}_{j+\frac{1}{2}}-\widehat{\mathbf{F}}_{j-\frac{1}{2}}\right)+\Delta t\overline{\mathbf{S}}_j^x=: \overline{\mathbf{U}}_j + \Delta t \mathbf{L}_j(\mathbf{U}_h),
\end{equation}
where $\overline{\mathbf{S}}_j^x=\left(0,\overline{S}_j^{[2]},0,\overline{S}_j^{[4]},\overline{S}_j^{[5]},0\right)^\top$ with $\overline{S}_j^{[{\ell}]}:=\frac{1}{h_j}\langle S^{[\ell]},1\rangle_j$, $\ell=2,4,5$.

When the DG polynomial degree $k=0$, one has $\mathbf{U}_h(x,t)\equiv\overline{\mathbf{U}}_j(t)$ for all $x\in I_j$, and (\ref{1D cell average semi}) reduces to the evolution of the cell average in first-order scheme with
\begin{equation}\label{1D first order flux}
\widehat{\mathbf{F}}_{j+\frac{1}{2}}=\mathbf{F}^{hllc}\left(\widehat{\overline{\mathbf{U}}}_j,\widehat{\overline{\mathbf{U}}}_{j+1}\right),
\end{equation}
where $\widehat{\overline{\mathbf{U}}}_j$ is related to $\overline{\mathbf{U}}_j$ by
\[
\widehat{\overline{\rho}}_j=\overline{\rho}_j, \quad
\widehat{\overline{\mathbf{m}}}_j=\mathbf{T}_j\overline{\mathbf{m}}_j, \quad
\widehat{\overline{\mathbf{E}}}_j=\mathbf{T}_j\overline{\mathbf{E}}_j(\mathbf{T}_j)^\top
\]
with
\[
\mathbf{T}_j=\begin{pmatrix}
               t_{1,j} & t_{2,j} \\
               0 & 1
             \end{pmatrix},
\]
where $t_{1,j}$ and $t_{2,j}$ are determined by the steady state solution; see (\ref{ph-pj1/2pm}) or Remark \ref{t1t2_high_order}. According to Lemma \ref{Uhat in G}, if $\overline{\mathbf{U}}_j\in\mathcal{G}$, then $\widehat{\overline{\mathbf{U}}}_j\in\mathcal{G}$.

We first analyze the positivity-preserving property of the homogeneous case, i.e., $\overline{\mathbf{S}}_j^x=0$. Using (\ref{1D first order flux}) gives
\begin{align}\label{1D first order scheme}
&\overline{\mathbf{U}}_j-\frac{\Delta t}{h_j}\left(\widehat{\mathbf{F}}_{j+\frac{1}{2}}-\widehat{\mathbf{F}}_{j-\frac{1}{2}}\right) \notag \\
=&\overline{\mathbf{U}}_j-\frac{\Delta t}{h_j}\left[\mathbf{F}^{hllc}\left(\widehat{\overline{\mathbf{U}}}_j,\widehat{\overline{\mathbf{U}}}_{j+1}\right)-
\mathbf{F}^{hllc}\left(\widehat{\overline{\mathbf{U}}}_{j-1},\widehat{\overline{\mathbf{U}}}_{j}\right)\right] \notag \\
=&\left(\overline{\mathbf{U}}_j-\xi\widehat{\overline{\mathbf{U}}}_j\right)+\xi\left[\widehat{\overline{\mathbf{U}}}_j-\frac{\Delta t}{\xi h_j}\left(\mathbf{F}^{hllc}\left(\widehat{\overline{\mathbf{U}}}_j,\widehat{\overline{\mathbf{U}}}_{j+1}\right)-
\mathbf{F}^{hllc}\left(\widehat{\overline{\mathbf{U}}}_{j-1},\widehat{\overline{\mathbf{U}}}_{j}\right)\right)\right] \notag \\
=&:\widetilde{\overline{\mathbf{U}}}_j+\mathbf{\Pi}_1,
\end{align}
where $\xi>0$ is a constant to be determined later. According to the positivity of the HLLC solver (see Lemma \ref{pp property}), if $\overline{\mathbf{U}}_j\in\mathcal{G}$ and the time step size satisfies
\begin{equation}\label{1D first order CFL}
\frac{\Delta t}{\xi h_j}\max\limits_{\mathbf{U}\in\{\widehat{\overline{\mathbf{U}}}_{j-1},\widehat{\overline{\mathbf{U}}}_{j},\widehat{\overline{\mathbf{U}}}_{j+1}\}}\alpha_{1}(\mathbf{U})\leq\frac{1}{2},
\end{equation}
then Lemma \ref{scale invariance} implies
\[
\mathbf{\Pi}_1=\xi\left[\widehat{\overline{\mathbf{U}}}_j-\frac{\Delta t}{\xi h_j}\left(\mathbf{F}^{hllc}\left(\widehat{\overline{\mathbf{U}}}_j,\widehat{\overline{\mathbf{U}}}_{j+1}\right)-
\mathbf{F}^{hllc}\left(\widehat{\overline{\mathbf{U}}}_{j-1},\widehat{\overline{\mathbf{U}}}_{j}\right)\right)\right]\in\mathcal{G} ~~ \forall \xi>0.
\]

Next, we derive a condition on $\xi$ such that $\widetilde{\overline{\mathbf{U}}}_j=\overline{\mathbf{U}}_j-\xi\widehat{\overline{\mathbf{U}}}_j\in\overline{\mathcal{G}}$.
 For the pressure tensor, one has
\begin{align*}
\widetilde{\overline{\mathbf{p}}}_j&=2\widetilde{\overline{\mathbf{E}}}_j-\frac{\widetilde{\overline{\mathbf{m}}}_j\widetilde{\overline{\mathbf{m}}}_j^\top}{\widetilde{\overline{\rho}}_j} \notag \\
&=2\left(\overline{\mathbf{E}}_j-\xi\mathbf{T}_j\overline{\mathbf{E}}_j\mathbf{T}_j^\top\right)-
\frac{\left(\mathbf{I}-\xi\mathbf{T}_j\right)\overline{\mathbf{m}}_j\overline{\mathbf{m}}_j^\top\left(\mathbf{I}-\xi\mathbf{T}_j\right)^\top}{(1-\xi)\overline{\rho}_j}.
\end{align*}
Substituting $\overline{\mathbf{E}}_j=\frac{1}{2}\left(\frac{\overline{\mathbf{m}}_j\overline{\mathbf{m}}_j^\top}{\overline{\rho}_j}+\overline{\mathbf{p}}_j\right)$ into above formula, one obtains
\begin{align}
\widetilde{\overline{\mathbf{p}}}_j&=\overline{\mathbf{p}}_j-\xi\mathbf{T}_j\overline{\mathbf{p}}_j\mathbf{T}_j^\top-\frac{\xi}{(1-\xi)\overline{\rho}_j}
\left(\mathbf{I}-\mathbf{T}_j\right)\overline{\mathbf{m}}_j\overline{\mathbf{m}}_j^\top\left(\mathbf{I}-\mathbf{T}_j\right)^\top \notag \\
&=\left[(1-\theta)\overline{\mathbf{p}}_j-\xi\mathbf{T}_j\overline{\mathbf{p}}_j\mathbf{T}_j^\top\right]+\left[\theta\overline{\mathbf{p}}_j-\frac{\xi}{(1-\xi)\overline{\rho}_j}\mathbf{Q}\right] \notag \\
&=:\mathbf{\Pi}_2+\mathbf{\Pi}_3, \label{WKLtemp1}
\end{align}
where $\theta \in [0,1)$ is an arbitrary parameter and
\[
\mathbf{Q}=\begin{pmatrix}
             q &0 \\
             0 & 0
           \end{pmatrix}
           :=\left(\mathbf{I}-\mathbf{T}_j\right)\overline{\mathbf{m}}_j\overline{\mathbf{m}}_j^\top\left(\mathbf{I}-\mathbf{T}_j\right)^\top
\]
with $q:=\left((1-t_{1,j})\overline{m}_{1,j}-t_{2,j}\overline{m}_{2,j}\right)^2$. If $q=0$, which implies $\mathbf{Q}\equiv0$, then $\theta$ is taken as 0; otherwise $\theta\in(0,1)$.
Denote
\begin{align}
f_1(\theta)&:=\frac{1-\theta}{\Vert\overline{\mathbf{p}}_j^{-\frac{1}{2}}\mathbf{T}_j\overline{\mathbf{p}}_j^{\frac{1}{2}}\Vert_2^2}, \label{f1_theta} \\
f_2(\theta)&:=\frac{\theta\overline{\rho}_j\overline{p}_{11,j}}{\theta\overline{\rho}_j\overline{p}_{11,j}+q}, \notag \\
f_3(\theta)&:=\frac{\theta\overline{\rho}_j\det(\overline{\mathbf{p}}_j)}{\theta\overline{\rho}_j\det(\overline{\mathbf{p}}_j)+q\overline{p}_{22,j}}. \notag
\end{align}
We then have the following conclusion.
\begin{lemma}\label{U_titlde_pp}
For $\overline{\mathbf{U}}_j\in\mathcal{G}$ and any $\theta \in [0,1)$, if
\begin{equation}\label{xi_f1_theta}
0<\xi\leq f(\theta)
\end{equation}
with
\[
f(\theta):=\begin{cases}
             \min\{f_1(\theta),f_2(\theta),f_3(\theta)\}, & \mbox{if } \theta\in(0,1), \\
             f_1(0), & \mbox{if } \theta=0,
           \end{cases}
\]
then $\widetilde{\overline{\mathbf{U}}}_j=\overline{\mathbf{U}}_j-\xi\widehat{\overline{\mathbf{U}}}_j\in\overline{\mathcal{G}}$.
\end{lemma}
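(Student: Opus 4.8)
\emph{Proof idea.} The goal is to verify the two defining conditions of $\overline{\mathcal{G}}$ for $\widetilde{\overline{\mathbf{U}}}_j$, namely $\widetilde{\overline{\rho}}_j\ge 0$ and positive-semidefiniteness of $\widetilde{\overline{\mathbf{p}}}_j$, and the natural route is to exploit the already-derived splitting \eqref{WKLtemp1}, $\widetilde{\overline{\mathbf{p}}}_j=\mathbf{\Pi}_2+\mathbf{\Pi}_3$, and show that each summand is positive-semidefinite. The structural fact I would record first is that $\mathbf{M}_j:=\overline{\mathbf{p}}_j^{-1/2}\mathbf{T}_j\overline{\mathbf{p}}_j^{1/2}$ is similar to $\mathbf{T}_j$, hence has eigenvalues $t_{1,j}$ and $1$; in particular its spectral norm $\|\mathbf{M}_j\|_2$ dominates its spectral radius, which is at least $1$, so $f_1(\theta)\le 1-\theta\le 1$. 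The density condition is then immediate, since $\widetilde{\overline{\rho}}_j=(1-\xi)\overline{\rho}_j$ with $0<\xi\le f(\theta)\le f_1(\theta)\le 1$; moreover $\xi<1$ whenever $\theta>0$, so in that case $\widetilde{\overline{\rho}}_j>0$ and the manipulation producing \eqref{WKLtemp1} (which divides by $(1-\xi)\overline{\rho}_j$) is legitimate. The single borderline value $\xi=f_1(0)=1$, which can only occur for $\theta=0$, I would dispatch by applying the conclusion with $\xi-\varepsilon$ in place of $\xi$ and letting $\varepsilon\to0^{+}$, using that $\overline{\mathcal{G}}$ is closed.

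For $\mathbf{\Pi}_2=(1-\theta)\overline{\mathbf{p}}_j-\xi\,\mathbf{T}_j\overline{\mathbf{p}}_j\mathbf{T}_j^\top$, I would test against an arbitrary $\mathbf{v}\in\mathbb{R}^2$: setting $\mathbf{w}=\overline{\mathbf{p}}_j^{1/2}\mathbf{v}$ rewrites $\mathbf{v}^\top\mathbf{\Pi}_2\mathbf{v}$ as $(1-\theta)\|\mathbf{w}\|_2^2-\xi\|\mathbf{M}_j^\top\mathbf{w}\|_2^2\ge\big((1-\theta)-\xi\|\mathbf{M}_j\|_2^2\big)\|\mathbf{w}\|_2^2\ge 0$, where the last inequality is exactly $\xi\le f_1(\theta)$; equivalently, congruence by $\overline{\mathbf{p}}_j^{-1/2}$ reduces $\mathbf{\Pi}_2\succeq\mathbf{0}$ to $\xi\,\mathbf{M}_j\mathbf{M}_j^\top\preceq(1-\theta)\mathbf{I}$. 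For $\mathbf{\Pi}_3=\theta\overline{\mathbf{p}}_j-\frac{\xi}{(1-\xi)\overline{\rho}_j}\mathbf{Q}$ with $\mathbf{Q}=\mathrm{diag}(q,0)$: when $\theta=0$ — which, by the construction preceding \eqref{WKLtemp1}, is the case $q=0$ — one has $\mathbf{Q}=\mathbf{0}$ and $\mathbf{\Pi}_3=\mathbf{0}$, and only $\xi\le f_1(0)$ is used; when $\theta\in(0,1)$ I would invoke the elementary criterion that a symmetric $2\times2$ matrix is positive-semidefinite iff its diagonal entries and its determinant are nonnegative. Here the $(2,2)$ entry $\theta\overline{p}_{22,j}$ is automatically positive, the $(1,1)$ entry $\theta\overline{p}_{11,j}-\frac{\xi q}{(1-\xi)\overline{\rho}_j}$ is nonnegative precisely when $\frac{\xi}{1-\xi}\le\frac{\theta\overline{\rho}_j\overline{p}_{11,j}}{q}$, and a one-line computation gives $\det\mathbf{\Pi}_3=\theta^2\det(\overline{\mathbf{p}}_j)-\frac{\xi q\,\theta\,\overline{p}_{22,j}}{(1-\xi)\overline{\rho}_j}$, which is nonnegative precisely when $\frac{\xi}{1-\xi}\le\frac{\theta\overline{\rho}_j\det(\overline{\mathbf{p}}_j)}{q\,\overline{p}_{22,j}}$. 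Since $s\mapsto s/(1-s)$ is increasing on $[0,1)$ and these two thresholds are attained at $\xi=f_2(\theta)$ and $\xi=f_3(\theta)$ respectively, the hypothesis $\xi\le f(\theta)=\min\{f_1,f_2,f_3\}$ forces $\mathbf{\Pi}_3\succeq\mathbf{0}$. Adding the two pieces yields $\widetilde{\overline{\mathbf{p}}}_j\succeq\mathbf{0}$, and combined with $\widetilde{\overline{\rho}}_j\ge 0$ this gives $\widetilde{\overline{\mathbf{U}}}_j\in\overline{\mathcal{G}}$.

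The step I expect to be the main obstacle is $\mathbf{\Pi}_2$: one must recognize that, after the congruence by $\overline{\mathbf{p}}_j^{-1/2}$, the governing quantity is the spectral norm of $\mathbf{M}_j=\overline{\mathbf{p}}_j^{-1/2}\mathbf{T}_j\overline{\mathbf{p}}_j^{1/2}$ rather than that of $\mathbf{T}_j$ itself, and use the similarity $\mathbf{M}_j\sim\mathbf{T}_j$ (both here and in the density estimate). This is exactly where the anisotropy of $\overline{\mathbf{p}}_j$, absent in the scalar-pressure Euler case of \cite{wu2021uniformly}, enters, and it is what dictates the somewhat unusual form of $f_1$. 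The only remaining delicacy is the bookkeeping for the degenerate value $\xi=1$, absorbed by the continuity/closedness argument noted above.
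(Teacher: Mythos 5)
Your proposal is correct and follows essentially the same route as the paper's proof: the same splitting $\widetilde{\overline{\mathbf{p}}}_j=\mathbf{\Pi}_2+\mathbf{\Pi}_3$, the congruence by $\overline{\mathbf{p}}_j^{\pm 1/2}$ reducing $\mathbf{\Pi}_2\succeq\mathbf{0}$ to the spectral-norm bound $\xi\Vert\overline{\mathbf{p}}_j^{-1/2}\mathbf{T}_j\overline{\mathbf{p}}_j^{1/2}\Vert_2^2\leq 1-\theta$, the similarity to $\mathbf{T}_j$ to get $f_1\leq 1$ and hence $\widetilde{\overline{\rho}}_j\geq 0$, and the diagonal-plus-determinant criterion for $\mathbf{\Pi}_3$ yielding exactly the thresholds $f_2(\theta)$ and $f_3(\theta)$. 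Your extra limiting argument for the borderline $\xi=f_1(0)=1$ is a small additional precaution the paper does not bother with (it concludes $\widetilde{\overline{\rho}}_j\geq0$ directly, which suffices for membership in $\overline{\mathcal{G}}$), but it does not change the substance.
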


\begin{proof}
	Based on the decomposition in \eqref{WKLtemp1}, it suffices to show that
	$\mathbf{\Pi}_2$ and $\mathbf{\Pi}_3$ are positive-semidefinite and
	that the first component $\widetilde{\overline{\rho}}_j$ of $\widetilde{\overline{\mathbf{U}}}_j$ is nonnegative.

Because $\overline{\mathbf{p}}_j$ is positive-definite, one has
\begin{align*}
\mathbf{\Pi}_2&=\overline{\mathbf{p}}_j^{\frac{1}{2}}\left[(1-\theta)\mathbf{I}-\xi\overline{\mathbf{p}}_j^{-\frac{1}{2}}\mathbf{T}_j\overline{\mathbf{p}}_j\mathbf{T}_j^\top\overline{\mathbf{p}}_j^{-\frac{1}{2}}\right]\overline{\mathbf{p}}_j^{\frac{1}{2}} \notag \\
&=:\overline{\mathbf{p}}_j^{\frac{1}{2}}\widehat{\mathbf{\Pi}}_2\overline{\mathbf{p}}_j^{\frac{1}{2}}.
\end{align*}
 Hence, $\mathbf{\Pi}_2$ is positive-semidefinite if and only if $\widehat{\mathbf{\Pi}}_2$ is positive-semidefinite, namely, if and only if all eigenvalues of $\widehat{\mathbf{\Pi}}_2$ are nonnegative. Because $\widehat{\mathbf{\Pi}}_2=(1-\theta)\mathbf{I}-\xi\overline{\mathbf{p}}_j^{-\frac{1}{2}}\mathbf{T}_j\overline{\mathbf{p}}_j^{\frac{1}{2}}(\overline{\mathbf{p}}_j^{-\frac{1}{2}}\mathbf{T}_j\overline{\mathbf{p}}_j^{\frac{1}{2}})^\top$, if $(1-\theta)-\xi\Vert\overline{\mathbf{p}}_j^{-\frac{1}{2}}\mathbf{T}_j\overline{\mathbf{p}}_j^{\frac{1}{2}}\Vert_2^2\geq0$ or equivalently if
\begin{equation}\label{positive definite condition 1}
\xi\leq\frac{1-\theta}{\Vert\overline{\mathbf{p}}_j^{-\frac{1}{2}}\mathbf{T}_j\overline{\mathbf{p}}_j^{\frac{1}{2}}\Vert_2^2}=f_1(\theta),
\end{equation}
then all eigenvalues of $\widehat{\mathbf{\Pi}}_2$ are nonnegative. This implies that $\mathbf{\Pi}_2$ is positive-semidefinite.

If $q=0$, then $\mathbf{Q}\equiv\mathbf{0}$. In this case, we take $\theta=0$, and then $\mathbf{\Pi}_3={\bf 0}$. Hence, if $\xi$ satisfies (\ref{positive definite condition 1}), then $\widetilde{\overline{\mathbf{p}}}_j=\mathbf{\Pi}_2\in\overline{\mathcal{G}}$. Note that
  $\Vert\overline{\mathbf{p}}_j^{-\frac{1}{2}}\mathbf{T}_j\overline{\mathbf{p}}_j^{\frac{1}{2}}\Vert_2$ is equal to or larger than the spectral radius of $\overline{\mathbf{p}}_j^{-\frac{1}{2}}\mathbf{T}_j\overline{\mathbf{p}}_j^{\frac{1}{2}}$,
  which equals the spectral radius of $\mathbf{T}_j$.
  Thus, $\Vert\overline{\mathbf{p}}_j^{-\frac{1}{2}}\mathbf{T}_j\overline{\mathbf{p}}_j^{\frac{1}{2}}\Vert_2 \ge \max \{|t_{1,j}|,1\} \ge 1$, and it follows that $f_1(0)\in(0,1]$. Hence, under the condition (\ref{xi_f1_theta}), one has $\widetilde{\overline{\rho}}_j=(1-\xi)\overline{\rho}_j\geq0$. In conclusion,  $\widetilde{\overline{\mathbf{U}}}_j\in\overline{\mathcal{G}}$.

If $q>0$, then $\mathbf{Q}\neq\mathbf{0}$. In this case, we require that $\theta\in(0,1)$ and have
\[
\mathbf{\Pi}_3=\begin{pmatrix}
        \theta\overline{p}_{11,j}-\frac{\xi}{(1-\xi)\overline{\rho}_j}q & \theta\overline{p}_{12,j} \\
        \theta\overline{p}_{12,j} & \theta\overline{p}_{22,j}
      \end{pmatrix}.
\]
Note that $\mathbf{\Pi}_3$ is positive-semidefinite under the following conditions
\begin{align}
\xi&\leq\frac{\theta\overline{\rho}_j\overline{p}_{11,j}}{\theta\overline{\rho}_j\overline{p}_{11,j}+q}=f_2(\theta), \label{positive definite condition 2} \\
\xi&\leq\frac{\theta\overline{\rho}_j\det(\overline{\mathbf{p}}_j)}{\theta\overline{\rho}_j\det(\overline{\mathbf{p}}_j)+q\overline{p}_{22,j}}=f_3(\theta). \label{positive definite condition 4}
\end{align}
Hence, if $\xi$ satisfies (\ref{positive definite condition 1})--(\ref{positive definite condition 4}) simultaneously, i.e.,
\begin{equation*}
\xi\leq\min\{f_1(\theta),f_2(\theta),f_3(\theta)\}=f(\theta), \quad \theta\in(0,1),
\end{equation*}
then $\widetilde{\overline{\mathbf{p}}}_j=\mathbf{\Pi}_2+\mathbf{\Pi}_3$ is positive-semidefinite. When $q>0$, one has $f_2(\theta), f_3(\theta)\in(0,1)$ for any $\theta\in(0,1)$. Because $\Vert\overline{\mathbf{p}}_j^{-\frac{1}{2}}\mathbf{T}_j\overline{\mathbf{p}}_j^{\frac{1}{2}}\Vert_2\geq1$, one has $f_1(\theta)\in(0,1)$ for any $\theta\in(0,1)$. Hence, under the condition (\ref{xi_f1_theta}), one has $\xi\in(0,1)$ and then $\widetilde{\overline{\rho}}_j=(1-\xi)\overline{\rho}_j>0$. This, together with the positive-semidefiniteness of $\widetilde{\overline{\mathbf{p}}}_j$, implies that $\widetilde{\overline{\mathbf{U}}}_j\in\overline{\mathcal{G}}$.
\end{proof}

Combining (\ref{1D first order scheme}), Lemma \ref{U_titlde_pp} with Lemma \ref{general combination} gives the following result: for any given parameter $\theta \in [0,1)$,
 if $\xi$ satisfies (\ref{xi_f1_theta}) and the time step size satisfies (\ref{1D first order CFL}), then
\[
\overline{\mathbf{U}}_j-\frac{\Delta t}{h_j}\left(\widehat{\mathbf{F}}_{j+\frac{1}{2}}-\widehat{\mathbf{F}}_{j-\frac{1}{2}}\right)=\widetilde{\overline{\mathbf{U}}}_j+\mathbf{\Pi}_1\in\mathcal{G}.
\]
It is worth exploring the ``optimal'' parameter $\theta_{\ast}$ such that the positivity-preserving CFL condition (\ref{1D first order CFL}) is as mild as possible, or equivalently, $\xi$ as large as possible.
Because of the constraint \eqref{xi_f1_theta}, the task is to find the ``optimal'' parameter $\theta_{\ast}$ that maximizes $f(\theta)$ .
Observe that
\[
\max_{\theta\in[0,1)}f(\theta)=f(\theta_{\ast})=f_1(\theta_{\ast}),
\]
where
\begin{equation}\label{theta_ast}
\theta_{\ast}:=\max\{\theta_{1,2},\theta_{1,3}\}
\end{equation}
with
\[
\theta_{1,\ell}:=\frac{\sqrt{(f^{(1)}+f^{(\ell)}-1)^2+4f^{(\ell)}}-(f^{(1)}+f^{(\ell)}-1)}{2},
\quad \ell=2,3,
\]
and
\[
f^{(1)}:=\Vert\overline{\mathbf{p}}_j^{-\frac{1}{2}}\mathbf{T}_j\overline{\mathbf{p}}_j^{\frac{1}{2}}\Vert_2^2,
\quad
f^{(2)}:=\frac{q}{\overline{\rho}_j\overline{p}_{11,j}},
\quad
f^{(3)}:=\frac{q\overline{p}_{22,j}}{\overline{\rho}_j\det(\overline{\mathbf{p}}_j)}.
\]
In fact, when $q>0$, $\theta_{1,\ell}$ is the unique intersection point of the graphs of $f_1(\theta)$ and $f_\ell(\theta)$ for $\ell=2,3$, respectively; when $q=0$, $\theta_{1,\ell}=0$ for $\ell=2,3$ and $\theta_\ast=0$.
In summary, we have the following theorem.

%
%

\begin{theorem}\label{1D first order homogeneous positivity}
If the DG polynomial degree $k=0$, $\overline{\mathbf{U}}_j\in\mathcal{G}$ for all $j$, and the time step size satisfies the CFL-type condition
\begin{equation}\label{k0 PP CFL}
\frac{\Delta t}{h_j}\left(\frac{1}{\xi_\ast(\overline{\mathbf{U}}_j)}\max\limits_{\mathbf{U}\in\{\widehat{\overline{\mathbf{U}}}_{j-1},\widehat{\overline{\mathbf{U}}}_{j},\widehat{\overline{\mathbf{U}}}_{j+1}\}}\alpha_{1}(\mathbf{U})\right)\leq\frac{1}{2},
\end{equation}
where $\xi_\ast(\overline{\mathbf{U}}_j):=f_1(\theta_\ast)$,
then
\[
\overline{\mathbf{U}}_j-\frac{\Delta t}{h_j}\left(\widehat{\mathbf{F}}_{j+\frac{1}{2}}-\widehat{\mathbf{F}}_{j-\frac{1}{2}}\right)\in\mathcal{G}.
\]
Here the function $f_1(\theta)$ is defined in {\rm (\ref{f1_theta})}, and $\theta_\ast$ is defined in {\rm (\ref{theta_ast})}.
\end{theorem}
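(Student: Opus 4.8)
The plan is to assemble the ingredients already in place and then carry out the optimization over the free parameter $\theta$. First I would recall the decomposition \eqref{1D first order scheme}: for any $\xi>0$,
\[
\overline{\mathbf{U}}_j-\frac{\Delta t}{h_j}\left(\widehat{\mathbf{F}}_{j+\frac{1}{2}}-\widehat{\mathbf{F}}_{j-\frac{1}{2}}\right)=\widetilde{\overline{\mathbf{U}}}_j+\mathbf{\Pi}_1, \qquad \widetilde{\overline{\mathbf{U}}}_j=\overline{\mathbf{U}}_j-\xi\widehat{\overline{\mathbf{U}}}_j.
\]
Since $\widehat{\overline{\mathbf{U}}}_j\in\mathcal{G}$ by Lemma~\ref{Uhat in G}, Lemma~\ref{pp property} applied to $\widehat{\overline{\mathbf{U}}}_{j-1},\widehat{\overline{\mathbf{U}}}_j,\widehat{\overline{\mathbf{U}}}_{j+1}$ together with the positive scaling in Lemma~\ref{scale invariance} gives $\mathbf{\Pi}_1\in\mathcal{G}$ whenever the step size obeys \eqref{1D first order CFL}; and Lemma~\ref{U_titlde_pp} gives $\widetilde{\overline{\mathbf{U}}}_j\in\overline{\mathcal{G}}$ whenever $0<\xi\le f(\theta)$ for some $\theta\in[0,1)$. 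Combining these two facts through Lemma~\ref{general combination} (with $\lambda_0=\lambda_1=1$) yields the claimed membership in $\mathcal{G}$, provided we can pick a single $\xi$ satisfying both $0<\xi\le f(\theta)$ and \eqref{1D first order CFL}. So the only remaining issue is to choose $\theta$ and $\xi$ so that the resulting CFL restriction is as mild as possible.

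Because \eqref{1D first order CFL} weakens as $\xi$ grows, while $\xi$ is capped by $f(\theta)$, the task reduces to computing $\max_{\theta\in[0,1)}f(\theta)$ and taking $\xi$ equal to it. This is where the real work lies. I would first record the monotonicities: $f_1(\theta)=(1-\theta)/f^{(1)}$ with $f^{(1)}=\Vert\overline{\mathbf{p}}_j^{-\frac{1}{2}}\mathbf{T}_j\overline{\mathbf{p}}_j^{\frac{1}{2}}\Vert_2^2\ge 1$ is strictly decreasing from $f_1(0)\in(0,1]$ to $0$; and, writing $f_2(\theta)=\theta/(\theta+f^{(2)})$, $f_3(\theta)=\theta/(\theta+f^{(3)})$ with $f^{(2)}=q/(\overline{\rho}_j\overline{p}_{11,j})$ and $f^{(3)}=q\overline{p}_{22,j}/(\overline{\rho}_j\det(\overline{\mathbf{p}}_j))$, both $f_2,f_3$ are strictly increasing from $0$ when $q>0$. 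Next I would note $\det(\overline{\mathbf{p}}_j)\le\overline{p}_{11,j}\overline{p}_{22,j}$, hence $f^{(2)}\le f^{(3)}$, so $f_2\ge f_3$ on $(0,1)$ and the crossing abscissas satisfy $\theta_{1,2}\le\theta_{1,3}$, where $\theta_{1,\ell}$ is the unique root in $[0,1)$ of $\theta^2+(f^{(1)}+f^{(\ell)}-1)\theta-f^{(\ell)}=0$, i.e.\ the unique point with $f_1=f_\ell$. Tracking which of $f_1,f_2,f_3$ is smallest on the intervals cut out by $\theta_{1,2},\theta_{1,3}$ shows that $f$ equals the increasing branch $\min\{f_2,f_3\}=f_3$ on $[0,\theta_{1,3}]$ and the decreasing branch $f_1$ on $[\theta_{1,3},1)$; hence $f$ is unimodal and attains its maximum at $\theta_\ast=\theta_{1,3}=\max\{\theta_{1,2},\theta_{1,3}\}$ as in \eqref{theta_ast}, with $\max_\theta f(\theta)=f(\theta_\ast)=f_1(\theta_\ast)$. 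The degenerate case $q=0$ must be handled separately: there $\theta$ is forced to $0$, $\theta_{1,2}=\theta_{1,3}=0$, and the bound collapses to $f_1(0)$, consistent with $\theta_\ast=0$.

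Finally I would set $\theta=\theta_\ast$ and $\xi=\xi_\ast(\overline{\mathbf{U}}_j)=f_1(\theta_\ast)$. Then $\xi=f(\theta_\ast)>0$ fulfils \eqref{xi_f1_theta}, so $\widetilde{\overline{\mathbf{U}}}_j\in\overline{\mathcal{G}}$ by Lemma~\ref{U_titlde_pp}; and with this $\xi$ the condition \eqref{1D first order CFL} is precisely the stated bound \eqref{k0 PP CFL}, so $\mathbf{\Pi}_1\in\mathcal{G}$. Lemma~\ref{general combination} then gives $\overline{\mathbf{U}}_j-\frac{\Delta t}{h_j}(\widehat{\mathbf{F}}_{j+\frac{1}{2}}-\widehat{\mathbf{F}}_{j-\frac{1}{2}})=\widetilde{\overline{\mathbf{U}}}_j+\mathbf{\Pi}_1\in\mathcal{G}$, completing the proof. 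The main obstacle is not any positivity estimate — those are packaged cleanly in Lemma~\ref{U_titlde_pp} — but the elementary-looking yet fiddly optimization of $f=\min\{f_1,f_2,f_3\}$: one must get the monotonicities and the ordering $\theta_{1,2}\le\theta_{1,3}$ right, argue unimodality to justify that $\theta_\ast$ is really the argmax, and treat the boundary case $q=0$ where the feasible set of $\theta$ degenerates to $\{0\}$.
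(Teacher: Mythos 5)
Your proposal is correct and follows essentially the same route as the paper: the splitting $\overline{\mathbf{U}}_j-\frac{\Delta t}{h_j}(\widehat{\mathbf{F}}_{j+\frac{1}{2}}-\widehat{\mathbf{F}}_{j-\frac{1}{2}})=\widetilde{\overline{\mathbf{U}}}_j+\mathbf{\Pi}_1$, with $\mathbf{\Pi}_1\in\mathcal{G}$ from Lemmas~\ref{scale invariance} and \ref{pp property}, $\widetilde{\overline{\mathbf{U}}}_j\in\overline{\mathcal{G}}$ from Lemma~\ref{U_titlde_pp}, the combination via Lemma~\ref{general combination}, and the choice $\xi=\xi_\ast=f_1(\theta_\ast)$ maximizing $f$. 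Your added justification of the optimization (the inequality $f^{(2)}\le f^{(3)}$ via $\det(\overline{\mathbf{p}}_j)\le\overline{p}_{11,j}\overline{p}_{22,j}$, the resulting ordering $\theta_{1,2}\le\theta_{1,3}$, the unimodality of $f$, and the degenerate case $q=0$) is a correct elaboration of a step the paper only asserts.
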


\begin{remark}
It is worth noting that $\xi_\ast=f_1(\theta_\ast)=1+O(h^{k+1})$, which can be proven as follows. Remark {\rm \ref{t1t2_high_order}} indicates that $t_{1,j}=1+O(h^{k+1})$ and $t_{2,j}=O(h^{k+1})$, which implies  $q=\left((1-t_{1,j})\overline{m}_{1,j}-t_{2,j}\overline{m}_{2,j}\right)^2=O(h^{2(k+1)})$. It follows that $f^{(2)}=O(h^{2(k+1)})$ and $f^{(3)}=O(h^{2(k+1)})$. Since $\mathbf{T}_j$ is an approximation to the identity matrix of order $k+1$, one has $f^{(1)}=1+O(h^{k+1})$. Hence $\theta_\ast=\max\{\theta_{1,2},\theta_{1,3}\}=O(h^{k+1})$. Consequently, $\xi_\ast=f_1(\theta_\ast)=\frac{1-\theta_\ast}{f^{(1)}}=1+O(h^{k+1})$.
In view of this fact, the CFL-type condition \eqref{k0 PP CFL} is not restrictive.
\end{remark}


Combining Theorem \ref{1D first order homogeneous positivity} and the GQL representation of the admissible set $\mathcal{G}$ (see Lemma \ref{GQL}) leads to the following corollary. It will play a vital role in the subsequent positivity-preserving analyses.
\begin{corollary}\label{1d_first_order_corollary}
For any $\mathbf{U}_1,\mathbf{U}_2,\mathbf{U}_3\in\mathcal{G}$, it holds that
\begin{align*}
{\bf \Pi}(\widehat{\mathbf{U}}_1,\widehat{\mathbf{U}}_2,\widehat{\mathbf{U}}_3)\cdot\mathbf{e}_1&>- \eta_1^\ast \mathbf{U}_1\cdot\mathbf{e}_1,
\\
\varphi\left({\bf \Pi}(\widehat{\mathbf{U}}_1,\widehat{\mathbf{U}}_2,\widehat{\mathbf{U}}_3);\mathbf{z},\mathbf{u}_\ast\right)&>- \eta_1^\ast \varphi(\mathbf{U}_1;\mathbf{z},\mathbf{u}_\ast) \quad \forall \mathbf{u}_{*}\in\mathbb{R}^2, ~ \forall \mathbf{z}\in\mathbb{R}^2\backslash\{\mathbf{0}\},
\end{align*}
where
\begin{align*}
&	{\bf \Pi}(\widehat{\mathbf{U}}_1,\widehat{\mathbf{U}}_2,\widehat{\mathbf{U}}_3):=-\left[\mathbf{F}^{hllc}(\widehat{\mathbf{U}}_{1},\widehat{\mathbf{U}}_{2})
-\mathbf{F}^{hllc}(\widehat{\mathbf{U}}_{3},\widehat{\mathbf{U}}_{1})\right],
\\
&\eta_1^\ast := \frac{2}{\xi_\ast(\mathbf{U}_1)}\max\limits_{\mathbf{U}\in\{\widehat{\mathbf{U}}_{1},\widehat{\mathbf{U}}_{2},\widehat{\mathbf{U}}_{3}\}}\alpha_{1}(\mathbf{U}).
\end{align*}
\end{corollary}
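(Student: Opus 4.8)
The plan is to deduce this corollary directly from Theorem~\ref{1D first order homogeneous positivity} together with the GQL representation in Lemma~\ref{GQL}, reading it as the statement of Theorem~\ref{1D first order homogeneous positivity} in which the time--space ratio is taken as large as the positivity-preserving CFL condition permits. Concretely, I would let $\mathbf{U}_1,\mathbf{U}_2,\mathbf{U}_3$ play the roles of the cell averages $\overline{\mathbf{U}}_j,\overline{\mathbf{U}}_{j+1},\overline{\mathbf{U}}_{j-1}$, so that, with the first-order fluxes \eqref{1D first order flux}, one has $\widehat{\mathbf{F}}_{j+\frac{1}{2}}-\widehat{\mathbf{F}}_{j-\frac{1}{2}}=\mathbf{F}^{hllc}(\widehat{\mathbf{U}}_1,\widehat{\mathbf{U}}_2)-\mathbf{F}^{hllc}(\widehat{\mathbf{U}}_3,\widehat{\mathbf{U}}_1)=-\mathbf{\Pi}(\widehat{\mathbf{U}}_1,\widehat{\mathbf{U}}_2,\widehat{\mathbf{U}}_3)$, and hence $\overline{\mathbf{U}}_j-\frac{\Delta t}{h_j}(\widehat{\mathbf{F}}_{j+\frac{1}{2}}-\widehat{\mathbf{F}}_{j-\frac{1}{2}})=\mathbf{U}_1+\frac{\Delta t}{h_j}\,\mathbf{\Pi}(\widehat{\mathbf{U}}_1,\widehat{\mathbf{U}}_2,\widehat{\mathbf{U}}_3)$.

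Next I would set the ratio $\frac{\Delta t}{h_j}=\frac{1}{\eta_1^\ast}$. Since $\eta_1^\ast=\frac{2}{\xi_\ast(\mathbf{U}_1)}\max_{\mathbf{U}\in\{\widehat{\mathbf{U}}_1,\widehat{\mathbf{U}}_2,\widehat{\mathbf{U}}_3\}}\alpha_1(\mathbf{U})$, this choice makes the left-hand side of \eqref{k0 PP CFL} equal to exactly $\frac{1}{2}$, so the CFL-type condition of Theorem~\ref{1D first order homogeneous positivity} is satisfied, and the theorem yields $\mathbf{U}_1+\frac{1}{\eta_1^\ast}\mathbf{\Pi}(\widehat{\mathbf{U}}_1,\widehat{\mathbf{U}}_2,\widehat{\mathbf{U}}_3)\in\mathcal{G}$. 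By the GQL representation $\mathcal{G}=\mathcal{G}_\ast$ (Lemma~\ref{GQL}), this membership is equivalent to $(\mathbf{U}_1+\frac{1}{\eta_1^\ast}\mathbf{\Pi})\cdot\mathbf{e}_1>0$ together with $\varphi(\mathbf{U}_1+\frac{1}{\eta_1^\ast}\mathbf{\Pi};\mathbf{z},\mathbf{u}_\ast)>0$ for all $\mathbf{u}_\ast\in\mathbb{R}^2$ and all $\mathbf{z}\in\mathbb{R}^2\setminus\{\mathbf{0}\}$. Using the linearity of $\mathbf{U}\mapsto\mathbf{U}\cdot\mathbf{e}_1$ and of $\mathbf{U}\mapsto\varphi(\mathbf{U};\mathbf{z},\mathbf{u}_\ast)$ (Lemma~\ref{GQL}) to split these expressions, and then multiplying through by $\eta_1^\ast>0$, I would obtain precisely the two claimed inequalities.

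The only delicate points — and the closest thing to an obstacle — are bookkeeping: one must verify that $\eta_1^\ast>0$, so that dividing and multiplying by $\eta_1^\ast$ is legitimate and the prescribed ratio $\frac{\Delta t}{h_j}=\frac{1}{\eta_1^\ast}$ is admissible. This holds because, for $\mathbf{U}_1\in\mathcal{G}$, $\xi_\ast(\mathbf{U}_1)=f_1(\theta_\ast)\in(0,1]$ by the argument in the proof of Lemma~\ref{U_titlde_pp}, while $\alpha_1(\mathbf{U})=|u_1|+\sqrt{3p_{11}/\rho}>0$ for every $\mathbf{U}\in\mathcal{G}$, so the maximum over the three admissible states $\widehat{\mathbf{U}}_1,\widehat{\mathbf{U}}_2,\widehat{\mathbf{U}}_3$ (which lie in $\mathcal{G}$ by Lemma~\ref{Uhat in G}) is strictly positive. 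I would also note that Theorem~\ref{1D first order homogeneous positivity} constrains only the ratio $\Delta t/h_j$, so the argument is purely algebraic and independent of any actual mesh; the strict inequalities in the conclusion are inherited for free from the openness of $\mathcal{G}$, equivalently from the strict inequalities defining $\mathcal{G}_\ast$.
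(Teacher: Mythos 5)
Your proposal is correct and is essentially the paper's own argument: the paper derives this corollary precisely by "combining Theorem~\ref{1D first order homogeneous positivity} and the GQL representation (Lemma~\ref{GQL})", which is exactly your reading of the theorem at the extremal ratio $\Delta t/h_j = 1/\eta_1^\ast$ followed by linearity of $\mathbf{U}\mapsto\mathbf{U}\cdot\mathbf{e}_1$ and $\mathbf{U}\mapsto\varphi(\mathbf{U};\mathbf{z},\mathbf{u}_\ast)$. Your checks that $\eta_1^\ast>0$ and that the strict inequalities are inherited from the openness of $\mathcal{G}_\ast$ are sound and fill in details the paper leaves implicit.
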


Next, we discuss the nonhomogeneous case. When $k=0$, it follows from (\ref{S2 integral approximation})--(\ref{S5 integral approximation}) that
\begin{equation*}
\overline{\mathbf{S}}_j^x=\overline{\alpha}_{11,j}\overline{\mathbf{S}}_{1,j}
\end{equation*}
with
\[
\overline{\alpha}_{11,j}:=\frac{p_{11,j+\frac{1}{2}}^{e,\ast}-p_{11,j-\frac{1}{2}}^{e,\ast}}{h_j{\overline{(\rho_h^e)}_j}}, \quad
\overline{\mathbf{S}}_{1,j}:=\left(0,\overline{\rho}_j,0,\overline{m}_{1,j},\frac{1}{2}\overline{m}_{2,j},0\right)^\top.
\]
The scheme (\ref{1D cell average semi}) can be rewritten as
\begin{equation}\label{eq:WKLtmp}
	\overline{\mathbf{U}}_j^{\text{new}} = \overline{\mathbf{U}}_j+\Delta t\mathbf{L}_j(\mathbf{U}_h)=\overline{\mathbf{U}}_j + \Delta t \left(  \frac{1}{h_j} {\bf \Pi}_4 +  \overline{\mathbf{S}}_j^x \right)
\end{equation}
with
\[
{\bf \Pi}_4:=-\left[\mathbf{F}^{hllc}\left(\widehat{\overline{\mathbf{U}}}_j,\widehat{\overline{\mathbf{U}}}_{j+1}\right)-
\mathbf{F}^{hllc}\left(\widehat{\overline{\mathbf{U}}}_{j-1},\widehat{\overline{\mathbf{U}}}_{j}\right)\right].
\]

\begin{theorem}\label{1D first order scheme PP theorem}
	If the DG polynomial degree $k=0$ and $\overline{\mathbf{U}}_j\in\mathcal{G}$ for all $j$, then
	\[
	\overline{\mathbf{U}}_j^{\text{new}} =  \overline{\mathbf{U}}_j+\Delta t\mathbf{L}_j(\mathbf{U}_h)\in\mathcal{G}
	\]
	under the CFL-type condition
\begin{equation}\label{eq:CFLWKL1}
		\Delta t \left( \frac{\eta_{1,j}^\ast}{\Delta x}+\overline{\beta}_{11,j} \right) \leq 1,
\end{equation}
	where 
	\begin{equation}
		\overline{\beta}_{11,j}:=\left|{\overline{\alpha}_{11,j}\delta_1(\overline{\mathbf{U}}_j)}\right|, \qquad \eta_{1,j}^\ast:=\frac{2}{\xi_\ast ( {\overline{\mathbf{U}}}_{j}) }\max\limits_{\mathbf{U}\in\{\widehat{\overline{\mathbf{U}}}_{j-1},\widehat{\overline{\mathbf{U}}}_{j},\widehat{\overline{\mathbf{U}}}_{j+1}\}}\alpha_{1}(\mathbf{U}).
	\end{equation}

\end{theorem}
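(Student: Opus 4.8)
## Proof Strategy for Theorem 3.7

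The plan is to split the update $\overline{\mathbf{U}}_j^{\text{new}} = \overline{\mathbf{U}}_j + \Delta t\big(\tfrac{1}{h_j}{\bf \Pi}_4 + \overline{\alpha}_{11,j}\overline{\mathbf{S}}_{1,j}\big)$ into a convex-type combination whose pieces are each provably in $\overline{\mathcal{G}}$ or $\mathcal{G}$, and then invoke Lemma \ref{general combination}. The natural splitting parameter is the time-step fraction devoted to the flux term versus the source term: write
\[
\overline{\mathbf{U}}_j^{\text{new}} = \Big(\overline{\mathbf{U}}_j + \frac{\Delta t}{h_j}{\bf \Pi}_4\Big)\;+\;\Delta t\,\overline{\alpha}_{11,j}\overline{\mathbf{S}}_{1,j},
\]
but since $\overline{\mathbf{S}}_{1,j}$ is not by itself a vector in $\overline{\mathcal{G}}$, the cleaner route is to argue via the GQL representation (Lemma \ref{GQL}): it suffices to show that $\overline{\mathbf{U}}_j^{\text{new}}\cdot\mathbf{e}_1 > 0$ and $\varphi(\overline{\mathbf{U}}_j^{\text{new}};\mathbf{z},\mathbf{u}_*) > 0$ for every $\mathbf{z}\in\mathbb{R}^2\setminus\{\mathbf{0}\}$ and $\mathbf{u}_*\in\mathbb{R}^2$. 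Because $\mathbf{e}_1\cdot(\cdot)$ and $\varphi(\cdot;\mathbf{z},\mathbf{u}_*)$ are both \emph{linear} in $\mathbf{U}$, these quantities distribute over the decomposition, converting the nonlinear positive-definiteness requirement into scalar inequalities — exactly the advantage the GQL approach is designed to provide.

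First I would apply the linear functional $\mathbf{e}_1\cdot(\cdot)$ to the update. The source vector $\overline{\mathbf{S}}_{1,j}$ has vanishing first component, so $\overline{\mathbf{U}}_j^{\text{new}}\cdot\mathbf{e}_1 = \overline{\mathbf{U}}_j\cdot\mathbf{e}_1 + \tfrac{\Delta t}{h_j}\,{\bf \Pi}_4\cdot\mathbf{e}_1$; by the first inequality in Corollary \ref{1d_first_order_corollary} (with $\mathbf{U}_1 = \overline{\mathbf{U}}_j$, $\mathbf{U}_2 = \overline{\mathbf{U}}_{j+1}$, $\mathbf{U}_3 = \overline{\mathbf{U}}_{j-1}$, noting ${\bf \Pi}_4 = {\bf \Pi}(\widehat{\overline{\mathbf{U}}}_j,\widehat{\overline{\mathbf{U}}}_{j+1},\widehat{\overline{\mathbf{U}}}_{j-1})$), one has ${\bf \Pi}_4\cdot\mathbf{e}_1 > -\eta_{1,j}^\ast\,\overline{\mathbf{U}}_j\cdot\mathbf{e}_1$, hence $\overline{\mathbf{U}}_j^{\text{new}}\cdot\mathbf{e}_1 > (1 - \tfrac{\Delta t}{h_j}\eta_{1,j}^\ast)\,\overline{\rho}_j \ge 0$ as soon as $\tfrac{\Delta t}{h_j}\eta_{1,j}^\ast \le 1$, which is implied by \eqref{eq:CFLWKL1}. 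Next I would apply $\varphi(\cdot;\mathbf{z},\mathbf{u}_*)$: linearity gives
\[
\varphi(\overline{\mathbf{U}}_j^{\text{new}};\mathbf{z},\mathbf{u}_*) = \varphi(\overline{\mathbf{U}}_j;\mathbf{z},\mathbf{u}_*) + \frac{\Delta t}{h_j}\varphi({\bf \Pi}_4;\mathbf{z},\mathbf{u}_*) + \Delta t\,\overline{\alpha}_{11,j}\,\varphi(\overline{\mathbf{S}}_{1,j};\mathbf{z},\mathbf{u}_*).
\]
For the flux term I use the second inequality of Corollary \ref{1d_first_order_corollary}: $\varphi({\bf \Pi}_4;\mathbf{z},\mathbf{u}_*) > -\eta_{1,j}^\ast\,\varphi(\overline{\mathbf{U}}_j;\mathbf{z},\mathbf{u}_*)$. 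For the source term, recognizing $\overline{\mathbf{S}}_{1,j} = \mathbf{S}_1(\overline{\mathbf{U}}_j)$ in the notation of Corollary \ref{GQL corollary}, I invoke that corollary to get $|\varphi(\overline{\mathbf{S}}_{1,j};\mathbf{z},\mathbf{u}_*)| \le \delta_1(\overline{\mathbf{U}}_j)\,\varphi(\overline{\mathbf{U}}_j;\mathbf{z},\mathbf{u}_*)$, so $\overline{\alpha}_{11,j}\,\varphi(\overline{\mathbf{S}}_{1,j};\mathbf{z},\mathbf{u}_*) \ge -\overline{\beta}_{11,j}\,\varphi(\overline{\mathbf{U}}_j;\mathbf{z},\mathbf{u}_*)$ with $\overline{\beta}_{11,j} = |\overline{\alpha}_{11,j}\delta_1(\overline{\mathbf{U}}_j)|$. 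Combining,
\[
\varphi(\overline{\mathbf{U}}_j^{\text{new}};\mathbf{z},\mathbf{u}_*) > \Big(1 - \frac{\Delta t}{h_j}\eta_{1,j}^\ast - \Delta t\,\overline{\beta}_{11,j}\Big)\varphi(\overline{\mathbf{U}}_j;\mathbf{z},\mathbf{u}_*),
\]
which is strictly positive under \eqref{eq:CFLWKL1} since $\varphi(\overline{\mathbf{U}}_j;\mathbf{z},\mathbf{u}_*) > 0$ because $\overline{\mathbf{U}}_j\in\mathcal{G} = \mathcal{G}_*$. Both GQL conditions holding, Lemma \ref{GQL} gives $\overline{\mathbf{U}}_j^{\text{new}}\in\mathcal{G}$.

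The main obstacle — and the reason this argument is non-routine — is verifying that ${\bf \Pi}_4$ equals exactly ${\bf \Pi}(\widehat{\overline{\mathbf{U}}}_j,\widehat{\overline{\mathbf{U}}}_{j+1},\widehat{\overline{\mathbf{U}}}_{j-1})$ so that Corollary \ref{1d_first_order_corollary} applies with the correct CFL constant $\eta_{1,j}^\ast$ (this hinges on the sign conventions and the identification $\widehat{\overline{\mathbf{U}}}$ of the HLLC inputs), and ensuring that the $\Delta t$ used uniformly in both the flux and source estimates is consistent — i.e., that the single condition \eqref{eq:CFLWKL1} simultaneously controls $\tfrac{\Delta t}{h_j}\eta_{1,j}^\ast$ through Corollary \ref{1d_first_order_corollary} (which internally absorbed a factor $2$ into $\eta_{1,j}^\ast$ relative to the CFL \eqref{k0 PP CFL}) and $\Delta t\,\overline{\beta}_{11,j}$ through Corollary \ref{GQL corollary}. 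One must also be careful that $\overline{\alpha}_{11,j}$ can have either sign, so the triangle-inequality bound $|\overline{\alpha}_{11,j}\varphi(\overline{\mathbf{S}}_{1,j};\cdot)| \le \overline{\beta}_{11,j}\varphi(\overline{\mathbf{U}}_j;\cdot)$ is what is actually needed — and this is precisely why the bound in Corollary \ref{GQL corollary} is stated with an absolute value. Once these bookkeeping points are pinned down, the inequalities chain together directly and no further computation is required.
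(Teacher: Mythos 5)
Your proposal is correct and follows essentially the same route as the paper's proof: both handle the first component by noting the source has zero density component and invoking the homogeneous flux estimate, and both apply the linear functional $\varphi(\cdot;\mathbf{z},\mathbf{u}_*)$ to the update, bounding the flux contribution via Corollary \ref{1d_first_order_corollary} and the source contribution via Corollary \ref{GQL corollary}, before concluding with the GQL representation in Lemma \ref{GQL}. The bookkeeping points you flag (the identification ${\bf \Pi}_4={\bf \Pi}(\widehat{\overline{\mathbf{U}}}_j,\widehat{\overline{\mathbf{U}}}_{j+1},\widehat{\overline{\mathbf{U}}}_{j-1})$, the factor $2$ absorbed into $\eta_{1,j}^\ast$, and the absolute value on $\overline{\alpha}_{11,j}$) are all resolved exactly as you describe.
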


\begin{proof}
Since the first component of $\overline{\mathbf{S}}_j^x$ is zero, Theorem \ref{1D first order homogeneous positivity} implies that
$$
\overline{\mathbf{U}}_j^{\text{new}} \cdot {\bf e}_1 > 0.
$$
Thanks to the linearity of $\varphi(\mathbf{U};\mathbf{z},\mathbf{u}_\ast)$ with respect to $\bf U$, it follows from \eqref{eq:WKLtmp} that
\begin{align*}
	\varphi( \overline{\mathbf{U}}_j^{\text{new}};\mathbf{z},\mathbf{u}_\ast) &= \varphi(  \overline{\mathbf{U}}_j ;\mathbf{z},\mathbf{u}_\ast) +    \frac{\Delta t}{h_j} \varphi(  {\bf \Pi}_4 ;\mathbf{z},\mathbf{u}_\ast) + \Delta t \varphi(  \overline{\mathbf{S}}_j^x  ;\mathbf{z},\mathbf{u}_\ast)
	\\
	& > \varphi(  \overline{\mathbf{U}}_j ;\mathbf{z},\mathbf{u}_\ast) - \frac{\Delta t}{h_j} \eta_{1,j}^\ast
	\varphi(  \overline{\mathbf{U}}_j ;\mathbf{z},\mathbf{u}_\ast) - \Delta t \overline{\beta}_{11,j} \varphi(\overline{\mathbf{U}}_j;\mathbf{z},\mathbf{u}_\ast)
	\\
	& = \left( 1 - \frac{\Delta t}{h_j} \eta_{1,j}^\ast  - \Delta t \overline{\beta}_{11,j} \right) \varphi(\overline{\mathbf{U}}_j;\mathbf{z},\mathbf{u}_\ast) \ge 0,
\end{align*}
where Corollaries \ref{GQL corollary} and \ref{1d_first_order_corollary} have been used in the second step, and the   CFL-type condition \ref{eq:CFLWKL1} has been used in the last step.
According to the GQL representation in Lemma \ref{GQL}, $\overline{\mathbf{U}}_j^{\text{new}} \in {\mathcal G}$.
The proof is completed.
\end{proof}

\subsection{Positivity-preserving high-order WB DG schemes}\label{pp analysis of 1d high-order schemes}
When the DG polynomial degree $k\geq1$, we derive a weak positivity for the cell averages of the high-order WB DG method (\ref{1D semi WB DG}); see Theorem \ref{1D high-order positivity}. Based on such weak positivity, one can apply a simple limiter to enforce the physical admissibility of the DG solution polynomials at certain points of interest without loss of conservation and high-order accuracy.

\subsubsection{Theoretical positivity-preserving analysis}\label{1d high-order pp analysis}
If the DG polynomial degree $k\geq1$, then the scheme (\ref{1D cell average semi}) can be rewritten as
\begin{equation}\label{1D high order CA evolution}
\overline{\mathbf{U}}_j^{\text{new}}=\overline{\mathbf{U}}_j+\Delta t\mathbf{L}_j(\mathbf{U}_h)=\overline{\mathbf{U}}_j+\Delta t\left[\frac{1}{h_j}(\mathbf{\Pi}_5+\mathbf{\Pi}_6)+\overline{\mathbf{S}}_j^x\right],
\end{equation}
where
\begin{align*}
\mathbf{\Pi}_5&:=-\left[\mathbf{F}^{hllc}\left(\widehat{\mathbf{U}}_{j+\frac{1}{2}}^{-},\widehat{\mathbf{U}}_{j+\frac{1}{2}}^{+}\right)-
\mathbf{F}^{hllc}\left(\widehat{\mathbf{U}}_{j-\frac{1}{2}}^{+},\widehat{\mathbf{U}}_{j+\frac{1}{2}}^{-}\right)\right], \\
\mathbf{\Pi}_6&:=-\left[\mathbf{F}^{hllc}\left(\widehat{\mathbf{U}}_{j-\frac{1}{2}}^{+},\widehat{\mathbf{U}}_{j+\frac{1}{2}}^{-}\right)-
\mathbf{F}^{hllc}\left(\widehat{\mathbf{U}}_{j-\frac{1}{2}}^{-},\widehat{\mathbf{U}}_{j-\frac{1}{2}}^{+}\right)\right].
\end{align*}
Recall that $\overline{\mathbf{S}}_j^x=\left(0,\overline{S}_j^{[2]},0,\overline{S}_j^{[4]},\overline{S}_j^{[5]},0\right)^\top$ with $\overline{S}_j^{[{\ell}]}=\frac{1}{h_j}\langle S^{[\ell]},1\rangle$, $\ell=2,4,5$. Taking $v=1$ in (\ref{S2 integral approximation})--(\ref{S5 integral approximation}) gives
\begin{equation}\label{1D high order source}
\overline{\mathbf{S}}_j^x=\sum_{\mu=1}^{N}\omega_\mu\frac{(p_{11,h}^e)_x(x_j^{(\mu)})}{\rho_h^e(x_j^{(\mu)})}\mathbf{S}_{1,h}(x_j^{(\mu)})
+\alpha_{11,j}\overline{\mathbf{S}}_{1,j},
\end{equation}
where
\begin{align*}
&\mathbf{S}_{1,h}(x_j^{(\mu)}):=\left(0,\rho_h(x_j^{(\mu)}),0,m_{1,h}(x_j^{(\mu)}),\frac{1}{2}
m_{2,h}(x_j^{(\mu)}),0\right)^\top, \\
&\alpha_{11,j}:=\frac{p_{11,j+\frac{1}{2}}^{e,\ast}-p_{11,j-\frac{1}{2}}^{e,\ast}-p_{11,h}^e(x_{j+\frac{1}{2}}^-)+p_{11,h}^e(x_{j-\frac{1}{2}}^+)}{h_j{\overline{(\rho_h^e)}_j}}, \end{align*}

Denote
\begin{equation}\label{Sj}
\mathbb{S}_j:=\{\widehat{x}_j^{(\nu)}\}_{\nu=1}^L\cup\{x_j^{(\mu)}\}_{\mu=1}^N,
\end{equation}
where $L=\lceil\frac{k+3}{2}\rceil$, and $\{\widehat{x}_j^{(\nu)}\}_{\nu=1}^L$ are the $L$-point Gauss--Lobatto quadrature nodes scaled in $I_j$ with $\widehat{x}_j^{(1)}=x_{j-\frac{1}{2}}$ and $\widehat{x}_j^{(L)}=x_{j+\frac{1}{2}}$. Let $\{\widehat{\omega}_{\nu}\}_{\nu=1}^L$ be the corresponding (positive) quadrature weights satisfying $\sum_{\nu=1}^L\widehat{\omega}_{\nu}=1$ and $\widehat{\omega}_{1}=\widehat{\omega}_{L}=\frac{1}{L(L-1)}$. Then one has
\begin{equation}\label{GL quadrature}
\overline{\mathbf{U}}_j=\frac{1}{h_j}\int_{I_j}\mathbf{U}_h{(x)}dx=\sum_{\nu=1}^{L}\widehat{\omega}_{\nu}\mathbf{U}_h(\widehat{x}_j^{(\nu)}).
\end{equation}

\begin{theorem}\label{1D high-order positivity}
Assume that the projected hydrostatic equilibrium solution satisfies
\begin{equation}\label{PP of projection}
\rho_h^e(x)>0, \quad \mathbf{z}^\top\mathbf{p}_h^e(x)\mathbf{z}>0 \quad \forall \mathbf{z}\in\mathbb{R}^2\setminus\{\mathbf{0}\}, ~ \forall x\in\mathbb{S}_j, ~ \forall j,
\end{equation}
and the numerical solution $\mathbf{U}_h$ satisfies
\begin{equation}\label{PP of quadrature node 2}
\mathbf{U}_h(x)\in\mathcal{G} \quad \forall x\in\mathbb{S}_j, ~ \forall j.
\end{equation}
Then
\begin{equation}\label{1D high order weak PP}
\overline{\mathbf{U}}_j^{\text{new}}=\overline{\mathbf{U}}_j+\Delta t\mathbf{L}_j(\mathbf{U}_h)\in\mathcal{G} \quad \forall j,
\end{equation}
under the CFL-type condition
\begin{equation}\label{1D high order PP CFL}
\Delta t\left(\frac{\max\{\eta_{1,j+\frac{1}{2}}^{\ast,-},\eta_{1,j-\frac{1}{2}}^{\ast,+}\}}{\widehat{\omega}_1h_j}+\beta_{11,j}\right)\leq1,
\end{equation}
where
\begin{align*}
&\eta_{1,j+\frac{1}{2}}^{\ast,-}:=\frac{2}{\xi_\ast(\mathbf{U}_{j+\frac{1}{2}}^{-})}\max\limits_{\mathbf{U}\in\{\widehat{\mathbf{U}}_{j+\frac{1}{2}}^{-},\widehat{\mathbf{U}}_{j+\frac{1}{2}}^{+},\widehat{\mathbf{U}}_{j-\frac{1}{2}}^{+}\}}\alpha_{1}(\mathbf{U}),
\\
&\eta_{1,j-\frac{1}{2}}^{\ast,+}:=\frac{2}{\xi_\ast(\mathbf{U}_{j-\frac{1}{2}}^{+})}\max\limits_{\mathbf{U}\in\{\widehat{\mathbf{U}}_{j-\frac{1}{2}}^{+},\widehat{\mathbf{U}}_{j+\frac{1}{2}}^{-},\widehat{\mathbf{U}}_{j-\frac{1}{2}}^{-}\}}\alpha_{1}(\mathbf{U}),
 \\
&\beta_{11,j}:={\max_{1\leq\mu\leq N}\left\{\left|\frac{(p_{11,h}^e)_x(x_j^{(\mu)})}{\rho_h^e(x_j^{(\mu)})}\delta_1(\mathbf{U}_{h}(x_j^{(\mu)}))\right|\right\}+\left|\alpha_{11,j}\delta_1(\overline{\mathbf{U}}_j)\right|}.
\end{align*}
\end{theorem}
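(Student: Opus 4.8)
The plan is to decompose the cell-average update \eqref{1D high order CA evolution} into a convex combination of building blocks, each of which is controlled by the auxiliary results from Section~\ref{sec2}, and then verify positivity through the GQL representation in Lemma~\ref{GQL}. First I would use the Gauss--Lobatto quadrature identity \eqref{GL quadrature} to write $\overline{\mathbf{U}}_j = \widehat\omega_1\big(\mathbf{U}_h(x_{j-\frac12}^+) + \mathbf{U}_h(x_{j+\frac12}^-)\big) + \sum_{\nu=2}^{L-1}\widehat\omega_\nu \mathbf{U}_h(\widehat x_j^{(\nu)})$, so that the ``boundary mass'' $\widehat\omega_1$ is available to absorb the HLLC flux differences $\mathbf{\Pi}_5$ and $\mathbf{\Pi}_6$. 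The key is that $\mathbf{\Pi}_5$ involves only the three states $\widehat{\mathbf{U}}_{j+\frac12}^-, \widehat{\mathbf{U}}_{j+\frac12}^+, \widehat{\mathbf{U}}_{j-\frac12}^+$ (in the notation of Corollary~\ref{1d_first_order_corollary}, $\mathbf{\Pi}_5 = \mathbf{\Pi}(\widehat{\mathbf{U}}_{j+\frac12}^-, \widehat{\mathbf{U}}_{j+\frac12}^+, \widehat{\mathbf{U}}_{j-\frac12}^+)$ up to relabeling), and similarly $\mathbf{\Pi}_6 = \mathbf{\Pi}(\widehat{\mathbf{U}}_{j-\frac12}^+, \widehat{\mathbf{U}}_{j+\frac12}^-, \widehat{\mathbf{U}}_{j-\frac12}^-)$.

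Next I would apply the GQL machinery: by Lemma~\ref{GQL} it suffices to show $\overline{\mathbf{U}}_j^{\text{new}}\cdot\mathbf{e}_1>0$ and $\varphi(\overline{\mathbf{U}}_j^{\text{new}};\mathbf{z},\mathbf{u}_*)>0$ for all admissible $\mathbf{z},\mathbf{u}_*$. Since $\varphi(\cdot;\mathbf{z},\mathbf{u}_*)$ and $(\cdot)\cdot\mathbf{e}_1$ are both linear in $\mathbf{U}$, I can distribute them over the decomposition \eqref{1D high order CA evolution} together with \eqref{1D high order source}. For the flux part, Corollary~\ref{1d_first_order_corollary} applied to the two triples gives $\varphi(\mathbf{\Pi}_5;\mathbf{z},\mathbf{u}_*) > -\eta_{1,j+\frac12}^{*,-}\varphi(\mathbf{U}_{j+\frac12}^-;\mathbf{z},\mathbf{u}_*)$ and $\varphi(\mathbf{\Pi}_6;\mathbf{z},\mathbf{u}_*) > -\eta_{1,j-\frac12}^{*,+}\varphi(\mathbf{U}_{j-\frac12}^+;\mathbf{z},\mathbf{u}_*)$, which are nonnegative multiples of the corresponding $\varphi$ at the interface trace states; here I need $\widehat{\mathbf{U}}_{j\pm\frac12}^{\mp}\in\mathcal{G}$, which follows from \eqref{PP of quadrature node 2}, Lemma~\ref{Uhat in G}, and the fact that $\mathbf{p}_{j+\frac12}^{e,\pm,*}$ is positive-definite (guaranteed by \eqref{PP of projection} and the $\max$-definition of $p_{11,j+\frac12}^{e,*}$). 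For the source part, Corollary~\ref{GQL corollary} gives $|\varphi(\mathbf{S}_{1,h}(x_j^{(\mu)});\mathbf{z},\mathbf{u}_*)| \le \delta_1(\mathbf{U}_h(x_j^{(\mu)}))\,\varphi(\mathbf{U}_h(x_j^{(\mu)});\mathbf{z},\mathbf{u}_*)$ and $|\varphi(\overline{\mathbf{S}}_{1,j};\mathbf{z},\mathbf{u}_*)| \le \delta_1(\overline{\mathbf{U}}_j)\,\varphi(\overline{\mathbf{U}}_j;\mathbf{z},\mathbf{u}_*)$. Then I would combine the $\varphi(\mathbf{U}_h(x_j^{(\mu)});\mathbf{z},\mathbf{u}_*)$ terms from the quadrature-node sum with the $\varphi$ evaluated at the Gauss--Lobatto interior nodes (all positive, being in $\mathcal{G}$ by \eqref{PP of quadrature node 2}), the $\widehat\omega_1$-weighted interface terms absorbing the flux contributions, and the $\varphi(\overline{\mathbf{U}}_j;\mathbf{z},\mathbf{u}_*)$ term absorbing the $\alpha_{11,j}\overline{\mathbf{S}}_{1,j}$ contribution, collecting everything into the form $(\text{positive combination of }\varphi\text{'s in }\mathcal{G}) + (1 - \Delta t(\cdots))\varphi(\overline{\mathbf{U}}_j;\mathbf{z},\mathbf{u}_*) - (\text{leftover})$, and checking that the CFL condition \eqref{1D high order PP CFL} makes all coefficients nonnegative. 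The density component $\overline{\mathbf{U}}_j^{\text{new}}\cdot\mathbf{e}_1$ is simpler since the first components of all source terms vanish, so it reduces to the homogeneous argument via Theorem~\ref{1D first order homogeneous positivity}.

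The main obstacle I anticipate is the bookkeeping of which quadrature weight absorbs which flux/source term while keeping every coefficient in the convex combination nonnegative under a \emph{single} CFL number. Specifically, the HLLC positivity (Lemma~\ref{pp property}, routed through Corollary~\ref{1d_first_order_corollary}) requires a local time-step restriction scaled by $\widehat\omega_1 h_j$ — hence the $\widehat\omega_1 h_j$ in the denominator of \eqref{1D high order PP CFL} — and one must carefully split $\overline{\mathbf{U}}_j$ so that exactly $\widehat\omega_1\mathbf{U}_h(x_{j\mp\frac12}^{\pm})$ of boundary mass is paired with each of $\mathbf{\Pi}_5,\mathbf{\Pi}_6$ while the remaining mass (all interior nodes plus the $\varphi(\overline{\mathbf{U}}_j;\cdot)$-slack from the source) stays in $\mathcal{G}$ or $\overline{\mathcal{G}}$. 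A secondary subtlety is that the modification $\mathbf{U}\mapsto\widehat{\mathbf{U}}$ acts on the \emph{trace} states, so the $\xi_*$ factors in $\eta_{1,j\pm\frac12}^{*,\mp}$ must be evaluated at the unmodified traces $\mathbf{U}_{j\pm\frac12}^{\mp}$ — this is already reflected in the statement and must be tracked consistently when invoking Corollary~\ref{1d_first_order_corollary}. Once the decomposition is pinned down, the remaining estimates are routine applications of Lemmas~\ref{general combination}, \ref{GQL}, Corollaries~\ref{GQL corollary}, \ref{1d_first_order_corollary}, and the positivity of the Gauss--Lobatto weights.
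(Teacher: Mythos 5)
Your proposal is correct and follows essentially the same route as the paper's proof: apply the GQL representation (Lemma \ref{GQL}) to reduce to the linear functionals $\mathbf{U}\cdot\mathbf{e}_1$ and $\varphi(\mathbf{U};\mathbf{z},\mathbf{u}_*)$, bound $\mathbf{\Pi}_5$ and $\mathbf{\Pi}_6$ via Corollary \ref{1d_first_order_corollary} with exactly the triples you identify, bound the source via Corollary \ref{GQL corollary} together with \eqref{1D high order source}, and close the argument with the Gauss--Lobatto decomposition \eqref{GL quadrature} under the CFL condition \eqref{1D high order PP CFL}. The bookkeeping you flag as the main obstacle is resolved in the paper exactly as you anticipate, including the evaluation of $\xi_*$ at the unmodified trace states.
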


\begin{proof}
Since $\overline{\mathbf{S}}_j^x\cdot {\bf e}_1=0$, (\ref{1D high order CA evolution}) implies that
\begin{equation*}
	\begin{aligned}
\overline{\mathbf{U}}_j^{\text{new}} \cdot {\bf e}_1&=\overline{\mathbf{U}}_j \cdot {\bf e}_1+\frac{\Delta t}{h_j}({\bf \Pi}_5\cdot{\bf e}_1+{\bf \Pi}_6\cdot{\bf e}_1) \\
&>\sum_{\nu=1}^{L}\widehat{\omega}_{\nu}\mathbf{U}_h(\widehat{x}_j^{(\nu)})\cdot {\bf e}_1-\frac{\Delta t}{h_j}\left(\eta_{1,j+\frac{1}{2}}^{\ast,-}\mathbf{U}_{j+\frac{1}{2}}^{-}\cdot {\bf e}_1+\eta_{1,j-\frac{1}{2}}^{\ast,+}\mathbf{U}_{j-\frac{1}{2}}^{+}\cdot {\bf e}_1\right) \\
&=\left(\widehat{\omega}_{L}-\frac{\Delta t}{h_j}\eta_{1,j+\frac{1}{2}}^{\ast,-}\right)\mathbf{U}_{j+\frac{1}{2}}^{-}\cdot {\bf e}_1+
\left(\widehat{\omega}_{1}-\frac{\Delta t}{h_j}\eta_{1,j-\frac{1}{2}}^{\ast,+}\right)\mathbf{U}_{j-\frac{1}{2}}^{+}\cdot {\bf e}_1+
\sum_{\nu=2}^{L-1}\widehat{\omega}_{\nu}\mathbf{U}_h(\widehat{x}_j^{(\nu)})\cdot {\bf e}_1 \\
&>0,
\end{aligned}
\end{equation*}
where the cell average decomposition (\ref{GL quadrature}) and Corollary \ref{1d_first_order_corollary} have been used in the second step, and the assumption (\ref{PP of quadrature node 2}) and the CFL condition (\ref{1D high order PP CFL}) have been used in the last step.

Thanks to the linearity of $\varphi(\mathbf{U};\mathbf{z},\mathbf{u}_\ast)$ with respect to $\bf U$, it follows from (\ref{1D high order CA evolution}) that
\begin{align*}
\varphi( \overline{\mathbf{U}}_j^{\text{new}};\mathbf{z},\mathbf{u}_\ast) &= \varphi(  \overline{\mathbf{U}}_j ;\mathbf{z},\mathbf{u}_\ast) +    \frac{\Delta t}{h_j} \varphi(  {\bf \Pi}_5 ;\mathbf{z},\mathbf{u}_\ast) + \frac{\Delta t}{h_j} \varphi(  {\bf \Pi}_6 ;\mathbf{z},\mathbf{u}_\ast) + \Delta t \varphi(  \overline{\mathbf{S}}_j^x  ;\mathbf{z},\mathbf{u}_\ast) \\
&>\varphi(  \overline{\mathbf{U}}_j ;\mathbf{z},\mathbf{u}_\ast)-\frac{\Delta t}{h_j}\eta_{1,j+\frac{1}{2}}^{\ast,-}\varphi(\mathbf{U}_{j+\frac{1}{2}}^{-};\mathbf{z},\mathbf{u}_\ast)-\frac{\Delta t}{h_j}\eta_{1,j-\frac{1}{2}}^{\ast,+}\varphi(\mathbf{U}_{j-\frac{1}{2}}^{+};\mathbf{z},\mathbf{u}_\ast) \\
&\quad-\Delta t\sum_{\mu=1}^{N}\omega_\mu\left|\frac{(p_{11,h}^e)_x(x_j^{(\mu)})}{\rho_h^e(x_j^{(\mu)})}{\delta_{1}(\mathbf{U}_{h}(x_j^{(\mu)}))}\right|\varphi(\mathbf{U}_{h}(x_j^{(\mu)});\mathbf{z},\mathbf{u}_\ast)-
\Delta t\left|{\alpha_{11,j}\delta_{1}(\overline{\mathbf{U}}_{j})}\right|\varphi(\overline{\mathbf{U}}_{j};\mathbf{z},\mathbf{u}_\ast) \\
&\geq(1-\Delta t\beta_{11,j})\varphi(\overline{\mathbf{U}}_{j};\mathbf{z},\mathbf{u}_\ast)-\frac{\Delta t}{h_j}\eta_{1,j+\frac{1}{2}}^{\ast,-}\varphi(\mathbf{U}_{j+\frac{1}{2}}^{-};\mathbf{z},\mathbf{u}_\ast)-\frac{\Delta t}{h_j}\eta_{1,j-\frac{1}{2}}^{\ast,+}\varphi(\mathbf{U}_{j-\frac{1}{2}}^{+};\mathbf{z},\mathbf{u}_\ast) \\
&=\left[\widehat{\omega}_L(1-\Delta t\beta_{11,j})-\frac{\Delta t}{h_j}\eta_{1,j+\frac{1}{2}}^{\ast,-}\right]\varphi(\mathbf{U}_{j+\frac{1}{2}}^{-};\mathbf{z},\mathbf{u}_\ast) \\
&\quad+\left[\widehat{\omega}_1(1-\Delta t\beta_{11,j})-\frac{\Delta t}{h_j}\eta_{1,j-\frac{1}{2}}^{\ast,+}\right]\varphi(\mathbf{U}_{j-\frac{1}{2}}^{+};\mathbf{z},\mathbf{u}_\ast)
+(1-\Delta t\beta_{11,j})\sum_{\nu=2}^{L-1}\widehat{\omega}_{\nu}\varphi(\mathbf{U}_h(\widehat{x}_j^{(\nu)}) ;\mathbf{z},\mathbf{u}_\ast) \\
&>0,
\end{align*}
where Corollary \ref{GQL corollary}, Corollary \ref{1d_first_order_corollary} and (\ref{1D high order source}) have been used in the second step, the cell average decomposition (\ref{GL quadrature}) has been used in the fourth step, and the assumption (\ref{PP of quadrature node 2}) and the CFL condition (\ref{1D high order PP CFL}) have been used in the last step.
According to the GQL representation (Lemma \ref{GQL}), $\overline{\mathbf{U}}_j^{\text{new}} \in {\mathcal G}$.
The proof is completed.
\end{proof}

\subsubsection{Positivity-preserving limiter}
In general, a high-order DG scheme does not automatically satisfy the condition (\ref{PP of quadrature node 2}). In such cases, one can use a simple positivity-preserving limiter (cf. \cite{zhang2010positivity,wang2012robust,meena2017positivity}) to enforce the condition (\ref{PP of quadrature node 2}) without losing conservation and high-order accuracy.

Denote
\begin{align*}
\overline{\mathbb{G}}_h^k&:=\left\{\mathbf{v}\in[\mathbb{V}_h^k]^6: \frac{1}{h_j}\int_{I_j}\mathbf{v}(x)dx\in\mathcal{G} ~~ \forall j\right\}, \\
\widetilde{\mathbb{G}}_h^k&:=\left\{\mathbf{v}\in[\mathbb{V}_h^k]^6: \mathbf{v}|_{I_j}(x)\in\mathcal{G} ~~ \forall x\in\mathbb{S}_j, ~ \forall j\right\}.
\end{align*}
For any $\mathbf{U}_h\in\overline{\mathbb{G}}_h^k$ with $\mathbf{U}_h|_{I_j}=:\mathbf{U}_j(x)=\left(\rho_j(x),m_{1,j}(x),m_{2,j}(x),E_{11,j}(x),E_{12,j}(x),E_{22,j}(x)\right)^\top$, we define the positivity-preserving limiting operator ${\mathcal{T}}_h:\overline{\mathbb{G}}_h^k\rightarrow\widetilde{\mathbb{G}}_h^k$ by
\begin{equation}\label{PP operator}
{\mathcal{T}}_h\mathbf{U}_h|_{I_j}=\mathbf{U}_j^{{\rm ({\textsc{iii}})}}(x) \quad \forall j
\end{equation}
with the limited polynomial vector function $\mathbf{U}_j^{({\textsc{iii}})}(x)$ constructed through the following three steps.

\textbf{Step (i)}: First, if ${\min_{x\in\mathbb{S}_j}}\rho_j(x)<\varepsilon_1$, modify the density to enforce its positivity via
\begin{equation}\label{pp for rho}
\rho_j^{(\textsc{i})}(x)=\theta_1\left(\rho_j(x)-\overline{\rho}_j\right)+\overline{\rho}_j, \quad
\theta_1:=\min\left\{1,\frac{\overline{\rho}_j-\varepsilon_1}{\overline{\rho}_j-\min_{x\in\mathbb{S}_j}\rho_j(x)}\right\},
\end{equation}
where $\varepsilon_1$ is a small positive number as the desired lower bound of the density  and may be taken as $\varepsilon_1=\min\{10^{-13},\overline{\rho}_j\}$.

\textbf{Step (ii)}: Then, if $\min_{x\in\mathbb{S}_j}g_{11}(\mathbf{U}_j^{(\textsc{i})}(x))<\varepsilon_2$, modify
\[
\mathbf{U}_j^{(\textsc{i})}(x):=\left(\rho_j^{(\textsc{i})}(x),m_{1,j}(x),m_{2,j}(x),E_{11,j}(x),E_{12,j}(x),E_{22,j}(x)\right)^\top
\]
 to enforce the positivity of $g_{11}(\mathbf{U})$ {as follows:}
\begin{equation}\label{pp for g11}
\mathbf{U}_j^{(\textsc{ii})}(x)=\theta_2\left(\mathbf{U}_j^{(\textsc{i})}(x)-\overline{\mathbf{U}}_j\right)+\overline{\mathbf{U}}_j, \quad
\theta_2:=\min\left\{1,\frac{g_{11}(\overline{\mathbf{U}}_j)-\varepsilon_2}{g_{11}(\overline{\mathbf{U}}_j)-\min_{x\in\mathbb{S}_j}g_{11}(\mathbf{U}_j^{(\textsc{i})}(x))}\right\},
\end{equation}
where $\varepsilon_2$ is also a small positive number as the desired lower bound of $g_{11}(\mathbf{U})$ and may be taken as $\varepsilon_2=\min\{10^{-13},g_{11}(\overline{\mathbf{U}}_j)\}$.

\textbf{Step (iii)}: Finally, 
modify $\mathbf{U}_j^{(\textsc{ii})}(x)$ to enforce the positivity of $g_{\det}(\mathbf{U})$ as follows:
\begin{equation}\label{pp for gdet}
\mathbf{U}_j^{(\textsc{iii})}(x)=\theta_3\left(\mathbf{U}_j^{(\textsc{ii})}(x)-\overline{\mathbf{U}}_j\right)+\overline{\mathbf{U}}_j, \quad
\theta_3:=\min_{x\in\mathbb{S}_j}\widetilde{\theta}(x),
\end{equation}
where, for $x\in\{x\in\mathbb{S}_j:g_{\det}(\mathbf{U}_j^{(\textsc{ii})}(x))\geq\varepsilon_3\}$, $\widetilde{\theta}(x)=1$, and, for $x\in\{x\in\mathbb{S}_j:g_{\det}(\mathbf{U}_j^{(\textsc{ii})}(x))<\varepsilon_3\}$, $\widetilde{\theta}(x)$ is the solution to the equation
\[
g_{\det}\left((1-\widetilde{\theta})\overline{\mathbf{U}}_j+\widetilde{\theta}\mathbf{U}_j^{(\textsc{ii})}(x)\right)=\varepsilon_3,
\quad
\widetilde{\theta}\in[0,1),
\]
where $\varepsilon_3$ is a small positive number as the desired lower bound of $g_{\det}(\mathbf{U})$ and may be taken as $\varepsilon_3=\min\{10^{-13},g_{\det}(\overline{\mathbf{U}}_j)\}$.

According to the above definition of the limiter ${\mathcal{T}}_h$ and the Jensen's inequality for the concave function $g_{11}(\mathbf{U})$, we immediately obtain the following proposition.

\begin{proposition}\label{limit proposition}
For any $\mathbf{U}_h\in\overline{\mathbb{G}}_h^k$, one has ${\mathcal{T}}_h\mathbf{U}_h\in\widetilde{\mathbb{G}}_h^k$.
\end{proposition}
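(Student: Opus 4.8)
The plan is to prove Proposition~\ref{limit proposition} by checking, stage by stage, that the three-step limiter $\mathcal{T}_h$ leaves the cell average unchanged (so conservation is not affected) and forces the limited polynomial into $\mathcal{G}=\mathcal{G}_1$ at every node of $\mathbb{S}_j$, for every cell $I_j$. First note that $\mathbf{U}_h\in\overline{\mathbb{G}}_h^k$ gives $\overline{\mathbf{U}}_j\in\mathcal{G}$, hence $\overline{\rho}_j>0$, $g_{11}(\overline{\mathbf{U}}_j)>0$ and $g_{\det}(\overline{\mathbf{U}}_j)>0$, so the tolerances satisfy $0<\varepsilon_1\le\overline{\rho}_j$, $0<\varepsilon_2\le g_{11}(\overline{\mathbf{U}}_j)$, $0<\varepsilon_3\le g_{\det}(\overline{\mathbf{U}}_j)$ and the scaling parameters $\theta_1,\theta_2,\theta_3\in[0,1]$ are well defined. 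Each step replaces $\mathbf{U}_j$ by a convex combination $\theta\,\mathbf{U}_j(x)+(1-\theta)\overline{\mathbf{U}}_j$ (in Step~(i) only in the density component); since $\int_{I_j}(\mathbf{U}_j(x)-\overline{\mathbf{U}}_j)\,dx=\mathbf{0}$, the cell average is preserved throughout, which is conservation.

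For Step~(i), the standard Zhang--Shu scaling argument yields $\rho_j^{(\textsc{i})}(x)\ge\varepsilon_1>0$ for all $x\in\mathbb{S}_j$. For Step~(ii) I would use that $g_{11}(\mathbf{U})=2E_{11}-m_1^2/\rho$ is concave on $\{\rho>0\}$ (affine minus the convex perspective function $m_1^2/\rho$); then Jensen's inequality applied to the convex combination defining $\mathbf{U}_j^{(\textsc{ii})}(x)$, together with the specific choice of $\theta_2$, gives $g_{11}(\mathbf{U}_j^{(\textsc{ii})}(x))\ge\varepsilon_2>0$ on $\mathbb{S}_j$, while the density stays a convex combination of values $\ge\varepsilon_1$, so $\rho_j^{(\textsc{ii})}(x)\ge\varepsilon_1$.

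The crux is Step~(iii), since $g_{\det}$ is a genuinely nonlinear constraint (in fact neither convex nor concave along segments) and a single scalar $\theta_3=\min_{x\in\mathbb{S}_j}\widetilde{\theta}(x)$ is used for all nodes. The tool I would isolate and prove is a concavity property: along any line segment $\tau\mapsto\mathbf{U}(\tau)$ along which $\rho>0$ and $\mathbf{p}(\mathbf{U}(\tau))$ stays positive semidefinite, the map $\tau\mapsto\sqrt{g_{\det}(\mathbf{U}(\tau))}=\sqrt{\det\mathbf{p}(\mathbf{U}(\tau))}$ is concave. This follows by combining (a) a direct computation giving $\mathbf{p}''(\tau)=-\tfrac{2}{\rho^3}\mathbf{w}\mathbf{w}^{\top}$ with $\mathbf{w}=\rho\,\mathbf{m}'-\rho'\,\mathbf{m}$, so that $\mathbf{p}(\mathbf{U}(\tau))$ is operator-concave in $\tau$ when $\rho>0$, with (b) the facts that $X\mapsto(\det X)^{1/2}$ is concave on symmetric positive semidefinite $2\times2$ matrices and monotone in the Loewner order. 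Granting this, I would argue, for each node $x$, on the segment from $\overline{\mathbf{U}}_j$ to $\mathbf{U}_j^{(\textsc{ii})}(x)$: if $g_{\det}(\mathbf{U}_j^{(\textsc{ii})}(x))\ge\varepsilon_3$, then $\mathbf{U}_j^{(\textsc{ii})}(x)\in\mathcal{G}$ (it already has $\rho\ge\varepsilon_1>0$ and $g_{11}\ge\varepsilon_2>0$), so by convexity of $\mathcal{G}$ (Lemma~\ref{scale invariance}) the whole segment lies in $\mathcal{G}$, $\sqrt{g_{\det}}$ is concave on $[0,1]$ with both endpoint values $\ge\sqrt{\varepsilon_3}$, hence $g_{\det}\ge\varepsilon_3$ along the entire segment and in particular at the parameter value $\theta_3\le1$ that produces $\mathbf{U}_j^{(\textsc{iii})}(x)$; if instead $g_{\det}(\mathbf{U}_j^{(\textsc{ii})}(x))<\varepsilon_3$, let $\tau^{\ast}$ be the first parameter at which the segment leaves $\mathcal{G}$, which (since $\rho>0$ and $g_{11}>0$ persist on $[0,1]$) can only happen through $g_{\det}\downarrow0$; then $\sqrt{g_{\det}}$ is concave on $[0,\tau^{\ast}]$, so $\{\tau:g_{\det}(\mathbf{U}(\tau))\ge\varepsilon_3\}$ is the interval $[0,\widetilde{\theta}(x)]$, and $0\le\theta_3\le\widetilde{\theta}(x)$ gives $g_{\det}(\mathbf{U}_j^{(\textsc{iii})}(x))\ge\varepsilon_3$. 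In both cases $\rho_j^{(\textsc{iii})}(x)\ge\varepsilon_1$, and by concavity of $g_{11}$ once more $g_{11}(\mathbf{U}_j^{(\textsc{iii})}(x))\ge\varepsilon_2$; therefore $\mathbf{U}_j^{(\textsc{iii})}(x)\in\mathcal{G}_1=\mathcal{G}$ for every $x\in\mathbb{S}_j$ and every $j$, which is exactly $\mathcal{T}_h\mathbf{U}_h\in\widetilde{\mathbb{G}}_h^k$.

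I expect the main obstacle to be Step~(iii): identifying the correct Jensen-type substitute for the (false) concavity of $g_{\det}$ itself—namely the concavity of $\sqrt{g_{\det}}$ along admissible segments, together with its proof via operator-concavity of $\mathbf{p}(\mathbf{U}(\tau))$ and monotone concavity of $(\det\cdot)^{1/2}$—and verifying that the single, globally chosen $\theta_3$ cannot spoil nodes that were already admissible, which is where convexity of $\mathcal{G}$ is used. Steps~(i) and~(ii) are routine scaling-limiter arguments and should require only the concavity of $\rho$ and $g_{11}$ already recorded in the paper.
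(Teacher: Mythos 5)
Your proof is correct, and it is best read as supplying the argument that the paper compresses into a single sentence (``by the definition of the limiter and Jensen's inequality for the concave function $g_{11}$''). For Steps (i) and (ii) you follow exactly the intended route: the standard scaling bound $\rho_j^{(\textsc{i})}\ge\varepsilon_1$ and Jensen's inequality for the concave $g_{11}$, together with $\varepsilon_1\le\overline{\rho}_j$, $\varepsilon_2\le g_{11}(\overline{\mathbf{U}}_j)$, $\varepsilon_3\le g_{\det}(\overline{\mathbf{U}}_j)$. The substantive addition is Step (iii), on which the paper is silent: both the well-definedness of $\widetilde{\theta}(x)$ as ``the'' root of $g_{\det}=\varepsilon_3$ and the fact that the single global $\theta_3=\min_x\widetilde{\theta}(x)$ does not spoil any node rest on the set $\{\tau\in[0,1]:g_{\det}((1-\tau)\overline{\mathbf{U}}_j+\tau\mathbf{U}_j^{(\textsc{ii})}(x))\ge\varepsilon_3\}$ being an interval containing $\tau=0$. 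Your lemma delivers exactly this: $\mathbf{p}(\mathbf{U}(\tau))$ is matrix-concave along lines with $\rho>0$ since $\mathbf{p}''=-\tfrac{2}{\rho^3}\mathbf{w}\mathbf{w}^{\top}$, and $(\det\cdot)^{1/2}$ is monotone and concave on positive semidefinite $2\times2$ matrices, so $\sqrt{g_{\det}}$ is concave on the admissible portion of each segment; combined with the persistence of $p_{11}=g_{11}\ge\varepsilon_2>0$ along the whole segment (which forces $g_{\det}<0$ once positive semidefiniteness is lost, so the admissible portion is itself an interval), this gives the required interval structure and hence $g_{\det}(\mathbf{U}_j^{(\textsc{iii})}(x))\ge\varepsilon_3$ at every node, in both of your cases. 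What your approach buys is a self-contained, rigorous justification of the determinant step that the paper instead delegates to the cited limiter references; what the paper's terseness buys is brevity, at the cost of leaving precisely the nontrivial point (the behavior of the non-concave constraint $g_{\det}$ under a single global scaling) unargued.
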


Proposition \ref{limit proposition} indicates that the limited solution (\ref{PP operator}) satisfies the condition (\ref{PP of quadrature node 2}). Note that this type of local scaling limiters keep the local conservation 
and do not destroy the high-order accuracy; see \cite{zhang2010maximum,zhang2010positivity,zhang2017positivity} for details.

Define the initial numerical solutions as $\mathbf{U}_h^0(x):={\mathcal{T}}_h{\mathcal{P}}_h\mathbf{U}(x,0)$, {where $\mathcal{P}_h$ denotes the $L^2$-projection onto the space $[\mathbb{V}_h^k]^6$}. For the WB DG schemes with the SSP-RK time discretization, if the limiter (\ref{PP operator}) is used at each RK stage, then the resulting fully discrete DG methods are positivity-preserving.

\begin{remark}
If the projected hydrostatic equilibrium solutions $\rho_h^e$ and $\mathbf{p}_h^e$ do not satisfy the condition {\rm (\ref{PP of projection})} in Theorem {\rm \ref{1D high-order positivity}}, then we can redefine $\rho_h^e,p_{11,h}^e,p_{12,h}^e,p_{22,h}^e\in\mathbb{V}_h^k$ as
\begin{equation}\label{PP limit projection}
\left(\rho_h^e(x),0,0,\frac{p_{11,h}^e(x)}{2},\frac{p_{12,h}^e(x)}{2},\frac{p_{22,h}^e(x)}{2}\right)^\top:={\mathcal{T}}_h{\mathcal{P}}_h
\left(\rho^e(x),0,0,\frac{p_{11}^e(x)}{2},\frac{p_{12}^e(x)}{2},\frac{p_{22}^e(x)}{2}\right)^\top.
\end{equation}
One can verify that $\rho_h^e$ and $\mathbf{p}_h^e$ defined by {\rm (\ref{PP limit projection})} always satisfy {\rm (\ref{PP of projection})}.
\end{remark}



\section{Positivity-preserving WB DG methods in two dimensions}\label{sec4}
In this section, we extend the proposed 1D positivity-preserving WB DG methods to two dimensions. For the sake of clarity, we shall focus on the 2D Cartesian meshes.
Assume that the target hydrostatic equilibrium solutions to be preserved are explicitly known and denoted by $\{\rho^e(x,y),u_1^e(x,y)=0,u_2^e(x,y)=0,p_{11}^e(x,y),p_{12}^e(x,y),p_{22}^e(x,y)\}$.
As discussed in Subsection \ref{stationary solutions}, one has
\begin{equation}\label{2D steady state 1}
(p_{11}^e)_x=-\frac{1}{2}\rho^e W_x, \quad p_{12}^e=C_0, \quad
(p_{22}^e)_y=-\frac{1}{2}\rho^e W_y, \quad \mathbf{u}_h^e=\mathbf{0}.
\end{equation}

\subsection{WB DG discretization}
Let ${\mathcal{K}}_h$ be a uniform Cartesian partition of the spatial domain {$\Omega$} with $K:=[x_{i-\frac{1}{2}},x_{i+\frac{1}{2}}]\times[y_{l-\frac{1}{2}},y_{l+\frac{1}{2}}]$ being a representative rectangular cell in ${\mathcal{K}}_h$. Denote $\Delta x=x_{i+\frac{1}{2}}-x_{i-\frac{1}{2}}$ and $\Delta y=y_{l+\frac{1}{2}}-y_{l-\frac{1}{2}}$. The DG numerical solutions, denoted by $\mathbf{U}_h(x,y,t)$, each component of which is an element of the finite-dimensional space of discontinuous piecewise polynomial functions:
\[
\mathbb{V}_h^k=\left\{v(x,y)\in L^2(\Omega):v(x,y)|_K\in\mathbb{P}^k(K) ~~ \forall K\in\mathcal{K}_h\right\},
\]
where $\mathbb{P}^k(K)$ denotes the space of polynomials of degree up to $k$ in cell $K$.
Then the 2D semi-discrete DG methods are {formulated} as follows: for any test function $v(x,y)\in\mathbb{V}_h^k$, {the solution $\mathbf{U}_h$ is computed by}
\begin{align}
\int_K(\mathbf{U}_h)_tv(x,y) \, dx \, dy&-\int_K\mathbf{F}(\mathbf{U}_h)v_x \, dx \, dy+\int_{y_{l-\frac{1}{2}}}^{y_{l+\frac{1}{2}}}\left(\widehat{\mathbf{F}}_{i+\frac{1}{2}}
v(x_{i+\frac{1}{2}}^{-},y)-\widehat{\mathbf{F}}_{i-\frac{1}{2}}v(x_{i-\frac{1}{2}}^{+},y)\right)dy \notag \\
&-\int_K\mathbf{G}(\mathbf{U}_h)v_y \, dx \, dy+\int_{x_{i-\frac{1}{2}}}^{x_{i+\frac{1}{2}}}\left(\widehat{\mathbf{G}}_{l+\frac{1}{2}}
v(x,y_{l+\frac{1}{2}}^{-})-\widehat{\mathbf{G}}_{l-\frac{1}{2}}v(x,y_{l-\frac{1}{2}}^{+})\right)dx \notag \\
&=\int_K\left(\mathbf{S}^x(\mathbf{U}_h)+\mathbf{S}^y(\mathbf{U}_h)\right)v(x,y) \, dx \, dy \quad \forall K\in{\mathcal{K}}_h,\label{2D semi DG}
\end{align}
where $\widehat{\mathbf{F}}_{i+\frac{1}{2}}$ and $\widehat{\mathbf{G}}_{l+\frac{1}{2}}$ represent the numerical fluxes. Let $\rho_h^e(x,y)$, $p_{11,h}^e(x,y)$, $p_{12,h}^e(x,y)$, and $p_{22,h}^e(x,y)$ be the $L^2$-projections of $\rho^e(x,y)$, $p_{11}^e(x,y)$, $p_{12}^e(x,y)$, and $p_{22}^e(x,y)$ onto $\mathbb{V}_h^k$, respectively. Since $p_{12}^e(x,y)=C_0$ is a constant, one has $p_{12,h}^e(x,y)=C_0$.

Denote
\begin{align}
\mathbf{p}_{i+\frac{1}{2},l}^{e,\pm,\ast}(y)&:=\begin{pmatrix}
                                               p_{11,i+\frac{1}{2},l}^{e,\ast}(y) & C_0 \\
                                               C_0 &
                                               p_{22,h}^e(x_{i+\frac{1}{2}}^{\pm},y)
                                             \end{pmatrix},
\quad y\in[y_{l-\frac{1}{2}},y_{l+\frac{1}{2}}], \label{2d_pstar_x}\\
\mathbf{p}_{i,l+\frac{1}{2}}^{e,\ast,\pm}(x)&:=\begin{pmatrix}
                                                p_{11,h}^e(x,y_{l+\frac{1}{2}}^{\pm}) & C_0 \\
                                                C_0 & p_{22,i,l+\frac{1}{2}}^{e,\ast}(x)
                                              \end{pmatrix},
\quad x\in[x_{i-\frac{1}{2}},x_{i+\frac{1}{2}}], \label{2d_pstar_y}
\end{align}
where
\[
p_{11,i+\frac{1}{2},l}^{e,\ast}(y):=\max\left\{{p_{11,h}^{e}(x_{i+\frac{1}{2}}^{-},y),p_{11,h}^{e}(x_{i+\frac{1}{2}}^{+},y)}\right\},
\quad
 y\in[y_{l-\frac{1}{2}},y_{l+\frac{1}{2}}],
\]
\[
p_{22,i,l+\frac{1}{2}}^{e,\ast}(x):=\max\left\{{p_{22,h}^{e}(x,y_{l+\frac{1}{2}}^{-}),p_{22,h}^{e}(x,y_{l+\frac{1}{2}}^{+})}\right\},
\quad
 x\in[x_{i-\frac{1}{2}},x_{i+\frac{1}{2}}].
\]
Then by Lemma \ref{transfor T}, there exist two upper triangular {matrices} $\mathbf{T}_{i+\frac{1}{2},l}^{\pm}(y)$ of form (\ref{upper T}) and two lower triangular matrices $\mathbf{T}_{i,l+\frac{1}{2}}^{\pm}(x)$ of form (\ref{lower T}) such that
\begin{align}
\mathbf{p}_{i+\frac{1}{2},l}^{e,\pm,\ast}(y)&=\mathbf{T}_{i+\frac{1}{2},l}^{\pm}(y)\mathbf{p}_h^e(x_{i+\frac{1}{2}}^{\pm},y)(\mathbf{T}_{i+\frac{1}{2},l}^{\pm}(y))^\top, \label{2d_pstar_T_x}\\
\mathbf{p}_{i,l+\frac{1}{2}}^{e,\ast,\pm}(x)&=\mathbf{T}_{i,l+\frac{1}{2}}^{\pm}(x)\mathbf{p}_h^e(x,y_{l+\frac{1}{2}}^{\pm})(\mathbf{T}_{i,l+\frac{1}{2}}^{\pm}(x))^\top.
\label{2d_pstar_T_y}
\end{align}

To {derive} 2D WB DG schemes, we employ the following HLLC numerical fluxes with modified solution states:
\[
\widehat{\mathbf{F}}_{i+\frac{1}{2}}=\mathbf{F}^{hllc}\left(\widehat{\mathbf{U}}_{i+\frac{1}{2},l}^{-}(y),\widehat{\mathbf{U}}_{i+\frac{1}{2},l}^{+}(y)\right), \quad
\widehat{\mathbf{G}}_{l+\frac{1}{2}}=\mathbf{G}^{hllc}\left(\widehat{\mathbf{U}}_{i,l+\frac{1}{2}}^{-}(x),\widehat{\mathbf{U}}_{i,l+\frac{1}{2}}^{+}(x)\right),
\]
where $\widehat{\mathbf{U}}_{i+\frac{1}{2},l}^{\pm}(y)$ and $\widehat{\mathbf{U}}_{i,l+\frac{1}{2}}^{\pm}(x)$ are the states obtained by some modifications to the DG numerical solutions $\mathbf{U}_{i+\frac{1}{2},l}^{\pm}(y):={\mathbf{U}_h(x_{i+\frac{1}{2}}^{\pm},y)}$ {within the interval} $y\in[y_{l-\frac{1}{2}},y_{l+\frac{1}{2}}]$ and $\mathbf{U}_{i,l+\frac{1}{2}}^{\pm}(x):={\mathbf{U}_h(x,y_{l+\frac{1}{2}}^{\pm})}$ {within the interval} $x\in[x_{i-\frac{1}{2}},x_{i+\frac{1}{2}}]$, respectively. Specifically, the modified solution states are defined as
\begin{equation}
\widehat{\rho}_{i+\frac{1}{2},l}^{\pm}(y)=\rho_{i+\frac{1}{2},l}^{\pm}(y), ~~
\widehat{\mathbf{m}}_{i+\frac{1}{2},l}^{\pm}(y)=\mathbf{T}_{i+\frac{1}{2},l}^{\pm}(y)\mathbf{m}_{i+\frac{1}{2},l}^{\pm}(y), ~~
\widehat{\mathbf{E}}_{i+\frac{1}{2},l}^{\pm}(y)=\mathbf{T}_{i+\frac{1}{2},l}^{\pm}(y)\mathbf{E}_{i+\frac{1}{2},l}^{\pm}(y)(\mathbf{T}_{i+\frac{1}{2},l}^{\pm}(y))^\top
\label{2d_transformation_x}
\end{equation}
\begin{equation}
\widehat{\rho}_{i,l+\frac{1}{2}}^{\pm}(x)=\rho_{i,l+\frac{1}{2}}^{\pm}(x), ~~
\widehat{\mathbf{m}}_{i,l+\frac{1}{2}}^{\pm}(x)=\mathbf{T}_{i,l+\frac{1}{2}}^{\pm}(x)\mathbf{m}_{i,l+\frac{1}{2}}^{\pm}(x), ~~
\widehat{\mathbf{E}}_{i,l+\frac{1}{2}}^{\pm}(x)=\mathbf{T}_{i,l+\frac{1}{2}}^{\pm}(x)\mathbf{E}_{i,l+\frac{1}{2}}^{\pm}(x)(\mathbf{T}_{i,l+\frac{1}{2}}^{\pm}(x))^\top.
\label{2d_transformation_y}
\end{equation}
Let $\{x_i^{(\mu)}\}_{\mu=1}^N$ and $\{y_l^{(\mu)}\}_{\mu=1}^N$ denote the Gauss quadrature points in the intervals $[x_{i-\frac{1}{2}},x_{i+\frac{1}{2}}]$ and $[y_{l-\frac{1}{2}},y_{l+\frac{1}{2}}]$, respectively. Then one can approximate the integrals over the cell edges in (\ref{2D semi DG}) as follows:
\begin{align}
&\int_{y_{l-\frac{1}{2}}}^{y_{l+\frac{1}{2}}}\left(\widehat{\mathbf{F}}_{i+\frac{1}{2}}v(x_{i+\frac{1}{2}}^{-},y)-\widehat{\mathbf{F}}_{i-\frac{1}{2}}v(x_{i-\frac{1}{2}}^{+},y)\right)dy \notag \\
\approx&\Delta y\sum_{\mu=1}^{N}\omega_\mu\left[\mathbf{F}^{hllc}\left(\widehat{\mathbf{U}}_{i+\frac{1}{2},l}^{-,\mu},\widehat{\mathbf{U}}_{i+\frac{1}{2},l}^{+,\mu}\right)v_{i+\frac{1}{2},l}^{-,\mu}-
\mathbf{F}^{hllc}\left(\widehat{\mathbf{U}}_{i-\frac{1}{2},l}^{-,\mu},\widehat{\mathbf{U}}_{i-\frac{1}{2},l}^{+,\mu}\right)v_{i-\frac{1}{2},l}^{+,\mu}
\right] \notag \\
=&:\left\langle\widehat{\mathbf{F}}_{i\pm\frac{1}{2}},v\right\rangle_y, \label{edge integral y} \\
&\int_{x_{i-\frac{1}{2}}}^{x_{i+\frac{1}{2}}}\left(\widehat{\mathbf{G}}_{l+\frac{1}{2}}v(x,y_{l+\frac{1}{2}}^{-})-\widehat{\mathbf{G}}_{l-\frac{1}{2}}v(x,y_{l-\frac{1}{2}}^{+})\right)dx \notag \\
\approx&\Delta x\sum_{\mu=1}^{N}\omega_\mu\left[\mathbf{G}^{hllc}\left(\widehat{\mathbf{U}}_{i,l+\frac{1}{2}}^{\mu,-},\widehat{\mathbf{U}}_{i,l+\frac{1}{2}}^{\mu,+}\right)v_{i,l+\frac{1}{2}}^{\mu,-}-
\mathbf{G}^{hllc}\left(\widehat{\mathbf{U}}_{i,l-\frac{1}{2}}^{\mu,-},\widehat{\mathbf{U}}_{i,l-\frac{1}{2}}^{\mu,+}\right)v_{i,l-\frac{1}{2}}^{\mu,+}
\right] \notag \\
=&:\left\langle\widehat{\mathbf{G}}_{l\pm\frac{1}{2}},v\right\rangle_x, \label{edge integral x}
\end{align}
where $\widehat{\mathbf{U}}_{i+\frac{1}{2},l}^{\pm,\mu}:=\widehat{\mathbf{U}}_{i+\frac{1}{2},l}^{\pm}(y_l^{(\mu)})$, $\widehat{\mathbf{U}}_{i,l+\frac{1}{2}}^{\mu,\pm}:=\widehat{\mathbf{U}}_{i,l+\frac{1}{2}}^{\pm}(x_i^{(\mu)})$, $v_{i+\frac{1}{2},l}^{\pm,\mu}:=v(x_{i+\frac{1}{2}}^{\pm},y_l^{(\mu)})$ and $v_{i,l+\frac{1}{2}}^{\mu,\pm}:=v(x_i^{(\mu)},y_{l+\frac{1}{2}}^{\pm})$. The {integrals over the cell $K$} in (\ref{2D semi DG}) are discretized as
\begin{align}
\int_K\mathbf{F}(\mathbf{U}_h)v_x \, dx \, dy&\approx|K|\sum_{1\leq\mu_1,\mu_2\leq N}\omega_{\mu_1}\omega_{\mu_2}(\mathbf{F}(\mathbf{U}_h)v_x){_{i,l}^{\mu_1,\mu_2}}=:\left\langle\mathbf{F}(\mathbf{U}_h),v_x\right\rangle_K, \label{element integral F} \\
\int_K\mathbf{G}(\mathbf{U}_h)v_y \, dx \, dy&\approx|K|\sum_{1\leq\mu_1,\mu_2\leq N}\omega_{\mu_1}\omega_{\mu_2}(\mathbf{G}(\mathbf{U}_h)v_y){_{i,l}^{\mu_1,\mu_2}}=:\left\langle\mathbf{G}(\mathbf{U}_h),v_y\right\rangle_K, \label{element integral G}
\end{align}
where $|K|=\Delta x\Delta y$ is the area of cell $K$, {and $(\cdot)_{i,l}^{\mu_1,\mu_2}$ denotes the value of the associated quantity at the point $(x_i^{(\mu_1)},y_l^{(\mu_2)})$}.

Next, we address how to discretize the integrals involving source terms in (\ref{2D semi DG}) to achieve the WB property. Denote $\mathbf{S}^x=:\left(0,S^{[2]},0,S^{[4]},S^{[5,x]},0\right)^\top$ and $\mathbf{S}^y=:\left(0,0,S^{[3]},0,S^{[5,y]},S^{[6]}\right)^\top$. {Employing a technique analogous to that used in the 1D case, we reformulate and decompose the integral of the second component of the source terms as follows:}
\begin{align*}
\int_K S^{[2]}v \, dx \, dy &= \int_K -\frac{1}{2}\rho W_xv \, dx \, dy = \int_K \frac{\rho}{\rho^e}(p_{11}^e)_xv \, dx \, dy \\
&= \int_K \left(\frac{\rho}{\rho^e} - \frac{\overline{\rho}_K}{{\overline{\rho^e}_K}} + \frac{\overline{\rho}_K}{\overline{\rho^e}_K}\right)(p_{11}^e)_xv \, dx \, dy \\
&= \int_K \left(\frac{\rho}{\rho^e} - \frac{\overline{\rho}_K}{\overline{\rho^e}_K}\right)(p_{11}^e)_xv \, dx \, dy \\
&\quad + \frac{\overline{\rho}_K}{\overline{\rho^e}_K}\left[\int_{y_{l-\frac{1}{2}}}^{y_{l+\frac{1}{2}}} \left((p_{11}^ev)(x_{i+\frac{1}{2}}^{-},y) - (p_{11}^ev)(x_{i-\frac{1}{2}}^{+},y)\right)dy - \int_K p_{11}^ev_x \, dx \, dy\right],
\end{align*}
where (\ref{2D steady state 1}) has been utilized in the second equality. This integral can then be approximated by
\begin{align}
\int_K S^{[2]}v \, dx \, dy &\approx |K|\sum_{1\leq\mu_1,\mu_2\leq N}\omega_{\mu_1}\omega_{\mu_2}\left[\left(\frac{\rho_h}{\rho_h^e}\right)_{i,l}^{\mu_1,\mu_2} - \frac{\overline{(\rho_h)}_K}{\overline{(\rho_h^e)}_K}\right]((p_{11,h}^e)_xv)_{i,l}^{\mu_1,\mu_2} \notag \\
&\quad + \frac{\overline{(\rho_h)}_K}{\overline{(\rho_h^e)}_K}\left[\Delta y\sum_{\mu=1}^N\omega_\mu\left(p_{11,i+\frac{1}{2},l}^{e,\ast,\mu}v_{i+\frac{1}{2},l}^{-,\mu} - p_{11,i-\frac{1}{2},l}^{e,\ast,\mu}v_{i-\frac{1}{2},l}^{+,\mu}\right) \right. \notag \\
&\quad \left. - |K|\sum_{1\leq\mu_1,\mu_2\leq N}\omega_{\mu_1}\omega_{\mu_2}(p_{11,h}^ev_x)_{i,l}^{\mu_1,\mu_2}\right] \notag \\
&=: \left\langle S^{[2]},v\right\rangle_K, \label{S2 v}
\end{align}
where $p_{11,i\pm\frac{1}{2},l}^{e,\ast,\mu}:=p_{11,i\pm\frac{1}{2},l}^{e,\ast}(y_l^{(\mu)})$.
Similarly, one can approximate other integrals of source terms as
\begin{align}
\int_K S^{[3]}v \, dx \, dy &\approx |K| \sum_{1 \leq \mu_1, \mu_2 \leq N} \omega_{\mu_1} \omega_{\mu_2} \left[ \left(\frac{\rho_h}{\rho_h^e}\right)_{i,l}^{\mu_1,\mu_2} - \frac{\overline{(\rho_h)}_K}{\overline{(\rho_h^e)}_K} \right] ((p_{22,h}^e)_y v)_{i,l}^{\mu_1,\mu_2} \notag \\
&\quad + \frac{\overline{\rho_h}_K}{\overline{(\rho_h^e)}_K} \left[ \Delta x \sum_{\mu=1}^N \omega_\mu \left( p_{22,i,l+\frac{1}{2}}^{e,\mu,\ast} v_{i,l+\frac{1}{2}}^{\mu,-} - p_{22,i,l-\frac{1}{2}}^{e,\mu,\ast} v_{i,l-\frac{1}{2}}^{\mu,+} \right) \right. \notag \\
&\quad \left. - |K| \sum_{1 \leq \mu_1, \mu_2 \leq N} \omega_{\mu_1} \omega_{\mu_2} (p_{22,h}^e v_y)_{i,l}^{\mu_1,\mu_2} \right] \notag \\
&=: \left\langle S^{[3]},v \right\rangle_K, \label{S3 v}
\end{align}
\begin{align}
\int_K S^{[4]}v \, dx \, dy &\approx |K| \sum_{1 \leq \mu_1, \mu_2 \leq N} \omega_{\mu_1} \omega_{\mu_2} \left[ \left(\frac{m_{1,h}}{\rho_h^e}\right)_{i,l}^{\mu_1,\mu_2} - \frac{\overline{(m_{1,h})}_K}{\overline{(\rho_h^e)}_K} \right] ((p_{11,h}^e)_x v)_{i,l}^{\mu_1,\mu_2} \notag \\
&\quad + \frac{\overline{(m_{1,h})}_K}{\overline{(\rho_h^e)}_K} \left[ \Delta y \sum_{\mu=1}^N \omega_\mu \left( p_{11,i+\frac{1}{2},l}^{e,\ast,\mu} v_{i+\frac{1}{2},l}^{-,\mu} - p_{11,i-\frac{1}{2},l}^{e,\ast,\mu} v_{i-\frac{1}{2},l}^{+,\mu} \right) \right. \notag \\
&\quad \left. - |K| \sum_{1 \leq \mu_1, \mu_2 \leq N} \omega_{\mu_1} \omega_{\mu_2} (p_{11,h}^e v_x)_{i,l}^{\mu_1,\mu_2} \right] \notag \\
&=: \left\langle S^{[4]},v \right\rangle_K, \label{S4 v}
\end{align}
\begin{align}
\int_K S^{[5,x]}v \, dx \, dy &\approx |K| \sum_{1 \leq \mu_1, \mu_2 \leq N} \omega_{\mu_1} \omega_{\mu_2} \left[ \left(\frac{m_{2,h}}{2\rho_h^e}\right)_{i,l}^{\mu_1,\mu_2} - \frac{\overline{(m_{2,h})}_K}{2\overline{(\rho_h^e)}_K} \right] ((p_{11,h}^e)_x v)_{i,l}^{\mu_1,\mu_2} \notag \\
&\quad + \frac{\overline{(m_{2,h})}_K}{2\overline{(\rho_h^e)}_K} \left[ \Delta y \sum_{\mu=1}^N \omega_\mu \left( p_{11,i+\frac{1}{2},l}^{e,\ast,\mu} v_{i+\frac{1}{2},l}^{-,\mu} - p_{11,i-\frac{1}{2},l}^{e,\ast,\mu} v_{i-\frac{1}{2},l}^{+,\mu} \right) \right. \notag \\
&\quad \left. - |K| \sum_{1 \leq \mu_1, \mu_2 \leq N} \omega_{\mu_1} \omega_{\mu_2} (p_{11,h}^e v_x)_{i,l}^{\mu_1,\mu_2} \right] \notag \\
&=: \left\langle S^{[5,x]},v \right\rangle_K, \label{S5x v}
\end{align}
\begin{align}
\int_K S^{[5,y]}v \, dx \, dy &\approx |K| \sum_{1 \leq \mu_1, \mu_2 \leq N} \omega_{\mu_1} \omega_{\mu_2} \left[ \left(\frac{m_{1,h}}{2\rho_h^e}\right)_{i,l}^{\mu_1,\mu_2} - \frac{\overline{m_{1,h}}_K}{2\overline{(\rho_h^e)}_K} \right] ((p_{22,h}^e)_y v)_{i,l}^{\mu_1,\mu_2} \notag \\
&\quad + \frac{\overline{m_{1,h}}_K}{2\overline{(\rho_h^e)}_K} \left[ \Delta x \sum_{\mu=1}^N \omega_\mu \left( p_{22,i,l+\frac{1}{2}}^{e,\mu,\ast} v_{i,l+\frac{1}{2}}^{\mu,-} - p_{22,i,l-\frac{1}{2}}^{e,\mu,\ast} v_{i,l-\frac{1}{2}}^{\mu,+} \right) \right. \notag \\
&\quad \left. - |K| \sum_{1 \leq \mu_1, \mu_2 \leq N} \omega_{\mu_1} \omega_{\mu_2} (p_{22,h}^e v_y)_{i,l}^{\mu_1,\mu_2} \right] \notag \\
&=:\left\langle S^{[5,y]},v\right\rangle_K, \label{S5y v}
\end{align}
\begin{align}
\int_K S^{[6]}v \, dx \, dy &\approx |K| \sum_{1 \leq \mu_1, \mu_2 \leq N} \omega_{\mu_1} \omega_{\mu_2} \left[ \left(\frac{m_{2,h}}{\rho_h^e}\right)_{i,l}^{\mu_1,\mu_2} - \frac{\overline{m_{2,h}}_K}{\overline{(\rho_h^e)}_K} \right] ((p_{22,h}^e)_y v)_{i,l}^{\mu_1,\mu_2} \notag \\
&\quad + \frac{\overline{m_{2,h}}_K}{\overline{(\rho_h^e)}_K} \left[ \Delta x \sum_{\mu=1}^N \omega_\mu \left( p_{22,i,l+\frac{1}{2}}^{e,\mu,\ast} v_{i,l+\frac{1}{2}}^{\mu,-} - p_{22,i,l-\frac{1}{2}}^{e,\mu,\ast} v_{i,l-\frac{1}{2}}^{\mu,+} \right) \right. \notag \\
&\quad \left. - |K| \sum_{1 \leq \mu_1, \mu_2 \leq N} \omega_{\mu_1} \omega_{\mu_2} (p_{22,h}^e v_y)_{i,l}^{\mu_1,\mu_2} \right] \notag \\
&=:\left\langle S^{[6]},v\right\rangle_K, \label{S6 v}
\end{align}
where $p_{22,i,l\pm\frac{1}{2}}^{e,\mu,\ast}:=p_{22,i,l\pm\frac{1}{2}}^{e,\ast}(x_i^{(\mu)})$. 
Substituting (\ref{edge integral y})--(\ref{S6 v}) into (\ref{2D semi DG}), one obtains the following WB DG methods with the forward Euler time discretization:
\begin{align}
\int_K\frac{\mathbf{U}_h^{\text{new}}-\mathbf{U}_h}{\Delta t}v \, dx \, dy
=&\left\langle\mathbf{F}(\mathbf{U}_h),v_x\right\rangle_K+\left\langle\mathbf{G}(\mathbf{U}_h),v_y\right\rangle_K-
\left\langle\widehat{\mathbf{F}}_{i\pm\frac{1}{2}},v\right\rangle_y-\left\langle\widehat{\mathbf{G}}_{l\pm\frac{1}{2}},v\right\rangle_x \notag \\
&+\left(0,\langle S^{[2]},v\rangle_K,0,\langle S^{[4]},v\rangle_K,\langle S^{[5,x]},v\rangle_K,0\right)^\top \notag \\
&+\left(0,0,\langle S^{[3]},v\rangle_K,0,\langle S^{[5,y]},v\rangle_K,\langle S^{[6]},v\rangle_K\right)^\top \quad {\forall K\in\mathcal{K}_h}. \label{2D WB semi DG}
\end{align}

\begin{theorem}\label{2D WB theorem}
For the 2D ten-moment Gauss closure equations with source terms, the DG methods {\rm (\ref{2D WB semi DG})} are WB for a general known hydrostatic state {\rm (\ref{2D steady state 1})}.
\end{theorem}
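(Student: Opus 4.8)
The plan is to mimic the one-dimensional argument of Theorem~\ref{1D well-balanceness}: assume the DG solution $\mathbf{U}_h$ has already reached the discrete hydrostatic state \eqref{2D steady state 1}, so that on every cell $\rho_h=\rho_h^e$, $\mathbf{u}_h=\mathbf{0}$, $E_{11,h}=\tfrac12 p_{11,h}^e$, $E_{12,h}=\tfrac12 C_0$, and $E_{22,h}=\tfrac12 p_{22,h}^e$, and then show that the right-hand side of \eqref{2D WB semi DG} vanishes identically, which forces $\mathbf{U}_h^{\mathrm{new}}=\mathbf{U}_h$. The first step is to evaluate the modified interface states: substituting the equilibrium values into \eqref{2d_transformation_x} and using \eqref{2d_pstar_T_x}, the momentum stays zero while the energy tensor is conjugated by the upper-triangular matrix $\mathbf{T}_{i+\frac12,l}^{\pm}(y)$, so that
\[
\widehat{\mathbf{U}}_{i+\frac12,l}^{\pm}(y)=\Bigl(\rho_h^e(x_{i+\frac12}^{\pm},y),\,0,\,0,\,\tfrac12 p_{11,i+\frac12,l}^{e,\ast}(y),\,\tfrac12 C_0,\,\tfrac12 p_{22,h}^e(x_{i+\frac12}^{\pm},y)\Bigr)^{\top},
\]
and, symmetrically, from \eqref{2d_transformation_y} and \eqref{2d_pstar_T_y}, $\widehat{\mathbf{U}}_{i,l+\frac12}^{\pm}(x)$ has zero momentum, first component $\rho_h^e(x,y_{l+\frac12}^{\pm})$, $E_{11}$-entry $\tfrac12 p_{11,h}^e(x,y_{l+\frac12}^{\pm})$, $E_{12}$-entry $\tfrac12 C_0$, and $E_{22}$-entry $\tfrac12 p_{22,i,l+\frac12}^{e,\ast}(x)$. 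These are exactly the contact-type data handled by Lemma~\ref{contact property} in the $x$-direction (equal $p_{11}$ across the interface) and, by the $x\leftrightarrow y$ symmetry of \eqref{FU}--\eqref{GU}, by its $y$-direction analogue for $\mathbf{G}^{hllc}$ (equal $p_{22}$ across the interface), so the numerical fluxes collapse to $\widehat{\mathbf{F}}_{i+\frac12}=\bigl(0,p_{11,i+\frac12,l}^{e,\ast}(y),C_0,0,0,0\bigr)^{\top}$ and $\widehat{\mathbf{G}}_{l+\frac12}=\bigl(0,C_0,p_{22,i,l+\frac12}^{e,\ast}(x),0,0,0\bigr)^{\top}$.

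Next I would check the six components of \eqref{2D WB semi DG} separately. Since $\mathbf{u}_h=\mathbf{0}$, the volume fluxes reduce to $\mathbf{F}(\mathbf{U}_h)=(0,p_{11,h}^e,C_0,0,0,0)^{\top}$ and $\mathbf{G}(\mathbf{U}_h)=(0,C_0,p_{22,h}^e,0,0,0)^{\top}$, and $m_{1,h}=m_{2,h}=0$ annihilates the source contributions \eqref{S4 v}, \eqref{S5x v}, \eqref{S5y v}, \eqref{S6 v}; combined with the vanishing of the matching flux components, this kills the first, fourth, fifth, and sixth components of the right-hand side. For the $m_1$-equation (second component), the decisive point is that $\rho_h=\rho_h^e$ makes the first sum in \eqref{S2 v} vanish and the prefactor $\overline{(\rho_h)}_K/\overline{(\rho_h^e)}_K$ equal to $1$, so $\langle S^{[2]},v\rangle_K$ reduces to exactly the edge-minus-volume combination of $p_{11,h}^e$ that cancels the $p_{11}$-part of $\langle\mathbf{F}(\mathbf{U}_h),v_x\rangle_K-\langle\widehat{\mathbf{F}}_{i\pm\frac12},v\rangle_y$; the only remaining piece is the constant $C_0$ coming from the second components of $\langle\mathbf{G}(\mathbf{U}_h),v_y\rangle_K$ and $\langle\widehat{\mathbf{G}}_{l\pm\frac12},v\rangle_x$, which gives $C_0\bigl(\int_K v_y\,dx\,dy-\int_{x_{i-\frac12}}^{x_{i+\frac12}}(v(x,y_{l+\frac12}^{-})-v(x,y_{l-\frac12}^{+}))\,dx\bigr)$ after exact quadrature and hence vanishes by the fundamental theorem of calculus, using that the tensor-product Gauss rule is exact for the polynomial $v_y$ and the one-dimensional edge rule is exact for $v$. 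The $m_2$-equation (third component) is identical after exchanging $x$ with $y$, $p_{11}$ with $p_{22}$, and \eqref{S2 v} with \eqref{S3 v}. Collecting the six components shows the right-hand side of \eqref{2D WB semi DG} is zero, so $\mathbf{U}_h^{\mathrm{new}}=\mathbf{U}_h$; the SSP-RK and SSP-MS versions then inherit the property, being convex combinations of this forward-Euler step.

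The step I expect to be the main obstacle is the two-directional bookkeeping rather than any individual estimate. One has to verify that the directional modifications \eqref{2d_transformation_x} and \eqref{2d_transformation_y} each land, at equilibrium, inside the hypothesis set of Lemma~\ref{contact property} in the appropriate direction: the $x$-interface needs the two states to share the same $p_{11}$-entry, which holds because $p_{11,i+\frac12,l}^{e,\ast}$ is the common value produced by the upper-triangular $\mathbf{T}_{i+\frac12,l}^{\pm}$; the $y$-interface needs the two states to share the same $p_{22}$-entry, which holds symmetrically through the lower-triangular $\mathbf{T}_{i,l+\frac12}^{\pm}$. One must also invoke the $x\leftrightarrow y$ analogue of the contact property for $\mathbf{G}^{hllc}$, and track precisely how the $p_{11}$-, $p_{22}$-, and constant-$C_0$ contributions from the four volume/edge terms and the six reformulated source integrals in \eqref{2D WB semi DG} pair up and telescope — in particular the $C_0$ cross-terms in the two momentum equations cancel only thanks to the exactness of the chosen quadratures for the relevant polynomial degrees.
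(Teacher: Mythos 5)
Your proposal is correct and follows essentially the same route as the paper's own proof: evaluate the modified interface states at equilibrium, invoke the contact property of the HLLC flux (and its $y$-direction analogue), and verify component-by-component that the reformulated source quadratures cancel the $p_{11}^e$ and $p_{22}^e$ flux contributions while the constant-$C_0$ cross-terms vanish by exactness of the Gauss quadrature. Your additional remark that the $x$- and $y$-interface states land in the hypothesis set of Lemma \ref{contact property} because they share the common starred entries $p_{11,i+\frac12,l}^{e,\ast}$ and $p_{22,i,l+\frac12}^{e,\ast}$ is exactly the point the paper uses implicitly.
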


\begin{proof}
{Assuming $\mathbf{U}_h$ reaches the equilibrium state (\ref{2D steady state 1}), one has $\rho_h=\rho_h^e$, $\mathbf{u}_h=\mathbf{u}_h^e=\mathbf{0}$, $E_{11,h}=\frac{1}{2}p_{11,h}^e$, $E_{12,h}=\frac{1}{2}p_{12,h}^e=\frac{1}{2}C_0$, $E_{22,h}=\frac{1}{2}p_{22,h}^e$.}
Hence, {from (\ref{2d_pstar_x})--(\ref{2d_transformation_y}), one obtains}
\begin{align*}
\widehat{\mathbf{U}}_{i+\frac{1}{2},l}^{\pm,\mu}&=\left(\rho_h^e(x_{i+\frac{1}{2}}^\pm,y_l^{(\mu)}),0,0,\frac{1}{2}p_{11,i+\frac{1}{2},l}^{e,\ast,\mu},
\frac{1}{2}C_0,\frac{1}{2}p_{22,i+\frac{1}{2},l}^{\pm,\mu}\right)^\top, \\
\widehat{\mathbf{U}}_{i,l+\frac{1}{2}}^{\mu,\pm}&=\left(\rho_h^e(x_i^{(\mu)},y_{l+\frac{1}{2}}^\pm),0,0,\frac{1}{2}p_{11,i,l+\frac{1}{2}}^{\mu,\pm},
\frac{1}{2}C_0,\frac{1}{2}p_{22,i,l+\frac{1}{2}}^{e,\mu,\ast}\right)^\top.
\end{align*}
According to the contact property of HLLC flux (see Lemma \ref{contact property}), the HLLC numerical fluxes with modified solution states reduce to
\begin{align*}
\widehat{\mathbf{F}}_{i+\frac{1}{2}}&=\mathbf{F}^{hllc}\left(\widehat{\mathbf{U}}_{i+\frac{1}{2},l}^{-,\mu},\widehat{\mathbf{U}}_{i+\frac{1}{2},l}^{+,\mu}\right)
=\left(0,p_{11,i+\frac{1}{2},l}^{e,\ast,\mu},C_0,0,0,0\right)^\top, \\
\widehat{\mathbf{G}}_{l+\frac{1}{2}}&=\mathbf{G}^{hllc}\left(\widehat{\mathbf{U}}_{i,l+\frac{1}{2}}^{\mu,-},\widehat{\mathbf{U}}_{i,l+\frac{1}{2},l}^{\mu,+}\right)
=\left(0,C_0,p_{22,i,l+\frac{1}{2}}^{e,\mu,\ast},0,0,0\right)^\top.
\end{align*}
Note that the first, fourth, fifth, and sixth {components} of flux and source approximations all become zero.
{For the equation of momentum $m_1$,}
because $\rho_h=\rho_h^e$, one has
\[
\left\langle S^{[2]},v\right\rangle_K=\Delta y\sum_{\mu=1}^N\omega_\mu\left(p_{11,i+\frac{1}{2},l}^{e,\ast,\mu}v_{i+\frac{1}{2},l}^{-,\mu}-p_{11,i-\frac{1}{2},l}^{e,\ast,\mu}v_{i-\frac{1}{2},l}^{+,\mu}\right)-
|K|\sum_{1\leq\mu_1,\mu_2\leq N}\omega_{\mu_1}\omega_{\mu_2}(p_{11,h}^ev_x){_{i,l}^{\mu_1,\mu_2}}.
\]
Denote the $\ell$-th component of $\mathbf{F}$ and $\mathbf{G}$ by $F^{[\ell]}$ and $G^{[\ell]}$, respectively. Since $\mathbf{u}_h^e=0$ and $p_{12,h}^e=C_0$, one has
\begin{align*}
&\left\langle F^{[2]}(\mathbf{U}_h),v_x\right\rangle_K-\left\langle \widehat{F}_{i\pm\frac{1}{2}}^{[2]},v\right\rangle_y \\
=&|K|\sum_{1\leq\mu_1,\mu_2\leq N}\omega_{\mu_1}\omega_{\mu_2}(p_{11,h}^ev_x)_{i,l}^{\mu_1,\mu_2}-\Delta y\sum_{\mu=1}^N\omega_\mu\left(p_{11,i+\frac{1}{2},l}^{e,\ast,\mu}v_{i+\frac{1}{2},l}^{-,\mu}-p_{11,i-\frac{1}{2},l}^{e,\ast,\mu}v_{i-\frac{1}{2},l}^{+,\mu}\right) \\
=&-\left\langle S^{[2]},v\right\rangle_K,
\end{align*}
and
\begin{align*}
&\left\langle G^{[2]}(\mathbf{U}_h),v_y\right\rangle_K-\left\langle \widehat{G}_{l\pm\frac{1}{2}}^{[2]},v\right\rangle_x \\
=&|K|\sum_{1\leq\mu_1,\mu_2\leq N}\omega_{\mu_1}\omega_{\mu_2}(C_0v_y)_{i,l}^{\mu_1,\mu_2}-\Delta x\sum_{\mu=1}^N\omega_\mu\left(C_0v_{i,l+\frac{1}{2}}^{\mu,-}-C_0v_{i,l-\frac{1}{2}}^{\mu,+}\right) \\
=&C_0\int_Kv_ydxdy-C_0\int_{x_{i-\frac{1}{2}}}^{x_{i+\frac{1}{2}}}[v(x,y_{l+\frac{1}{2}}^{-})-v(x,y_{l-\frac{1}{2}}^{+})]dx \\
=&0.
\end{align*}
It follows that
\[
\left\langle F^{[2]}(\mathbf{U}_h),v_x\right\rangle_K+\left\langle G^{[2]}(\mathbf{U}_h),v_y\right\rangle_K+
\left\langle S^{[2]},v\right\rangle_K-\left\langle \widehat{F}_{i\pm\frac{1}{2}}^{[2]},v\right\rangle_y
-\left\langle \widehat{G}_{l\pm\frac{1}{2}}^{[2]},v\right\rangle_x=0.
\]
Similar derivation {for the equation of momentum $m_2$} gives
\[
\left\langle F^{[3]}(\mathbf{U}_h),v_x\right\rangle_K+\left\langle G^{[3]}(\mathbf{U}_h),v_y\right\rangle_K+
\left\langle S^{[3]},v\right\rangle_K-\left\langle \widehat{F}_{i\pm\frac{1}{2}}^{[3]},v\right\rangle_y
-\left\langle \widehat{G}_{l\pm\frac{1}{2}}^{[3]},v\right\rangle_x=0.
\]
Hence the right hand side of (\ref{2D WB semi DG}) becomes zero when $\mathbf{U}_h$ {reaches} the hydrostatic state. This implies
$\mathbf{U}_h^{\text{new}}=\mathbf{U}_h$ and completes the proof.
\end{proof}

\subsection{Positivity of first-order WB DG scheme}\label{pp analysis of 2d first-order schemes}
Let $\overline{\mathbf{U}}_K(t):=\frac{1}{|K|}\int_K\mathbf{U}_h(x,y,t) \, dx \, dy$ denote the cell average of $\mathbf{U}_h$ over cell $K$. By setting $v=1$ in the scheme (\ref{2D WB semi DG}), one obtains the evolution equations for the cell average:
\begin{equation}\label{2d_cell_average_evolution}
\overline{\mathbf{U}}_K^{\text{new}}=\overline{\mathbf{U}}_K-\frac{\Delta t}{|K|}\left[\left\langle\widehat{\mathbf{F}}_{i\pm\frac{1}{2}},1\right\rangle_y+\left\langle\widehat{\mathbf{G}}_{l\pm\frac{1}{2}},1\right\rangle_x\right]+\Delta t\overline{\mathbf{S}}_K^x+\Delta t\overline{\mathbf{S}}_K^y=:\overline{\mathbf{U}}_K+\Delta t\mathbf{L}_{K}(\mathbf{U}_h),
\end{equation}
where
\[
\overline{\mathbf{S}}_K^x:=\frac{1}{|K|}\left(0,\langle S^{[2]},1\rangle_K,0,\langle S^{[4]},1\rangle_K,\langle S^{[5,x]},1\rangle_K,0\right)^\top,
\]
\[
\overline{\mathbf{S}}_K^y:=\frac{1}{|K|}\left(0,0,\langle S^{[3]},1\rangle_K,0,\langle S^{[5,y]},1\rangle_K,\langle S^{[6]},1\rangle_K\right)^\top.
\]

If the DG polynomial degree $k=0$, then $\mathbf{U}_h(x,y,t)\equiv\overline{\mathbf{U}}_K(t)$ for all $(x,y)\in K$, and (\ref{2d_cell_average_evolution}) reduces to the evolution of the cell average in the first-order scheme which can be rewritten as
\begin{equation}
\overline{\mathbf{U}}_K^{\text{new}}=\overline{\mathbf{U}}_K+\Delta t\mathbf{L}_{K}(\mathbf{U}_h)
=\overline{\mathbf{U}}_K+\frac{\Delta t}{\Delta x}\mathbf{\Pi}_7+\frac{\Delta t}{\Delta y}\mathbf{\Pi}_8+\Delta t\alpha_{11,i}\overline{\mathbf{S}}_{1,K}+\Delta t\alpha_{22,l}\overline{\mathbf{S}}_{2,K}, \label{2d_first_order_CA_evolution}
\end{equation}
where
\begin{align*}
&\mathbf{\Pi}_7:=-\left[\mathbf{F}^{hllc}(\widehat{\overline{\mathbf{U}}}_{i,l},\widehat{\overline{\mathbf{U}}}_{i+1,l})-\mathbf{F}^{hllc}(\widehat{\overline{\mathbf{U}}}_{i-1,l},\widehat{\overline{\mathbf{U}}}_{i,l})\right], \\
&\mathbf{\Pi}_8:=-\left[\mathbf{G}^{hllc}(\widehat{\overline{\mathbf{U}}}_{i,l},\widehat{\overline{\mathbf{U}}}_{i,l+1})-\mathbf{G}^{hllc}(\widehat{\overline{\mathbf{U}}}_{i,l-1},\widehat{\overline{\mathbf{U}}}_{i,l})\right],
\\
&\alpha_{11,i}:=\frac{p_{11,i+\frac{1}{2}}^{e,\ast}-p_{11,i-\frac{1}{2}}^{e,\ast}}{\Delta x\overline{(\rho_h^e)}_K}, \quad \overline{\mathbf{S}}_{1,K}:=\left(0,\overline{\rho}_K,0,\overline{m}_{1,K},\frac{1}{2}\overline{m}_{2,K},0\right)^\top, \\
&\alpha_{22,l}:=\frac{p_{22,l+\frac{1}{2}}^{e,\ast}-p_{22,l-\frac{1}{2}}^{e,\ast}}{\Delta y\overline{(\rho_h^e)}_K}, \quad
\overline{\mathbf{S}}_{2,K}:=\left(0,0,\overline{\rho}_K,0,\frac{1}{2}\overline{m}_{1,K},\overline{m}_{2,K}\right)^\top.
\end{align*}

\begin{theorem}
If the DG polynomial degree $k=0$ and $\overline{\mathbf{U}}_K\in\mathcal{G}$ for all $K\in\mathcal{K}_h$, then
\begin{equation*}
\overline{\mathbf{U}}_K^{\text{new}}=\overline{\mathbf{U}}_K+\Delta t\mathbf{L}_K(\mathbf{U}_h)\in\mathcal{G} \quad \forall K\in\mathcal{K}_h
\end{equation*}
under the CFL-type condition
\begin{equation}\label{2D first order PP CFL}
\Delta t\left(\frac{\eta_{1,K}^\ast}{\Delta x}+\frac{\eta_{2,K}^\ast}{\Delta y}+\beta_{11,i}+\beta_{22,l}\right)\leq1,
\end{equation}
where
\begin{align*}
\eta_{1,K}^\ast&:=\frac{2}{\xi^\ast(\overline{\mathbf{U}}_K)}\max\limits_{\mathbf{U}\in\{\widehat{\overline{\mathbf{U}}}_{i,l},\widehat{\overline{\mathbf{U}}}_{i-1,l},\widehat{\overline{\mathbf{U}}}_{i+1,l}\}}\alpha_1(\mathbf{U}), \quad
\eta_{2,K}^\ast:=\frac{2}{\xi^\ast(\overline{\mathbf{U}}_K)}\max\limits_{\mathbf{U}\in\{\widehat{\overline{\mathbf{U}}}_{i,l},\widehat{\overline{\mathbf{U}}}_{i,l-1},\widehat{\overline{\mathbf{U}}}_{i,l+1}\}}\alpha_2(\mathbf{U}),
\\
\beta_{11,i}&:=\left|\alpha_{11,i}\delta_1(\overline{\mathbf{U}}_K)\right|,
\quad
\beta_{22,l}:=\left|\alpha_{22,l}\delta_2(\overline{\mathbf{U}}_K)\right|
\end{align*}
with $\alpha_2(\mathbf{U}):=|u_2|+\sqrt{\frac{3p_{22}}{\rho}}$.
\end{theorem}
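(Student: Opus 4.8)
The plan is to follow the strategy of the proof of Theorem~\ref{1D first order scheme PP theorem}, carried out for the two flux differences $\mathbf{\Pi}_7$, $\mathbf{\Pi}_8$ and the two source vectors $\overline{\mathbf{S}}_{1,K}$, $\overline{\mathbf{S}}_{2,K}$ simultaneously, and to invoke the GQL representation (Lemma~\ref{GQL}) so that the nonlinear positive-definiteness constraint on the pressure is replaced by the requirement $\varphi(\overline{\mathbf{U}}_K^{\text{new}};\mathbf{z},\mathbf{u}_*)>0$ for all $\mathbf{z}\in\mathbb{R}^2\setminus\{\mathbf{0}\}$ and $\mathbf{u}_*\in\mathbb{R}^2$. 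First I would record that, by the symmetry of the ten-moment system under the interchange of the two spatial directions (equivalently, of the indices $1$ and $2$), every auxiliary result used in the $x$-direction has a $y$-direction counterpart: $\mathbf{G}^{hllc}$ has the contact property of Lemma~\ref{contact property} (with $p_{11}$, $p_{22}$ swapped), the $y$-direction HLLC solver is positivity-preserving in the sense of Lemma~\ref{pp property} with $\alpha_1$ replaced by $\alpha_2(\mathbf{U})=|u_2|+\sqrt{3p_{22}/\rho}$, Lemma~\ref{transfor T} supplies the lower-triangular matrices entering \eqref{2d_transformation_y}, and Lemma~\ref{Uhat in G} (valid for any nonsingular transformation) keeps the modified states $\widehat{\overline{\mathbf{U}}}_{i,l\pm1}$ in $\mathcal G$. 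Hence the $k=0$ analogues of Theorem~\ref{1D first order homogeneous positivity} and Corollary~\ref{1d_first_order_corollary} also hold in the $y$-direction.

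These corollaries, applied in the $x$- and $y$-directions with the centre state $\overline{\mathbf{U}}_K$, give for all admissible neighbours and all $\mathbf{z}\neq\mathbf{0}$, $\mathbf{u}_*$ the bounds
\[
\mathbf{\Pi}_7\cdot\mathbf{e}_1 > -\eta_{1,K}^\ast\,\overline{\mathbf{U}}_K\cdot\mathbf{e}_1,\qquad
\varphi(\mathbf{\Pi}_7;\mathbf{z},\mathbf{u}_*) > -\eta_{1,K}^\ast\,\varphi(\overline{\mathbf{U}}_K;\mathbf{z},\mathbf{u}_*),
\]
together with the same estimates with $\mathbf{\Pi}_8$, $\eta_{2,K}^\ast$ in place of $\mathbf{\Pi}_7$, $\eta_{1,K}^\ast$. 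Next, observing that $\overline{\mathbf{S}}_{1,K}$ and $\overline{\mathbf{S}}_{2,K}$ are exactly the vectors $\mathbf{S}_1$, $\mathbf{S}_2$ of Corollary~\ref{GQL corollary} evaluated at $\mathbf{U}=\overline{\mathbf{U}}_K$, that corollary (with $\delta_1,\delta_2\ge0$ and $\varphi(\overline{\mathbf{U}}_K;\mathbf{z},\mathbf{u}_*)>0$) yields
\[
\alpha_{11,i}\,\varphi(\overline{\mathbf{S}}_{1,K};\mathbf{z},\mathbf{u}_*)\ge-\beta_{11,i}\,\varphi(\overline{\mathbf{U}}_K;\mathbf{z},\mathbf{u}_*),\qquad
\alpha_{22,l}\,\varphi(\overline{\mathbf{S}}_{2,K};\mathbf{z},\mathbf{u}_*)\ge-\beta_{22,l}\,\varphi(\overline{\mathbf{U}}_K;\mathbf{z},\mathbf{u}_*),
\]
while the first components of $\overline{\mathbf{S}}_{1,K}$ and $\overline{\mathbf{S}}_{2,K}$ vanish.

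With these estimates in hand the conclusion follows in two short steps. Using \eqref{2d_first_order_CA_evolution} and the two $\mathbf{e}_1$-bounds, and dropping the nonnegative terms $\Delta t\,\beta_{11,i}$, $\Delta t\,\beta_{22,l}$, one gets
\[
\overline{\mathbf{U}}_K^{\text{new}}\cdot\mathbf{e}_1 > \Bigl(1-\tfrac{\Delta t}{\Delta x}\eta_{1,K}^\ast-\tfrac{\Delta t}{\Delta y}\eta_{2,K}^\ast\Bigr)\overline{\mathbf{U}}_K\cdot\mathbf{e}_1\ge0
\]
by the CFL condition \eqref{2D first order PP CFL}. For the pressure, I would use the linearity of $\varphi(\cdot;\mathbf{z},\mathbf{u}_*)$ in $\mathbf{U}$ to expand $\varphi(\overline{\mathbf{U}}_K^{\text{new}};\mathbf{z},\mathbf{u}_*)$ from \eqref{2d_first_order_CA_evolution}, insert the four lower bounds above, and collect terms to obtain
\[
\varphi(\overline{\mathbf{U}}_K^{\text{new}};\mathbf{z},\mathbf{u}_*) > \Bigl(1-\Delta t\bigl(\tfrac{\eta_{1,K}^\ast}{\Delta x}+\tfrac{\eta_{2,K}^\ast}{\Delta y}+\beta_{11,i}+\beta_{22,l}\bigr)\Bigr)\varphi(\overline{\mathbf{U}}_K;\mathbf{z},\mathbf{u}_*)\ge0,
\]
again by \eqref{2D first order PP CFL}. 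Since $\mathbf{z}\neq\mathbf{0}$ and $\mathbf{u}_*$ are arbitrary and $\overline{\mathbf{U}}_K^{\text{new}}\cdot\mathbf{e}_1>0$, the GQL representation \eqref{eq:GQL} gives $\overline{\mathbf{U}}_K^{\text{new}}\in\mathcal G$.

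The only genuinely new work beyond the 1D case lies in the reduction of the first paragraph: one must verify that the $y$-direction counterpart of Corollary~\ref{1d_first_order_corollary} truly holds, which amounts to checking that the modification \eqref{2d_transformation_y} with the \emph{lower}-triangular matrices $\mathbf{T}_{i,l\pm\frac12}^\pm$ preserves membership in $\mathcal G$ (immediate from Lemma~\ref{Uhat in G}) and that the splitting of $\widetilde{\overline{\mathbf{U}}}_K=\overline{\mathbf{U}}_K-\xi\widehat{\overline{\mathbf{U}}}_K$ into positive-semidefinite pieces used in Lemma~\ref{U_titlde_pp} remains valid under the interchange $1\leftrightarrow2$; this is where the symmetric structure of the system must be exploited most carefully, and it is the main obstacle. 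Once the $y$-direction analogues are granted, the remainder is the bookkeeping above, the CFL condition \eqref{2D first order PP CFL} being designed precisely so that the coefficient of $\varphi(\overline{\mathbf{U}}_K;\mathbf{z},\mathbf{u}_*)$ stays nonnegative.
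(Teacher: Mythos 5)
Your proposal is correct and follows essentially the same route as the paper: bound $\mathbf{\Pi}_7\cdot\mathbf{e}_1$, $\mathbf{\Pi}_8\cdot\mathbf{e}_1$ and $\varphi(\mathbf{\Pi}_7;\cdot)$, $\varphi(\mathbf{\Pi}_8;\cdot)$ via Corollary~\ref{1d_first_order_corollary} (and its $y$-direction analogue), bound the source contributions via Corollary~\ref{GQL corollary}, use linearity of $\varphi$ and the CFL condition \eqref{2D first order PP CFL} to make the coefficient of $\varphi(\overline{\mathbf{U}}_K;\mathbf{z},\mathbf{u}_*)$ nonnegative, and conclude with the GQL representation. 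Your explicit flagging of the $y$-direction counterparts (lower-triangular $\mathbf{T}$, $\alpha_2$, $\delta_2$, and the $p_{22}$-based version of Lemma~\ref{U_titlde_pp}) is a point the paper leaves implicit, and your symmetry reduction there is sound.
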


\begin{proof}
Since the first component of the source term is zero, one has
\begin{align*}
\overline{\mathbf{U}}_K^{\text{new}}\cdot\mathbf{e}_1&=\overline{\mathbf{U}}_K\cdot\mathbf{e}_1+\frac{\Delta t}{\Delta x}\mathbf{\Pi}_7\cdot\mathbf{e}_1+\frac{\Delta t}{\Delta y}\mathbf{\Pi}_8\cdot\mathbf{e}_1 \\
&>\left(1-\frac{\Delta t}{\Delta x}\eta_{1,K}^\ast-\frac{\Delta t}{\Delta y}\eta_{2,K}^\ast\right)\overline{\mathbf{U}}_K\cdot\mathbf{e}_1 \\
&\geq0,
\end{align*}
where Corollary \ref{1d_first_order_corollary} has been used in the second step, and the CFL condition (\ref{2D first order PP CFL}) has been used in the last step.
Applying the linearity of $\varphi(\mathbf{U};\mathbf{z},\mathbf{u}_\ast)$ with respect to $\mathbf{U}$, one has from (\ref{2d_first_order_CA_evolution}) that
\begin{align*}
\varphi(\overline{\mathbf{U}}_K^{\text{new}};\mathbf{z},\mathbf{u}_\ast) &= \varphi(\overline{\mathbf{U}}_K;\mathbf{z},\mathbf{u}_\ast) + \frac{\Delta t}{\Delta x} \varphi(\mathbf{\Pi}_7;\mathbf{z},\mathbf{u}_\ast) + \frac{\Delta t}{\Delta y} \varphi(\mathbf{\Pi}_8;\mathbf{z},\mathbf{u}_\ast) \\
&\quad + \Delta t \alpha_{11,i} \varphi(\overline{\mathbf{S}}_{1,K};\mathbf{z},\mathbf{u}_\ast) + \Delta t \alpha_{22,l} \varphi(\overline{\mathbf{S}}_{2,K};\mathbf{z},\mathbf{u}_\ast) \\
&> \left(1 - \frac{\Delta t}{\Delta x} \eta_{1,K}^\ast - \frac{\Delta t}{\Delta y} \eta_{2,K}^\ast - \Delta t \beta_{11,i} - \Delta t \beta_{22,l}\right) \varphi(\overline{\mathbf{U}}_K;\mathbf{z},\mathbf{u}_\ast) \\
&\geq 0,
\end{align*}
where Corollaries \ref{GQL corollary} and \ref{1d_first_order_corollary} have been used in the second step, and the CFL condition (\ref{2D first order PP CFL}) has been used in the last step. Combining the above two inequalities with the GQL representation of the admissible state set (Lemma \ref{GQL}), one concludes that $\overline{\mathbf{U}}_K^{\text{new}}\in\mathcal{G}$ and completes the proof.
\end{proof}

\subsection{Positivity-preserving high-order WB DG schemes}\label{pp analysis of 2d high-order schemes}
If the degree of the DG polynomial $k \geq 1$, the high-order WB DG schemes (\ref{2D WB semi DG}) generally do not preserve the positivity. Analogous to the 1D case, we will first establish a weak positivity property of the cell averages for our schemes under appropriate conditions. Subsequently, a simple scaling limiter can be applied to ensure the physical admissibility of the DG solution polynomials at certain points of interest while maintaining both the conservation and high-order accuracy.

For the case of $k\geq1$, the evolution equations (\ref{2d_cell_average_evolution}) of the cell average can be rewritten as
\begin{align}
\overline{\mathbf{U}}_K^{\text{new}}&=\overline{\mathbf{U}}_K+\Delta t\mathbf{L}_K(\mathbf{U}_h) \notag \\
&=\overline{\mathbf{U}}_K+\frac{\Delta t}{\Delta x}\sum_{\mu=1}^{N}\omega_\mu\left(\mathbf{\Pi}_{9}^{(\mu)}+\mathbf{\Pi}_{10}^{(\mu)}\right)
+\frac{\Delta t}{\Delta y}\sum_{\mu=1}^{N}\omega_\mu\left(\mathbf{\Pi}_{11}^{(\mu)}+\mathbf{\Pi}_{12}^{(\mu)}\right)+\Delta t\overline{\mathbf{S}}_K^x+\Delta t\overline{\mathbf{S}}_K^y, \label{2d_high_order_CA_evolution}
\end{align}
where
\begin{align*}
&\mathbf{\Pi}_{9}^{(\mu)}:=-\left[\mathbf{F}^{hllc}(\widehat{\mathbf{U}}_{i+\frac{1}{2},l}^{-,\mu},\widehat{\mathbf{U}}_{i+\frac{1}{2},l}^{+,\mu})-
\mathbf{F}^{hllc}(\widehat{\mathbf{U}}_{i-\frac{1}{2},l}^{+,\mu},\widehat{\mathbf{U}}_{i+\frac{1}{2},l}^{-,\mu})\right], \\
&\mathbf{\Pi}_{10}^{(\mu)}:=-\left[\mathbf{F}^{hllc}(\widehat{\mathbf{U}}_{i-\frac{1}{2},l}^{+,\mu},\widehat{\mathbf{U}}_{i+\frac{1}{2},l}^{-,\mu})-
\mathbf{F}^{hllc}(\widehat{\mathbf{U}}_{i-\frac{1}{2},l}^{-,\mu},\widehat{\mathbf{U}}_{i-\frac{1}{2},l}^{+,\mu})\right], \\
&\mathbf{\Pi}_{11}^{(\mu)}:=-\left[\mathbf{G}^{hllc}(\widehat{\mathbf{U}}_{i,l+\frac{1}{2}}^{\mu,-},\widehat{\mathbf{U}}_{i,l+\frac{1}{2}}^{\mu,+})-
\mathbf{G}^{hllc}(\widehat{\mathbf{U}}_{i,l-\frac{1}{2}}^{\mu,+},\widehat{\mathbf{U}}_{i,l+\frac{1}{2}}^{\mu,-})\right], \\
&\mathbf{\Pi}_{12}^{(\mu)}:=-\left[\mathbf{G}^{hllc}(\widehat{\mathbf{U}}_{i,l-\frac{1}{2}}^{\mu,+},\widehat{\mathbf{U}}_{i,l+\frac{1}{2}}^{\mu,-})-
\mathbf{G}^{hllc}(\widehat{\mathbf{U}}_{i,l-\frac{1}{2}}^{\mu,-},\widehat{\mathbf{U}}_{i,l-\frac{1}{2}}^{\mu,+})\right],
\end{align*}
and
\begin{align}
\overline{\mathbf{S}}_K^x&=\sum_{1\leq\mu_1,\mu_2\leq N}\omega_{\mu_1}\omega_{\mu_2}\left(\frac{(p_{11,h}^e)_x}{\rho_h^e}\mathbf{S}_{1,h}\right)_{i,l}^{\mu_1,\mu_2}+\alpha_{11,K}\overline{\mathbf{S}}_{1,K},
\label{2d high order S1}\\
\overline{\mathbf{S}}_K^y&=\sum_{1\leq\mu_1,\mu_2\leq N}\omega_{\mu_1}\omega_{\mu_2}\left(\frac{(p_{22,h}^e)_y}{\rho_h^e}\mathbf{S}_{2,h}\right)_{i,l}^{\mu_1,\mu_2}+\alpha_{22,K}\overline{\mathbf{S}}_{2,K}
\label{2d high order S2}
\end{align}
with
\begin{align*}
&\mathbf{S}_{1,h}:=\left(0,\rho_h,0,m_{1,h},\frac{1}{2}m_{2,h},0\right)^\top, \quad
\alpha_{11,K}:=\frac{\sum_{\mu=1}^{N}\omega_\mu\left(p_{11,i+\frac{1}{2},l}^{e,\ast,\mu}-p_{11,i-\frac{1}{2},l}^{e,\ast,\mu}-p_{11,i+\frac{1}{2},l}^{e,-,\mu}+p_{11,i-\frac{1}{2},l}^{e,+,\mu}\right)}{\Delta x\overline{(\rho_h^e)}_K}, \\
&\mathbf{S}_{2,h}:=\left(0,0,\rho_h,0,\frac{1}{2}m_{1,h},m_{2,h}\right)^\top, \quad
\alpha_{22,K}:=\frac{\sum_{\mu=1}^{N}\omega_\mu\left(p_{22,i,l+\frac{1}{2}}^{e,\mu,\ast}-p_{22,i,l-\frac{1}{2}}^{e,\mu,\ast}-p_{22,i,l+\frac{1}{2}}^{e,\mu,-}+p_{22,i,l-\frac{1}{2}}^{e,\mu,+}\right)}{\Delta y\overline{(\rho_h^e)}_K}.
\end{align*}


To analyze the positivity-preserving property of high-order WB DG schemes, we first introduce the following feasible convex decomposition \cite{cui2023classic} of the 2D cell average values:
\begin{equation}\label{2D feasible decomposition}
\overline{\mathbf{U}}_K=\sum_{\mu=1}^{N}\omega_\mu\left(\omega_1^{-}\mathbf{U}_{i+\frac{1}{2},l}^{-,\mu}+\omega_1^{+}\mathbf{U}_{i-\frac{1}{2},l}^{+,\mu}
+\omega_2^{-}\mathbf{U}_{i,l+\frac{1}{2}}^{\mu,-}+\omega_2^{+}\mathbf{U}_{i,l-\frac{1}{2}}^{\mu,+}\right)+\sum_{s=1}^{S}\widetilde{\omega}_s\mathbf{U}_h(x_K^{(s)},y_K^{(s)}),
\end{equation}
which is assumed to hold for any polynomials in $\mathbb{P}^k(K)$.
Here the weights $\omega_1^{\pm},\omega_2^{\pm},\widetilde{\omega}_s>0$, $\omega_1^{-}+\omega_1^{+}+\omega_2^{-}+\omega_2^{+}+\sum_{s=1}^{S}\widetilde{\omega}_s=1$, and the points $(x_K^{(s)},y_K^{(s)})\subset K$, which will be specified later. Denote the set of all the points involved in \eqref{2D feasible decomposition} as
\begin{equation}\label{2D limit point set}
\mathbb{S}_K:=\left\{(x_{i+\frac{1}{2}}^{-},y_l^{(\mu)}),(x_{i-\frac{1}{2}}^{+},y_l^{(\mu)}),(x_i^{(\mu)},y_{l+\frac{1}{2}}^{-}),
(x_i^{(\mu)},y_{l-\frac{1}{2}}^{+})\right\}_{\mu=1}^N\cup\left\{(x_K^{(s)},y_K^{(s)})\right\}_{s=1}^S\cup\left\{(x_i^{(\mu_1)},y_l^{(\mu_2)})\right\}_{\mu_1,\mu_2=1 }^N.
\end{equation}

\begin{theorem}\label{2D positivity of scheme theorem}
Assume that the projected hydrostatic equilibrium solution satisfies
\begin{equation}\label{PP of 2D projection}
\rho_h^e(x,y)>0, \quad \mathbf{z}^\top\mathbf{p}_h^e(x,y)\mathbf{z}>0 \quad \forall \mathbf{z}\in\mathbb{R}^2\setminus\{\mathbf{0}\}, \quad \forall (x,y)\in\mathbb{S}_K, \quad \forall K\in\mathcal{K}_h,
\end{equation}
and $\mathbf{U}_h$ satisfies
\begin{equation}\label{2D nodes positivity}
\mathbf{U}_h(x,y)\in\mathcal{G} \quad \forall (x,y)\in\mathbb{S}_K, \quad \forall K\in\mathcal{K}_h.
\end{equation}
Then
\begin{equation}\label{2D positivity of high-order scheme}
\overline{\mathbf{U}}_K^{new}=\overline{\mathbf{U}}_K+\Delta t\mathbf{L}_K(\mathbf{U}_h)\in\mathcal{G} \quad \forall K\in\mathcal{K}_h
\end{equation}
under the CFL-type condition
\begin{equation}\label{2D high-order scheme PP CFL}
\Delta t\left(\max\limits_{1\leq\mu\leq N}\left\{\frac{\eta_{1,i+\frac{1}{2},l}^{\ast,-,\mu}}{\omega_1^-\Delta x},\frac{\eta_{1,i-\frac{1}{2},l}^{\ast,+,\mu}}{\omega_1^+\Delta x},\frac{\eta_{2,i,l+\frac{1}{2}}^{\ast,\mu,-}}{\omega_2^-\Delta y},
\frac{\eta_{2,i,l-\frac{1}{2}}^{\ast,\mu,+}}{\omega_2^+\Delta y}\right\}+\beta_{11,K}+\beta_{22,K}\right)\leq1,
\end{equation}
where
\begin{align*}
&\eta_{1,i+\frac{1}{2},l}^{\ast,-,\mu} := \frac{2}{\xi^\ast(\mathbf{U}_{i+\frac{1}{2},l}^{-,\mu})} \max_{\mathbf{U} \in \left\{\widehat{\mathbf{U}}_{i+\frac{1}{2},l}^{-,\mu}, \widehat{\mathbf{U}}_{i+\frac{1}{2},l}^{+,\mu}, \widehat{\mathbf{U}}_{i-\frac{1}{2},l}^{+,\mu}\right\}} \alpha_{1}(\mathbf{U}), \\
&\eta_{1,i-\frac{1}{2},l}^{\ast,+,\mu} := \frac{2}{\xi^\ast(\mathbf{U}_{i-\frac{1}{2},l}^{+,\mu})} \max_{\mathbf{U} \in \left\{\widehat{\mathbf{U}}_{i-\frac{1}{2},l}^{-,\mu}, \widehat{\mathbf{U}}_{i-\frac{1}{2},l}^{+,\mu}, \widehat{\mathbf{U}}_{i+\frac{1}{2},l}^{-,\mu}\right\}} \alpha_{1}(\mathbf{U}), \\
&\eta_{2,i,l+\frac{1}{2}}^{\ast,\mu,-} := \frac{2}{\xi^\ast(\mathbf{U}_{i,l+\frac{1}{2}}^{\mu,-})} \max_{\mathbf{U} \in \left\{\widehat{\mathbf{U}}_{i,l+\frac{1}{2}}^{\mu,-}, \widehat{\mathbf{U}}_{i,l+\frac{1}{2}}^{\mu,+}, \widehat{\mathbf{U}}_{i,l-\frac{1}{2}}^{\mu,+}\right\}} \alpha_{2}(\mathbf{U}), \\
&\eta_{2,i,l-\frac{1}{2}}^{\ast,\mu,+} := \frac{2}{\xi^\ast(\mathbf{U}_{i,l-\frac{1}{2}}^{\mu,+})} \max_{\mathbf{U} \in \left\{\widehat{\mathbf{U}}_{i,l-\frac{1}{2}}^{\mu,-}, \widehat{\mathbf{U}}_{i,l-\frac{1}{2}}^{\mu,+}, \widehat{\mathbf{U}}_{i,l+\frac{1}{2}}^{\mu,-}\right\}} \alpha_{2}(\mathbf{U}), \\
&\beta_{11,K} := \max_{1\leq\mu_1,\mu_2\leq N} \left\{ \left| \left(\frac{(p_{11,h}^e)_x}{\rho_h^e} \delta_{1,h} \right)_{i,l}^{\mu_1,\mu_2} \right| \right\} + \left| \alpha_{11,K} \delta_{1}(\overline{\mathbf{U}}_K) \right|, \\
&\beta_{22,K} := \max_{1\leq\mu_1,\mu_2\leq N} \left\{ \left| \left(\frac{(p_{22,h}^e)_y}{\rho_h^e} \delta_{2,h} \right)_{i,l}^{\mu_1,\mu_2} \right| \right\} + \left| \alpha_{22,K} \delta_{2}(\overline{\mathbf{U}}_K) \right|
\end{align*}
with $(\delta_{1,h})_{i,l}^{\mu_1,\mu_2}:=\delta_1((\mathbf{U}_h)_{i,l}^{\mu_1,\mu_2})$ and $(\delta_{2,h})_{i,l}^{\mu_1,\mu_2}:=\delta_2((\mathbf{U}_h)_{i,l}^{\mu_1,\mu_2})$.
\end{theorem}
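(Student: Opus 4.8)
The plan is to mimic the two-track argument used for Theorem \ref{1D high-order positivity}: establish the density positivity $\overline{\mathbf{U}}_K^{\text{new}}\cdot\mathbf{e}_1>0$ and the GQL functional positivity $\varphi(\overline{\mathbf{U}}_K^{\text{new}};\mathbf{z},\mathbf{u}_\ast)>0$ for all $\mathbf{u}_\ast\in\mathbb{R}^2$ and $\mathbf{z}\in\mathbb{R}^2\setminus\{\mathbf{0}\}$ separately, and then invoke the GQL representation of Lemma \ref{GQL} to conclude $\overline{\mathbf{U}}_K^{\text{new}}\in\mathcal{G}$. The structural tool replacing the one-dimensional Gauss--Lobatto splitting is the feasible convex decomposition \eqref{2D feasible decomposition} of $\overline{\mathbf{U}}_K$ from \cite{cui2023classic}, which writes $\overline{\mathbf{U}}_K$ as a convex combination of the numerical values $\mathbf{U}_h$ at the boundary Gauss points of all four edges and at a family of interior points, all lying in $\mathbb{S}_K$. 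I would also use that the tensor-product $N$-point Gauss rule is exact for polynomials of degree $k$, so that $\sum_{1\le\mu_1,\mu_2\le N}\omega_{\mu_1}\omega_{\mu_2}(\mathbf{U}_h)_{i,l}^{\mu_1,\mu_2}=\overline{\mathbf{U}}_K$; combined with the linearity of $\varphi$ in $\mathbf{U}$, this lets the interior-point contributions from \eqref{2d high order S1}--\eqref{2d high order S2} collapse onto $\varphi(\overline{\mathbf{U}}_K;\cdot)$.

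For the density: since the first components of $\overline{\mathbf{S}}_K^x$ and $\overline{\mathbf{S}}_K^y$ vanish, I would start from \eqref{2d_high_order_CA_evolution}, apply the first inequality of Corollary \ref{1d_first_order_corollary} (and its $y$-direction analogue for $\mathbf{G}^{hllc}$, $\alpha_2$) to each of $\mathbf{\Pi}_{9}^{(\mu)},\dots,\mathbf{\Pi}_{12}^{(\mu)}$ with the natural ``anchor'' states (e.g.\ $\mathbf{U}_{i+\frac{1}{2},l}^{-,\mu}$ for $\mathbf{\Pi}_{9}^{(\mu)}$, $\mathbf{U}_{i-\frac{1}{2},l}^{+,\mu}$ for $\mathbf{\Pi}_{10}^{(\mu)}$, etc.), and then substitute the decomposition \eqref{2D feasible decomposition}. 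This expresses $\overline{\mathbf{U}}_K^{\text{new}}\cdot\mathbf{e}_1$ as a linear combination over the boundary Gauss points with coefficients $\omega_1^{-}-\tfrac{\Delta t}{\Delta x}\eta_{1,i+\frac{1}{2},l}^{\ast,-,\mu}$, $\omega_1^{+}-\tfrac{\Delta t}{\Delta x}\eta_{1,i-\frac{1}{2},l}^{\ast,+,\mu}$, $\omega_2^{\pm}-\tfrac{\Delta t}{\Delta y}\eta_{2,\cdot}^{\ast,\cdot}$, plus $\sum_{s}\widetilde{\omega}_s\mathbf{U}_h(x_K^{(s)},y_K^{(s)})\cdot\mathbf{e}_1$; the CFL condition \eqref{2D high-order scheme PP CFL} renders every coefficient nonnegative, and assumption \eqref{2D nodes positivity} makes every point value have positive first component, so $\overline{\mathbf{U}}_K^{\text{new}}\cdot\mathbf{e}_1>0$.

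For the GQL functional: using linearity of $\varphi(\cdot;\mathbf{z},\mathbf{u}_\ast)$ I would expand $\varphi(\overline{\mathbf{U}}_K^{\text{new}};\mathbf{z},\mathbf{u}_\ast)$ from \eqref{2d_high_order_CA_evolution}, bound $\varphi(\mathbf{\Pi}_{9}^{(\mu)};\cdot),\dots,\varphi(\mathbf{\Pi}_{12}^{(\mu)};\cdot)$ below by the second inequality of Corollary \ref{1d_first_order_corollary} (and its $y$-analogue), and bound $\varphi(\overline{\mathbf{S}}_K^x;\cdot)$ and $\varphi(\overline{\mathbf{S}}_K^y;\cdot)$ below via Corollary \ref{GQL corollary} applied pointwise, noting that $\mathbf{S}_{1,h}$ is $\mathbf{S}_1$ evaluated at $\mathbf{U}_h$ and $\overline{\mathbf{S}}_{1,K}$ is $\mathbf{S}_1$ at $\overline{\mathbf{U}}_K$ (likewise $\mathbf{S}_2,\overline{\mathbf{S}}_{2,K}$). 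After collapsing the interior Gauss sum with the quadrature identity, this yields $\varphi(\overline{\mathbf{S}}_K^x;\cdot)\ge-\beta_{11,K}\varphi(\overline{\mathbf{U}}_K;\cdot)$ and $\varphi(\overline{\mathbf{S}}_K^y;\cdot)\ge-\beta_{22,K}\varphi(\overline{\mathbf{U}}_K;\cdot)$. Collecting terms, applying \eqref{2D feasible decomposition} to the coefficient $(1-\Delta t\beta_{11,K}-\Delta t\beta_{22,K})\varphi(\overline{\mathbf{U}}_K;\cdot)$, and absorbing the $-\tfrac{\Delta t}{\Delta x}\eta\,\varphi$ and $-\tfrac{\Delta t}{\Delta y}\eta\,\varphi$ lower bounds against the corresponding $\omega_1^{\pm},\omega_2^{\pm}$-weighted boundary terms, I recover $\varphi(\overline{\mathbf{U}}_K^{\text{new}};\cdot)$ as a combination with coefficients $\omega_1^{-}(1-\Delta t\beta_{11,K}-\Delta t\beta_{22,K})-\tfrac{\Delta t}{\Delta x}\eta_{1,i+\frac{1}{2},l}^{\ast,-,\mu}$, etc., and $\widetilde{\omega}_s(1-\Delta t\beta_{11,K}-\Delta t\beta_{22,K})$; the CFL condition \eqref{2D high-order scheme PP CFL} makes all of these nonnegative and keeps $1-\Delta t\beta_{11,K}-\Delta t\beta_{22,K}>0$, so by \eqref{2D nodes positivity} and \eqref{PP of 2D projection} every $\varphi$-value on the right is positive, whence $\varphi(\overline{\mathbf{U}}_K^{\text{new}};\mathbf{z},\mathbf{u}_\ast)>0$. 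Lemma \ref{GQL} then finishes the proof.

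I expect the main obstacle to be the bookkeeping in the GQL step: correctly matching the four boundary-edge flux-difference families $\mathbf{\Pi}_{9}^{(\mu)},\dots,\mathbf{\Pi}_{12}^{(\mu)}$ with the right anchor states in Corollary \ref{1d_first_order_corollary}, lining them up with the appropriate pieces $\omega_1^{\pm},\omega_2^{\pm}$ of the two-dimensional decomposition, and verifying that the single bound \eqref{2D high-order scheme PP CFL} simultaneously dominates all the resulting coefficients. The genuinely new ingredient relative to the 1D proof is that \eqref{2D feasible decomposition} is not a tensor product of Gauss--Lobatto rules but the nonstandard feasible decomposition of \cite{cui2023classic}; one must also record that the $y$-direction counterparts of Lemma \ref{pp property}, Theorem \ref{1D first order homogeneous positivity}, and Corollary \ref{1d_first_order_corollary} (with $\alpha_2$, $\mathbf{G}^{hllc}$, and lower-triangular transformations) hold by the obvious symmetry, which is routine but should be stated explicitly.
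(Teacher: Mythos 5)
Your proposal is correct and follows essentially the same route as the paper's own proof: the two-track argument (first component, then the GQL functional $\varphi$), lower bounds on the flux differences $\mathbf{\Pi}_{9}^{(\mu)},\dots,\mathbf{\Pi}_{12}^{(\mu)}$ via Corollary \ref{1d_first_order_corollary} and its $y$-direction analogue with the same anchor states you name, the source-term bounds via Corollary \ref{GQL corollary} with the interior Gauss sum collapsed onto $\varphi(\overline{\mathbf{U}}_K;\cdot)$ by quadrature exactness and linearity, the feasible convex decomposition \eqref{2D feasible decomposition} to distribute the remaining coefficient over the boundary and interior points, and finally Lemma \ref{GQL}. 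Your explicit remarks on the quadrature identity and on recording the $y$-direction counterparts are points the paper leaves implicit, but they do not change the argument.
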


\begin{proof}
Since $\overline{\mathbf{S}}_K^x\cdot\mathbf{e}_1=0$ and $\overline{\mathbf{S}}_K^y\cdot\mathbf{e}_1=0$, the scheme (\ref{2d_high_order_CA_evolution}) implies that
\begin{align*}
\overline{\mathbf{U}}_K^{\text{new}}\cdot\mathbf{e}_1&=\overline{\mathbf{U}}_K\cdot\mathbf{e}_1+\frac{\Delta t}{\Delta x}\sum_{\mu=1}^{N}\omega_\mu\left(\mathbf{\Pi}_{9}^{(\mu)}\cdot\mathbf{e}_1+\mathbf{\Pi}_{10}^{(\mu)}\cdot\mathbf{e}_1\right)
+\frac{\Delta t}{\Delta y}\sum_{\mu=1}^{N}\omega_\mu\left(\mathbf{\Pi}_{11}^{(\mu)}\cdot\mathbf{e}_1+\mathbf{\Pi}_{12}^{(\mu)}\cdot\mathbf{e}_1\right) \\
&>\overline{\mathbf{U}}_K\cdot\mathbf{e}_1-\frac{\Delta t}{\Delta x}\sum_{\mu=1}^{N}\omega_\mu\left(\eta_{1,i+\frac{1}{2},l}^{\ast,-,\mu}\mathbf{U}_{i+\frac{1}{2},l}^{-,\mu}\cdot\mathbf{e}_1+
\eta_{1,i-\frac{1}{2},l}^{\ast,+,\mu}\mathbf{U}_{i-\frac{1}{2},l}^{+,\mu}\cdot\mathbf{e}_1\right) \\
&\quad-\frac{\Delta t}{\Delta y}\sum_{\mu=1}^{N}\omega_\mu\left(\eta_{2,i,l+\frac{1}{2}}^{\ast,\mu,-}\mathbf{U}_{i,l+\frac{1}{2}}^{\mu,-}\cdot\mathbf{e}_1+
\eta_{2,i,l-\frac{1}{2}}^{\ast,\mu,+}\mathbf{U}_{i,l-\frac{1}{2}}^{\mu,+}\cdot\mathbf{e}_1\right) \\
&=\sum_{\mu=1}^{N}\omega_\mu\left[\left(\omega_1^{-}-\frac{\Delta t}{\Delta x}\eta_{1,i+\frac{1}{2},l}^{\ast,-,\mu}\right)\mathbf{U}_{i+\frac{1}{2},l}^{-,\mu}\cdot\mathbf{e}_1+
\left(\omega_1^{+}-\frac{\Delta t}{\Delta x}\eta_{1,i-\frac{1}{2},l}^{\ast,+,\mu}\right)\mathbf{U}_{i-\frac{1}{2},l}^{+,\mu}\cdot\mathbf{e}_1\right] \\
&\quad+\sum_{\mu=1}^{N}\omega_\mu\left[
\left(\omega_2^{-}-\frac{\Delta t}{\Delta y}\eta_{2,i,l+\frac{1}{2}}^{\ast,\mu,-}\right)\mathbf{U}_{i,l+\frac{1}{2}}^{\mu,-}\cdot\mathbf{e}_1+
\left(\omega_2^{+}-\frac{\Delta t}{\Delta y}\eta_{2,i,l-\frac{1}{2}}^{\ast,\mu,+}\right)\mathbf{U}_{i,l-\frac{1}{2}}^{\mu,+}\cdot\mathbf{e}_1\right] \\
&\quad+\sum_{s=1}^{S}\widetilde{\omega}_s\mathbf{U}_h(x_K^{(s)},y_K^{(s)})\cdot\mathbf{e}_1 \\
&>0,
\end{align*}
where Corollary \ref{1d_first_order_corollary} has been used in the second step, the 2D feasible cell average decomposition (\ref{2D feasible decomposition}) has been applied in the third step, and the assumption (\ref{2D nodes positivity}) and the CFL condition (\ref{2D high-order scheme PP CFL}) have been used in the last step.

Using the linearity of $\varphi(\mathbf{U};\mathbf{z},\mathbf{u}_\ast)$, one can derive from (\ref{2d_high_order_CA_evolution}) that
\begin{align*}
\varphi(\overline{\mathbf{U}}_K^{\text{new}};\mathbf{z},\mathbf{u}_\ast)&=\varphi(\overline{\mathbf{U}}_K;\mathbf{z},\mathbf{u}_\ast)+\frac{\Delta t}{\Delta x}\sum_{\mu=1}^{N}\omega_\mu\left(\varphi(\mathbf{\Pi}_{9}^{(\mu)};\mathbf{z},\mathbf{u}_\ast)+\varphi(\mathbf{\Pi}_{10}^{(\mu)};\mathbf{z},\mathbf{u}_\ast)\right) \\
&\quad+\frac{\Delta t}{\Delta y}\sum_{\mu=1}^{N}\omega_\mu\left(\varphi(\mathbf{\Pi}_{11}^{(\mu)};\mathbf{z},\mathbf{u}_\ast)+\varphi(\mathbf{\Pi}_{12}^{(\mu)};\mathbf{z},\mathbf{u}_\ast)\right)
+\Delta t\varphi(\overline{\mathbf{S}}_K^x;\mathbf{z},\mathbf{u}_\ast)+\Delta t\varphi(\overline{\mathbf{S}}_K^y;\mathbf{z},\mathbf{u}_\ast) \\
&>\varphi(\overline{\mathbf{U}}_K;\mathbf{z},\mathbf{u}_\ast)-\frac{\Delta t}{\Delta x}\sum_{\mu=1}^{N}\omega_\mu\left(\eta_{1,i+\frac{1}{2},l}^{\ast,-,\mu}\varphi(\mathbf{U}_{i+\frac{1}{2},l}^{-,\mu};\mathbf{z},\mathbf{u}_\ast)
+\eta_{1,i-\frac{1}{2},l}^{\ast,+,\mu}\varphi(\mathbf{U}_{i-\frac{1}{2},l}^{+,\mu};\mathbf{z},\mathbf{u}_\ast)\right) \\
&\quad-\frac{\Delta t}{\Delta y}\sum_{\mu=1}^{N}\omega_\mu\left(\eta_{2,i,l+\frac{1}{2}}^{\ast,\mu,-}\varphi(\mathbf{U}_{i,l+\frac{1}{2}}^{\mu,-};\mathbf{z},\mathbf{u}_\ast)
+\eta_{2,i,l-\frac{1}{2}}^{\ast,\mu,+}\varphi(\mathbf{U}_{i,l-\frac{1}{2}}^{\mu,+};\mathbf{z},\mathbf{u}_\ast)\right) \\
&\quad-\Delta t\max_{1\leq\mu_1,\mu_2\leq N}\left\{\left|\left(\frac{(p_{11,h}^e)_x}{\rho_h^e}\delta_{1,h}\right)_{i,l}^{\mu_1,\mu_2}\right|\right\}\sum_{1\leq\mu_1,\mu_2\leq N}\omega_{\mu_1}\omega_{\mu_2}\varphi((\mathbf{U}_h)_{i,l}^{\mu_1,\mu_2};\mathbf{z},\mathbf{u}_\ast) \\
&\quad-\Delta t\max_{1\leq\mu_1,\mu_2\leq N}\left\{\left|\left(\frac{(p_{22,h}^e)_y}{\rho_h^e}\delta_{2,h}\right)_{i,l}^{\mu_1,\mu_2}\right|\right\}\sum_{1\leq\mu_1,\mu_2\leq N}\omega_{\mu_1}\omega_{\mu_2}\varphi((\mathbf{U}_h)_{i,l}^{\mu_1,\mu_2};\mathbf{z},\mathbf{u}_\ast) \\
&\quad-\Delta t\left|\alpha_{11,K}\delta_{1}(\overline{\mathbf{U}}_K)\right|\varphi(\overline{\mathbf{U}}_{K};\mathbf{z},\mathbf{u}_\ast)-\Delta t\left|\alpha_{22,K}\delta_{2}(\overline{\mathbf{U}}_K)\right|\varphi(\overline{\mathbf{U}}_{K};\mathbf{z},\mathbf{u}_\ast) \\
&=(1-\Delta t(\beta_{11,K}+\beta_{22,K}))\varphi(\overline{\mathbf{U}}_{K};\mathbf{z},\mathbf{u}_\ast)-\frac{\Delta t}{\Delta x}\sum_{\mu=1}^{N}\omega_\mu\left(\eta_{1,i+\frac{1}{2},l}^{\ast,-,\mu}\varphi(\mathbf{U}_{i+\frac{1}{2},l}^{-,\mu};\mathbf{z},\mathbf{u}_\ast) \right. \\
&\quad\left.+\eta_{1,i-\frac{1}{2},l}^{\ast,+,\mu}\varphi(\mathbf{U}_{i-\frac{1}{2},l}^{+,\mu};\mathbf{z},\mathbf{u}_\ast)\right)
-\frac{\Delta t}{\Delta y}\sum_{\mu=1}^{N}\omega_\mu\left(\eta_{2,i,l+\frac{1}{2}}^{\ast,\mu,-}\varphi(\mathbf{U}_{i,l+\frac{1}{2}}^{\mu,-};\mathbf{z},\mathbf{u}_\ast)
+\eta_{2,i,l-\frac{1}{2}}^{\ast,\mu,+}\varphi(\mathbf{U}_{i,l-\frac{1}{2}}^{\mu,+};\mathbf{z},\mathbf{u}_\ast)\right) \\
&=\sum_{\mu=1}^{N}\omega_\mu\left[\omega_1^{-}(1-\Delta t(\beta_{11,K}+\beta_{22,K}))-\frac{\Delta t}{\Delta x}\eta_{1,i+\frac{1}{2},l}^{\ast,-,\mu}\right]\varphi(\mathbf{U}_{i+\frac{1}{2},l}^{-,\mu};\mathbf{z},\mathbf{u}_\ast) \\
&\quad+\sum_{\mu=1}^{N}\omega_\mu\left[\omega_1^{+}(1-\Delta t(\beta_{11,K}+\beta_{22,K}))-\frac{\Delta t}{\Delta x}\eta_{1,i-\frac{1}{2},l}^{\ast,+,\mu}\right]\varphi(\mathbf{U}_{i-\frac{1}{2},l}^{+,\mu};\mathbf{z},\mathbf{u}_\ast) \\
&\quad+\sum_{\mu=1}^{N}\omega_\mu\left[\omega_2^{-}(1-\Delta t(\beta_{11,K}+\beta_{22,K}))-\frac{\Delta t}{\Delta y}\eta_{2,i,l+\frac{1}{2}}^{\ast,\mu,-}\right]\varphi(\mathbf{U}_{i,l+\frac{1}{2}}^{\mu,-};\mathbf{z},\mathbf{u}_\ast) \\
&\quad+\sum_{\mu=1}^{N}\omega_\mu\left[\omega_2^{+}(1-\Delta t(\beta_{11,K}+\beta_{22,K}))-\frac{\Delta t}{\Delta y}\eta_{2,i,l-\frac{1}{2}}^{\ast,\mu,+}\right]\varphi(\mathbf{U}_{i,l-\frac{1}{2}}^{\mu,+};\mathbf{z},\mathbf{u}_\ast) \\
&\quad+\sum_{s=1}^{S}\widetilde{\omega}_s\varphi(\mathbf{U}_h(x_K^{(s)},y_K^{(s)});\mathbf{z},\mathbf{u}_\ast) \\
&>0,
\end{align*}
where Corollary \ref{GQL}, Corollary \ref{1d_first_order_corollary}, (\ref{2d high order S1}), and (\ref{2d high order S2}) have been used in the second step, the 2D feasible cell average decomposition (\ref{2D feasible decomposition}) has been applied in the fourth step, and the assumption (\ref{2D nodes positivity}) and the CFL condition (\ref{2D high-order scheme PP CFL}) have been used in the last step.
 According to the GQL representation of $\mathcal{G}$ (Lemma \ref{GQL}), one obtains  $\overline{\mathbf{U}}_K^{\text{new}}\in\mathcal{G}$ and completes the proof.

\end{proof}

Theorem \ref{2D positivity of scheme theorem} shows that (\ref{2D nodes positivity}) is a sufficient condition for the proposed high-order WB DG schemes (\ref{2D WB semi DG}) to be positivity-preserving. It can be again be enforced by a simple positivity-preserving limiter similar to 1D case; see (\ref{pp for rho})-(\ref{pp for gdet}) with the 1D point set $\mathbb{S}_j$ (\ref{Sj}) replaced by the 2D point set (\ref{2D limit point set}) accordingly. With the limiter applied at each stage of SSP-RK time steps, the resulting schemes are also positivity-preserving.

All above positivity-preserving analyses for 2D high-order WB DG schemes are based on the 2D feasible convex decomposition (\ref{2D feasible decomposition}). In this paper, for the third-order ($P^2$-based) and fourth-order ($P^3$-based) DG methods, we employ the optimal cell average decomposition (OCAD) proposed in \cite{cui2023classic}, which allows us to achieve the mildest positivity-preserving CFL condition in theory. Specifically, in (\ref{2D feasible decomposition}), we take
\[
\omega_1^{-}=\omega_1^{+}=\frac{\varpi_1}{2}, \quad \omega_2^{-}=\omega_2^{+}=\frac{\varpi_2}{2},
\]
and
\[
\{(x_K^{(s)},y_K^{(s)})\}=\begin{cases}
                            \left(x_i,y_l\pm\frac{\Delta y}{2\sqrt{3}}\sqrt{\frac{\sigma_\ast-\sigma_2}{\sigma_\ast}}\right) & \text{if } \sigma_1\geq\sigma_2 \\
                            \left(x_i\pm\frac{\Delta x}{2\sqrt{3}}\sqrt{\frac{\sigma_\ast-\sigma_1}{\sigma_\ast}},y_l\right) & \text{otherwise}
                          \end{cases},
\quad
\widetilde{\omega}_s=\frac{\sigma_\ast}{\chi},
\]
where
\[
\sigma_1=\frac{\eta_{1,\max}^{\ast,\mu}}{\Delta x}, ~\sigma_2=\frac{\eta_{2,\max}^{\ast,\mu}}{\Delta y}, ~\sigma_\ast=\max\{\sigma_1,\sigma_2\}, ~
\chi=\sigma_1+\sigma_2+2\sigma_\ast, ~\varpi_1=\frac{\sigma_1}{\chi}, ~\varpi_2=\frac{\sigma_2}{\chi}
\]
with $\eta_{1,\max}^{\ast,\mu}:=\max\{\eta_{1,i+\frac{1}{2},l}^{\ast,-,\mu},\eta_{1,i-\frac{1}{2},l}^{\ast,+,\mu}\}$ and $\eta_{2,\max}^{\ast,\mu}:=\max\{\eta_{2,i,l+\frac{1}{2}}^{\ast,\mu,-},\eta_{2,i,l-\frac{1}{2}}^{\ast,\mu,+}\}$.
Then the theoretical positivity-preserving CFL condition (\ref{2D high-order scheme PP CFL}) becomes
\[
\Delta t\left(\max\limits_{1\leq\mu\leq N}\left\{2\frac{\eta_{1,\max}^{\ast,\mu}}{\Delta x}+2\frac{\eta_{2,\max}^{\ast,\mu}}{\Delta y}+4\max\left\{\frac{\eta_{1,\max}^{\ast,\mu}}{\Delta x},\frac{\eta_{2,\max}^{\ast,\mu}}{\Delta y}\right\}\right\}+\beta_{11,K}+\beta_{22,K}\right)\leq1.
\]
We also refer the readers to \cite{cui2022optimal} for the analyses of OCAD in general polynomial spaces on Cartesian meshes.


\begin{remark}
It should be emphasized that the positivity-preserving CFL conditions {\rm \eqref{eq:CFLWKL1}, (\ref{1D high order PP CFL}), \eqref{2D first order PP CFL}}, and {\rm \eqref{2D high-order scheme PP CFL}} are sufficient (generally not necessary) conditions. In practical computations, the following efficient strategy is often adopted (refer to {\rm \cite{zhang2017positivity}} for details): initiate the simulation with an appropriate time step size, and if subsequent calculations yield nonphysical values for density or pressure in the cell averages, then the simulation is restarted from the preceding time step with the time step size reduced by half. Notably, in our numerical experiments (Section \ref{sec5}), this restarting procedure was never required.
\end{remark}

\section{Numerical experiments}\label{sec5}

This section presents a series of numerical examples in one and two dimensions to illustrate the high-order accuracy, the WB and positivity-preserving properties of our DG methods on uniform Cartesian grids. For comparison, we also include results from non-well-balanced (non-WB) DG schemes that employ straightforward source terms discretization and the original HLLC flux. Unless otherwise stated, the third-order SSP-RK time discretization, as defined in (\ref{SSPRK3}), is applied. The time step size for the 1D examples is determined by
\[
\Delta t = C_{\text{cfl}} \frac{\Delta x}{\alpha_{1,\max}}  \quad \mbox{with} \quad  \alpha_{1,\max}:=\max_j\alpha_1(\overline{\mathbf{U}}_j),
\]
except in the case of the fourth-order DG scheme during accuracy testing. Here, the time step size is adjusted to $(\Delta t)^{4/3}$ for the third-order SSP-RK time discretization and to $\frac{1}{3}(\Delta t)^{4/3}$ for the third-order SSP-MS time discretization to align with the fourth-order spatial discretization accuracy.
For the 2D examples, we calculate the time step size by
\[
\Delta t = \frac{C_{\text{cfl}}}{\frac{\alpha_{1,\max}}{\Delta x} + \frac{\alpha_{2,\max}}{\Delta y}} \quad \mbox{with} \quad \alpha_{1,\max}:=\max_K\alpha_1(\overline{\mathbf{U}}_K),~   \alpha_{2,\max}:=\max_K\alpha_2(\overline{\mathbf{U}}_K).
\]
The CFL numbers are taken as $C_{\text{cfl}} = 0.2$ for the third-order DG methods and $C_{\text{cfl}} = 0.125$ for the fourth-order DG methods, respectively.

\subsection{Example 1: Accuracy test}
In the initial example, we examine a smooth problem within the interval $[-0.25, 0.25]$, considering the potential $W(x) = x$ and the exact solution
\[
\rho = \epsilon + \sin^2(2\pi(x - t)), ~ u_1 = 1, ~ u_2 = 0, ~ p_{11} = 1 + (t - x)\left(\frac{\epsilon}{2} + \frac{1}{4}\right) + \frac{\sin(4\pi(x - t))}{16\pi}, ~ p_{12} = 0, ~ p_{22} = 1.
\]
The parameter $\epsilon$ is taken as either $10^{-2}$ or $10^{-5}$. The computations are performed up to time $t=0.1$ with the exact boundary conditions being applied. The third-order and fourth-order WB DG schemes are applied to the grid comprising $N$ uniform cells. In the milder test case with $\epsilon = 10^{-2}$, the third-order SSP-RK method, as outlined in (\ref{SSPRK3}), is employed for temporal discretization, and it is noted that the positivity-preserving limiter remains inactive. Conversely, in the low-density scenario where $\epsilon = 10^{-5}$, we utilize the third-order SSP-MS time discretization method (\ref{SSPMS3}). This challenging test necessitates the activation of the positivity-preserving limiter.
The numerical results of these tests are displayed in Tables \ref{sin_nr1} and \ref{sin_nr2}. Inspection of these results confirms that both WB DG schemes retain the anticipated order of convergence, signifying that neither the WB modification nor the positivity-preserving limiter impairs the schemes' inherent high-order accuracy.

\begin{table}[!htbp]
\centering
\caption{Example 1: The convergent results of $\rho$ and $p_{11} $at $t=0.1$ for the 1D accuracy test with $\epsilon=10^{-2}$.}
\label{sin_nr1}
\begin{center}
\small
\begin{tabular}{c|c|c|cc|cc|cc}
\hline
& & $N$ & $l^1$ error & order & $l^2$ error & order & $l^\infty$ error & order \\
\hline
\multirow{12}{*}{$P^2$} & \multirow{6}{*}{$\rho$} & 10 & 1.8240e-04 & -- & 3.7472e-04 & -- & 2.0389e-03 & -- \\
\multirow{12}{*}{} & \multirow{6}{*}{} & 20 & 2.2319e-05 & 3.03 & 4.7129e-05 & 2.99 & 2.5606e-04 & 2.99 \\
\multirow{12}{*}{} & \multirow{6}{*}{} & 40 & 2.7786e-06 & 3.01 & 5.8955e-06 & 3.00 & 3.2224e-05 & 2.99 \\
\multirow{12}{*}{} & \multirow{6}{*}{} & 80 & 3.4700e-07 & 3.00 & 7.3707e-07 & 3.00 & 4.0348e-06 & 3.00 \\
\multirow{12}{*}{} & \multirow{6}{*}{} & 160 & 4.3365e-08 & 3.00 & 9.2136e-08 & 3.00 & 5.0457e-07 & 3.00 \\
\multirow{12}{*}{} & \multirow{6}{*}{} & 320 & 5.4204e-09 & 3.00 & 1.1517e-08 & 3.00 & 6.3081e-08 & 3.00 \\
\cline{2-9}
\multirow{12}{*}{} & \multirow{6}{*}{$p_{11}$} & 10 & 7.0546e-06 & -- & 1.1958e-05 & -- & 4.5535e-05 & -- \\
\multirow{12}{*}{} & \multirow{6}{*}{} & 20 & 8.8542e-07 & 2.99 & 1.4856e-06 & 3.01 & 5.3797e-06 & 3.08 \\
\multirow{12}{*}{} & \multirow{6}{*}{} & 40 & 1.1038e-07 & 3.00 & 1.8554e-07 & 3.00 & 6.5632e-07 & 3.04 \\
\multirow{12}{*}{} & \multirow{6}{*}{} & 80 & 1.3791e-08 & 3.00 & 2.3188e-08 & 3.00 & 8.1406e-08 & 3.01 \\
\multirow{12}{*}{} & \multirow{6}{*}{} & 160 & 1.7236e-09 & 3.00 & 2.8983e-09 & 3.00 & 1.0088e-08 & 3.01 \\
\multirow{12}{*}{} & \multirow{6}{*}{} & 320 & 2.1545e-10 & 3.00 & 3.6228e-10 & 3.00 & 1.2573e-09 & 3.00  \\
\hline
\multirow{12}{*}{$P^3$} & \multirow{6}{*}{$\rho$} & 10 & 6.0870e-06 & -- & 1.3963e-05 & -- & 8.9033e-05 & -- \\
\multirow{12}{*}{} & \multirow{6}{*}{} & 20 & 3.9685e-07 & 3.94 & 8.8936e-07 & 3.97 & 5.8075e-06 & 3.94 \\
\multirow{12}{*}{} & \multirow{6}{*}{} & 30 & 7.8426e-08 & 4.00 & 1.7745e-07 & 3.98 & 1.1549e-06 & 3.98 \\
\multirow{12}{*}{} & \multirow{6}{*}{} & 40 & 2.4786e-08 & 4.00 & 5.6356e-08 & 3.99 & 3.6575e-07 & 4.00 \\
\multirow{12}{*}{} & \multirow{6}{*}{} & 50 & 1.0096e-08 & 4.03 & 2.3099e-08 & 4.00 & 1.4977e-07 & 4.00 \\
\multirow{12}{*}{} & \multirow{6}{*}{} & 60 & 4.8670e-09 & 4.00 & 1.1137e-08 & 4.00 & 7.2187e-08 & 4.00 \\
\cline{2-9}
\multirow{12}{*}{} & \multirow{6}{*}{$p_{11}$} & 10 & 2.0892e-07 & -- & 4.1250e-07 & -- & 1.9700e-06 & -- \\
\multirow{12}{*}{} & \multirow{6}{*}{} & 20 & 1.3012e-08 & 4.01 & 2.5834e-08 & 4.00 & 1.2003e-07 & 4.04 \\
\multirow{12}{*}{} & \multirow{6}{*}{} & 30 & 2.5571e-09 & 4.01 & 5.1124e-09 & 4.00 & 2.3637e-08 & 4.01 \\
\multirow{12}{*}{} & \multirow{6}{*}{} & 40 & 8.0502e-10 & 4.02 & 1.6165e-09 & 4.00 & 7.3999e-09 & 4.04 \\
\multirow{12}{*}{} & \multirow{6}{*}{} & 50 & 3.2875e-10 & 4.01 & 6.6106e-10 & 4.01 & 3.0020e-09 & 4.04 \\
\multirow{12}{*}{} & \multirow{6}{*}{} & 60 & 1.5838e-10 & 4.01 & 3.1838e-10 & 4.01 & 1.4455e-09 & 4.01  \\
\hline
\end{tabular}
\end{center}
\end{table}

\begin{table}[!htbp]
\centering
\caption{Example 1: The convergent results of $\rho$ and $p_{11} $at $t=0.1$ for the 1D accuracy test with $\epsilon=10^{-5}$.}
\label{sin_nr2}
\begin{center}
\small
\begin{tabular}{c|c|c|cc|cc|cc}
\hline
& & $N$ & $l^1$ error & order & $l^2$ error & order & $l^\infty$ error & order \\
\hline
\multirow{12}{*}{$P^2$} & \multirow{6}{*}{$\rho$} & 10 & 2.3811e-03 & -- & 6.1702e-03 & -- & 3.9297e-02 & -- \\
\multirow{12}{*}{} & \multirow{6}{*}{} & 20 & 2.7714e-04 & 3.10 & 9.5093e-04 & 2.70 & 7.6869e-03 & 2.35 \\
\multirow{12}{*}{} & \multirow{6}{*}{} & 40 & 2.7785e-06 & 6.64 & 5.8956e-06 & 7.33 & 3.2223e-05 & 7.90 \\
\multirow{12}{*}{} & \multirow{6}{*}{} & 80 & 3.4699e-07 & 3.00 & 7.3707e-07 & 3.00 & 4.0346e-06 & 3.00 \\
\multirow{12}{*}{} & \multirow{6}{*}{} & 160 & 4.3364e-08 & 3.00 & 9.2137e-08 & 3.00 & 5.0457e-07 & 3.00 \\
\multirow{12}{*}{} & \multirow{6}{*}{} & 320 & 5.4204e-09 & 3.00 & 1.1517e-08 & 3.00 & 6.3086e-08 & 3.00 \\
\cline{2-9}
\multirow{12}{*}{} & \multirow{6}{*}{$p_{11}$} & 10 & 2.4860e-05 & -- & 6.3193e-05 & -- & 2.6126e-04 & -- \\
\multirow{12}{*}{} & \multirow{6}{*}{} & 20 & 1.8917e-06 & 3.72 & 5.3252e-06 & 3.57 & 3.0949e-05 & 3.08 \\
\multirow{12}{*}{} & \multirow{6}{*}{} & 40 & 1.1046e-07 & 4.10 & 1.8895e-07 & 4.82 & 1.1419e-06 & 4.76 \\
\multirow{12}{*}{} & \multirow{6}{*}{} & 80 & 1.3793e-08 & 3.00 & 2.3302e-08 & 3.02 & 1.2573e-07 & 3.18 \\
\multirow{12}{*}{} & \multirow{6}{*}{} & 160 & 1.7236e-09 & 3.00 & 2.9008e-09 & 3.01 & 1.3266e-08 & 3.24 \\
\multirow{12}{*}{} & \multirow{6}{*}{} & 320 & 2.1548e-10 & 3.00 & 3.6240e-10 & 3.00 & 1.4183e-09 & 3.23  \\
\hline
\multirow{12}{*}{$P^3$} & \multirow{6}{*}{$\rho$} & 10 & 1.7559e-04 & -- & 6.0147e-04 & -- & 5.1763e-03 & -- \\
\multirow{12}{*}{} & \multirow{6}{*}{} & 20 & 3.9693e-07 & 8.79 & 8.8943e-07 & 9.40 & 5.8077e-06 & 9.80 \\
\multirow{12}{*}{} & \multirow{6}{*}{} & 30 & 7.8434e-08 & 4.00 & 1.7746e-07 & 3.98 & 1.1551e-06 & 3.98 \\
\multirow{12}{*}{} & \multirow{6}{*}{} & 40 & 2.4790e-08 & 4.00 & 5.6359e-08 & 3.99 & 3.6591e-07 & 4.00 \\
\multirow{12}{*}{} & \multirow{6}{*}{} & 50 & 1.0100e-08 & 4.02 & 2.3102e-08 & 4.00 & 1.4999e-07 & 4.00 \\
\multirow{12}{*}{} & \multirow{6}{*}{} & 60 & 4.8671e-09 & 4.00 & 1.1137e-08 & 4.00 & 7.2202e-08 & 4.01 \\
\cline{2-9}
\multirow{12}{*}{} & \multirow{6}{*}{$p_{11}$} & 10 & 5.6708e-05 & -- & 2.1415e-04 & -- & 1.9702e-03 & -- \\
\multirow{12}{*}{} & \multirow{6}{*}{} & 20 & 1.2853e-08 & 12.11 & 2.5810e-08 & 13.02 & 1.1928e-07 & 14.01 \\
\multirow{12}{*}{} & \multirow{6}{*}{} & 30 & 2.5361e-09 & 4.00 & 5.0896e-09 & 4.00 & 2.3344e-08 & 4.02 \\
\multirow{12}{*}{} & \multirow{6}{*}{} & 40 & 8.0148e-10 & 4.00 & 1.6098e-09 & 4.00 & 7.5354e-09 & 3.93 \\
\multirow{12}{*}{} & \multirow{6}{*}{} & 50 & 3.3040e-10 & 3.97 & 6.6666e-10 & 3.95 & 3.1988e-09 & 3.84 \\
\multirow{12}{*}{} & \multirow{6}{*}{} & 60 & 1.5842e-10 & 4.03 & 3.1805e-10 & 4.06 & 1.4696e-09 & 4.27  \\
\hline
\end{tabular}
\end{center}
\end{table}

\subsection{Example 2: 1D WB test}
This subsection evaluates the WB property of our proposed schemes. We consider the following three test cases \cite{meena2018well}:
\begin{itemize}
\item Polytropic case: The hydrostatic equilibrium solution is given by
\[
\rho(x)=\rho_0\left(1+\frac{\nu-1}{\alpha\nu\rho_0^{\nu-1}}\left(\frac{W_0-W(x)}{2}\right)\right)^{\frac{1}{\nu-1}}, ~ p_{11}=\alpha\rho^\nu, ~ p_{12}=0.5, ~ p_{22}=1
\]
with parameters $\rho_0=1$, $\alpha=1$, $\nu=1.2$, and $W_0=0$.

\item Isentropic case: The hydrostatic equilibrium solution is given by
\begin{equation}\label{isentropic state}
\rho(x)=\rho_0\left(1+\frac{1}{3\alpha\rho_0^{2}}\left(W_0-W(x)\right)\right)^{\frac{1}{2}}, ~ p_{11}=\alpha\rho^3, ~ p_{12}=0, ~ p_{22}=1
\end{equation}
with parameters $\rho_0=1$, $\alpha=1$, and $W_0=0$.

\item Isothermal case: The hydrostatic equilibrium solution is given by
\[
\rho(x)=\rho_0\exp\left(\frac{-W(x)}{2RT_0}\right), ~ p_{11}=p_{11,0}\exp\left(\frac{-W(x)}{2RT_0}\right), ~ p_{12}=0.5, ~ p_{22}=1,
\]
where $p_{11,0}=\rho_0RT_0$, $\rho_0=1$, $R=1$, and $T_0=1$.
\end{itemize}

The computational domain is defined over $[0,2]$ with a potential $W(x)=\frac{x^2}{2}$. We apply the third-order and fourth-order WB DG schemes to simulate these cases until a final time $t=2$. The simulations are conducted on meshes consisting of $N$ cells, with $N$ set to 50 and 100. For comparative analysis, we also include the results of the non-WB DG schemes. In these tests, the positivity-preserving limiter was not triggered, as the conservative variables remained significantly distant from the boundary of $\mathcal{G}$.
Tables \ref{wb_nr1}--\ref{wb_nr3} present the errors in density $\rho$ and pressure component $p_{11}$ for the three test cases, respectively. It is evident that the WB schemes, even on a coarse mesh, achieve errors reaching the level of machine precision, thereby validating their WB property. In contrast, the errors from the non-WB scheme are noticeably larger.

As discussed in Remark \ref{t1t2_high_order}, the transformation matrices $\mathbf{T}_{j+\frac{1}{2}}^{\pm}$ approximate the identity matrix to at least  $(k+1)$-order accuracy. To substantiate this claim, we evaluate the $l^1$ errors of the matrices' first-row elements, denoted as $t_1$ and $t_2$. The results, presented in Table \ref{wb_nr4} for the polytropic test case, showcase the expected third-order and fifth-order convergence rates for the third-order and fourth-order WB DG methods, respectively, corroborating the assertions made in Remark \ref{t1t2_high_order}.

\begin{table}[!htbp]
\centering
\caption{Example 2: The errors in $\rho$ and $p_{11}$ at $t=2$ for the polytropic test case.}
\label{wb_nr1}
\begin{center}
\small
\begin{tabular}{c|c|c|ccc|ccc}
\hline
\multicolumn{3}{c|}{} & \multicolumn{3}{c|}{$\rho$}  & \multicolumn{3}{c}{$p_{11}$} \\
\hline
& Scheme & $N$ & $l^1$ error & $l^2$ error & $l^\infty$ error & $l^1$ error & $l^2$ error & $l^\infty$ error \\
\hline
\multirow{4}{*}{$P^2$} & \multirow{2}{*}{WB} & 50 & 2.5487e-15 & 2.3583e-15 & 6.6613e-15 & 8.8098e-15 & 6.2570e-15 & 5.6621e-15 \\
\multirow{4}{*}{} & \multirow{2}{*}{} & 100 & 6.8565e-15 & 5.8222e-15 & 3.1752e-14 & 1.7910e-14 & 1.2735e-14 & 1.0769e-14 \\
\cline{2-9}
\multirow{4}{*}{} & \multirow{2}{*}{non-WB} & 50 & 1.6227e-07 & 1.6539e-07 & 4.4324e-07 & 2.0180e-07 & 1.6331e-07 & 2.5919e-07  \\
\multirow{4}{*}{} & \multirow{2}{*}{} & 100 & 2.0242e-08 & 2.0692e-08 & 5.5597e-08 & 2.5214e-08 & 2.0414e-08 & 3.2406e-08  \\
\hline
\multirow{4}{*}{$P^3$} & \multirow{2}{*}{WB} & 50 & 1.3543e-15 & 1.9353e-15 & 2.2093e-14 & 1.0158e-14 & 8.4518e-15 & 8.7708e-15 \\
\multirow{4}{*}{} & \multirow{2}{*}{} & 100 & 4.1026e-15 & 4.4694e-15 & 3.0365e-14 & 2.1123e-14 & 1.7562e-14 & 1.7208e-14 \\
\cline{2-9}
\multirow{4}{*}{} & \multirow{2}{*}{non-WB} & 50 & 3.2469e-10 & 3.5759e-10 & 1.6018e-09 & 3.6862e-10 & 3.7784e-10 & 9.5206e-10  \\
\multirow{4}{*}{} & \multirow{2}{*}{} & 100 & 2.0275e-11 & 2.2375e-11 & 1.0210e-10 & 2.3040e-11 & 2.3621e-11 & 5.9537e-11  \\
\hline
\end{tabular}
\end{center}
\end{table}

\begin{table}[!htbp]
\centering
\caption{Example 2: The errors in $\rho$ and $p_{11}$ at $t=2$ for the isentropic test case.}
\label{wb_nr2}
\begin{center}
\small
\begin{tabular}{c|c|c|ccc|ccc}
\hline
\multicolumn{3}{c|}{} & \multicolumn{3}{c|}{$\rho$}  & \multicolumn{3}{c}{$p_{11}$} \\
\hline
&  Scheme & $N$ & $l^1$ error & $l^2$ error & $l^\infty$ error & $l^1$ error & $l^2$ error & $l^\infty$ error \\
\hline
\multirow{4}{*}{$P^2$} & \multirow{2}{*}{WB} & 50 & 4.0434e-15 & 3.7646e-15 & 1.9318e-14 & 9.7475e-15 & 6.9757e-15 & 6.7724e-15 \\
\multirow{4}{*}{} & \multirow{2}{*}{} & 100 & 1.0363e-14 & 9.0394e-15 & 1.7764e-14 & 1.8458e-14 & 1.3259e-14 & 1.2101e-14 \\
\cline{2-9}
\multirow{4}{*}{} & \multirow{2}{*}{non-WB} & 50 & 6.6993e-07 & 1.2876e-06 & 9.4571e-06 & 2.2824e-07 & 2.2671e-07 & 7.0178e-07  \\
\multirow{4}{*}{} & \multirow{2}{*}{} & 100 & 8.4264e-08 & 1.6239e-07 & 1.2514e-06 & 2.8446e-08 & 2.8342e-08 & 9.1753e-08 \\
\hline
\multirow{4}{*}{$P^3$} & \multirow{2}{*}{WB} & 50 & 1.6867e-15 & 2.0815e-15 & 1.3767e-14 & 1.0268e-14 & 8.5591e-15 & 8.9928e-15 \\
\multirow{4}{*}{} & \multirow{2}{*}{} & 100 & 4.6378e-15 & 5.1236e-15 & 3.6637e-14 & 2.1659e-14 & 1.8013e-14 & 1.7208e-14 \\
\cline{2-9}
\multirow{4}{*}{} & \multirow{2}{*}{non-WB} & 50 & 2.6992e-09 & 8.3779e-09 & 9.9931e-08 & 7.7321e-10 & 9.8609e-10 & 5.1808e-09  \\
\multirow{4}{*}{} & \multirow{2}{*}{} & 100 & 1.7072e-10 & 5.2824e-10 & 6.7417e-09 & 4.8240e-11 & 6.1808e-11 & 3.4614e-10 \\
\hline
\end{tabular}
\end{center}
\end{table}

\begin{table}[!htbp]
\centering
\caption{Example 2: The errors in $\rho$ and $p_{11}$ at $t=2$ for the isothermal test case.}
\label{wb_nr3}
\begin{center}
\small
\begin{tabular}{c|c|c|ccc|ccc}
\hline
\multicolumn{3}{c|}{} & \multicolumn{3}{c|}{$\rho$}  & \multicolumn{3}{c}{$p_{11}$} \\
\hline
& Scheme & $N$ & $l^1$ error & $l^2$ error & $l^\infty$ error & $l^1$ error & $l^2$ error & $l^\infty$ error \\
\hline
\multirow{4}{*}{$P^2$} & \multirow{2}{*}{WB} & 50 & 2.3666e-15 & 2.3217e-15 & 1.5987e-14  & 8.7103e-15 & 6.2063e-15 & 5.8842e-15 \\
\multirow{4}{*}{} & \multirow{2}{*}{} & 100 & 6.9348e-15 & 5.9350e-15 & 3.0420e-14 & 1.8576e-14 & 1.3212e-14 & 1.1657e-14  \\
\cline{2-9}
\multirow{4}{*}{} & \multirow{2}{*}{non-WB} & 50 & 1.9706e-07 & 1.6853e-07 & 3.7095e-07 & 1.9683e-07 & 1.5931e-07 & 2.6034e-07  \\
\multirow{4}{*}{} & \multirow{2}{*}{} & 100 & 2.4612e-08 & 2.1067e-08 & 4.6444e-08 & 2.4597e-08 & 1.9914e-08 & 3.2538e-08 \\
\hline
\multirow{4}{*}{$P^3$} & \multirow{2}{*}{WB} & 50 & 1.5484e-15 & 2.6601e-15 & 4.2577e-14  & 1.0268e-14 & 8.5353e-15 & 8.2157e-15 \\
\multirow{4}{*}{} & \multirow{2}{*}{} & 100 & 4.4496e-15 & 4.8082e-15 & 3.2474e-14 & 2.1371e-14 & 1.7765e-14 & 1.7542e-14  \\
\cline{2-9}
\multirow{4}{*}{} & \multirow{2}{*}{non-WB} & 50 & 4.5348e-10 & 4.6727e-10 & 1.2341e-09 & 4.2239e-10 & 4.3134e-10 & 1.1423e-09  \\
\multirow{4}{*}{} & \multirow{2}{*}{} & 100 & 2.8323e-11 & 2.9211e-11 & 7.7326e-11 & 2.6398e-11 & 2.6964e-11 & 7.1436e-11 \\
\hline
\end{tabular}
\end{center}
\end{table}


\begin{table}[!htbp]
\centering
\caption{Example 2: The convergent results of $t_1$ and $t_2$ for the polytropic test case.}
\label{wb_nr4}
\begin{center}
\small
\begin{tabular}{c|cc|cc|cc|cc}\hline
& \multicolumn{4}{c|}{$P^2$} & \multicolumn{4}{c}{$P^3$} \\
\hline
 & \multicolumn{2}{c|}{$t_1$} & \multicolumn{2}{c|}{$t_2$} & \multicolumn{2}{c|}{$t_1$} & \multicolumn{2}{c}{$t_2$} \\
\hline
 $N$ & $l^1$ error & order & $l^1$ error & order & $l^1$ error & order & $l^1$ error & order \\
\hline
 10 &   3.2605e-05 & -- &      1.6303e-05 & -- & 1.7757e-07 & -- & 8.8785e-08 & -- \\
 20 &   4.1712e-06 & 2.97 &    2.0856e-06 & 2.97 & 6.0479e-09 & 4.88 & 3.0239e-09 & 4.88\\
 40 &   5.3088e-07 & 2.97 &    2.6544e-07 & 2.97 & 1.9719e-10 & 4.94 & 9.8593e-11 & 4.94 \\
 80 &   6.7076e-08 & 2.98 &    3.3538e-08 & 2.98 & 6.2933e-12 & 4.97 & 3.1466e-12 & 4.97 \\
160 &   8.4333e-09 & 2.99 &    4.2167e-09 & 2.99 & 1.9876e-13 & 4.98 & 9.9378e-14 & 4.98 \\
320 &   1.0574e-09 & 3.00 &    5.2868e-10 & 3.00 & 6.1968e-15 & 5.00 & 3.0984e-15 & 5.00 \\
\hline
\end{tabular}
\end{center}
\end{table}

\subsection{Example 3: Small perturbation test for isentropic case}
The WB schemes are expected to outperform non-WB schemes in accurately capturing solutions near the steady state, particularly on coarser meshes. To substantiate this, we consider the isentropic hydrostatic state \eqref{isentropic state}, and add a small periodic velocity perturbation at the left boundary:
\[
u_1(0,t)=A\sin(4\pi t), \quad u_2(0,t)=A\sin(4\pi t)
\]
with $A=10^{-7}$. This experimental setup extends Example 1 in \cite{wu2021uniformly}. Figure \ref{test9_nr} displays the perturbation of variables at time $t=1$, computed using both the third-order WB and the non-WB DG methods on a mesh consisting of 50 uniform cells. For reference, we also present the solutions obtained by the third-order WB DG method on a much finer mesh of 10,000 cells. The positivity-preserving limiter is not necessary for this mild example.
 As one can see, the WB scheme agrees well with the reference solutions on the coarse mesh.
 In contrast, the non-WB scheme exhibits a notable deviation from these reference solutions.
This underlines the superiority of WB schemes in effectively and accurately capturing solutions near the steady state on coarser meshes.

%

\begin{figure}[!htbp]
\subfigure[$\rho$ perturbation]{
\begin{minipage}[c]{0.3\linewidth}
\centering
\includegraphics[width=5cm]{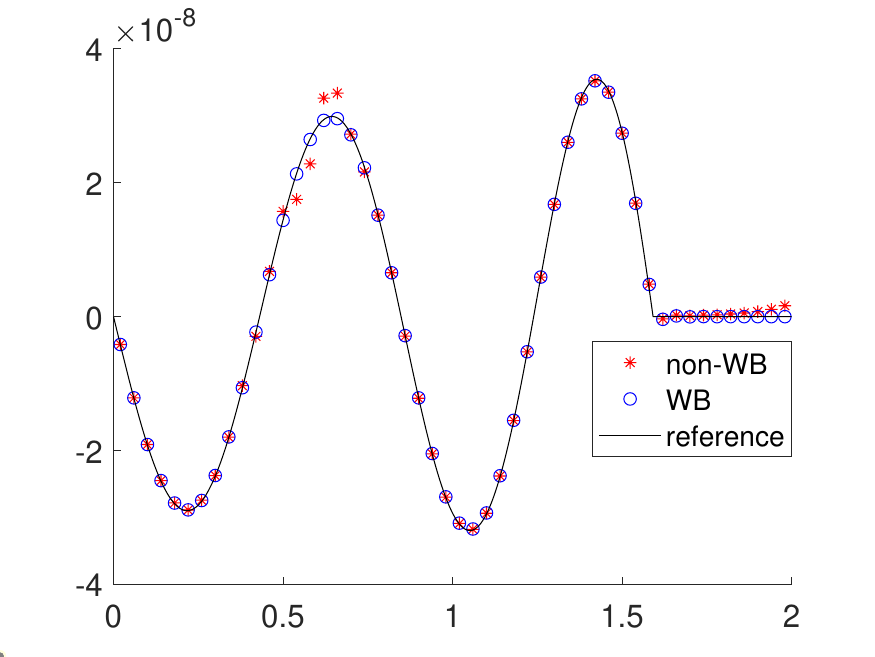}
\end{minipage}
}
\subfigure[$u_1$ perturbation]{
\begin{minipage}[c]{0.3\linewidth}
\centering
\includegraphics[width=5cm]{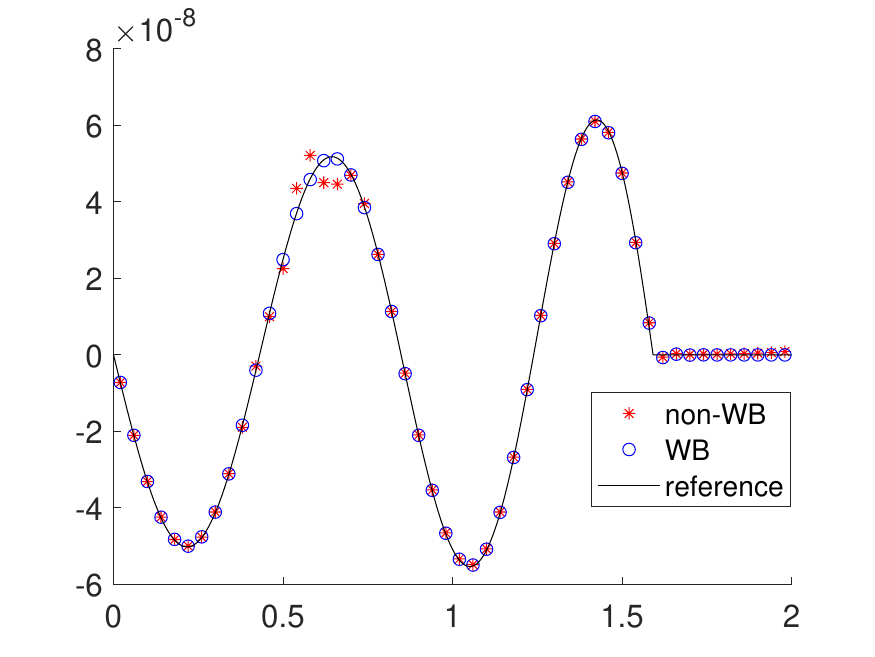}
\end{minipage}
}
\subfigure[$u_2$ perturbation]{
\begin{minipage}[c]{0.3\linewidth}
\centering
\includegraphics[width=5cm]{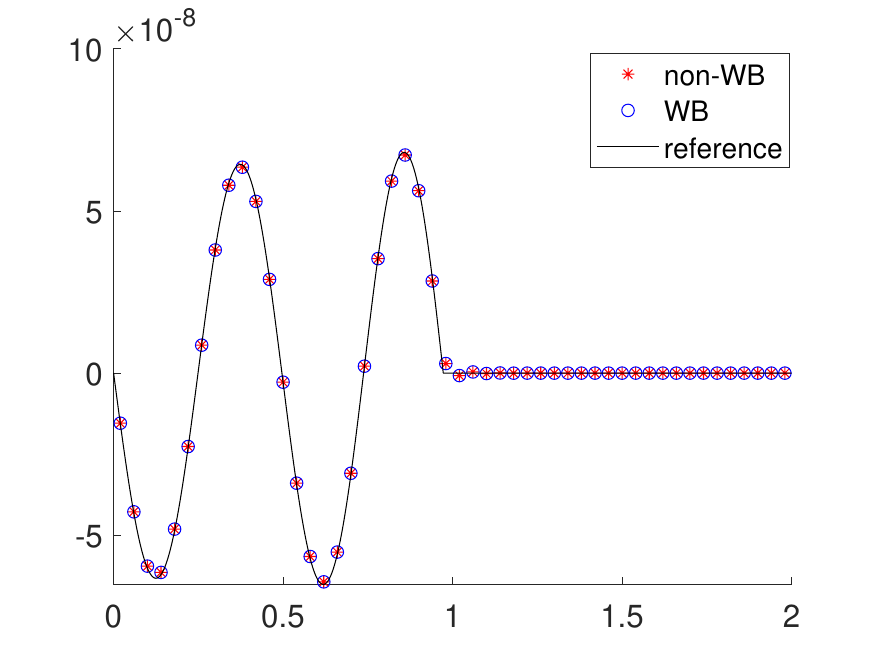}
\end{minipage}
}
\subfigure[$p_{11}$ perturbation]{
\begin{minipage}[c]{0.3\linewidth}
\centering
\includegraphics[width=5cm]{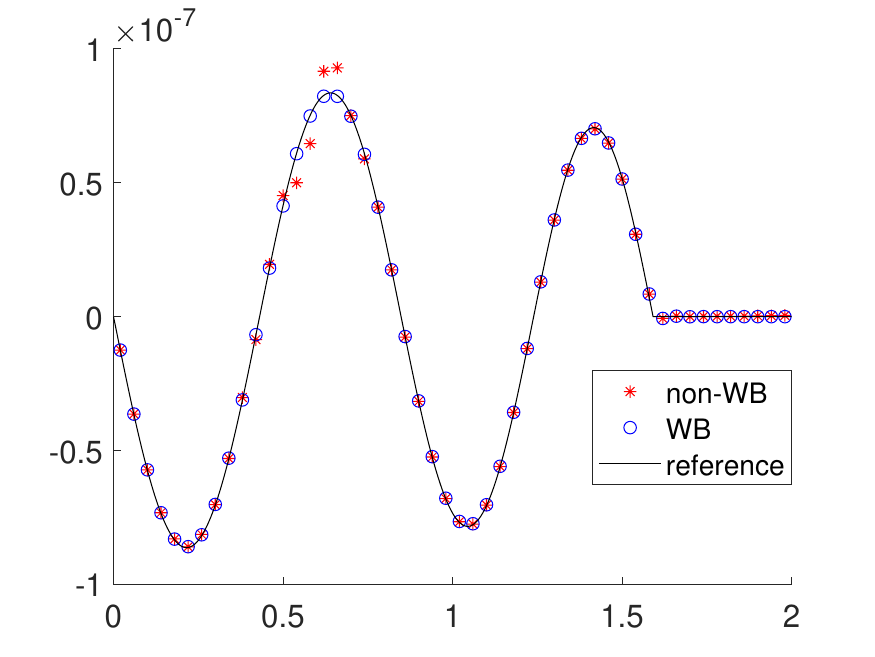}
\end{minipage}
}
\subfigure[$p_{12}$ perturbation]{
\begin{minipage}[c]{0.3\linewidth}
\centering
\includegraphics[width=5cm]{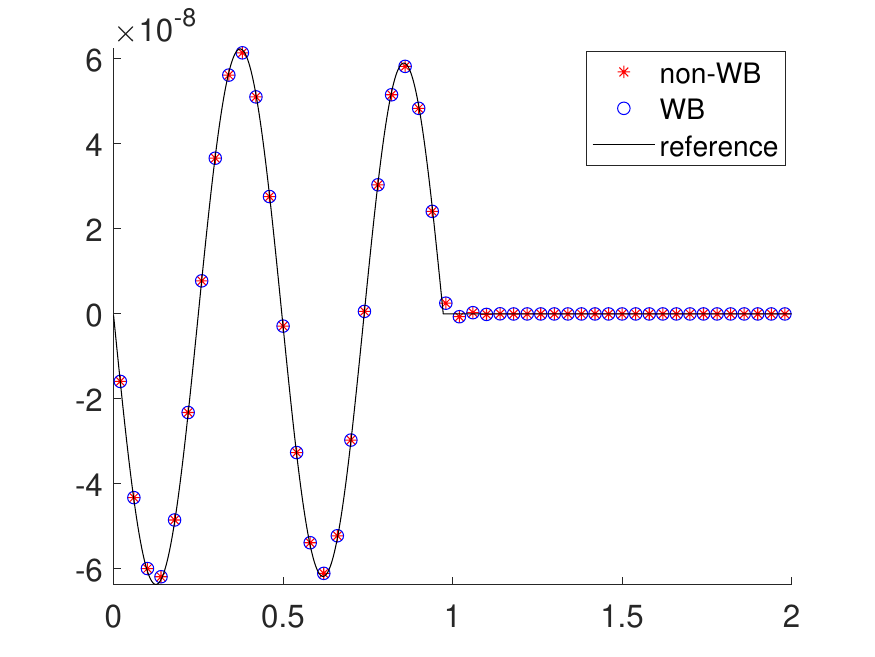}
\end{minipage}
}
\subfigure[$p_{22}$ perturbation]{
\begin{minipage}[c]{0.3\linewidth}
\centering
\includegraphics[width=5cm]{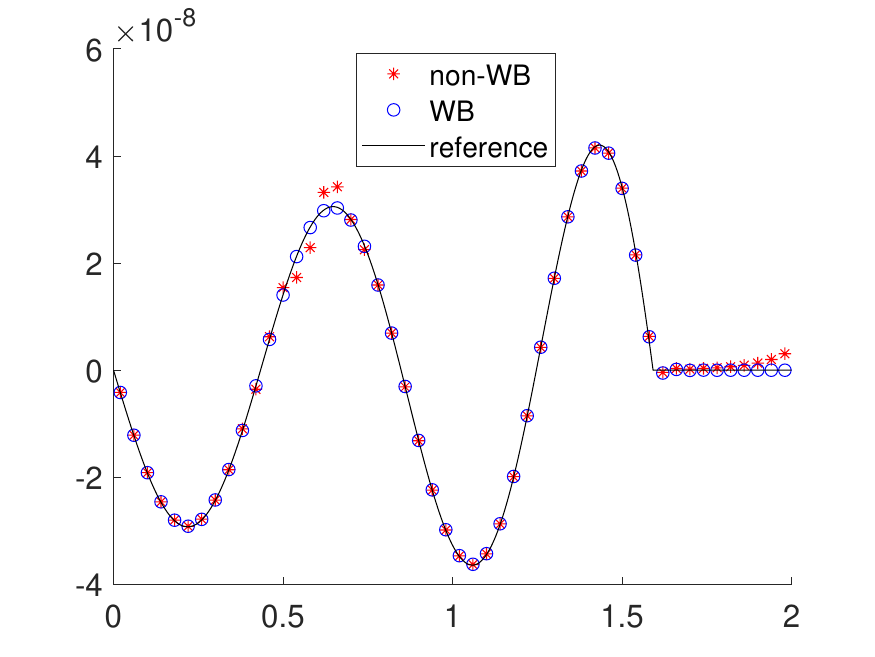}
\end{minipage}
}
\centering
\caption{Example 3: Small perturbation wave traveling up the isentropic hydrostatic state. The results of the third-order WB and non-WB schemes are obtained with 50 uniform cells. The reference solutions are obtained by the third-order WB DG scheme on 10,000 cells.}
\label{test9_nr}
\end{figure}

\subsection{Example 4: Small perturbation test for isothermal case}
In this test \cite{meena2018well}, we consider a small perturbation of isothermal hydrostatic solution in the pressure component $p_{11}$, i.e., the initial profile is set as
\begin{align*}
&\rho(x)=\rho_0\exp\left(\frac{-x}{2RT_0}\right), ~~ u_1=u_2=0, \\
&p_{11}=\rho_0RT_0\exp\left(\frac{-x}{2RT_0}\right)+\epsilon\exp\left(\frac{-100(x-0.5)^2}{2RT_0}\right), ~~ p_{12}=0, ~~ p_{22}=1,
\end{align*}
where $\rho_0=1$, $R=1$, and $T_0=1$.
The perturbation parameter $\epsilon$ is set as $10^{-6}$, $10^{-8}$, and $10^{-10}$, respectively. The third-order WB and non-WB DG schemes are applied to solve these problems up to $t=0.25$ on a mesh consisting of 50 cells. The positivity-preserving limiter is not activated in this test. The comparative analyses are illustrated in Figures \ref{test7_nr1} and \ref{test7_nr3}, where the reference solutions are obtained by the WB scheme with 10,000 cells. From Figure \ref{test7_nr1}, one can see that for the perturbation parameter $\epsilon=10^{-6}$, both WB and non-WB schemes provide satisfactory results. However, for smaller $\epsilon=10^{-8}$ and $\epsilon=10^{-10}$, the WB scheme is notably more accurate than the non-WB scheme. Furthermore, Figure \ref{test7_nr3} shows that, for the smallest perturbation of $\epsilon=10^{-10}$, the non-WB scheme fails to achieve good resolution unless the mesh is refined to 400 cells.


\begin{figure}[!htbp]
\subfigure[$\epsilon=10^{-6}$]{
\begin{minipage}[c]{0.3\linewidth}
\centering
\includegraphics[width=5cm]{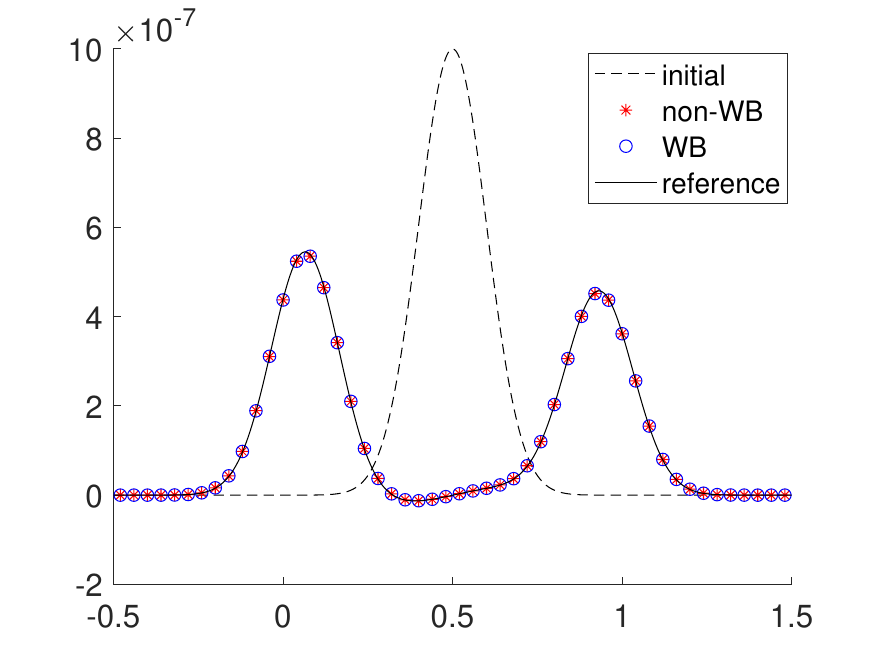}
\end{minipage}
}
\subfigure[$\epsilon=10^{-8}$]{
\begin{minipage}[c]{0.3\linewidth}
\centering
\includegraphics[width=5cm]{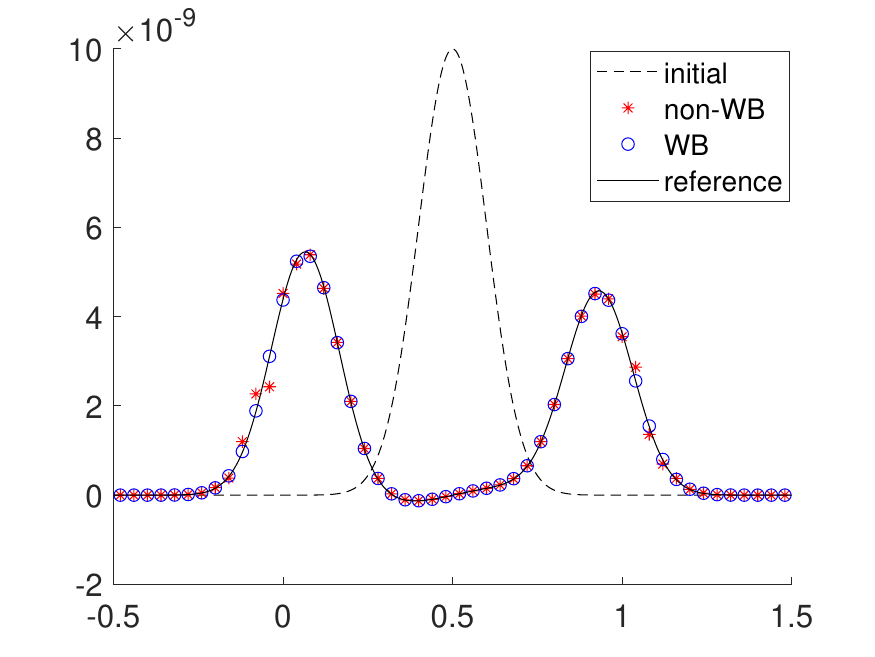}
\end{minipage}
}
\subfigure[$\epsilon=10^{-10}$]{
\begin{minipage}[c]{0.3\linewidth}
\centering
\includegraphics[width=5cm]{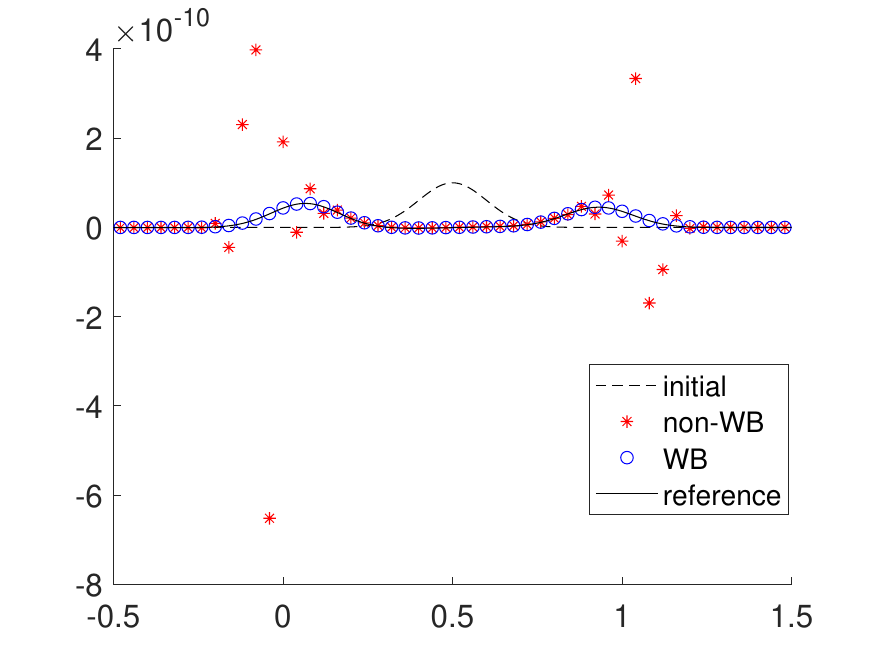}
\end{minipage}
}
\centering
\caption{Example 4: The comparison plots of $p_{11}$ perturbation on a mesh of 50 cells. The reference solutions are obtained by the WB scheme with 10,000 cells.}
\label{test7_nr1}
\end{figure}


\begin{figure}[!htbp]
\subfigure[100 cells]{
\begin{minipage}[c]{0.3\linewidth}
\centering
\includegraphics[width=5cm]{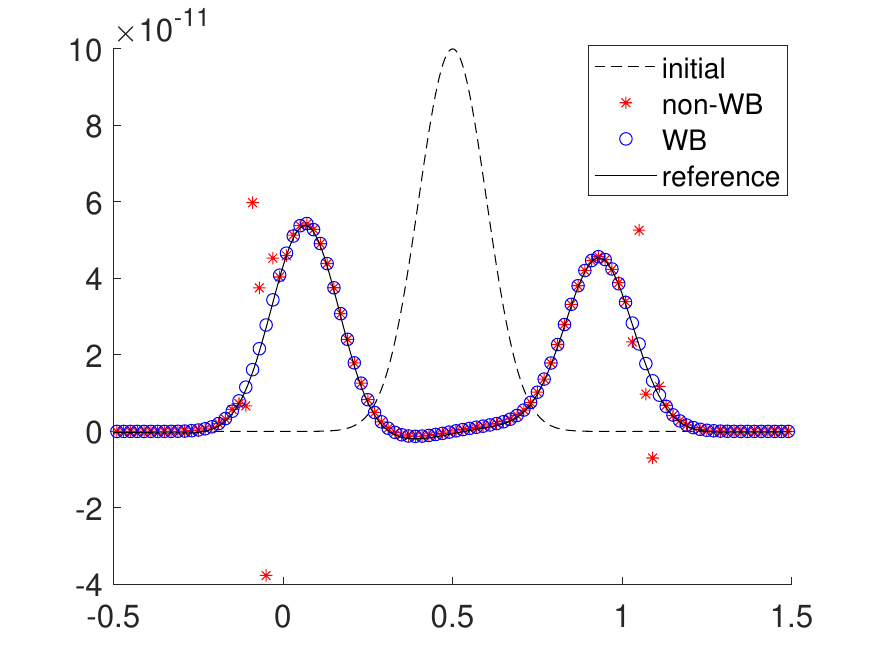}
\end{minipage}
}
\subfigure[200 cells]{
\begin{minipage}[c]{0.3\linewidth}
\centering
\includegraphics[width=5cm]{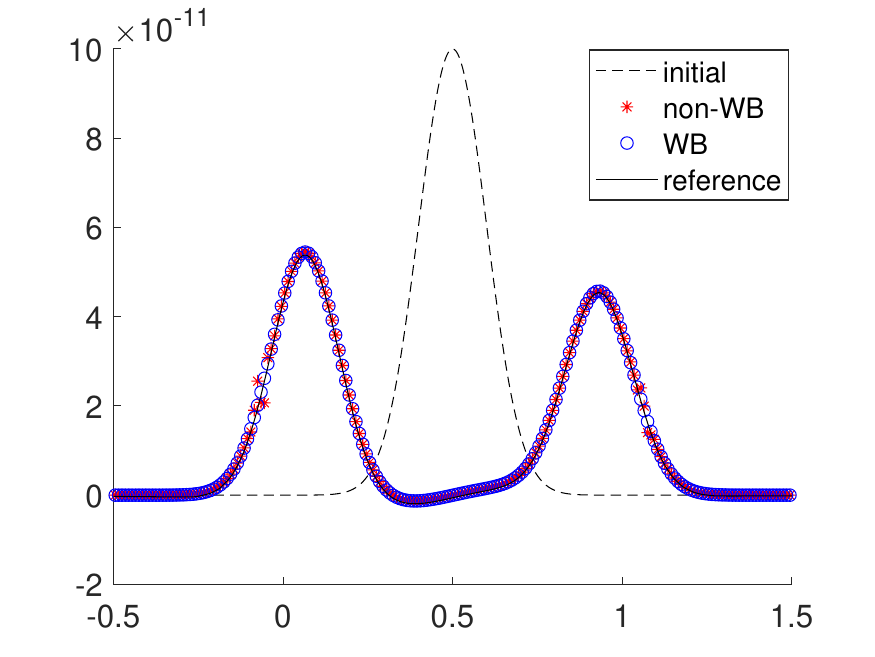}
\end{minipage}
}
\subfigure[400 cells]{
\begin{minipage}[c]{0.3\linewidth}
\centering
\includegraphics[width=5cm]{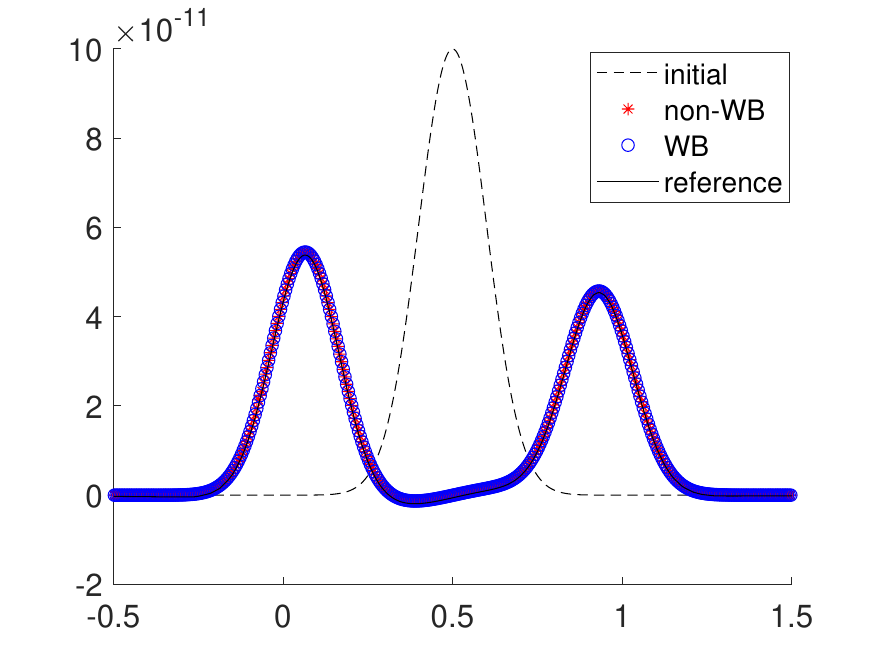}
\end{minipage}
}
\centering
\caption{Example 4: The comparison plots of $p_{11}$ perturbation of $\epsilon=10^{-10}$. The reference solutions are obtained by the WB scheme with 10,000 cells.}
\label{test7_nr3}
\end{figure}


\subsection{Example 5: Sod shock-tube problem}
To test the shock-capturing ability of the proposed WB schemes, we consider the Sod shock-tube problem with a non-zero source term \cite{meena2018well}. The initial solution with a discontinuity at $x=0.5$ over the interval $[0,1]$ is given by
\[
(\rho,u_1,u_2,p_{11},p_{12},p_{22})=\begin{cases}
                                      (1,0,0,2,0.05,0.6), & \text{if } x\leq0.5, \\
                                      (0.125,0,0,0.2,0.1,0.2), & \text{if } x>0.5.
                                    \end{cases}
\]
 The potential $W(x)=x$ introduces a non-trivial source, and the reflection boundary conditions are imposed. The third-order WB and non-WB DG schemes are employed to simulate this problem up to time $t=0.125$ on a mesh consisting of 400 cells. The positivity-preserving limiter is turned off for this problem.
 The results are presented in Figure \ref{test11_nr}, where the reference solutions are computed by the third-order WB DG scheme with 10,000 cells. At $t=0.125$, the solution to this problem encompasses a left-going rarefaction wave extending across the region $(0.188, 0.406)$, a left-moving shear wave near $x=0.489$, a right-going shear wave near $x=0.827$, and a right-moving shock wave at approximately  $x=0.898$, separated by a contact discontinuity roughly at $x=0.603$. One can see that the WB scheme effectively captures all the waves, comparable to the performance of the non-WB scheme. This demonstrates that our WB modification does not compromise the capability of resolving complex wave structures including discontinuities.

\begin{figure}[!htbp]
\subfigure[$\rho$]{
\begin{minipage}[c]{0.3\linewidth}
\centering
\includegraphics[width=5cm]{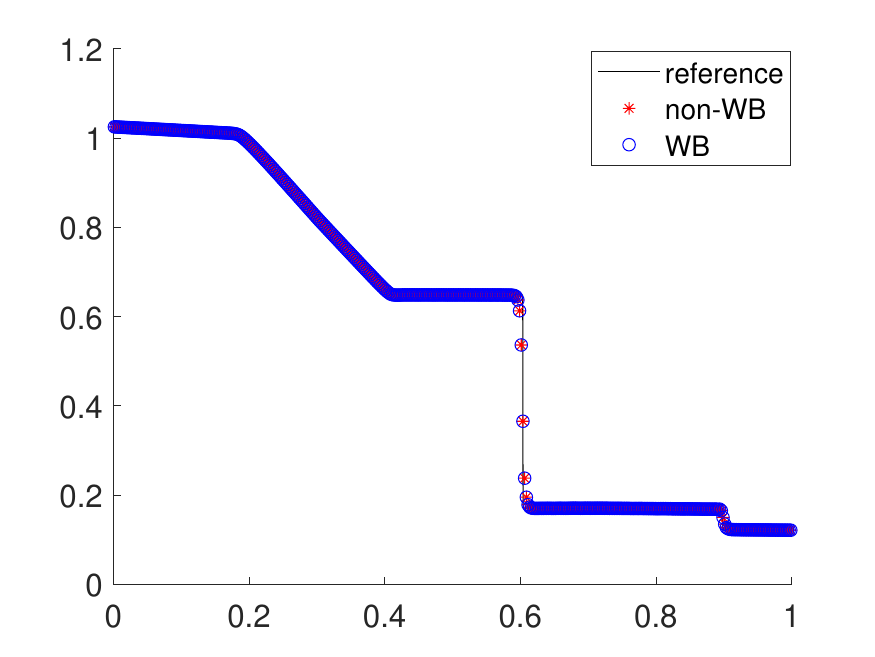}
\end{minipage}
}
\subfigure[$u_1$]{
\begin{minipage}[c]{0.3\linewidth}
\centering
\includegraphics[width=5cm]{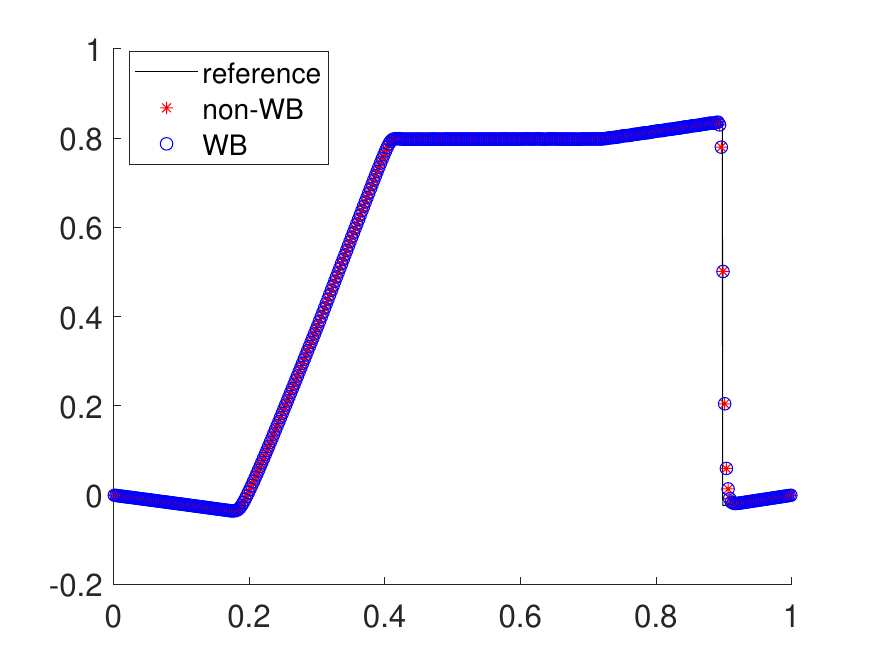}
\end{minipage}
}
\subfigure[$u_2$]{
\begin{minipage}[c]{0.3\linewidth}
\centering
\includegraphics[width=5cm]{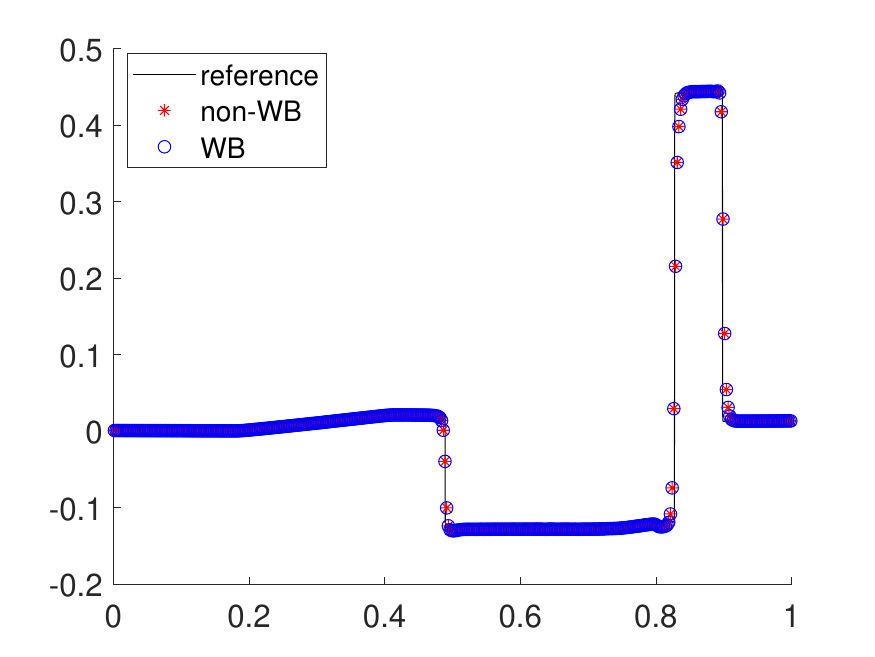}
\end{minipage}
}
\subfigure[$p_{11}$]{
\begin{minipage}[c]{0.3\linewidth}
\centering
\includegraphics[width=5cm]{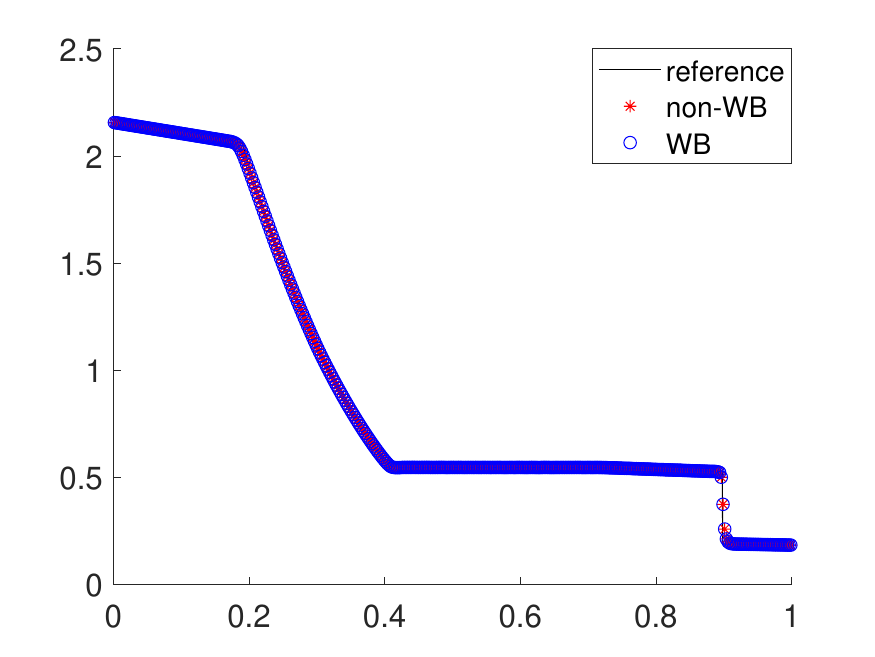}
\end{minipage}
}
\subfigure[$p_{12}$]{
\begin{minipage}[c]{0.3\linewidth}
\centering
\includegraphics[width=5cm]{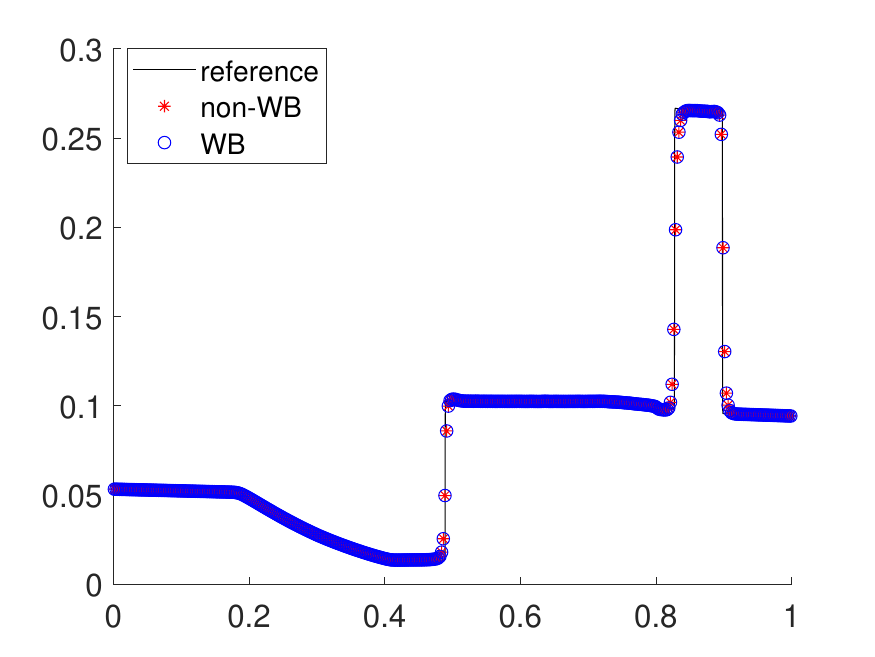}
\end{minipage}
}
\subfigure[$p_{22}$]{
\begin{minipage}[c]{0.3\linewidth}
\centering
\includegraphics[width=5cm]{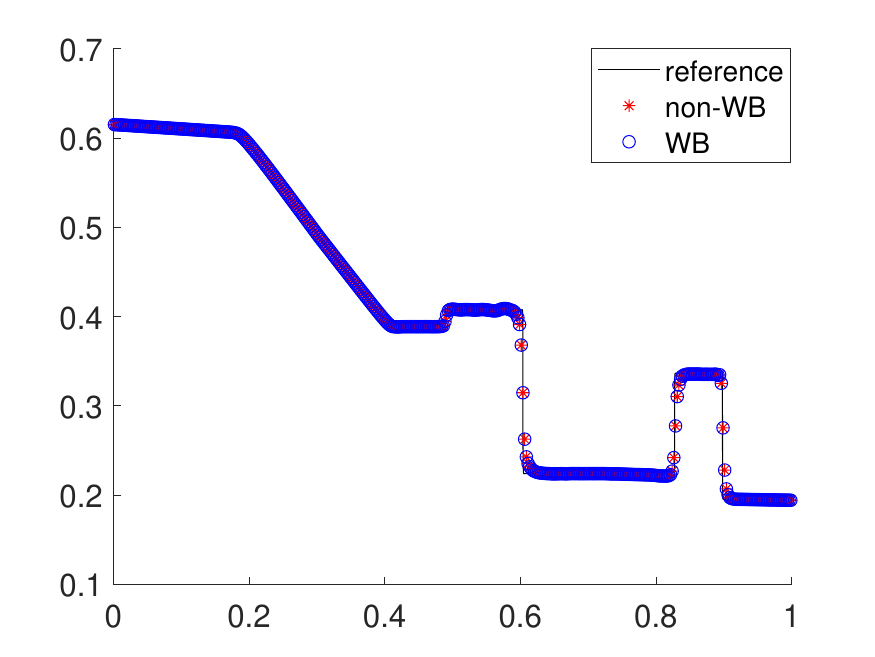}
\end{minipage}
}
\centering
\caption{Example 5: The numerical solutions of the third-order WB and non-WB schemes are obtained on 400 uniform cells. The reference solutions are obtained by the third-order WB DG scheme with 10,000 cells.}
\label{test11_nr}
\end{figure}

\subsection{Example 6: 1D near-vacuum test}
This is a demanding Riemann problem, which extends the 1D near-vacuum test in \cite{meena2017positivity,meena2020positivity},  used to check the positivity-preserving property of the proposed DG schemes.
The initial conditions are given by
\[
(\rho,u_1,u_2,p_{11},p_{12},p_{22})=\begin{cases}
                                      (\epsilon, -8\epsilon, 0, 2\epsilon, 0, 2\epsilon), & \text{if } x\leq0 \\
                                      (\epsilon, 8\epsilon, 0, 2\epsilon, 0, 2\epsilon), & \text{if } x>0
                                    \end{cases}
\]
with $\epsilon=10^{-5}$ over the domain $[-1,1]$. The potential function is taken as $W(x)=\frac{x^2}{2}$. The outflow boundary conditions are imposed. The exact solution describes the propagation of two rarefaction waves away from the center, which leads to very low density and pressure near the origin.

We apply the third-order WB DG scheme with the positivity-preserving limiter to simulate this problem up to $t=0.05$ on the uniform mesh of 400 cells. The numerical results are displayed in Figure \ref{1d_near_vacuum_nr1}, where the reference solution is obtained by the same scheme with 10,000 cells. Throughout the simulation, the observed minimum values of density $\rho$, pressure component $p_{11}$, and the determinant of pressure tensor $\det(\mathbf{p})$ are $9.3937\times10^{-9}$, $7.5100\times10^{-10}$, and $1.4635\times10^{-17}$, respectively. The proposed positivity-preserving WB DG scheme behaves robustly, even in the low density and low pressure region. It is noticed that the code would quickly break down, if the positivity-preserving limiter is not employed.

\begin{figure}[!htbp]
\subfigure[$\rho$]{
\begin{minipage}[c]{0.3\linewidth}
\centering
\includegraphics[width=5cm]{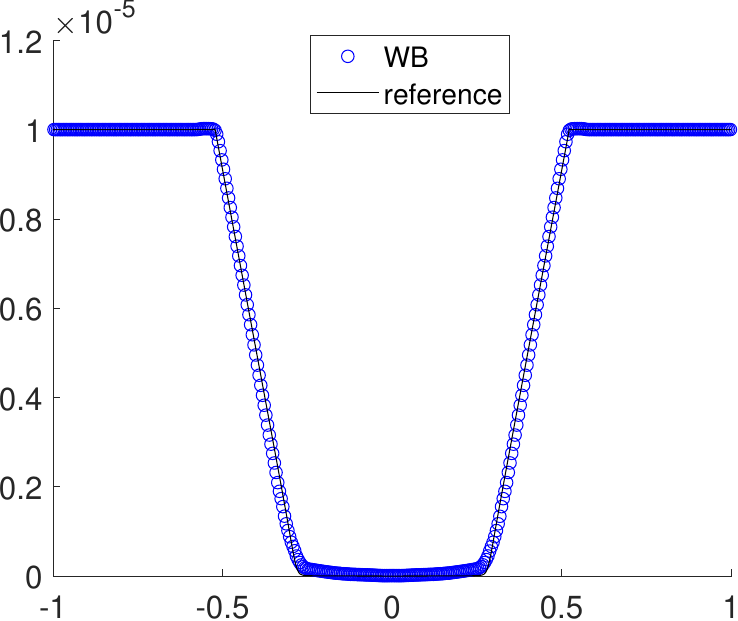}
\end{minipage}
}
\subfigure[$p_{11}$]{
\begin{minipage}[c]{0.3\linewidth}
\centering
\includegraphics[width=5cm]{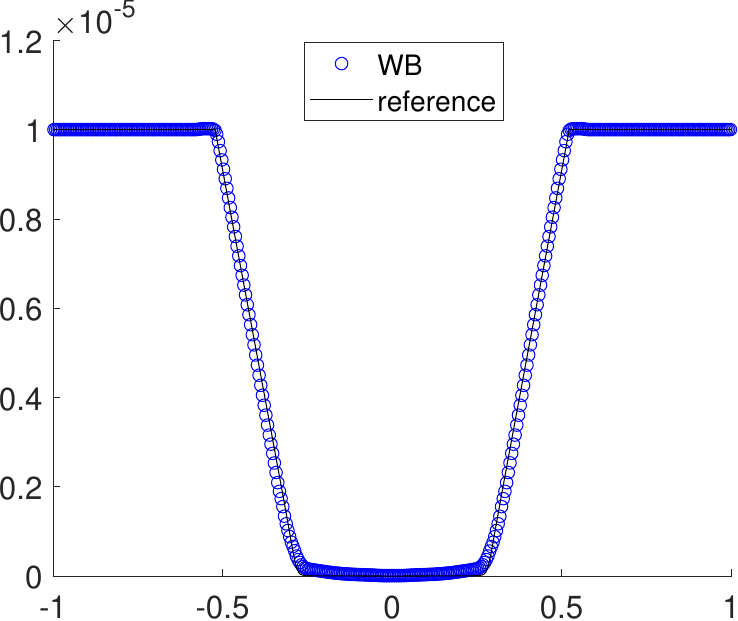}
\end{minipage}
}
\subfigure[$\det(\mathbf{p})$]{
\begin{minipage}[c]{0.3\linewidth}
\centering
\includegraphics[width=5cm]{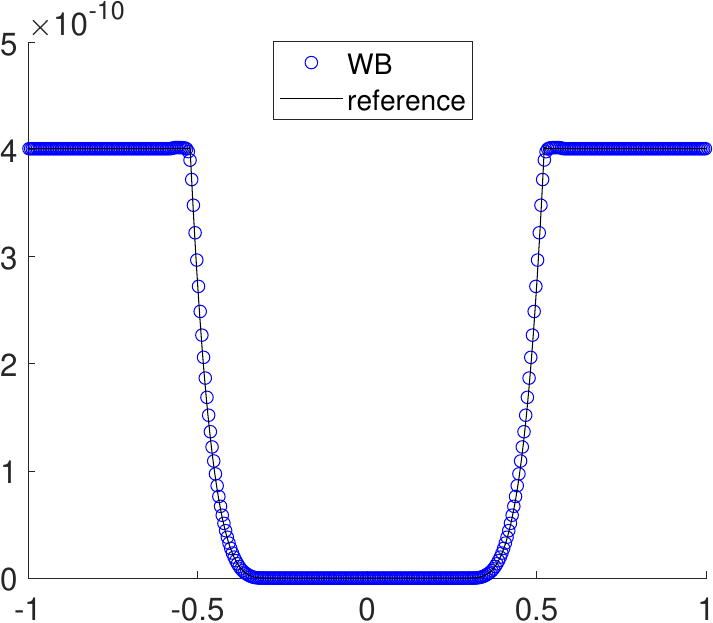}
\end{minipage}
}
\centering
\caption{Example 6: The numerical results obtained by the third-order positivity-preserving WB DG scheme for the 1D near vacuum test. The reference solutions are obtained by the same scheme with 10000 cells.}
\label{1d_near_vacuum_nr1}
\end{figure}

\subsection{Example 7: 2D WB test}
This example is used to examine the WB property of the proposed 2D schemes. The following three test cases \cite{meena2018well} are considered.
\begin{itemize}
\item Polytropic case: The hydrostatic equilibrium solution is given by
\begin{align*}
&\rho(x,y)=\rho_0\left(1+\frac{\nu-1}{\alpha\nu\rho_0^{\nu-1}}\left(\frac{W_0-W(x,y)}{2}\right)\right)^{\frac{1}{\nu-1}}, ~ p_{11}(x,y)=p_{11,0}\left(1+\frac{\nu-1}{\alpha\nu\rho_0^{\nu-1}}\left(\frac{W_0-W(x,y)}{2}\right)\right)^{\frac{\nu}{\nu-1}},
\\
&p_{12}=0, ~ p_{22}(x,y)=p_{22,0}\left(1+\frac{\nu-1}{\alpha\nu\rho_0^{\nu-1}}\left(\frac{W_0-W(x,y)}{2}\right)\right)^{\frac{\nu}{\nu-1}},
\end{align*}
where $\rho_0=1$, $p_{11,0}=1$, $p_{22,0}=1$, $\alpha=1$, $\nu=1.2$, and $W_0=0$.

\item Isentropic case: The hydrostatic equilibrium solution is given by
\begin{align*}
&\rho(x,y)=\rho_0\left(1+\frac{1}{3\alpha\rho_0^{2}}\left(W_0-W(x,y)\right)\right)^{\frac{1}{2}}, ~ p_{11}(x,y)=p_{11,0}\left(1+\frac{1}{3\alpha\rho_0^{2}}\left(W_0-W(x,y)\right)\right)^{\frac{3}{2}},
\\
&p_{12}=0, ~ p_{22}(x,y)=p_{22,0}\left(1+\frac{1}{3\alpha\rho_0^{2}}\left(W_0-W(x,y)\right)\right)^{\frac{3}{2}},
\end{align*}
where $\rho_0=1$, $p_{11,0}=1$, $p_{22,0}=1$, $\alpha=1$, and $W_0=0$.

\item Isothermal case: The hydrostatic equilibrium solution is given by
\begin{equation}\label{2D isothermal case}
\rho(x,y)=\rho_0\exp\left(\frac{-\rho_0W(x,y)}{2}\right), ~ p_{11}(x,y)=p_{22}(x,y)=\exp\left(\frac{-\rho_0W(x,y)}{2}\right), ~ p_{12}=0,
\end{equation}
where $\rho_0=1.21$.
\end{itemize}

Consider the computational domain $[0,1]^2$ and the potential $W(x,y)=x+y$. The third-order WB and non-WB DG schemes are employed to solve the above three test cases on the meshes consisting of $50\times50$ and $100\times100$ cells, up to the final time $t=1$. The positivity-preserving limiter is not turned on for these problems. The errors of density $\rho$ and the pressure components $p_{11}$ and $p_{22}$ for the three cases are presented in Tables \ref{2D_wb_nr1}--\ref{2D_wb_nr3}, respectively. One can see that the $l^1$ errors are at the level of machine precision for our proposed 2D WB scheme, thereby verifying its WB property. In contrast, the errors for the non-WB scheme are notably larger, particularly on coarser meshes.


\begin{table}[!htbp]
\centering
\caption{Example 7: The $l^1$ errors in $\rho$, $p_{11}$ and $p_{22}$ at $t=1$ for the polytropic test case.}
\label{2D_wb_nr1}
\begin{center}
\small
\begin{tabular}{c|c|ccc}
\hline
 Scheme & $N$ & $\rho$ & $p_{11}$ & $p_{22}$  \\
\hline
 \multirow{2}{*}{WB} & 50 & 9.1018e-16 & 8.1588e-16 & 7.9613e-16  \\
 \multirow{2}{*}{} & 100 & 1.2444e-15 & 1.6866e-15 & 1.6623e-15  \\
\hline
\multirow{2}{*}{non-WB} & 50 & 3.5622e-09 & 6.5502e-09 & 6.5502e-09  \\
\multirow{2}{*}{} & 100 & 4.4071e-10 & 8.1373e-10 & 8.1373e-10  \\
\hline
\end{tabular}
\end{center}
\end{table}

\begin{table}[!htbp]
\centering
\caption{Example 7: The $l^1$ errors in $\rho$, $p_{11}$ and $p_{22}$ at $t=1$ for the isentropic test case.}
\label{2D_wb_nr2}
\begin{center}
\small
\begin{tabular}{c|c|ccc}
\hline
 Scheme & $N$ & $\rho$ & $p_{11}$ & $p_{22}$  \\
\hline
 \multirow{2}{*}{WB} & 50 & 8.4933e-16 & 6.1084e-16 & 6.2504e-16  \\
 \multirow{2}{*}{} & 100 & 1.2542e-15 & 1.2212e-15 & 1.2639e-15  \\
\hline
\multirow{2}{*}{non-WB} & 50 & 8.2118e-09 & 3.6419e-09 & 3.6419e-09  \\
\multirow{2}{*}{} & 100 & 1.0394e-09 & 4.5836e-10 & 4.5836e-10  \\
\hline
\end{tabular}
\end{center}
\end{table}

\begin{table}[!htbp]
\centering
\caption{Example 7: The $l^1$ errors in $\rho$, $p_{11}$ and $p_{22}$ at $t=1$ for the isothermal test case.}
\label{2D_wb_nr3}
\begin{center}
\small
\begin{tabular}{c|c|ccc}
\hline
 Scheme & $N$ & $\rho$ & $p_{11}$ & $p_{22}$  \\
\hline
 \multirow{2}{*}{WB} & 50 & 9.9460e-16 & 7.7001e-16 & 7.9929e-16  \\
 \multirow{2}{*}{} & 100 & 1.8564e-15 & 1.7519e-15 & 1.7938e-15  \\
\hline
\multirow{2}{*}{non-WB} & 50 & 1.7205e-08 & 1.7033e-08 & 1.7033e-08  \\
\multirow{2}{*}{} & 100 & 2.1728e-09 & 2.1453e-09 & 2.1453e-09  \\
\hline
\end{tabular}
\end{center}
\end{table}

\subsection{Example 8: 2D small perturbation test}
To verify the ability of the proposed 2D WB scheme in capturing solutions close to the hydrostatic solution, we consider the isothermal steady state (\ref{2D isothermal case}) and impose a small Gaussian hump perturbation \cite{meena2018well} centered at $(0.3,0.3)$ to the pressure components $p_{11}$ and $p_{22}$, i.e., the initial profile is set as
\begin{align*}
&\rho(x,y)=\rho_0\exp\left(\frac{-\rho_0W(x,y)}{2}\right), \\
&p_{11}(x,y)=\exp\left(\frac{-\rho_0W(x,y)}{2}\right)+\epsilon\exp\left(\frac{-100((x-0.3)^2+(y-0.3)^2)}{2}\right), \\
&p_{22}(x,y)=\exp\left(\frac{-\rho_0W(x,y)}{2}\right)+\epsilon\exp\left(\frac{-100((x-0.3)^2+(y-0.3)^2)}{2}\right),
\end{align*}
and $u_1=0$, $u_2=0$, $p_{12}=0$. The parameters are set as $\rho_0=1.21$ and $\epsilon=10^{-7}$.

This problem is simulated until $t=0.15$ by using the third-order WB and non-WB DG schemes with $50\times50$ uniform cells. The transmissive boundary conditions are imposed. The positivity-preserving limiter is deactivated for this simulation. The contour plots of the density perturbation, the $\mathrm{trace}(\mathbf{p}):=p_{11}+p_{22}$ perturbation and the $\det(\mathbf{p})$ perturbation are presented in Figure \ref{2d_small_per_nr}.
It is observed that the WB DG scheme accurately resolves such small perturbations even on a relatively coarse mesh, while the non-WB one cannot capture it well.

%

\begin{figure}[!htbp]
\subfigure[WB scheme: $\rho$ perturbation]{
\begin{minipage}[c]{0.3\linewidth}
\centering
\includegraphics[width=5cm]{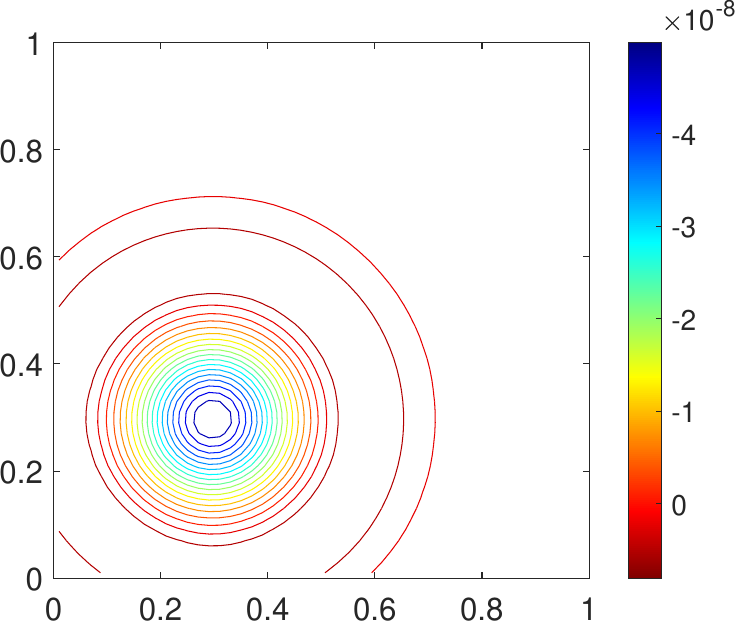}
\end{minipage}
}
\subfigure[WB scheme: ${\rm trace}(\mathbf{p})$ perturbation]{
\begin{minipage}[c]{0.3\linewidth}
\centering
\includegraphics[width=5cm]{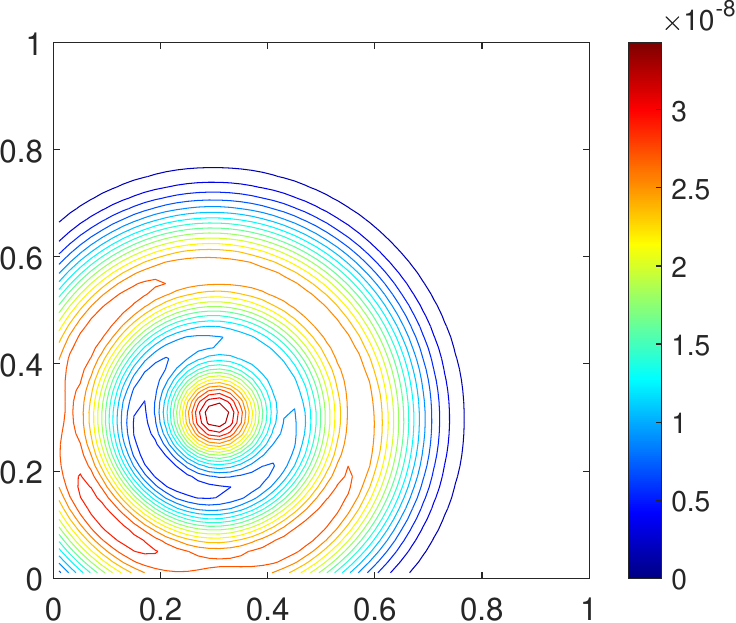}
\end{minipage}
}
\subfigure[WB scheme: $\det(\mathbf{p})$ perturbation]{
\begin{minipage}[c]{0.3\linewidth}
\centering
\includegraphics[width=5cm]{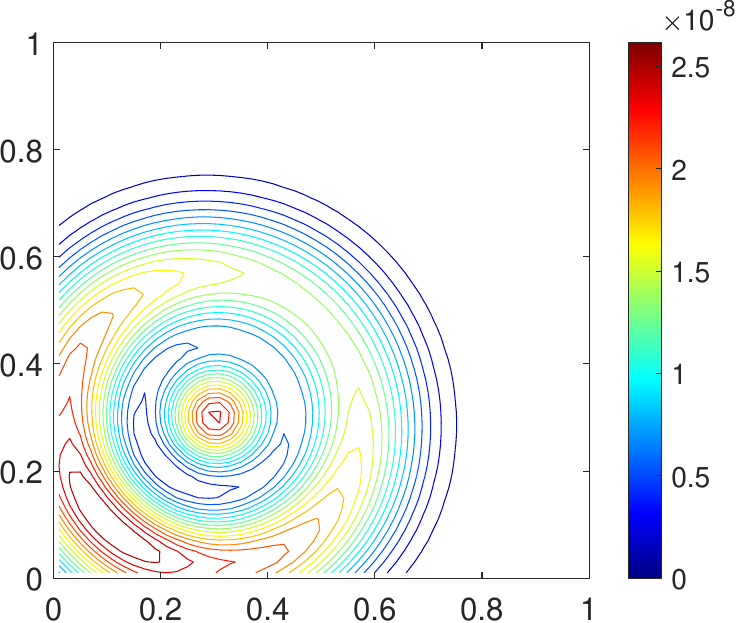}
\end{minipage}
}
\subfigure[non-WB scheme: $\rho$ perturbation]{
\begin{minipage}[c]{0.3\linewidth}
\centering
\includegraphics[width=5cm]{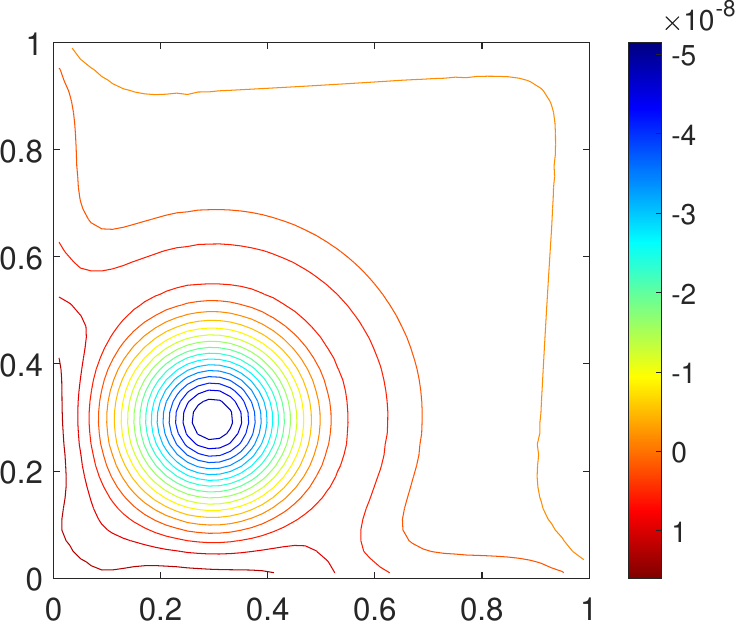}
\end{minipage}
}
\subfigure[non-WB scheme: ${\rm trace}(\mathbf{p})$ perturbation]{
\begin{minipage}[c]{0.3\linewidth}
\centering
\includegraphics[width=5cm]{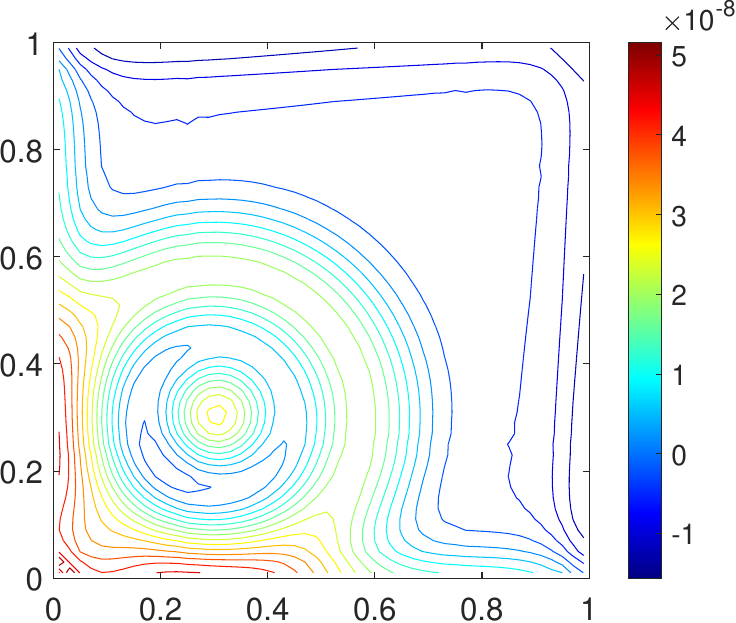}
\end{minipage}
}
\subfigure[non-WB scheme: $\det(\mathbf{p})$ perturbation]{
\begin{minipage}[c]{0.3\linewidth}
\centering
\includegraphics[width=5cm]{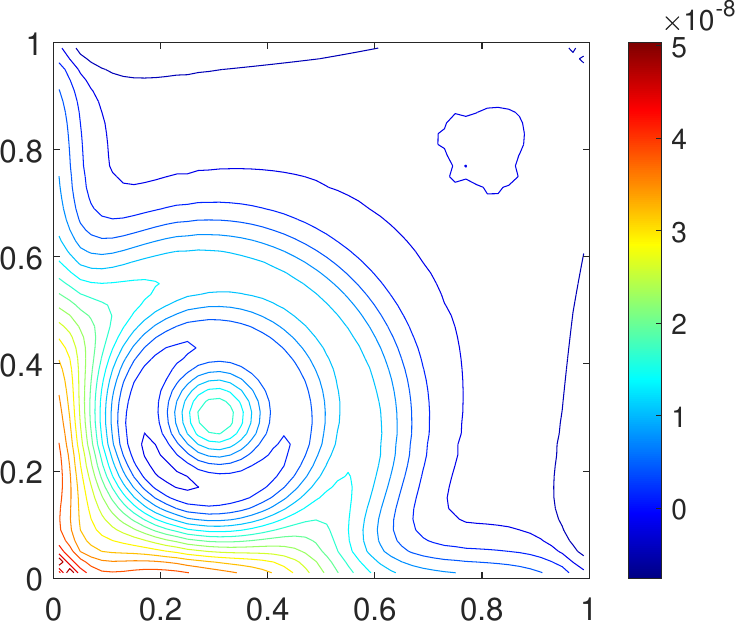}
\end{minipage}
}
\centering
\caption{Example 8: The contour plots of the perturbations of the isothermal hydrostatic solution at time $t=0.15$ obtained by the third-order WB and non-WB DG schemes with $50\times50$ cells. 20 equally spaced contour lines are displayed.}
\label{2d_small_per_nr}
\end{figure}

\subsection{Example 9: 2D perturbation test with low density and low pressure}
To validate the WB and positivity-preserving properties of the proposed 2D DG schemes, we consider an isothermal steady state with low density and low pressure, and impose a Gaussian hump perturbation centered at $(0.3,0.3)$ to the pressure components $p_{11}$ and $p_{22}$ over the domain $[0,1]^2$. The initial profile is as follows:
\begin{align*}
&\rho(x,y)=10^{-7}\rho_0\exp\left(\frac{-\rho_0W(x,y)}{2}\right), \\
&p_{11}(x,y)=10^{-7}\exp\left(\frac{-\rho_0W(x,y)}{2}\right)+\epsilon\exp\left(\frac{-100((x-0.3)^2+(y-0.3)^2)}{2}\right), \\
&p_{22}(x,y)=10^{-7}\exp\left(\frac{-\rho_0W(x,y)}{2}\right)+\epsilon\exp\left(\frac{-100((x-0.3)^2+(y-0.3)^2)}{2}\right),
\end{align*}
and $u_1=0$, $u_2=0$, $p_{12}=0$. The parameters are set as $\rho_0=1.21$ and $\epsilon=10^{-6}$. The potential function $W(x,y)=x+y$.

This problem is simulated up to $t=5$ by using the third-order WB and non-WB DG schemes with $50\times50$ uniform cells. The transmissive boundary conditions are imposed. The contour plots of the $\mathrm{trace}(\mathbf{p})$ perturbation at $t=1,2,5$ are presented in Figure \ref{2d_small_per_2_nr}.
It is observed that, compared to the non-WB DG scheme, the WB one can better capture the perturbation even for a long time. Since the density and pressure of this isothermal steady state are very low, a DG scheme without the positivity-preserving property may easily produce negative density or pressure. We observe that, at the time $t=5$, the maximum and minimum of ${\rm trace}(\mathbf{p})$ are about  $2.3416\times10^{-7}$ and $4.4152\times10^{-8}$, respectively.
The proposed positivity-preserving DG schemes are robust for this demanding problem. However, without the positivity-preserving limiter, the third-order DG code fails at the time $t=0.092186$.

\begin{figure}[!htbp]
\subfigure[WB scheme: $t=1$]{
\begin{minipage}[c]{0.3\linewidth}
\centering
\includegraphics[width=5cm]{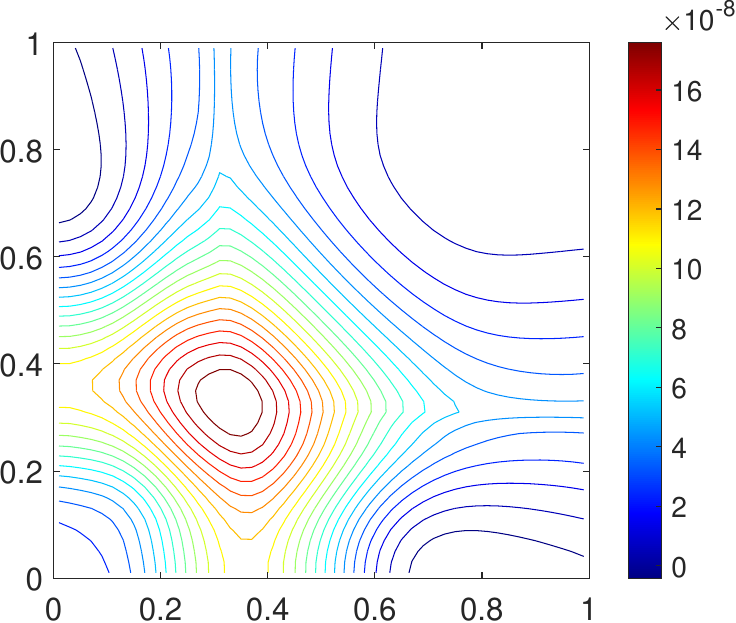}
\end{minipage}
}
\subfigure[WB scheme: $t=2$]{
\begin{minipage}[c]{0.3\linewidth}
\centering
\includegraphics[width=5cm]{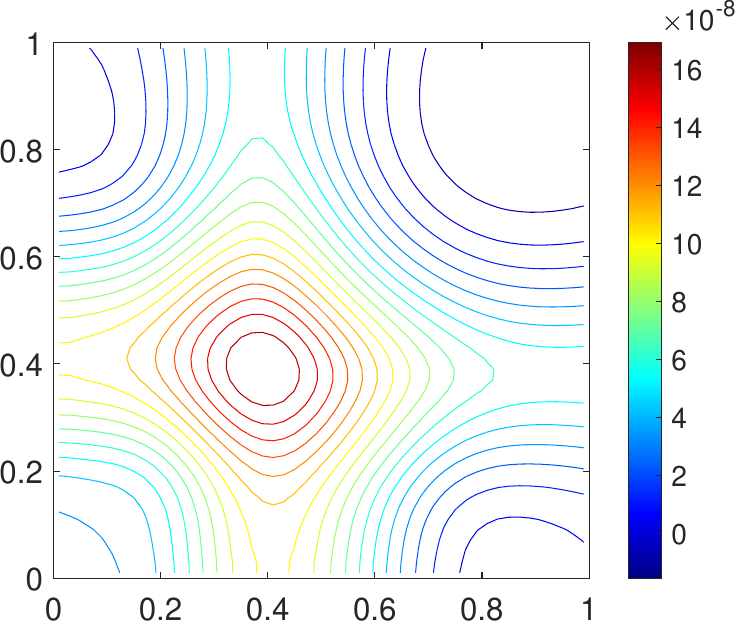}
\end{minipage}
}
\subfigure[WB scheme: $t=5$]{
\begin{minipage}[c]{0.3\linewidth}
\centering
\includegraphics[width=5cm]{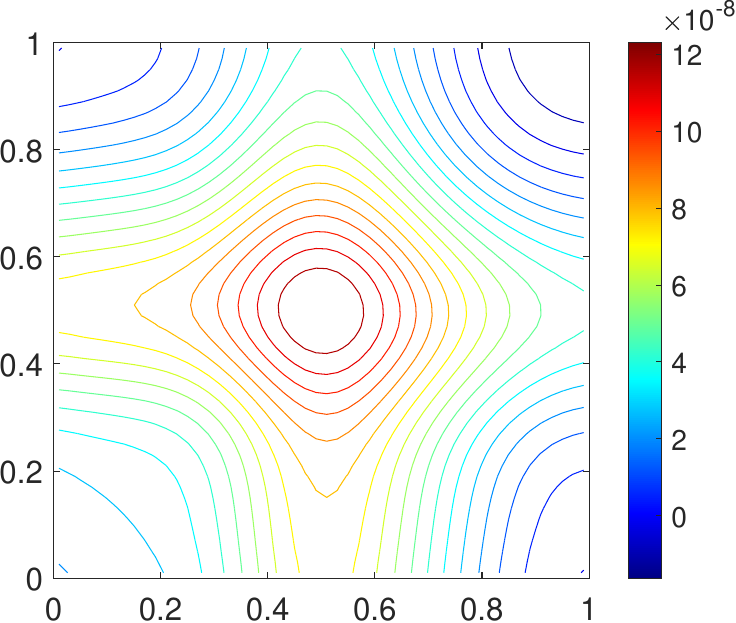}
\end{minipage}
}
\subfigure[non-WB scheme: $t=1$]{
\begin{minipage}[c]{0.3\linewidth}
\centering
\includegraphics[width=5cm]{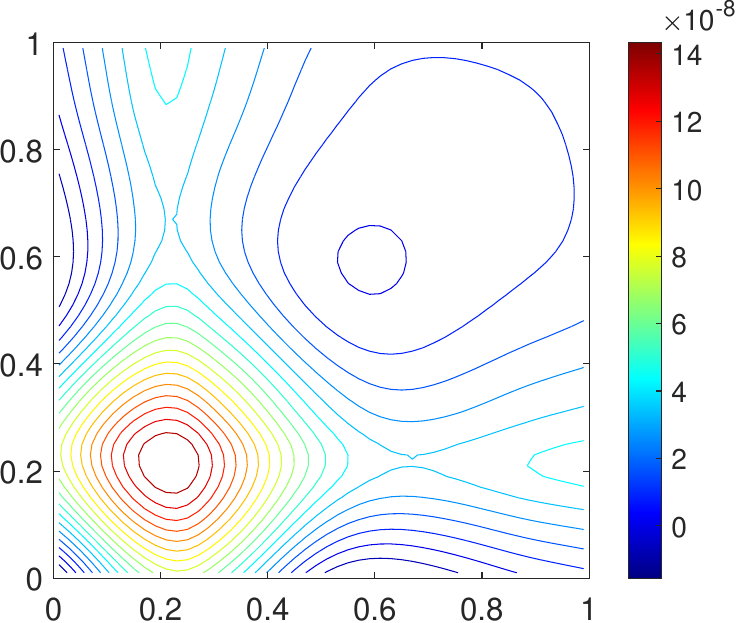}
\end{minipage}
}
\subfigure[non-WB scheme: $t=2$]{
\begin{minipage}[c]{0.3\linewidth}
\centering
\includegraphics[width=5cm]{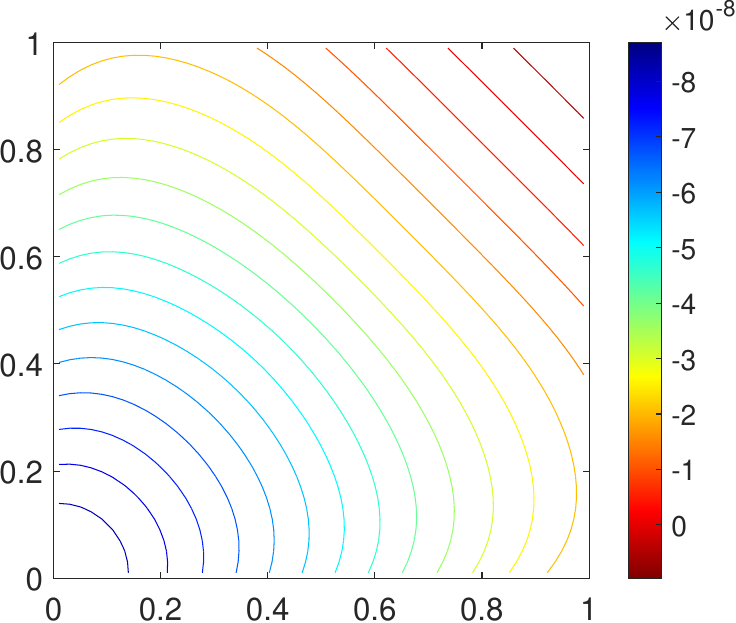}
\end{minipage}
}
\subfigure[non-WB scheme: $t=5$]{
\begin{minipage}[c]{0.3\linewidth}
\centering
\includegraphics[width=5cm]{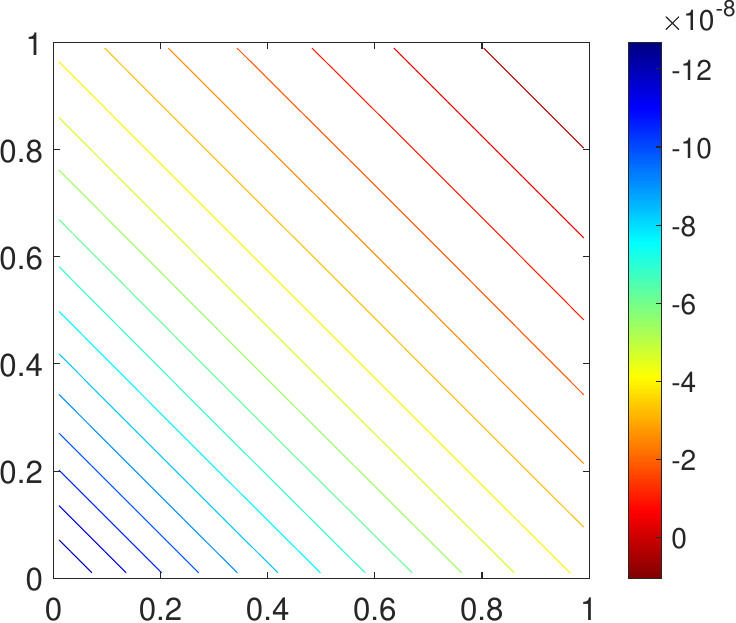}
\end{minipage}
}
\centering
\caption{Example 9: The contour plots of the ${\rm trace}(\mathbf{p})$ perturbations at time $t=1,2,5$ obtained by the third-order WB and non-WB DG schemes with $50\times50$ cells. 20 equally spaced contour lines are displayed.}
\label{2d_small_per_2_nr}
\end{figure}

\subsection{Example 10: 2D near-vacuum test}
%

To further demonstrate the positivity-preserving property of the proposed WB DG scheme, the 2D near-vacuum test \cite{meena2017positivity,meena2020positivity} is employed. The initial conditions are set to
\begin{equation*}
\rho = 1, \quad p_{11} = 2, \quad p_{12} = 0, \quad p_{22} = 2,
\end{equation*}
and a radially outward velocity field
\begin{equation*}
u_1 = 8\frac{x}{r}f(r,s), \quad u_2 = 8\frac{y}{r}f(r,s),
\end{equation*}
where $r = \sqrt{x^2 + y^2}$ and $s = \Delta x$. The smoothing function $f$ is defined by
\begin{equation*}
f(r,s) =
\begin{cases}
    -2\left(\frac{r}{s}\right)^3 + 3\left(\frac{r}{s}\right)^2, & \text{if } r < s, \\
    1, & \text{otherwise},
\end{cases}
\end{equation*}
which moderates the velocity profile in the vicinity of the origin. The potential function is given by $W(x,y) = \frac{1}{2}(x^2 + y^2)$. The strong outward velocity induces a continuous decline in both density and pressure near the center as time progresses.

The simulation is conducted using the third-order WB DG scheme until the final time $t = 0.05$ across the domain $[-2,2]^2$ with outflow boundary conditions. A grid consisting of $151 \times 151$ cells is utilized. Contour plots of the numerical solution are depicted in Figure \ref{2d_near_vacuum_nr4}. At the final time, the minimum values recorded for density, the pressure component $p_{11}$, and $\det(\mathbf{p})$ are approximately $1.7492 \times 10^{-3}$, $3.0958 \times 10^{-4}$, and $1.0378 \times 10^{-7}$, respectively. It is noteworthy that disabling the positivity-preserving limiter results in the failure of the DG method at $t = 0.000761$.

\begin{figure}[!htbp]
\subfigure[$\rho$]{
\begin{minipage}[c]{0.3\linewidth}
\centering
\includegraphics[width=5cm]{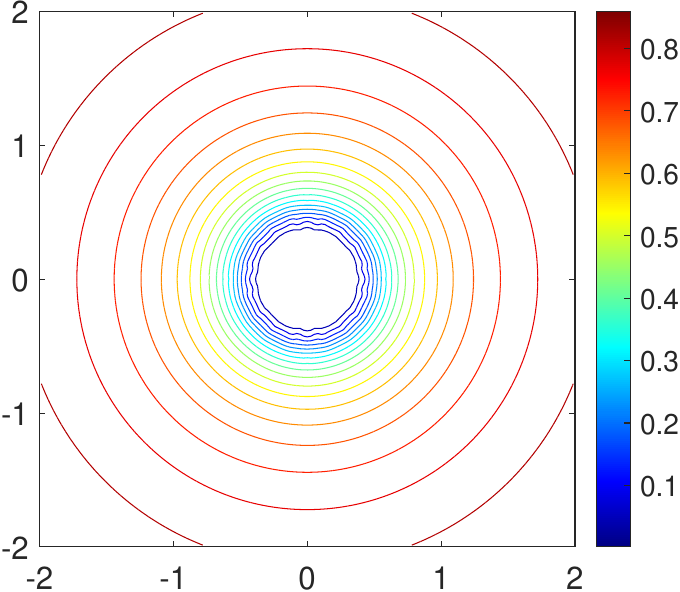}
\end{minipage}
}
\subfigure[$p_{11}$]{
\begin{minipage}[c]{0.3\linewidth}
\centering
\includegraphics[width=5cm]{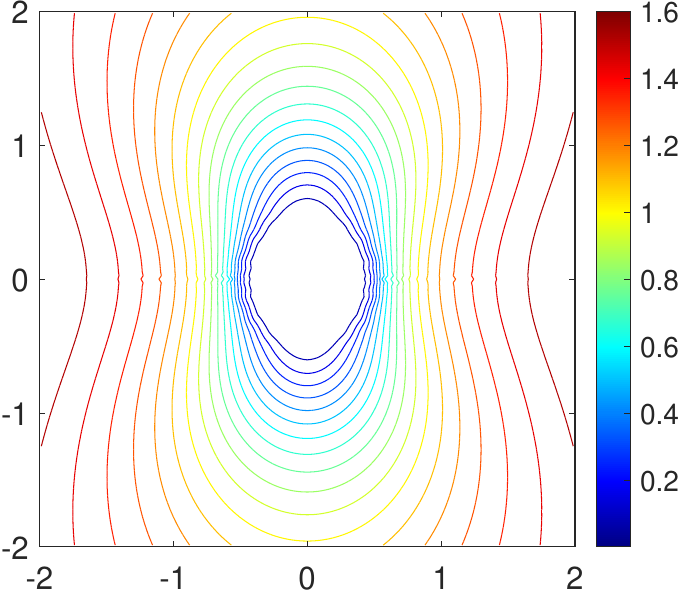}
\end{minipage}
}
\subfigure[$p_{12}$]{
\begin{minipage}[c]{0.3\linewidth}
\centering
\includegraphics[width=5cm]{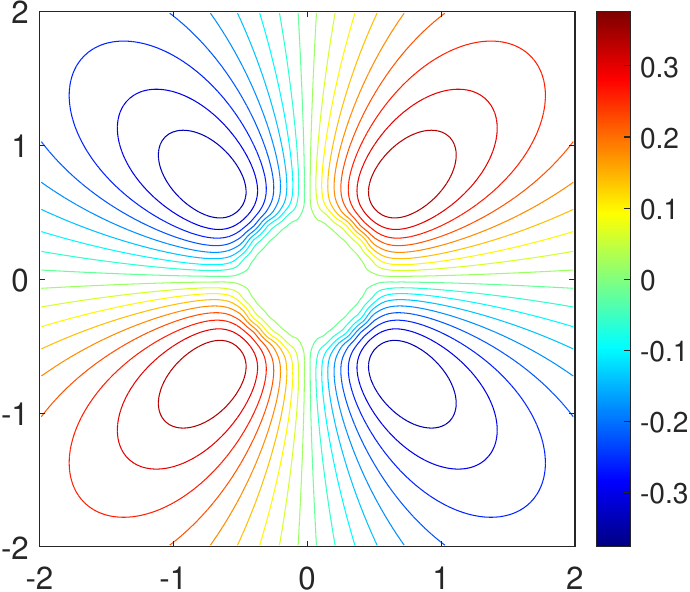}
\end{minipage}
}
\subfigure[$p_{22}$]{
\begin{minipage}[c]{0.3\linewidth}
\centering
\includegraphics[width=5cm]{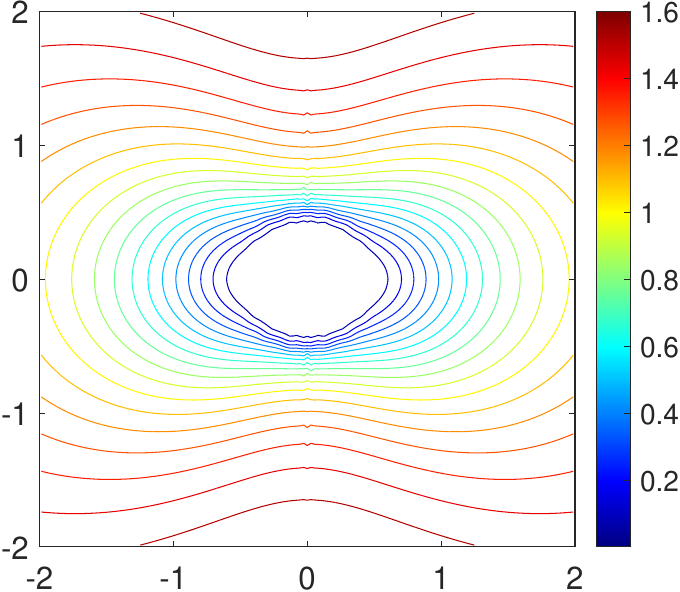}
\end{minipage}
}
\subfigure[${\rm trace}(\mathbf{p})$]{
\begin{minipage}[c]{0.3\linewidth}
\centering
\includegraphics[width=5cm]{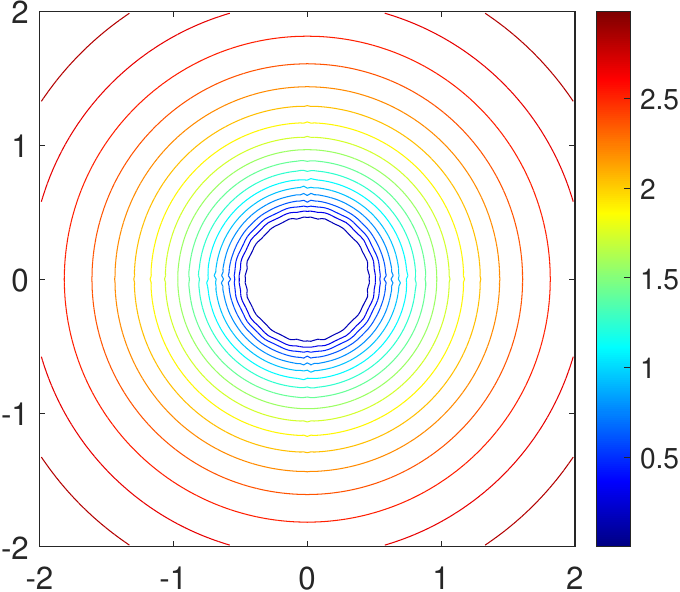}
\end{minipage}
}
\subfigure[$\det(\mathbf{p})$]{
\begin{minipage}[c]{0.3\linewidth}
\centering
\includegraphics[width=5cm]{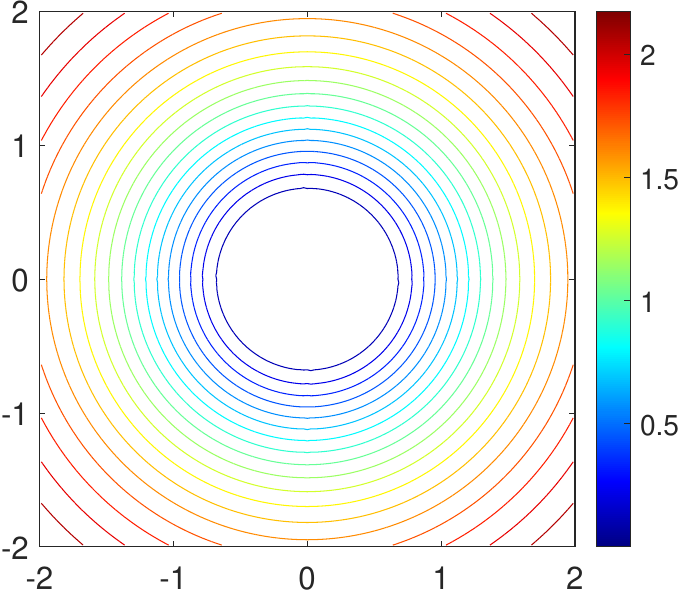}
\end{minipage}
}
\centering
\caption{Example 10: The contour plots of the 2D near vacuum test at time $t=0.05$ obtained by the third-order positivity-preserving and WB DG scheme with $151\times151$ cells. 20 equally spaced contour lines are displayed.}
\label{2d_near_vacuum_nr4}
\end{figure}

\subsection{Example 11: Uniform plasma state with 2D Gaussian source}

This test evaluates the influence of a Gaussian source term on a 2D plasma model \cite{meena2017positivity,sen2018entropy,meena2020positivity}. The plasma is initially in a uniform state given by
\begin{equation*}
(\rho, u_1, u_2, p_{11}, p_{12}, p_{22}) = (0.1, 0, 0, 9, 7, 9),
\end{equation*}
with the potential specified as
\begin{equation*}
W(x,y) = 25\exp\left(-200\left((x - 2)^2 + (y - 2)^2\right)\right)
\end{equation*}
over the spatial domain $[0, 4]^2$. Figure \ref{uniform_plasma_nr} presents the numerical results at $t = 0.1$, obtained by applying the third-order and fourth-order WB DG methods on a grid consisting of $100 \times 100$ cells. The simulations are conducted without the use of the positivity-preserving limiter.
Figures \ref{p2_rho} and \ref{p3_rho} show the anisotropic changes in density due to the Gaussian source's influence. Furthermore, a comparison of Figures \ref{p2_tracep}-\ref{p2_detp} with Figures \ref{p3_tracep}-\ref{p3_detp} reveals that the fourth-order scheme achieves greater accuracy than the third-order one.

\begin{figure}[!htbp]
\subfigure[$\rho$]{
\begin{minipage}[c]{0.3\linewidth}
\centering
\includegraphics[width=5.1cm]{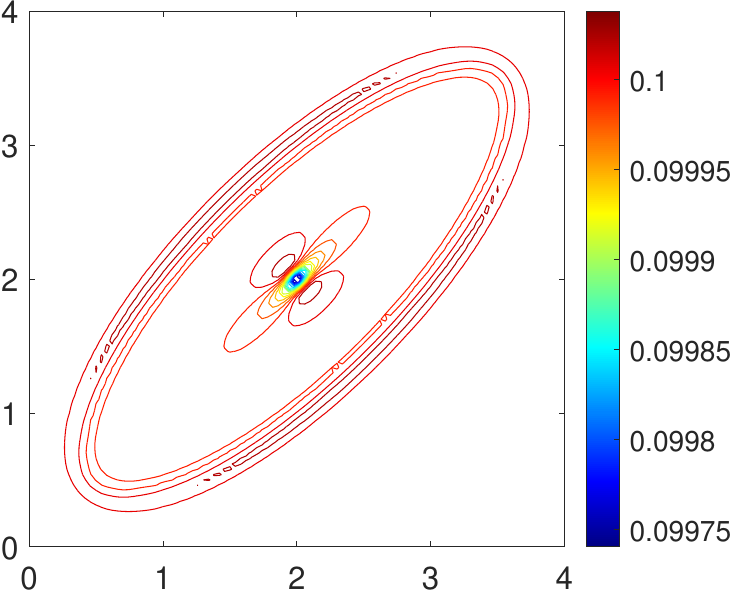}
\end{minipage}
\label{p2_rho}
}
\subfigure[${\rm trace}(\mathbf{p})$]{
\begin{minipage}[c]{0.3\linewidth}
\centering
\includegraphics[width=5cm]{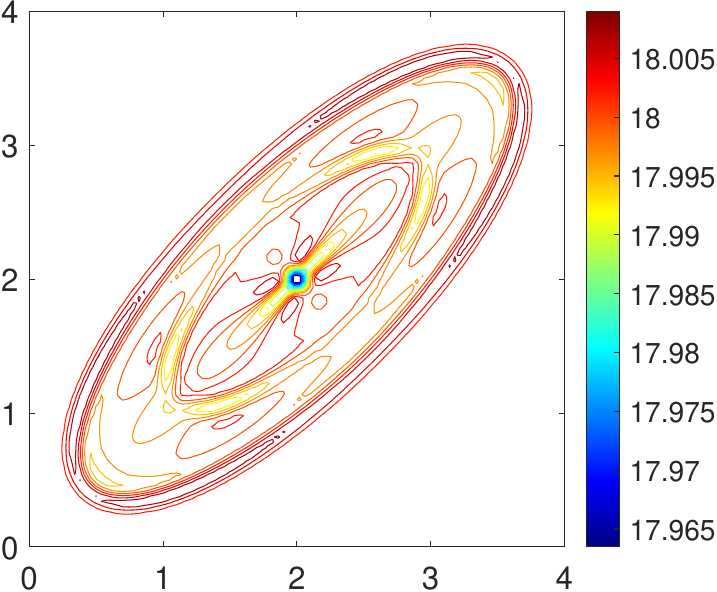}
\end{minipage}
\label{p2_tracep}
}
\subfigure[$\det(\mathbf{p})$]{
\begin{minipage}[c]{0.3\linewidth}
\centering
\includegraphics[width=4.8cm]{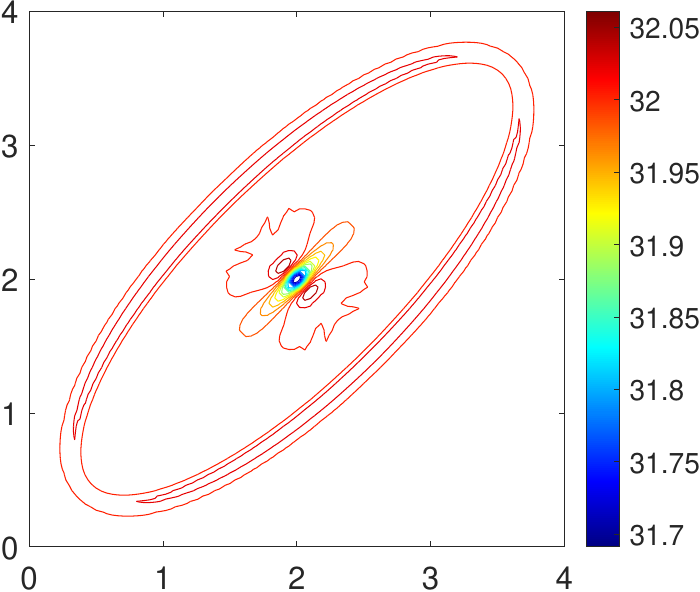}
\end{minipage}
\label{p2_detp}
}
\subfigure[$\rho$]{
\begin{minipage}[c]{0.3\linewidth}
\centering
\includegraphics[width=4.99cm]{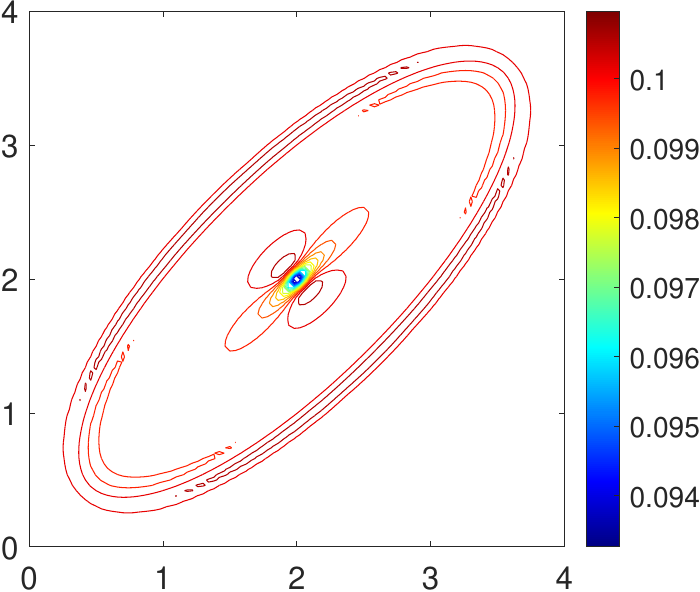}
\label{p3_rho}
\end{minipage}
}
\subfigure[${\rm trace}(\mathbf{p})$]{
\begin{minipage}[c]{0.3\linewidth}
\centering
\includegraphics[width=5cm]{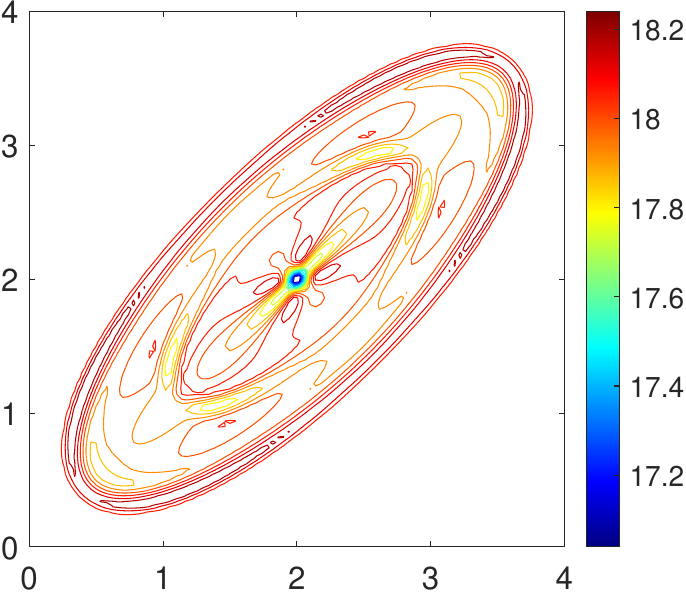}
\end{minipage}
\label{p3_tracep}
}
\subfigure[$\det(\mathbf{p})$]{
\begin{minipage}[c]{0.3\linewidth}
\centering
\includegraphics[width=4.8cm]{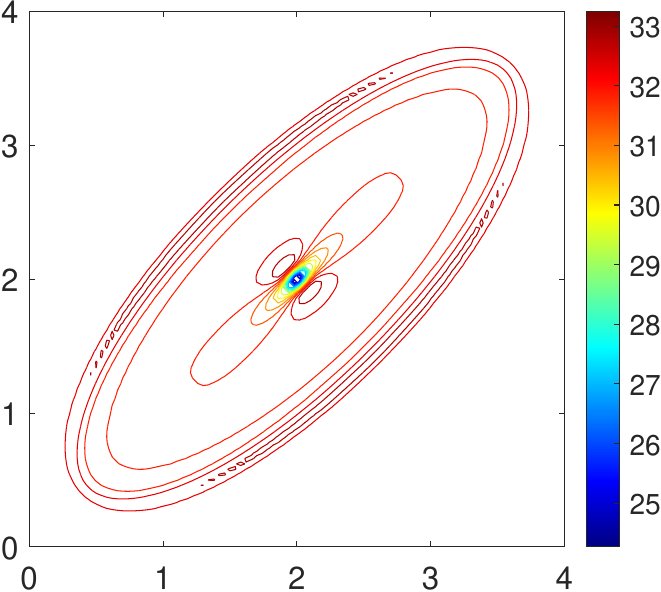}
\end{minipage}
\label{p3_detp}
}
\centering
\caption{Example 11: The contour plots of $\rho$ of the uniform plasma state with Gaussian source at time $t=0.1$ obtained by the third-order (the first row) and fourth-order (the second row) WB DG schemes with $100\times100$ cells. 20 equally spaced contour lines are displayed.}
\label{uniform_plasma_nr}
\end{figure}

\subsection{Example 12: Realistic simulation in two diemnsions}

In our final example \cite{berthon2015entropy,meena2017positivity,meena2020positivity}, we examine a plasma state within the domain $[0,100]^2$, initially defined by
\begin{equation*}
(\rho, u_1, u_2, p_{11}, p_{12}, p_{22}) = (0.109885, 0, 0, 1, 0, 1).
\end{equation*}
The plasma is subject to a source term with potential
\begin{equation*}
W(x,y) = \exp\left(\frac{-(x-50)^2 + (y-50)^2}{100}\right),
\end{equation*}
which exerts an influence solely in the $x$-direction, with the source term in the $y$-direction, $\mathbf{S}^y(\mathbf{U})$, being zero. Outflow boundary conditions are implemented on all edges of the domain.

This problem setup was originally designed to study the effects of inverse Bremsstrahlung absorption (IBA) \cite{berthon2015entropy}; more details on IBA can be found in \cite{firouzi2020inverse,turnbull2023inverse}. To simulate the IBA in an anisotropic plasma, we augment the energy equation for component $E_{11}$ with an additional source term, $v_T\rho W$, where $v_T$ denotes the absorption coefficient. We consider three scenarios with $v_T$ values of 0, 0.5, and 1.

Employing the third-order WB DG scheme, we simulate the problem up to $t = 0.5$ using a grid consisting of $200 \times 200$ cells. The positivity-preserving limiter is not activated for this simulation.
Figure \ref{realistic_simulation_nr1} showcases contour plots of $\rho$, ${\rm trace}(\mathbf{p})$, and $\det(\mathbf{p})$ for $v_T$ values of 0 and 1. Figure \ref{realistic_simulation_nr2} illustrates the 1D profiles of $\rho$ and $p_{11}$ along the line $y = 50$. An increase in the absorption coefficient, $v_T$, is observed to raise the pressure component $p_{11}$ around the center. This, in turn, drives a more pronounced expulsion of particles from the region, leading to a reduction in density near the center. These observations are consistent with the results documented in prior research \cite{meena2017positivity,sen2018entropy,meena2020positivity}.

\begin{figure}[!htbp]
\subfigure[$\rho$]{
\begin{minipage}[c]{0.3\linewidth}
\centering
\includegraphics[width=5.3cm]{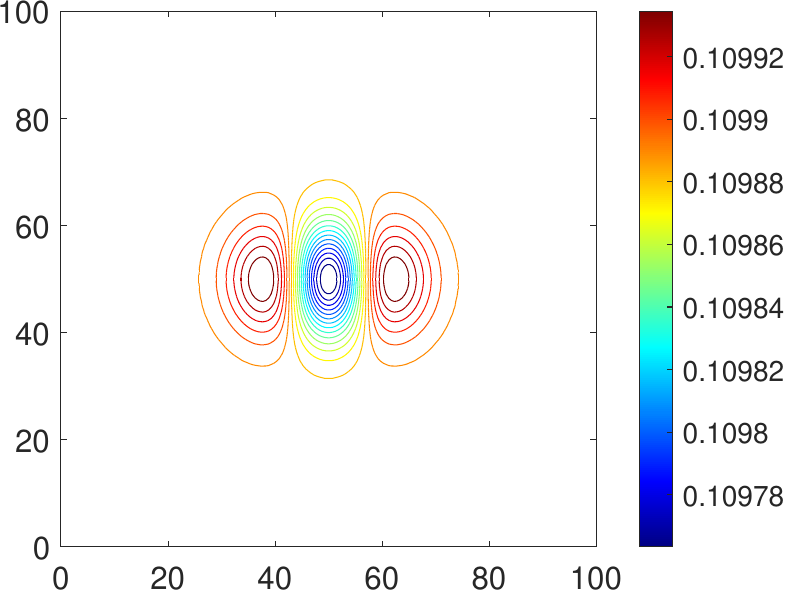}
\end{minipage}
}
\subfigure[${\rm trace}(\mathbf{p})$]{
\begin{minipage}[c]{0.3\linewidth}
\centering
\includegraphics[width=5cm]{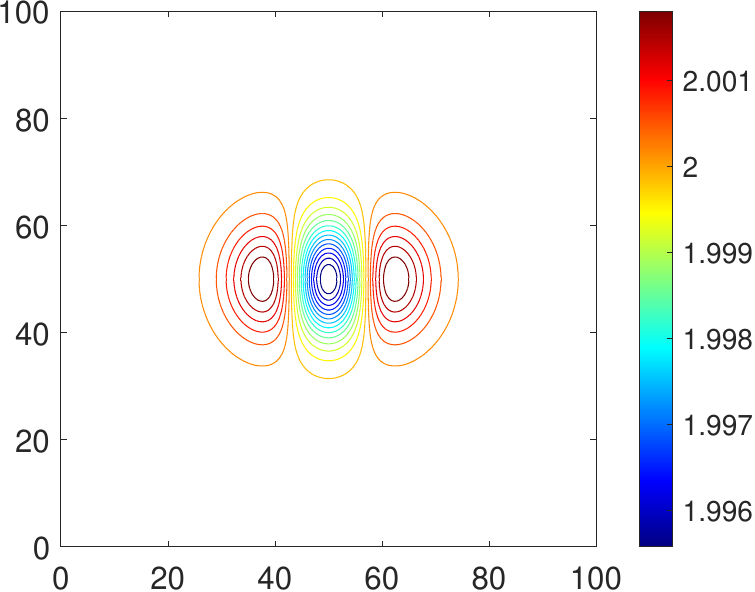}
\end{minipage}
}
\subfigure[$\det(\mathbf{p})$]{
\begin{minipage}[c]{0.3\linewidth}
\centering
\includegraphics[width=5cm]{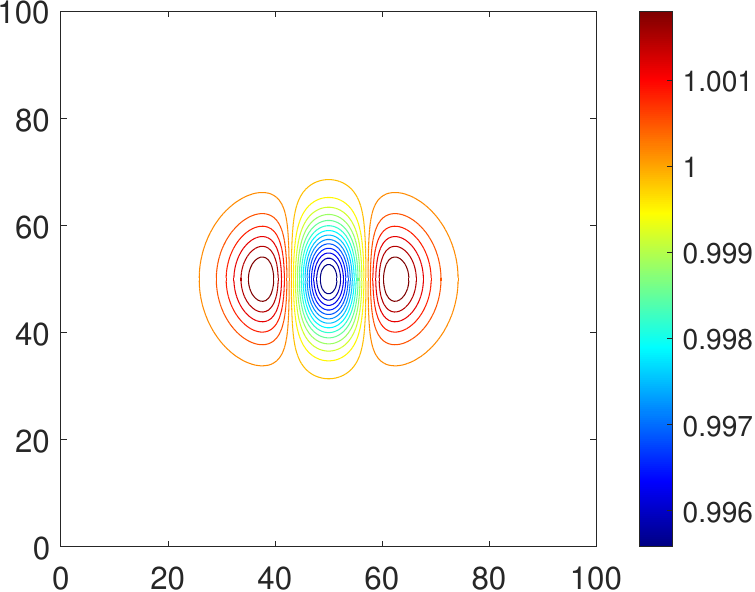}
\end{minipage}
}
\subfigure[$\rho$]{
\begin{minipage}[c]{0.3\linewidth}
\centering
\includegraphics[width=5.3cm]{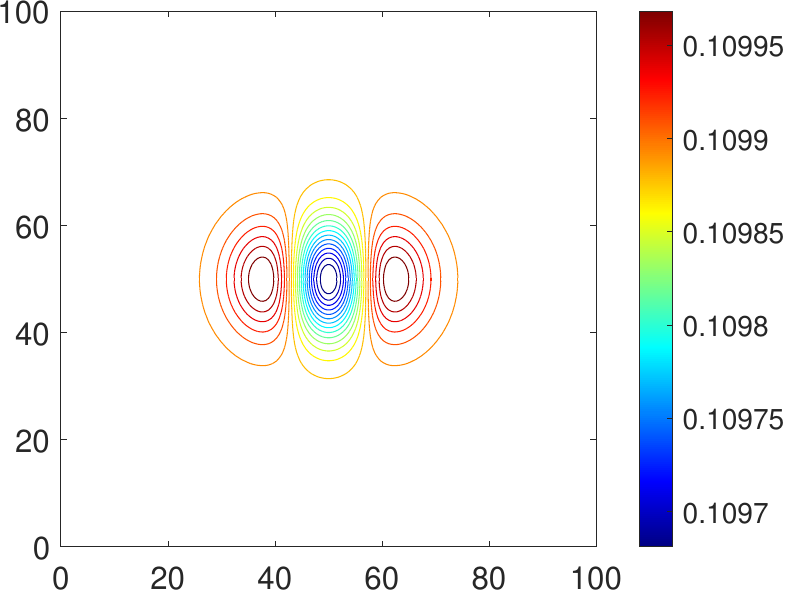}
\end{minipage}
}
\subfigure[${\rm trace}(\mathbf{p})$]{
\begin{minipage}[c]{0.3\linewidth}
\centering
\includegraphics[width=5cm]{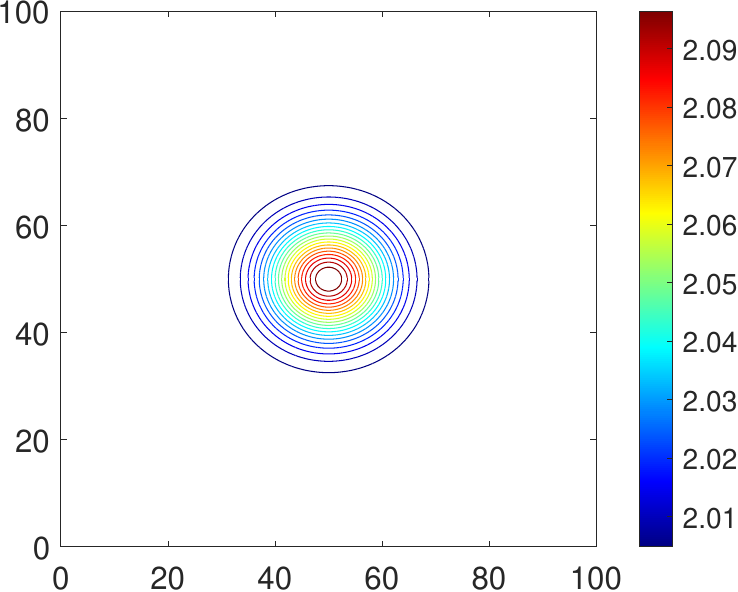}
\end{minipage}
}
\subfigure[$\det(\mathbf{p})$]{
\begin{minipage}[c]{0.3\linewidth}
\centering
\includegraphics[width=5cm]{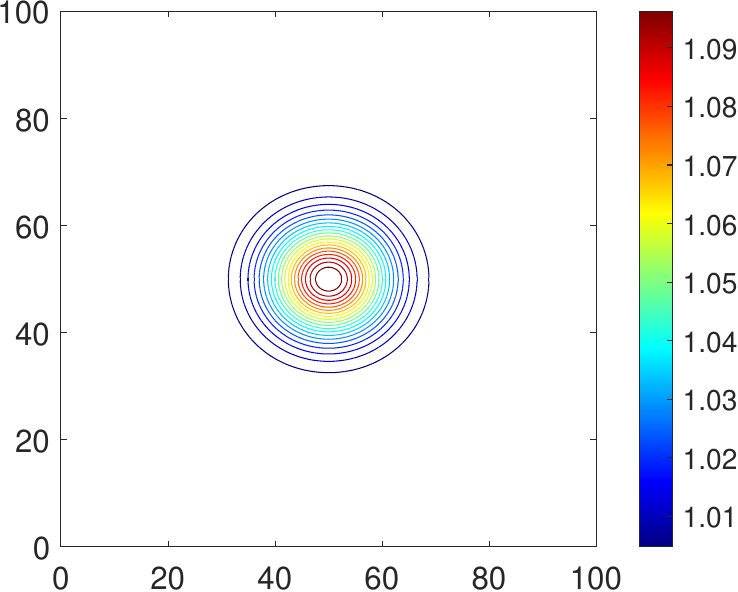}
\end{minipage}
}
\centering
\caption{Example 12: The contour plots of the realistic simulation at time $t=0.5$ obtained by the third-order WB DG scheme with $200\times200$ cells. The first and second row correspond to $v_T=0$ and $v_T=1$, respectively. 20 equally spaced contour lines are displayed.}
\label{realistic_simulation_nr1}
\end{figure}

\begin{figure}[!htbp]
\subfigure[$\rho$]{
\begin{minipage}[c]{0.3\linewidth}
\centering
\includegraphics[width=5.3cm]{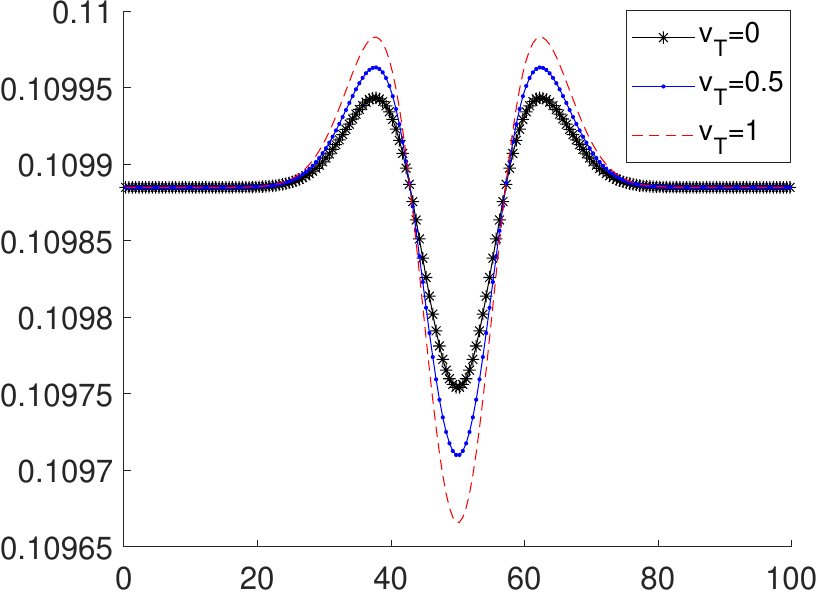}
\end{minipage}
}
\subfigure[$p_{11}$]{
\begin{minipage}[c]{0.3\linewidth}
\centering
\includegraphics[width=5cm]{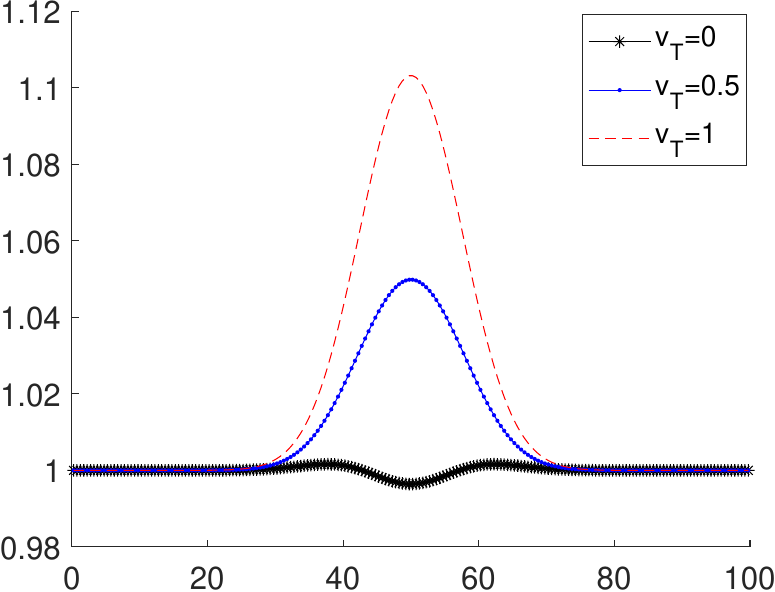}
\end{minipage}
}
\centering
\caption{Example 12: Comparison of $\rho$ and $p_{11}$ for different absorption coefficient $v_T=0$, $v_T=0.5$ and $v_T=1$ along the line $y=50$.}
\label{realistic_simulation_nr2}
\end{figure}


\section{Conclusion}\label{sec6}
This paper developed high-order accurate, well-balanced (WB), and positivity-preserving discontinuous Galerkin (DG) schemes for the one- and two-dimensional ten-moment Gaussian closure equations with source terms defined by a given potential. Our schemes were proven to maintain balance in known hydrostatic equilibrium states while ensuring the positivity of density and the positive-definiteness of the anisotropic pressure tensor.
The anisotropic effects posed new difficulties in this study, rendering the existing WB modification techniques designed for isotropic cases inapplicable for the ten-moment system. To address this, we introduced a novel modification to the solution states in the Harten--Lax--van Leer--contact (HLLC) flux, which, along with suitable discretization of the source terms, gave a new WB DG discretization.
We carried out the positivity-preserving analyses of our WB DG schemes, based on several key properties of the admissible state set, the HLLC flux and the HLLC solver, as well as the geometric quasilinearization (GQL) technique. The analyses proved a weak positivity for the cell averages of the DG solutions, so that a simple limiter effectively enforced the physical admissibility of the DG solution polynomials at certain points of interest.
 Extensive 1D and 2D numerical tests were conducted to demonstrate the accuracy, well-balancedness, positivity-preserving property, and high resolution of our proposed schemes.

 \section*{Acknowledgements}
 The works of J.~Wang and H.~Tang were partially supported by the National Key R\&D Program of China (Project Number
2020YFA0712000), the National Natural Science Foundation of China (Nos.~12171227 \& 12288101).
The work of K.~Wu was partially supported by Shenzhen Science and Technology Program
(No.~RCJC20221008092757098) and National Natural Science Foundation of China (Nos.~12171227 \& 92370108).

\bibliographystyle{siamplain}

\bibliography{Ten_Moment_PP_WB}
\end{document}